\def\@tocline#1#2#3#4#5#6#7{\relax
  \ifnum #1>\c@tocdepth 
  \else
    \par \addpenalty\@secpenalty\addvspace{#2}%
    \begingroup \hyphenpenalty\@M
    \@ifempty{#4}{%
      \@tempdima\csname r@tocindent\number#1\endcsname\relax
    }{%
      \@tempdima#4\relax
    }%
    \parindent\z@ \leftskip#3\relax \advance\leftskip\@tempdima\relax
    \rightskip\@pnumwidth plus4em \parfillskip-\@pnumwidth
    #5\leavevmode\hskip-\@tempdima
      \ifcase #1
      \or\or \hskip 2em \or \hskip 2em \else \hskip 3em \fi%
      #6\nobreak\relax
    \dotfill\hbox to\@pnumwidth{\@tocpagenum{#7}}\par
    \nobreak
    \endgroup
  \fi}
\newcommand{\A}{\mathbf{A}}
\newcommand{\B}{\mathbf{B}}
\newcommand{\G}{\mathbf{G}}
\renewcommand{\P}{\mathbf{P}}
\newcommand{\Q}{\mathbb{Q}}
\newcommand{\Z}{\mathbb{Z}}
\newcommand{\sC}{\mathcal{C}}
\newcommand{\sG}{\mathcal{G}}
\newcommand{\sM}{{\mathscr M}}
\newcommand{\sO}{\mathcal{O}}
\newcommand{\sP}{\mathcal{P}}
\newcommand{\sY}{\mathcal{Y}}
\newcommand{\Xb}{{\overline{X}}}
\newcommand{\Cor}{\operatorname{\mathbf{Cor}}}
\newcommand{\HI}{\operatorname{\mathbf{HI}}}
\newcommand{\Ext}{\operatorname{Ext}}
\newcommand{\ul}[1]{{\underline{#1}}}
\newcommand{\Cpx}{{\operatorname{\mathbf{Cpx}}}}
\newcommand{\PST}{{\operatorname{\mathbf{PST}}}}
\newcommand{\NST}{\operatorname{\mathbf{NST}}}
\newcommand{\DM}{\operatorname{\mathbf{DM}}}
\newcommand{\Map}{\operatorname{Map}}
\newcommand{\Hom}{\operatorname{Hom}}
\newcommand{\uHom}{\operatorname{\underline{Hom}}}
\newcommand{\uExt}{\operatorname{\underline{Ext}}}
\newcommand{\Ker}{\operatorname{Ker}}
\newcommand{\Coker}{\operatorname{Coker}}
\newcommand{\Pic}{\operatorname{Pic}}
\newcommand{\Spec}{\operatorname{Spec}}
\newcommand{\Comp}{\operatorname{Comp}}
\newcommand{\Sm}{\operatorname{\mathbf{Sm}}}
\newcommand{\Sch}{\operatorname{\mathbf{Sch}}}
\newcommand{\Shv}{\operatorname{\mathbf{Shv}}}
\newcommand{\pro}[1]{\text{\rm pro}_{#1}\text{\rm--}}
\newcommand{\tr}{{\operatorname{tr}}}
\newcommand{\eff}{{\operatorname{eff}}}
\newcommand{\fin}{{\operatorname{fin}}}
\newcommand{\red}{{\operatorname{red}}}
\newcommand{\Zar}{{\operatorname{Zar}}}
\newcommand{\Nis}{{\operatorname{Nis}}}
\newcommand{\et}{{\operatorname{\acute{e}t}}}
\newcommand{\Jac}{{\operatorname{Jac}}}
\newcommand{\id}{{\operatorname{Id}}}
\newcommand{\codim}{{\operatorname{codim}}}
\newcommand{\ch}{{\operatorname{ch}}}
\newcommand{\Tot}{{\operatorname{Tot}}}
\newcommand{\CH}{{\operatorname{CH}}}
\renewcommand{\lim}{\operatornamewithlimits{\varprojlim}}
\newcommand{\colim}{\operatornamewithlimits{\varinjlim}}
\newcommand{\hocolim}{\operatorname{hocolim}}
\newcommand{\ol}{\overline}
\renewcommand{\phi}{\varphi}
\renewcommand{\epsilon}{\varepsilon}
\renewcommand{\div}{\operatorname{div}}
\newcommand{\MNS}{\operatorname{\mathbf{MNS}}}
\newcommand{\MNST}{\operatorname{\mathbf{MNST}}}
\newcommand{\MCor}{\operatorname{\mathbf{MCor}}}
\newcommand{\MSm}{\operatorname{\mathbf{MSm}}}
\newcommand{\MPST}{\operatorname{\mathbf{MPST}}}
\newcommand{\CI}{\operatorname{\mathbf{CI}}}
\newcommand{\Sq}{{\operatorname{\mathbf{Sq}}}}
\newcommand{\MSmsq}{{(\MSm)^{\Sq}}}
\newcommand{\bcube}{{\ol{\square}}}
\newcommand{\cube}{\square}
\newcommand{\M}{\mathbf{M}}
\newcommand{\ulMSm}{\operatorname{\mathbf{\underline{M}Sm}}}
\newcommand{\ulMNS}{\operatorname{\mathbf{\underline{M}NS}}}
\newcommand{\ulMPS}{\operatorname{\mathbf{\underline{M}PS}}}
\newcommand{\ulMPST}{\operatorname{\mathbf{\underline{M}PST}}}
\newcommand{\ulMNST}{\operatorname{\mathbf{\underline{M}NST}}}
\newcommand{\ulMCor}{\operatorname{\mathbf{\underline{M}Cor}}}
\newcommand{\uPic}{\underline{\Pic}}
\newcounter{spec}
{\end{list}}%
\newtheorem{lemma}{Lemma}[section]
\newtheorem{thm}[lemma]{Theorem}
\newtheorem{prop}[lemma]{Proposition}
\newtheorem{cor}[lemma]{Corollary}
\newtheorem{corollary}[lemma]{Corollary}
\newtheorem{claim}[lemma]{Claim}
\theoremstyle{definition}
\newtheorem{defn}[lemma]{Definition}
\theoremstyle{remark}
\newtheorem{qn}[lemma]{Question}
\newtheorem{remark}[lemma]{Remark}
\newtheorem{example}[lemma]{Example}
\newtheorem{warning}[lemma]{Warning}
\numberwithin{equation}{section}
\numberwithin{equation}{lemma}
\newcommand{\shuji}[1]{\begin{color}{green}{#1}\end{color}}
\newcommand{\shujirem}[1]{\begin{color}{red}{Shuji:\;#1}\end{color}}
\colorlet{LightRubineRed}{RubineRed!70!}
\def\lSm{\mathbf{lSm}}
\def\SmlSm{\mathbf{SmlSm}}
\def\Sm{\mathbf{Sm}}
\def\sM{\mathcal{M}}
\newcounter{elno}
\begin{document}

\def\aNis{a_{\Nis}}
\def\ulaNis{\underline{a}_{\Nis}}
\def\ulasNis{\underline{a}_{s,\Nis}}
\def\ulaNisfin{\underline{a}^{\fin}_{\Nis}}
\def\ulasNisfin{\underline{a}^{\fin}_{s,\Nis}}
\def\asNis{a_{s,\Nis}}
\def\ulasNis{\underline{a}_{s,\Nis}}
\def\qaq{\quad\text{ and }\quad}
\def\limcat#1{``\underset{#1}{\lim}"}
\def\Comp{\Comp^{\fin}}
\def\ulc{\ul{c}}
\def\ulb{\ul{b}}
\def\ulgam{\ul{\gamma}}
\def\MSm{\operatorname{\mathbf{MSm}}}
\def\MsigmaS{\operatorname{\mathbf{MsigmaS}}}
\def\ulMSm{\operatorname{\mathbf{\ul{M}Sm}}}
\def\ulMsigmaS{\operatorname{\mathbf{\ul{M}NS}}}

\def\ulMPS{\operatorname{\mathbf{\ul{M}PS}}}

\def\ulMsigmaS{\operatorname{\mathbf{\ul{M}PS}_\sigma}}
\def\ulMsigmaSTfin{\operatorname{\mathbf{\ul{M}PST}^{\fin}_\sigma}}
\def\ulMsigmaST{\operatorname{\mathbf{\ul{M}PST}_\sigma}}
\def\MsigmaS{\operatorname{\mathbf{MPS}_\sigma}}
\def\MsigmaST{\operatorname{\mathbf{MPST}_\sigma}}
\def\MsigmaSTfin{\operatorname{\mathbf{MPST}^{\fin}_\sigma}}

\def\ulMNS{\operatorname{\mathbf{\ul{M}NS}}}
\def\ulMNSTfin{\operatorname{\mathbf{\ul{M}NST}^{\fin}}}
\def\ulMNSfin{\operatorname{\mathbf{\ul{M}NS}^{\fin}}}
\def\ulMNST{\operatorname{\mathbf{\ul{M}NST}}}
\def\ulMEST{\operatorname{\mathbf{\ul{M}EST}}}
\def\MNS{\operatorname{\mathbf{MNS}}}
\def\MNST{\operatorname{\mathbf{MNST}}}
\def\MEST{\operatorname{\mathbf{MEST}}}
\def\MNSTfin{\operatorname{\mathbf{MNST}^{\fin}}}
\def\RSC{\operatorname{\mathbf{RSC}}}
\def\NST{\operatorname{\mathbf{NST}}}
\def\EST{\operatorname{\mathbf{EST}}}

\newcommand{\NS}{{\operatorname{\mathrm{NS}}}}

\def\LogRec{\operatorname{\mathbf{LogRec}}}
\def\Ch{\operatorname{\mathrm{Ch}}}

\def\MSmsq{\MSm^{\Sq}}
\def\Comp{\operatorname{\mathbf{Comp}}}
\def\uli{\ul{i}}
\def\ulis{\ul{i}_s}
\def\is{i_s}
\def\qfor{\text{ for }\;\;}
\def\CIlog{\operatorname{\mathbf{CI}}^{\mathrm{log}}}
\def\CIltr{\operatorname{\mathbf{CI}}^{\mathrm{ltr}}}
\def\CIt{\operatorname{\mathbf{CI}}^\tau}
\def\CItsp{\operatorname{\mathbf{CI}}^{\tau,sp}}
\def\ltr{\mathrm{ltr}}

\def\kX{\mathfrak{X}}
\def\kY{\mathfrak{Y}}
\def\kC{\mathfrak{C}}

\def\otCIsp{\otimes_{\CI}^{sp}}
\def\otCINissp{\otimes_{\CI}^{\Nis,sp}}

\def\hM#1{h_0^{\bcube}(#1)}
\def\hMNis#1{h_0^{\bcube}(#1)_{\Nis}}
\def\hMM#1{h^0_{\bcube}(#1)}
\def\hMw#1{h_0(#1)}
\def\hMwNis#1{h_0(#1)_{\Nis}}

\def\hetrec{h_{0, \et}}
\def\hetcube{h_{0, \et}^{\cube}}

\def\ihF#1{F^{#1}}
\def\ihFA{\ihF {\A^1}}

\def\istm{\iota_{st,m}}
\def\im{\iota_m}
\def\est{\epsilon_{st}}
\def\tL{\tilde{L}}
\def\tX{\tilde{X}}
\def\tY{\tilde{Y}}
\def\omegaCI{\omega^{\CI}}
\def\qwith{\;\text{ with} }
\def\aVNis{a^V_\Nis}
\def\ulMCorls{\ulMCor_{ls}}

\def\Zinf{Z_\infty}
\def\Einf{E_\infty}
\def\Xinf{X_\infty}
\def\Yinf{Y_\infty}
\def\Pinf{P_\infty}

\def\Lot{{\cubegm\otimes\cubegm}}

\def\Ln#1{\Lambda_n^{#1}}
\def\tLn#1{\widetilde{\Lambda_n^{#1}}}
\def\tild#1{\widetilde{#1}}
\def\otuCINis{\otimes_{\underline{\CI}_\Nis}}
\def\otCI{\otimes_{\CI}}
\def\otCINis{\otimes_{\CI}^{\Nis}}
\def\tF{\widetilde{F}}
\def\tG{\widetilde{G}}
\def\bcubered{\bcube^{\mathrm{red}}}
\def\cubegm{\bcube^{(1)}}
\def\cubegma{\bcube^{(a)}}
\def\cubegmb{\bcube^{(b)}}
\def\cubegmred{\bcube^{(1)}_{red}}
\def\cubegmreda{\bcube^{(a)}_{red}}
\def\cubegmredb{\bcube^{(b)}_{red}}

\def\LT{\bcube^{(1)}_{T}}
\def\LU{\bcube^{(1)}_{U}}
\def\LV{\bcube^{(1)}_{V}}
\def\LW{\bcube^{(1)}_{W}}
\def\LTred{\bcube^{(1)}_{T,red}}
\def\Lred{\bcube^{(1)}_{red}}
\def\LTred{\bcube^{(1)}_{T,red}}
\def\LUred{\bcube^{(1)}_{U,red}}
\def\LVred{\bcube^{(1)}_{V,red}}
\def\LWred{\bcube^{(1)}_{W,red}}
\def\PP{\P}
\def\AA{\A}

\def\LL{\bcube^{(2)}}
\def\LLred#1{\bcube^{(2)}_{#1,red}}
\def\LLredd{\bcube^{(2)}_{red}}
\def\Lredd#1{\bcube_{#1,red}}

\def\Lnredd#1{\bcube^{(#1)}_{red}}

\def\LLT{\bcube^{(2)}_T}
\def\LLTred{\bcube^{(2)}_{T,red}}

\def\LLU{\bcube^{(2)}_U}
\def\LLUred{\bcube^{(2)}_{U,red}}

\def\LLS{\bcube^{(2)}_S}
\def\LLSred{\bcube^{(2)}_{S,red}}
\def\tMCor{\Hom_{\MPST}}
\def\otHINis{\otimes_{\HI}^{\Nis}}

\def\Sh{\operatorname{\mathbf{Shv}}}
\def\Shv{\operatorname{\mathbf{Shv}}}
\def\PSh{\operatorname{\mathbf{PSh}}}
\def\Shltr{\operatorname{\mathbf{Shv}_{dNis}^{ltr}}}
\def\Shlog{\operatorname{\mathbf{Shv}_{dNis}^{log}}}
\def\Shvlog{\operatorname{\mathbf{Shv}^{log}}}
\def\SmlSm{\operatorname{\mathbf{SmlSm}}}
\def\lSm{\operatorname{\mathbf{lSm}}}
\def\lCor{\operatorname{\mathbf{lCor}}}
\def\SmlCor{\operatorname{\mathbf{SmlCor}}}
\def\PShltr{\operatorname{\mathbf{PSh}^{ltr}}}
\def\PShlog{\operatorname{\mathbf{PSh}^{log}}}
\def\lDM{\operatorname{\mathbf{logDM}^{eff}}}
\def\logDM{\operatorname{\mathbf{log}\mathcal{DM}^{eff}}}
\def\logDMone{\operatorname{\mathbf{log}\mathcal{DM}^{eff}_{\leq 1}}}
\def\logCI{\mathbf{logCI}} 

\def\DM{\operatorname{\mathbf{DM}^{eff}}}
\def\lDA{\operatorname{\mathbf{logDA}^{eff}}}
\def\DA{\operatorname{\mathbf{DA}^{eff}}}
\def\Log{\operatorname{\mathcal{L}\textit{og}}}
\def\Rsc{\operatorname{\mathcal{R}\textit{sc}}}
\def\Pro{\mathrm{Pro}\textrm{-}}
\def\pro{\mathrm{pro}\textrm{-}}
\def\dg{\mathrm{dg}}
\def\plim{\mathrm{``lim"}}
\def\ker{\mathrm{ker}}
\def\coker{\mathrm{coker}}

\def\Alb{\operatorname{Alb}}
\def\bAlb{\mathbf{Alb}}
\def\Gal{\operatorname{Gal}}

\def\hofib{\mathrm{hofib}}
\def\triv{\mathrm{triv}}
\def\ABl{\mathcal{A}\textit{Bl}}
\def\divsm#1{{#1_\mathrm{div}^{\mathrm{Sm}}}}

\def\cA{\mathcal{A}}
\def\cB{\mathcal{B}}
\def\cC{\mathcal{C}}
\def\cD{\mathcal{D}}
\def\cI{\mathcal{I}}
\def\cS{\mathcal{S}}
\def\cM{\mathcal{M}}
\def\cO{\mathcal{O}}

\def\XP{X \backslash \sP}
\def\M0a{{}^t\cM_0^a}
\newcommand{\Ind}{{\operatorname{Ind}}}

\def\Xkbar{\overline{X}_{\overline{k}}}
\def\dx{{\rm d}x}

\newcommand{\dNis}{{\operatorname{dNis}}}
\newcommand{\ABNis}{{\operatorname{AB-Nis}}}
\newcommand{\sNis}{{\operatorname{sNis}}}
\newcommand{\sZar}{{\operatorname{sZar}}}
\newcommand{\cofib}{\mathrm{Cofib}}

\newcommand{\Gmlog}{\G_m^{\log}}
\newcommand{\Gmlogred}{\overline{\G_m^{\log}}}

\newcommand{\varcolim}{\mathop{\mathrm{colim}}}
\newcommand{\varlim}{\mathop{\mathrm{lim}}}
\newcommand{\tensor}{\otimes}

\newcommand{\eq}[2]{\begin{equation}\label{#1}#2 \end{equation}}
\newcommand{\eqalign}[2]{\begin{equation}\label{#1}\begin{aligned}#2 \end{aligned}\end{equation}}

\def\varplim#1{\text{``}\varlim_{#1}\text{''}}
\def\det{\mathrm{d\acute{e}t}}

\title{Derived Log Albanese sheaves}   
\author{Federico Binda, Alberto Merici and Shuji Saito}

\address{Dipartimento di Matematica ``Federigo Enriques'',  Universit\`a degli Studi di Milano\\ Via Cesare Saldini 50, 20133 Milano, Italy}
\email{federico.binda@unimi.it}

\address{Department of Mathematics, University of Oslo, Niels Henrik Abels hus, Moltke Moes vei 35, 0851 Oslo, Norway}
\email{allbertm@math.uio.no}

\address{Graduate School of Mathematical Sciences, University of Tokyo, 3-8-1 Komaba, Tokyo 153-8941, Japan}
\email{sshuji@msb.biglobe.ne.jp}

\thanks{F.B.\ is supported by the PRIN ``Geometric, Algebraic and Analytic Methods in Arithmetic''.}
\thanks{A.M.\ is supported by the Swiss National Science Foundation (SNSF), project 200020\_178729}
\begin{abstract}We define higher pro-Albanese functors for every effective log motive over a field $k$ of characteristic zero, and we compute them for every smooth log smooth scheme $X=(\underline{X}, \partial X)$. The result involves an inverse system of the coherent cohomology of the underlying scheme as well as a pro-group scheme $\mathrm{Alb}^{\log}(X)$ that extends Serre's semi-abelian Albanese variety of $\underline{X}-|\partial X|$. This generalizes the higher Albanese sheaves of Ayoub, Barbieri-Viale and Kahn and is related to an old question of Grothendieck.  

\end{abstract}
\maketitle
\tableofcontents

\def\shujirem#1{\begin{color}{red}{#1}\end{color}}
\def\shuji#1{\begin{color}{green}{#1}\end{color}}

\section{Introduction}
Let $k$ be a perfect field and let $X$ be a smooth, quasi-projective and geometrically connected $k$-scheme. A very classical tool in the study of the geometry of $X$ is given by the \emph{Albanese variety} $\Alb_X$ of $X$, the universal Abelian variety receiving a map from $X$ (up to the choice of a base point). When $X$ is a smooth curve, the Albanese variety coincides with the Jacobian variety  ${\rm Jac}(X)$ of $X$, and essentially every invariant of $X$ can be recovered from it. 
In higher dimension, the Albanese variety is still an important tool for gathering information about the Chow group $\CH_0(X)$ of zero cycles of $X$. Extending the Albanese map by linearity, there is in fact a well-defined morphism (now independent on the choice of a base point) 
\begin{equation}\label{eq:intro_alb} {a}_X\colon \CH_0(X)^0 \to \Alb_X(k).\end{equation}
Much is known, at least conjecturally, about this map. If $X$ is proper over an algebraically closed field, a famous theorem of Rojtman \cite{Rojtman} asserts that ${a}_X$ is an isomorphism on torsion subgroups (at least modulo $p$-torsion in characteristic $p>0$, which was later fixed by Milne \cite{MilneTorsion}). If $k$ is finite, the kernel of $a_X$ can be explicitly determined by geometric class field theory \cite{KatoSaitoUnCFT}. If $k$ is the algebraic closure of a finite field, then a theorem of Kato and Saito (see again \cite{KatoSaitoUnCFT}) asserts that ${a}_X$ is in fact an isomorphism, a statement that is conjectured to be true even when $k=\overline{\mathbb{Q}}$ as consequence of the Bloch-Beilinson conjectures. This is far from being true over the complex numbers, as shown by Mumford. 

When $X$ is no longer proper, both sides of \eqref{eq:intro_alb} need to be modified. It is already clear from the case of curves \cite{SerreGACC} that one can consider a more general class of commutative algebraic groups as target of a map from $X$, including Abelian varieties, tori, and their extensions, i.e. semi-Abelian varieties. Serre \cite{SerreChevalley1} (see also \cite{FaltingsWustholz}) showed that the problem of finding a universal map to a semi-Abelian variety has always a solution. The corresponding universal object is now known as Serre's Albanese variety: it agrees with
the usual Albanese variety if $X$ is proper. 

Using Serre's semi-Abelian Albanese variety it is possible to extend the Albanese morphism to every smooth quasi-projective variety\footnote{at least after  inverting the exponential characteristic of the ground field, in an appropriate sense.}. As observed by Spie\ss\ and Szamuely \cite{SS}, every semi-Abelian variety, seen as \'etale sheaf on the big  site $\Sm(k)$, has a natural structure of \'etale sheaf with transfers, i.e. it enjoys an extra functoriality with respect to the category of finite correspondences $\Cor(k)$ introduced by Suslin and Voevodsky. Since every map from an affine space to a torus or to an Abelian variety is constant, such sheaves are moreover ${\A}^1$-homotopy invariant. 

These two facts are essentially enough to show that the assignment $X\mapsto \mathbf{Alb}_X$ (here $\mathbf{Alb}_X$ is the non-connected algebraic group whose neutral component is exactly Serre's semi-Abelian Albanese) can be promoted by left Kan extension to a motivic ``realization'' functor
\begin{equation}\label{eq:LAlb_intro_DM}
     L\Alb \colon \mathcal{DM}^{\rm eff}_{\et}(k, \Q) \longrightarrow \cD(\HI_{\leq 1,\et}(k,\Q))\end{equation}
defined on the $\infty$-category of Voevodsky's effective motives $\mathcal{DM}^{\rm eff}_{\et}(k, \Q)$, i.e. the full subcategory of the derived $\infty$-category of \'etale sheaves with transfers $\cD(\Shv_{\et}^{\rm tr}(k, \Q))$ whose objects are $\mathbb{A}^1$-local complexes, taking values in the derived $\infty$-category of the Abelian category $\HI_{\leq 1,\et}(k,\Q)$ of $1$-motivic sheaves with rational coefficients: this is the full subcategory of \'etale sheaves with transfers  generated under colimits by lattices (i.e. \'etale sheaves $L$ such that $L(\overline{k}) \cong \Z^r$) and semi-abelian varieties (see \cite[Prop. 1.3.8]{Ayoub-BV}). It is naturally a full subcategory of the abelian category of homotopy invariant sheaves with transfers. 
This result can in fact be refined to integral coefficients by considering a more exotic (and probably not fully faithful) functor \[R\iota\colon \cD(\HI_{\leq 1,\et})\to \mathcal{DM}^{\rm eff}_{\Nis}(k,\Z),\]
and by constructing $L\Alb$ as its left adjoint (this functor is in fact much more mysterious, and we will not take it in consideration). This result, due to Ayoub and Barbieri-Viale \cite[Thm. 2.4.1]{Ayoub-BV} (extending Barbieri-Viale and Kahn \cite{BVKahn} to non necessarily geometric motives) has several consequences. First, it provides  a construction of an Albanese map for arbitrary motives (in particular, for every separated $k$-scheme of finite type, not necessarily smooth or proper), giving for example vast generalizations of the theorem of Rojtman \cite[13]{BVKahn}. Second, the Albanese functor is now a \emph{derived} functor: it has higher homotopy groups $L_i \Alb(M) = \pi_i(L\Alb(M))$ for every $M\in \mathcal{DM}^{\rm eff}_{\et}(k, \Q)$,  encoding   information such as the N\'eron-Severi group  of a variety (see \cite[Thm. 9.2.3]{BVKahn}).  
Moreover, the functor $L\Alb$ in  \eqref{eq:LAlb_intro_DM} can be identified with the left adjoint of the derived functor $i_{\leq 1}^{\DM}$ of the natural embedding  $\HI_{\leq 1,\et}(k,\Q) \subset \Shv_{\et}^{\rm tr}(k, \Q)$. One can show that $i_{\leq 1}^{\DM}$ is fully faithful, and its essential image coincides with the stable $\infty$-category $\mathcal{DM}^{\rm eff}_{\leq 1}(k,\Q)$ generated by the motives of curves. If we restrict ourselves to compact objects, $\cD(\HI_{\leq 1,\et}(k,\Q))^\omega$ coincides with the (bounded) derived category of the Abelian category of Deligne $1$-motives introduced in \cite{DeligneHodgeIII}. In fact, the properties of \eqref{eq:LAlb_intro_DM} are essential in the ``motivic'' proof of Deligne's conjectures on 1-motives, see \cite[Part 4]{BVKahn} and \cite{Vologodsky}.
\medskip 

Our goal in this paper is to extend the  picture sketched above in order to include a more general kind of algebraic groups in the definition of the Albanese variety. Thanks to Chevalley's structure theorem, any connected commutative algebraic group over a perfect field can be written as an extension of an Abelian variety by an affine smooth group scheme, which splits as a product of a torus by a unipotent commutative group.  As observed by Serre, however, the problem of finding a universal map from a smooth variety $X$ to an arbitrary commutative algebraic group does \emph{not} have  a solution in general (namely, when $X$ is not proper), whence the classical restriction to semi-Abelian varieties. A solution does, however, exist, if a bound on the dimension of the tangent spaces of the groups is imposed.
Let us assume that $k$ has characteristic zero (and keep this assumption throughout the rest of the Introduction, see Proposition \ref{prop:AlbOmega-AlbChow-char0}). Faltings and W\"ustholz \cite{FaltingsWustholz} realized that when $X$ admits a smooth compactification $\ol{X}$ with normal crossing boundary $D$, it is possible to use any finite dimensional subspace of the vector space $H^0(X, \Omega^1_{X})$ to give such a bound. A natural choice is to use for $n\geq 1$ the subspaces $H^0(\ol{X}, \Omega^1_{\ol{X}}(nD))$ of regular $1$-forms on $X$ having poles of order at most $n$ along $D$. The resulting universal object $\Alb_{(\ol{X}, nD)}$ depends on the pair $\mathfrak{X}^{(n)}:=(\ol{X}, nD)$ in a functorial way. This gives a generalized Albanese morphism
\[ a_{\mathfrak{X}}\tensor \Q \colon \Q_{tr}(X) \to \mathbf{Alb}_{\mathfrak{X}^{(n)}} \tensor_{\Z} \Q, \]
which is a surjective morphism of   \'etale sheaves with transfers with rational coefficients (here $\Q_{tr}(X)$ denotes the \'etale sheaf of $\Q$-vector spaces represented by $X$). The generalized Albanese $\mathbf{Alb}_{\mathfrak{X}^{(n)}}$ is an extension of Serre's semi-Abelian Albanese of $X$ (independent on the choice of the compactification $\ol{X}$) by a unipotent group. If $X$ is a curve, $\mathbf{Alb}_{\mathfrak{X}^{(n)}} = \Jac(\ol{X}, nD)$ is exactly the generalised Jacobian variety of Rosenlicht and Serre \cite{SerreGACC}, and in higher dimension it is the generalised Albanese with modulus considered in \cite{BS}, \cite{BK} (see also \cite{RussellKyoto}, \cite{RussellANT}).

By varying $n$, we get a pro-object in the category of commutative algebraic groups up to isogeny $``\lim_n"\mathbf{Alb}_{\mathfrak{X}^{(n)}}$, which satisfies an obvious universal property, see Prop. \ref{prop:Alb-Omega-PST}.

In fact, we can give a finer result. Let $\mathbf{RSC}_{\et, \leq 1}(k, \Q)$ be the full (abelian) subcategory of the category of \'etale sheaves with transfers $\Shv_{\et}^{\rm tr}(k, \Q)$ generated under  colimits by  commutative connected $k$-group schemes of finite type and lattices. Note that we clearly have $\mathbf{HI}_{\et, \leq 1}(k, \Q)\subset \mathbf{RSC}_{\et, \leq 1}(k, \Q)$.  Write $\Comp(X)$ for the category of normal compactifications $\overline{X}$ of $X$ such that the complement $\ol{X}-X$ is the support of an effective Cartier divisor. 
\begin{thm}\label{thm:Alb_intro}(see Thm. \ref{thm:Alb}) Assume that the characteristic of $k$ is zero.
The embedding $\RSC_{\et,\leq 1}(k,\Q) \subseteq \Shv_{\et}^{\tr}(k,\Q)$ has a pro-left adjoint:
	\begin{equation}\label{eq:thm:Alb_intro}
	\Alb\colon \Shv_{\et}^{\tr}(k,\Q)\to \pro\RSC_{\et,\leq 1}(k,\Q), \end{equation}
induced by colimit from \[
\Q_{\tr}(X)\mapsto  ``\lim_{n}"\bAlb_{\kX^{(n)}}
\]
for any choice of $\kX\in \Comp(X)$ smooth. 
\end{thm}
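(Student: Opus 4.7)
The plan is to define $\Alb$ first on the representable sheaves $\Q_{\tr}(X)$ for $X\in \Sm(k)$, establish the adjunction there, and then extend by left Kan extension. For a smooth $X$, pick any $\kX=(\ol{X},D)\in\Comp(X)$ with $\ol{X}$ smooth; this exists since $\car k = 0$ by Hironaka. Set
\[\Alb(\Q_{\tr}(X)) := \varplim{n}\bAlb_{\kX^{(n)}}.\]
The universal property of the pro-Albanese (Proposition~\ref{prop:Alb-Omega-PST}) gives, for every $G\in \RSC_{\et,\leq 1}(k,\Q)$,
\[ \Hom_{\pro\RSC_{\et,\leq 1}}\!\bigl(\Alb(\Q_{\tr}(X)),\, G\bigr) \;=\; \varcolim_n \Hom_{\RSC_{\et,\leq 1}}(\bAlb_{\kX^{(n)}}, G) \;=\; G(X), \]
which is exactly the pro-adjunction on representables.

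Independence from the choice of compactification proceeds by observing that any two smooth compactifications of $X$ are dominated by a third (Hironaka, $\car k=0$), and a dominating map $(\ol{X}',D')\to(\ol{X},D)$ induces a compatible system of transition morphisms $\bAlb_{(\ol{X},nD)}\to\bAlb_{(\ol{X}',nD')}$ via pullback of logarithmic $1$-forms and the universal property of the source; at the level of pro-objects this yields an isomorphism in $\pro\RSC_{\et,\leq 1}$. Functoriality in finite correspondences is deduced similarly: given $\alpha\in\Cor(X,Y)\otimes\Q$, the composite $\Q_{\tr}(X)\xrightarrow{\alpha}\Q_{\tr}(Y)\to \bAlb_{\kY^{(n)}}\otimes\Q$ is a morphism of \'etale sheaves with transfers landing in an object of $\RSC_{\et,\leq 1}$, hence by the universal property factors, uniquely, through some $\bAlb_{\kX^{(m)}}$ for a suitable $m=m(n)$. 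Taking the pro-limit over $n$ produces the desired morphism $\Alb(\alpha)$ in $\pro\RSC_{\et,\leq 1}$; associativity of composition is again checked by applying the universal property to a third representable.

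To extend $\Alb$ to all sheaves, I use that $\Shv_{\et}^{\tr}(k,\Q)$ is generated by the representables $\Q_{\tr}(X)$ under small colimits. Since $\RSC_{\et,\leq 1}(k,\Q)$ is closed under small colimits in $\Shv_{\et}^{\tr}(k,\Q)$ (it is defined as a subcategory generated under colimits), the pro-category $\pro\RSC_{\et,\leq 1}(k,\Q)$ inherits all small colimits, and so the assignment above extends uniquely to a colimit-preserving functor $\Alb$. The global adjunction is then verified by resolving $F=\colim_i\Q_{\tr}(X_i)$ and computing
\[\Hom_{\pro\RSC}(\Alb F, G)\;=\;\lim_i \Hom_{\pro\RSC}(\Alb\Q_{\tr}(X_i),G)\;=\;\lim_i G(X_i)\;=\;\Hom_{\Shv_{\et}^{\tr}}(F,G)\]
for every $G\in\RSC_{\et,\leq 1}$, which is exactly the property of a pro-left adjoint to the inclusion.

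The main obstacle is the correspondence functoriality: the indexing sets for the pro-Albanese systems depend on the source, and one must verify that the factorization indices $m=m(n)$ assemble into a morphism of pro-objects in a coherent fashion compatible with composition, and that the resulting map is independent of the choices of compactifications on both sides. A secondary issue is checking that the left Kan extension of the representable assignment indeed takes values in $\pro\RSC_{\et,\leq 1}$ rather than in some larger pro-category; this reduces to the existence of small colimits in $\RSC_{\et,\leq 1}$ and their stability under the pro-construction.
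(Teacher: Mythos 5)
There is a genuine gap at the heart of your argument: the adjunction on representables. You assert that ``the universal property of the pro-Albanese (Proposition~\ref{prop:Alb-Omega-PST})'' yields
\[ \Hom_{\pro\RSC_{\et,\leq 1}}\bigl(\varplim{n}\bAlb_{\kX^{(n)}},\, E\bigr) \;\cong\; \colim_n \Hom(\bAlb_{\kX^{(n)}}, E) \;\cong\; \Hom(\Q_{\tr}(X),E) \]
for \emph{every} $E\in \RSC_{\et,\leq 1}(k,\Q)$, but no such universal property is available. Proposition~\ref{prop:Alb-Omega-PST} only says that $\bAlb_{\kX}$ is a reciprocity sheaf with transfers, and the actual universal property (Proposition~\ref{prop:UniversalPropertyPST}) is stated only for maps into smooth commutative \emph{group schemes} $G$, and only for sections that already factor through $\omega_!h_0^{\bcube}(\kX)$. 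A general object of $\RSC_{\et,\leq 1}$ is a filtered colimit of quotients $G/L$ of group schemes by lattices (Proposition~\ref{prop:can-pres-1-mot-sheaf}), not a group scheme, so the displayed isomorphism is essentially the content of the theorem and cannot be quoted.

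The paper closes this gap in several steps that are absent from your proposal: (i) injectivity of the comparison map follows from the surjectivity of $a_{\kX}\otimes\Q\colon \Q_{\tr}(X)\to\bAlb_{\kX^{(n)}}\otimes\Q$ (Lemma~\ref{lem:Alb-map-surjective}) together with exactness of filtered colimits; (ii) for surjectivity one first reduces, by Galois descent, to $k$ algebraically closed, and then, using compactness of $\Q_{\tr}(X)$, to $E$ finitely presented, $E=\Coker(L\hookrightarrow G)$; (iii) one lifts a given map $\Q_{\tr}(X)\to E$ to $G$ using the vanishing $H^1_{\et}(X,L)=H^1_{\Nis}(X,\Q^r)=0$ --- this is where both the rational coefficients and the passage to $\ol{k}$ are used; (iv) only now does one invoke reciprocity of $G$ (Remark~\ref{rmk;reciprocity-only-(n)}) to find an $n$ with a factorization through $\omega_!h_0^{\bcube}(\kX^{(n)})$, and then Proposition~\ref{prop:UniversalPropertyPST} to factor through $\bAlb_{\kX^{(n)}}$. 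Your secondary concerns (coherence of the indices $m(n)$, correspondence functoriality, behaviour under left Kan extension) are the routine part and are not where the difficulty lies; without steps (i)--(iv) the representable case, and hence the theorem, is unproved.
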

It is natural to ask for a derived version of the above Theorem, in the spirit of the result of Ayoub, Barbieri-Viale and Kahn. However, since unipotent group schemes are $\A^1$-contractible, \eqref{eq:thm:Alb_intro} cannot be extended to $\mathcal{DM}^{\rm eff}_{\et}(k, \Q)$ in a non-trivial way, i.e. without simply collapsing to the subcategory $\mathbf{HI}_{\et, \leq 1}(k, \Q)$, recovering \eqref{eq:LAlb_intro_DM}. 

Our solution to this difficulty is to extend the construction to a framework in which $\A^1$-contractibility is no longer a problem. This is achieved by passing from the world of algebraic geometry to the world of \emph{logarithmic algebraic geometry}, in the sense of Fontaine, Illusie, Kato and others. Over a field $k$ (seen as log scheme with trivial log structure), we can, roughly speaking, replace schemes with log pairs $X:=(\underline{X},\partial X)$, where $\underline{X}$ is the \emph{underlying} $k$-scheme and $\partial X$ is a log structure supported on a Cartier divisor (the so-called compactifying log structure associated to the open embedding of schemes $\underline{X}-|\partial X| \hookrightarrow \underline{X}$).

For $X$ a smooth log-smooth scheme (i.e. a log scheme such that the underlying scheme is smooth and the log structure is supported on a normal crossing divisor,  see \ref{ssec:loggeom}), we write $X^\circ:= \ul{X}-|\partial X|$ and $\mathbf{Comp}(X)$ for the category of normal compactifications $\overline{X}$ of $\underline{X}$ such that $\overline{X}-X^{\circ}=|\partial X|+D$, for $D$ an effective Cartier divisor. We can then consider the pro-algebraic group  \[\bAlb_X \coloneqq``\lim"_{n} \bAlb_{(\overline{X},|\partial X|_{\rm red} + nD)}\] as an invariant of the log scheme $X$. 
Note that we recover the previous pro-Albanese in the case where  
$X=(\underline{X}, \triv)$, i.e. the scheme $\underline{X}$ seen as log scheme with trivial log structure.

In order to 
exploit this formalism, we need another observation. Any commutative group scheme $G$ (not just semi-Abelian varieties) gives rise to an \'etale sheaf with transfers (still denoted $G$) on $\Sm(k)$. As such, it belongs to the subcategory $\RSC_{\et}(k, \Z)$ of $\Shv_{\et}^{\rm tr}(k, \Z)$ of \emph{reciprocity sheaves}. Its objects satisfy the property that each section $a\in F(X)$ for any $X\in \Sm(k)$ ``has bounded ramification'', i.e. that the corresponding map $a\colon \Z_{tr}(X) \to F$ factors through a quotient $h_0(\mathfrak{X})$ associated to a pair $\mathfrak{X} = (\ol{X}, D)$ where $\ol{X}$ is a proper compactification of $X$ and $D$ is an effective Cartier divisor such that $X=\ol{X}-|D|$ (we refer to such a pair as a Cartier compactification of $X$). Thanks to \cite{shujilog}, every reciprocity sheaf $F$ is \emph{logarithmic}, i.e. it can be extended in a unique way to a functor $\mathcal{L}og(F)$ defined on the category $\SmlSm(k)$ of smooth log smooth log schemes over $k$ (see also \cite{BindaMerici} for an alternative construction).
In fact, we have that (with rational coefficients)
\[\mathcal{L}og(F) \in \logCI_{d\et} \subset \Shv_{d\et}^{\rm ltr}(k, \Q) \]
where $\Shv_{d\et}^{\rm ltr}(k, \Q)$ is the category of \emph{dividing \'etale sheaves with log transfers} introduced in \cite[Section 3]{BPO} (see also \cite{BPOCras}), i.e. sheaves for a certain Grothendieck topology on the category  $\lSm(k)$ of log smooth log schemes over $k$, equipped with an extra transfer structure with respect to an extension of Voevodsky's category of finite correspondences. The topology is generated by \'etale covers of the underlying schemes together with admissible blow-ups with center contained in the locus where the log structure is non-trivial. The category $\logCI_{d\et}$ is the Grothendieck abelian category \cite[Thm. 5.7]{BindaMerici} of strictly $\bcube:=(\P^1, \infty)$-invariant sheaves (here $(\P^1, \infty)$ denotes the log scheme $\P^1$ with compactifying log structure given by the open embedding $\A^1\hookrightarrow\P^1$).
Again by \cite[Thm. 5.7]{BindaMerici}, it is the heart of a t-structure, called the homotopy t-structure, on the $\infty$-category of effective log motives $\logDM(k, \Q)$ \cite{BPO}, i.e. the full subcategory of the derived $\infty$-category $\cD(\Shv_{d\et}^{\rm ltr}(k, \Q))$ consisting of $\bcube$-local complexes.
By Saito's theorem, we have actually a fully faithful embedding (see \eqref{eq;shuji-functor2})
\begin{equation}
    \omega_{\leq 1}^{\CI} \colon \RSC_{\et, \leq 1}(k, \Q) \hookrightarrow \logCI_{d\et} \subset \Shv_{d\et}^{\rm ltr}(k, \Q), 
\end{equation}
and passing to the derived $\infty$-categories, a functor
\[ \omega_{\leq 1}^{\logDM} \colon \cD(\RSC_{\et, \leq 1}(k, \Q)) \to \cD(\Shv_{d\et}^{\rm ltr}(k, \Q)) \xrightarrow{L_{\bcube}} \logDM(k, \Q),
\]
where $L_{\bcube}$ is the localization functor.

If we put these  facts together, we see that {for each $(\underline{X},\partial X)\in \SmlSm(k)$}, each smooth compactification {$(\overline{X},|\partial X| + D) \in \Comp(X)$} and $n\geq 1$, we can construct a strictly $\bcube$-invariant sheaf {$\omega_{\leq 1}^{\CI}(\mathbf{Alb}_{(\overline{X},|\partial X|_{\rm red} + nD)})$} defined on the category of log smooth log schemes over $k$. This extends to the motivic category in the following way.

\begin{thm}[Theorems \ref{thm:derivedAlbanese} and \ref{thm;fully-faithfyl}, Proposition \ref{prop;comparison-usual-LAlb}]\label{thm:main-introduction} Assume that the characteristic of $k$ is zero. 
	The functor $\omega^{\lDM}_{\leq 1}$  has a pro-left adjoint, the \textit{log motivic Albanese functor}:
	\[L\Alb^{\log}\colon \logDM(k, \Q) \to \Pro \cD(\RSC_{\et, \leq 1}(k, \Q)), \]
where $\Pro$ means the pro-$\infty$-category, which fits in a commutative diagram:
\[
	\begin{tikzcd}
		&\cD(\HI_{\leq 1}(k,\Q))\ar[r,"j"]&\Pro\cD(\RSC_{\leq 1}(k,\Q))\\
		\mathcal{DM}^{\rm eff}_{\leq 1}(k,\Q)\ar[r,hook]\ar[ur,bend left=10,"L\Alb_{\leq 1}"]&\mathcal{DM}^{\rm eff}(k,\Q) \ar[r,"\omega^*"]
		&\logDM(k,\Q),\ar[u,swap,"L\Alb^{\log}"]
	\end{tikzcd}
	\]
where $L\Alb_{\leq 1}$ is the (restriction of the) functor $L\Alb$ of Ayoub, Barbieri-Viale and Kahn \eqref{eq:LAlb_intro_DM}, $\omega^*$ is the natural comparison functor 
\[\omega^*\colon \mathcal{DM}^{\rm eff}(k,\Q) \to \logDM(k,\Q),\]
which is fully faithful by \cite[Thm.8.2.16]{BPO}. 
Moreover, the functor $\omega^{\lDM}_{\leq 1}$ is fully faithful and its essential image is the full stable $\infty$-subcategory of $\logDM(k,\Q)$ generated by $\omega^*\mathcal{DM}^{\rm eff}_{\leq 1}(k,\Q)$ and $\G_a[n]$.
	\end{thm}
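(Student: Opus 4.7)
The plan is to construct $L\Alb^{\log}$ as a pro-functor on generators and then extend by left Kan extension to obtain the desired pro-left adjoint. For $X \in \SmlSm(k)$ and any choice of smooth $\kX = (\ol{X}, |\partial X| + D) \in \mathbf{Comp}(X)$, set
\[
L\Alb^{\log}(\Q_{\tr}(X)) := ``\lim_n"\, \omega^{\CI}_{\leq 1}\bigl(\bAlb_{(\ol{X}, |\partial X|_{\red} + nD)}\bigr),
\]
viewed as an object of $\Pro \cD(\RSC_{\et, \leq 1}(k,\Q))$. Independence of the compactification follows from Theorem \ref{thm:Alb_intro}, while the strict $\bcube$-invariance of each level (ensuring that the construction descends to $\logDM(k,\Q)$ after $\bcube$-localization) follows from the fact that $\omega^{\CI}_{\leq 1}$ lands in $\logCI_{d\et}$. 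The pro-left adjunction
\[
\Map_{\Pro \cD(\RSC_{\leq 1})}(L\Alb^{\log}(M), F) \simeq \Map_{\logDM}(M, \omega^{\lDM}_{\leq 1}(F))
\]
is first checked on the representables $\Q_{\tr}(X)$, where it reduces to the universal property in Theorem \ref{thm:Alb_intro}, and then extended to all of $\logDM(k,\Q)$ by colimit preservation on both sides.

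For the commutative diagram, it is enough to check on motives of smooth curves, which generate $\mathcal{DM}^{\rm eff}_{\leq 1}(k,\Q)$. Given a smooth curve $X$ with smooth compactification $\ol{X}$ and reduced boundary $D$, the motive $\omega^*(\Q_{\tr}(X))$ is represented by the log pair $(\ol{X}, D)$, and $L\Alb^{\log}(\omega^*(\Q_{\tr}(X)))$ is the pro-system $``\lim_n" \Jac(\ol{X}, nD)$ of Rosenlicht--Serre generalized Jacobians. Under the inclusion $j$, the comparison with Serre's semi-abelian Jacobian $L\Alb_{\leq 1}(\Q_{\tr}(X))$ is induced by the universal property of the pro-Albanese restricted to test objects in $\HI_{\leq 1}$, from which the commutativity follows.

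For the fully faithfulness of $\omega^{\lDM}_{\leq 1}$, I would show that the counit $L\Alb^{\log} \circ \omega^{\lDM}_{\leq 1} \to \id$ is an equivalence on $\cD(\RSC_{\et,\leq 1}(k,\Q))$. Via a standard $t$-structure argument this reduces to showing that for $F, G \in \RSC_{\leq 1}(k,\Q)$, the natural map
\[
\Ext^i_{\RSC_{\leq 1}}(F, G) \longrightarrow \Ext^i_{\logDM}(\omega^{\CI}_{\leq 1}(F), \omega^{\CI}_{\leq 1}(G))
\]
is an isomorphism for every $i \geq 0$. Saito's theorem handles the case $i = 0$; for $i > 0$ I would combine the exactness of $\omega^{\CI}_{\leq 1}$ (following from the heart structure of $\logCI_{d\et}$) with a dévissage along the Chevalley filtration, reducing to the known case of semi-abelian varieties (treated by Ayoub--Barbieri-Viale) and to a direct computation for $\G_a$.

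Finally, for the essential image, Chevalley's structure theorem in characteristic zero implies that every object of $\RSC_{\et, \leq 1}(k,\Q)$ is an iterated extension of lattices, semi-abelian varieties, and copies of $\G_a$. Since lattices and semi-abelian varieties generate $\HI_{\leq 1}(k,\Q) \subseteq \RSC_{\leq 1}(k,\Q)$, the essential image of $\omega^{\lDM}_{\leq 1}$ is the stable $\infty$-subcategory of $\logDM(k,\Q)$ generated by $\omega^{\lDM}_{\leq 1}(\cD(\HI_{\leq 1}(k,\Q)))$ and $\G_a$. By the commutativity established above, $\omega^{\lDM}_{\leq 1}(\cD(\HI_{\leq 1}(k,\Q))) = \omega^*(\mathcal{DM}^{\rm eff}_{\leq 1}(k,\Q))$, giving the claimed description. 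The main obstacle throughout will be the control of the higher Ext groups in $\logDM$ for unipotent reciprocity sheaves, which requires a careful understanding of the motivic behavior of $\G_a$ as a log motive.
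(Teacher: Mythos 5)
Your overall architecture (define the pro-Albanese on representables by the naive formula, extend by colimits, check the diagram on curves, compute for $\G_a$) resembles the paper's, but your first step contains a genuine gap that the paper spends most of Section \ref{sec:derived} and the Appendix repairing. The value of the \emph{derived} functor $L\Alb^{\log}$ on $\Q_{\ltr}(X)$ is \emph{not} $``\lim_n"\,\bAlb_{\kX^{(n)}}[0]$ for general $X\in\SmlSm(k)$: by Theorem \ref{thm;compute-LAlbX} it has nontrivial $\pi_i$ for $i\geq 1$ governed by $H^i(\ol{X},\cO_{\ol{X}}(nD))$. Correspondingly, your proposed adjunction check on representables fails: for $F=\G_a$ the right-hand side $\Map_{\logDM}(M(X),\omega^{\lDM}_{\leq 1}\G_a)$ computes $R\Gamma(\ul{X},\cO_{\ul{X}})$, which has cohomology up to degree $\dim(X)$, whereas the left-hand side with your definition is concentrated in degrees $\leq 1$ because $\RSC_{\et,\leq 1}(k,\Q)$ has cohomological dimension one (Proposition \ref{prop:hom-dim}). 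The derived adjunction on a representable is equivalent to the vanishing $\Ext^i_{\Shv^{\ltr}}(\Q_{\ltr}(X),\Log_{\det}(I))=0$ for $i>0$ and $I$ injective ($BC$-admissibility, Lemma \ref{lem:equivalent-condition-admissible}); this holds only for a restricted class of generators (the $\Alb^{\log}$-trivial $X$: $X^\circ$ affine, $\NS^1(X^\circ_{\ol{k}})=0$, $H^{>0}(\ul{X},\cO_{\ul{X}})=0$), and proving it even there is the hard Proposition \ref{prop:key-prop-Alb-local-admissible}, which analyses the counit $\omega^*_{\log}\G_m\otimes\uHom(\omega^*_{\log}\G_m,\Log_{\det}I)\to\Log_{\det}I$ using birational sheaves and the cancellation theorem. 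A left Kan extension from \emph{all} representables with the naive values therefore produces a functor that is not the pro-left adjoint.

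This gap propagates to the other two assertions. In the commutativity check on curves, $\omega^*M(C)=M(\ol{C},\partial C)$ with $\ol{C}$ \emph{proper}, which is not $\Alb^{\log}$-trivial once $H^1(\ol{C},\cO_{\ol{C}})\neq 0$; the paper must cover $\ol{C}$ by $\ol{C}\setminus\{x\}$ and $\ol{C}\setminus\{y\}$ and then kill the resulting $\pi_1$ by an injectivity statement for relative Picard groups (Lemma \ref{lem;shuji-lemma-pic}) --- your direct identification with $``\lim"\,\Jac(\ol{C},nD)$ skips exactly this point. Finally, you correctly isolate that full faithfulness hinges on the higher homotopy of $L\Alb^{\log}(\G_a)$, but you leave it as ``a direct computation'': in the paper this is Lemma \ref{lem;compute-Li-Alb-Ga}, a substantial explicit calculation with the Breen resolution and the filtered pro-system $\Pic^0(\P^1,m\infty)$, and it cannot be obtained from the Ayoub--Barbieri-Viale results by d\'evissage along the Chevalley filtration, since $\G_a$ is $\A^1$-contractible and hence invisible to $\mathcal{DM}^{\rm eff}(k,\Q)$.
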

The proof of the above theorem is fairly technical, and requires new ingredients compared to the argument given in \cite{Ayoub-BV} (among which some very explicit computations). We would like to stress that the formalism of stable $\infty$-categories is essential to generalize the usual construction of derived functors (via resolutions) to pro-adjoint functors between derived categories, as we explain in Appendix \ref{sec:appendix}. 

Theorem \ref{thm:main-introduction} in particular asserts that for all $X\in \Sm(k)$ and $G$ a commutative algebraic group (with rational coefficients), we have an equivalence
	\begin{align*}
	R\Gamma_{\et}(X,G)&\simeq \Map_{\logDM(k,\Lambda)}(M(X,\triv),\Log(G))\\
	&\simeq \Map_{\Pro\cD(\RSC_{\et,\leq 1})}(L\Alb^{\log}(X,\triv),G).
	\end{align*}
In particular, the pro-object $L\Alb^{\log}(X,\triv)$ represents the \'etale cohomology of $G$. A similar object was considered in a letter from Grothendieck to Serre \cite[August 9, 1960]{GScorr}, where for every smooth variety $X$, a complex of pro-agebraic groups $J_*$ was constructed by the use of local cohomology, such that $\Hom(J_*,G)$ computes the Zariski cohomology of $G$. The reference to the existence of this object was suggested to us by B. To\"en, we thank him for this.

For $X\in \SmlSm(k)$, we determine the homotopy groups $\pi_iL\Alb^{\log}(X)$ completely:
\begin{thm}[Theorem \ref{thm;compute-LAlbX}]\label{thm;compute-LAlbX-intro}
		Let $X\in \SmlSm(k)$ geometrically connected and $(\overline{X},D)$ a Cartier compactification of $X$. Then we have that\[
		\pi_iL\Alb^{\log}(X)\cong \begin{cases}
			``\lim"(H^i(\overline{X},\cO_{\overline{X}}(nD))^\vee\otimes_k\G_a) &\textrm{for }2\leq i\leq \dim(X)\\
			\substack{``\lim"\Bigl((H^1(\overline{X},\cO_{\overline{X}}(nD))/H^1(\overline{X},\cO_{\overline{X}}))^\vee\otimes_k\G_a\Bigr) \\ \oplus \NS^*(\underline{X}-|\partial X|)\otimes_{\Z}\Q}&\textrm{for }i=1\\
			\Alb^{\log}(X)&\textrm{for }i=0\\
			0&\textrm{otherwise}.
		\end{cases}
		\]
where $\NS^*(\underline{X}- |\partial X|)$ is the dual torus to the N\'eron-Severi group of $\underline{X}-|\partial X|$, and for $V$ a $k$-vector space, $V^\vee$ denotes its linear dual.
\end{thm}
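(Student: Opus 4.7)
The strategy is to probe $L\Alb^{\log}(X)$ through the pro-adjunction that defines it (Appendix \ref{sec:appendix}): for any $G\in\RSC_{\et,\leq 1}(k,\Q)$ placed in degree $0$ and any $n\geq 0$, one has a natural identification
\[
\Hom_{\pro\cD(\RSC_{\leq 1})}\bigl(L\Alb^{\log}(X), G[n]\bigr)\;=\;\Ext^n_{\logDM}\bigl(M(X),\omega^{\lDM}_{\leq 1}(G)\bigr),
\]
and the pro-object $\pi_i L\Alb^{\log}(X)$ is reconstructed from these groups as $G$ ranges over a generating family. Because $\car k = 0$ and we work rationally, Chevalley's structure theorem splits the computation into two essentially disjoint cases: either $G$ is $\A^1$-invariant (a lattice or a semi-abelian variety), or $G$ is a unipotent vector group which, by d\'evissage, we may take to be $\G_a$.

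First I would handle the $\A^1$-invariant case. Here $\omega^{\lDM}_{\leq 1}(G)$ lies in the essential image of the fully faithful comparison functor $\omega^*\colon \mathcal{DM}^{\rm eff}_{\et}(k,\Q) \to \logDM(k,\Q)$ of \cite[Thm.~8.2.16]{BPO}, so the commutative triangle in the preceding theorem identifies the above $\Ext$ with $\Ext^n_{\mathcal{DM}}(M(\underline{X}-|\partial X|), G)$. Applying the Ayoub--Barbieri-Viale--Kahn theorem recalled in \eqref{eq:LAlb_intro_DM} produces the $\Alb^{\log}(X)$-contribution at $i=0$ and the $\NS^*(\underline{X}-|\partial X|)$-contribution at $i=1$, with vanishing for $i\geq 2$; note that the $\pi_0$ piece is $\Alb^{\log}(X)$ (not just Serre's semi-abelian part) because the unipotent Chevalley extensions built into $\omega^{\CI}$ already sit inside the reciprocity structure.

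The main new computation is for $G=\G_a$. Using the explicit description of $\omega^{\CI}(\G_a)$ on smooth log smooth log schemes—whose sections over a Cartier compactification $(\overline{X}, D)$ are filtered by pole order through $H^0(\overline{X},\cO_{\overline{X}}(nD))$—together with the identification between $\logDM$-Ext and dividing Nisnevich cohomology implicit in \cite[Thm.~5.7]{BindaMerici}, one would establish
\[
\Ext^i_{\logDM}\bigl(M(X),\omega^{\lDM}(\G_a)\bigr)\;\cong\;\colim_n H^i\bigl(\overline{X},\cO_{\overline{X}}(nD)\bigr).
\]
At $i=1$ one splits off $H^0(\overline{X},\cO_{\overline{X}})$ because the trivial line bundle contribution has already been absorbed into the $\pi_0$-part $\Alb^{\log}(X)$ through the Chevalley extension. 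Dualizing over $k$ and applying the pro-identity $\Hom_{\pro}(``\lim_n" V_n\otimes_k\G_a,\G_a)\cong \colim_n V_n^{\vee}$ for finite-dimensional $V_n$, the $\G_a$-contribution to $\pi_i L\Alb^{\log}(X)$ becomes the announced pro-object $``\lim_n" H^i(\overline{X},\cO_{\overline{X}}(nD))^{\vee}\otimes_k\G_a$, with vanishing beyond $\dim X$ being immediate from coherent cohomology of $\overline{X}$.

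The main obstacle is the explicit identification of $\Ext^i_{\logDM}(M(X),\G_a)$ with twisted coherent cohomology: one must traverse the $\bcube$-localization and dividing Nisnevich descent carefully and verify that the natural pole-order filtration is compatible with the pro-structure arising from the pro-adjunction. A secondary difficulty is assembling the $\A^1$-invariant and unipotent contributions in a pro-compatible way inside $\cD(\RSC_{\leq 1}(k,\Q))$, so that they collate into a single coherent pro-object whose homotopy reproduces the stated direct-sum decomposition in degree $1$ and the pure coherent-cohomology terms in higher degrees.
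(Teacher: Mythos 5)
Your overall strategy---probe $L\Alb^{\log}(X)$ by mapping into $\G_a$ and into $\A^1$-invariant generators, use $\Map_{\logDM}(M(X),\omega^{\lDM}_{\leq 1}\G_a)\simeq R\Gamma(\underline{X},\cO_{\underline{X}})\simeq \colim_n R\Gamma(\overline{X},\cO_{\overline{X}}(nD))$, and dualize---is the same computational core as the paper's proof. But the step you describe as ``reconstructed from these groups as $G$ ranges over a generating family'' is precisely where the work lies, and your proposal does not supply it. Since $\RSC_{\et,\leq 1}(k,\Q)$ has cohomological dimension $1$ (Proposition \ref{prop:hom-dim}), the spectral sequence computing $\pi_{*}\Map(L\Alb^{\log}(X),G)$ degenerates only into short exact sequences
\[
0\to \Ext^1(\pi_j L\Alb^{\log}(X),G)\to \pi_{1+j}\Map(L\Alb^{\log}(X),G)\to \Hom(\pi_{1+j}L\Alb^{\log}(X),G)\to 0,
\]
so knowing the mapping spaces does not directly give the homotopy pro-objects: there is a genuine extension problem in every degree. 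The paper resolves it structurally, by applying the pro-left adjoint of $\cD(\HI_{\et,\leq 1})\hookrightarrow\cD(\RSC_{\et,\leq 1})$ to produce the fiber sequence $\textrm{Fib}(X)\to L\Alb^{\log}(X)\to cL\Alb(\omega(X))$ of \eqref{eq;Fib}, observing that $\textrm{Fib}(X)$ lies in $\Pro\langle\G_a\rangle^{\omega}\simeq\Pro\cD(\mathbf{Vect}_{\rm fd})$ where the $\G_a$-duality you invoke is actually valid, and then proving a separate splitting lemma $L\Alb^{\log}(X)\simeq\oplus_i L_i\Alb^{\log}(X)[i]$. Without first isolating $\textrm{Fib}(X)$, the identity $\Hom_{\pro}(``\lim" V_n\otimes_k\G_a,\G_a)\cong\colim V_n^{\vee}$ cannot be applied to $\pi_iL\Alb^{\log}(X)$, because you do not yet know these are pro-vector groups.

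The concrete casualty is the $i=1$ case, where your one substantive claim is wrong: what must be removed from $\colim_n H^1(\overline{X},\cO_{\overline{X}}(nD))$ is $H^1(\overline{X},\cO_{\overline{X}})$, not ``$H^0(\overline{X},\cO_{\overline{X}})$'', and the mechanism is not that a ``trivial line bundle contribution'' was absorbed into $\pi_0$; it is that the $\Ext^1$-term in the short exact sequence \eqref{eq;exact-sequence-LiAlb}, namely $\Ext^1(\Alb^{\log}(X),\G_a)$, is identified with $H^1(\overline{X},\cO_{\overline{X}})$ via the classical computation $\Ext^1(\Alb(\overline{X}),\G_a)\cong H^1(\overline{X},\cO_{\overline{X}})$ for the abelian part of $\pi_0$, together with a comparison with the proper log scheme $(\overline{X},\triv)$ showing $\pi_1\textrm{Fib}(\overline{X})=0$. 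You would also need to justify the splitting of the extension $0\to\pi_1\textrm{Fib}(X)\to L_1\Alb^{\log}(X)\to\NS^*(\omega(X))\to 0$ (it splits because the kernel is a pro-vector group and the quotient a torus) and the vanishing of the boundary map $L_1\Alb(\omega(X))\to\pi_0\textrm{Fib}(X)$ for the same reason. Finally, the $i=0$ statement does not come from the Ayoub--Barbieri-Viale theorem at all (which only sees the semi-abelian quotient): it is the assertion that $L\Alb^{\log}$ is a genuine pro-left \emph{derived} functor, i.e.\ $\pi_0L\Alb^{\log}(X[0])=\Alb^{\log}(X)$, which rests on the $BC$-admissible resolutions of Section \ref{sec:derived} and the Appendix.
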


As an application, we can identify the compact objects of $\logDMone(k,\Q)$: our result generalizes \cite{BVKahn} on Deligne $1$-motives to the case of \'etale Laumon $1$-motives:
\begin{thm}[Theorem \ref{thm;Laumon-are-motivic}]\label{thm:Laumon_intro}
	Let $\cM_{1,\et}^{a}$ be the category of \'etale Laumon $1$-motives (see \ref{def:eff-1-mot} and \ref{def:1-mot}). Then the functor $\omega^{\lDM}_{\leq 1}$ preserves compact objects and it induces an equivalence\[
	\cD^b(\cM_{1,\et}^{a}\otimes \Q)\xrightarrow{\sim} \mathbf{log}{\mathcal{DM}}^{\rm eff}_{\leq 1,\rm gm}(k,\Q)
	\] 
	where the right-hand side is the $\infty$-subcategory of compact objects of $\mathbf{log}{\mathcal{DM}}^{\rm eff}_{\leq 1}(k,\Q)$
	\end{thm}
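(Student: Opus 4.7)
The plan is to combine the fully faithful embedding $\omega^{\lDM}_{\leq 1}$ established in the previous theorem with the Barbieri-Viale-Kahn identification $\cD^b(\cM_1 \otimes \Q) \simeq \mathcal{DM}^{\rm eff}_{\leq 1, \rm gm}(k,\Q)$ for Deligne $1$-motives, upgrading it to the Laumon setting by incorporating the additional $\G_a$-generator. First, I would identify $\cD^b(\cM^a_{1,\et} \otimes \Q)$ with the subcategory of compact objects of $\cD(\RSC_{\et,\leq 1}(k,\Q))$. In characteristic zero, Chevalley's structure theorem together with the fact that commutative unipotent groups are products of copies of $\G_a$ shows that $\cM^a_{1,\et} \otimes \Q$ is precisely the subcategory of finite-type objects in $\RSC_{\et,\leq 1}(k,\Q)$ (lattices, abelian varieties, tori, and $\G_a$-factors), and every object of $\RSC_{\et,\leq 1}(k,\Q)$ arises as a filtered colimit of such.

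Next, I would check that $\omega^{\lDM}_{\leq 1}$ sends a set of compact generators of $\cD^b(\cM^a_{1,\et}\otimes \Q)$ to compact objects in $\logDM(k,\Q)$. For lattices, abelian varieties and tori, Barbieri-Viale-Kahn exhibits them as compact in $\mathcal{DM}^{\rm eff}_{\leq 1, \rm gm}(k,\Q)$, and since $\omega^*$ is induced by a cocontinuous site-level functor, it preserves compact objects. For $\G_a$, I would appeal to the explicit computation in Theorem \ref{thm;compute-LAlbX-intro}, writing $\omega^{\lDM}_{\leq 1}(\G_a)$ as a finite cone built from geometric log motives $M(\overline{X}, D)$ of smooth log smooth compactifications; this realises it as a compact object of $\logDM(k,\Q)$.

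Essential surjectivity onto $\mathbf{log}\mathcal{DM}^{\rm eff}_{\leq 1,\rm gm}(k,\Q)$ then follows directly from the previous theorem: the essential image of $\omega^{\lDM}_{\leq 1}$ is the stable $\infty$-subcategory of $\logDM(k,\Q)$ generated by $\omega^*\mathcal{DM}^{\rm eff}_{\leq 1}(k,\Q)$ and the shifts $\G_a[n]$, so its compact subcategory is generated by $\omega^*\mathcal{DM}^{\rm eff}_{\leq 1,\rm gm}(k,\Q)$ and $\G_a$, which by definition is $\mathbf{log}\mathcal{DM}^{\rm eff}_{\leq 1,\rm gm}(k,\Q)$. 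The hard part will be establishing the compactness of $\omega^{\lDM}_{\leq 1}(\G_a)$: since $\G_a$ is not representable by any smooth log smooth scheme, one must exhibit it explicitly as a finite cone of geometric log motives, and the coherent-cohomology $\G_a$-summands appearing in Theorem \ref{thm;compute-LAlbX-intro} should supply the precise fibre sequence required.
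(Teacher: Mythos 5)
Your overall architecture coincides with the paper's: identify $\cD^b(\cM_{1,\et}^{a}\otimes\Q)$ with the compact objects of $\cD(\RSC_{\et,\leq 1}(k,\Q))$ (this is Remark \ref{rmk;generators-laumon-1-mot} plus Lemma \ref{lem;compact-laumon}), show the generators land on compact objects, and conclude from Theorem \ref{thm;fully-faithfyl}. Two caveats on your first step: the identification of the compacts with the \emph{bounded} derived category of the Laumon subcategory is not purely formal from "finite-type objects plus filtered colimits" — it needs the cohomological dimension bound $\Ext^{\geq 2}_{\RSC_{\et,\leq 1}}=0$ of Proposition \ref{prop:hom-dim} together with idempotent completeness of $\cM_{1,\et}^{a}\otimes\Q$. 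Also, the motivic-level $\omega^*$ is the \emph{right} adjoint $R^{\bcube}\omega^*_{\log}$, so its preservation of compacts is not a formal consequence of cocontinuity; it comes from the explicit identification $\omega^*M(X)\simeq M(\ol{X},\partial X)$ of \cite[Thm.~8.2.16]{BPO}.

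The genuine gap is the step you yourself flag: compactness of $\omega^{\lDM}_{\leq 1}(\G_a)$. Theorem \ref{thm;compute-LAlbX-intro} points in the wrong direction for this purpose: it takes a geometric motive $M(X)$ as input and outputs \emph{pro}-objects $``\lim"(H^i(\ol{X},\cO_{\ol{X}}(nD))^\vee)\otimes_k\G_a$, which are in general not essentially constant (see Remark \ref{rmk:main_theorem_optimal} and Section \ref{BlochSrinivas}); no finite cone presentation of $\Log_{\det}\G_a$ by objects $M(\ol{X},D)$ with $(\ol X,D)\in\SmlSm(k)$ can be extracted from it, and $\G_a$ is not a summand of any single such $M(\ol X, D)$. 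The paper closes this differently: by Lemma \ref{lem:gen-sub} one reduces to $M=[L\hookrightarrow G]\in\cM_{1,\et}^{a,\star}$ and hence to the compactness of $\omega^{\CI}_{\log}(G)$ for a commutative group scheme $G$; the unipotent part is handled through \emph{modulus pairs with non-reduced divisor}, namely the fiber sequence $h_0(\kC_{\rm red})[-1]\to\G_a^{\oplus r}\to h_0(\kC)$ for one-dimensional proper modulus pairs $\kC$ (concretely, $\G_a$ is a direct summand of $\bAlb_{(\P^1,2\infty)}=\uPic(\P^1,2\infty)$ by Remark \ref{rmk:Alb-1-generated}, while $\omega^{\lDM}_{\leq 1}h_0(\kC_{\rm red})=\omega^*M(\ol{C}-C_\infty)$ is geometric). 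So the finite presentation of $\G_a$ you are looking for lives on the modulus-pair side, not among the coherent-cohomology summands of Theorem \ref{thm;compute-LAlbX-intro}; as written, your argument for this step would not go through.
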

Finally, we remark that it is essential to use the \'etale topology with rational coefficients from the beginning in Theorem \ref{thm:Alb_intro} for the following reason: the map $\Z_{\tr}(X)\to \bAlb_{\kX}$ is surjective in the \'etale topology (and not in the Nisnevich topology), which forces us to work in the category of \'etale reciprocity sheaves (a similar issue occurs in Ayoub and Barbieri-Viale, where they are forced to work in $\HI_{\et}$). On the other hand, it is not known in general that the category of \'etale reciprocity sheaves is closed under quotients, and the reason is the following: if $X_x^h$ is an Henselian local scheme, then its generic point is again Henselian local. On the other hand, if $X_x^{sh}$ is a Henselian, its generic point is not going to be be strictly Henselian anymore. This forces us to use rational coefficients to exploit Voevodsky's theorem and conclude that  $\RSC_{\et}(k,\Q)$ is an abelian category.
\subsection{Outline}
We now give a brief outline of the contents of the various sections of this paper.

In Section \ref{sec:recall}, we give a quick reminder of the  theory of reciprocity sheaves  and modulus sheaves with transfers as developed in \cite{MotModulusI}, \cite{MotModulusII}, \cite{KSY-RecII} and \cite{SaitoPurity}. We also give a quick recollection of the material in \cite{BPO} and \cite{BindaMerici} on logarithmic motives and we prove some basic result with rational coefficients.
In Section \ref{sec:Albanese}, we construct the Albanese map with modulus as a universal object in the category of reciprocity sheaves and compare it with the usual Albanese map and the Albanese group scheme of \cite{BK}.
 In Section \ref{sec:1-mot-sheaves-with-modulus} we introduce the categories of $n$-reciprocity sheaves by a suitable modification of the techniques of \cite{Ayoub-BV}. We prove that the category of $0$-reciprocity sheaves agrees with the category of $0$-motivic sheaves of \cite{Ayoub-BV}. We show the existence of a pro-left adjoint $\Alb^{\log}$ of the fully faithful embedding of $1$-motivic sheaves into the category of dividing \'etale sheaves with log transfers, or ``logarithmic sheaves'' for short (again with rational coefficients).

Section \ref{sec:derived} is the most technical one: we prove that the category of logarithmic sheaves admits enough $BC$-admissible objects (in the sense of the Appendix \ref{sec:appendix}) and deduce the existence of a pro-left derived functor $L\Alb^{\log}$. The techniques in particular are fairly different from the corresponding one in \cite{Ayoub-BV}, although the general structure of the proof is similar. Next, we prove that the functor $L\Alb^{\log}$ factors through $\logDM$ and that on $\mathcal{DM}_{\leq 1}^{\rm eff}$ it agrees with the motivic Albanese map of \cite{Ayoub-BV} (note that this result is optimal, see Remark \ref{rmk:main_theorem_optimal}). After that,  we compute $L\Alb^{\log}(\G_a)$ thanks to an explicit resolution (the Breen-Deligne resolution of the algebraic group $\G_a$), deducing the full faithfulness of the inclusion $\cD(\RSC_{\leq 1, \et})(k,\Q)\to \logDM(k,\Q)$.

In Section \ref{sec:computations}, we perform several computations, and we identify precisely $L\Alb(X)$ for $X\in \SmlSm(k)$, proving Theorem \ref{thm;compute-LAlbX-intro}. We also pose some questions about the behaviour of the higher pro-Albanese sheaves in some special geometric situations.
In Section \ref{sec:Laumon} we consider the category of \'etale Laumon $1$-motives, and prove that they are motivic in the sense that their bounded derived category agrees with the category of compact objects in the category of logarithmic 1-motives, as explained in Theorem \ref{thm:Laumon_intro}. 

Finally, in the Appendix \ref{sec:appendix} we introduce the notion of $BC$-admissible objects in a stable $\infty$-category and generalise the notion of a derived functor to pro-adjuntions between derived stable $\infty$-categories which are not in general induced by Quillen adjunctions.


\subsection*{Acknowledgements}
The authors are deeply grateful to Joseph Ayoub for the many insights and suggestions. F.B.~wishes to thank Luca Barbieri-Viale for several useful conversations. This project started while the first and the last named author where visiting the Institut Mittag-Leffler in Djursholm, Sweden, during the special trimester program ``Algebro-Geometric and Homotopical Methods'', in the Spring 2017. Another part of this project was written while the first author was a Postdoc at the University of Regensburg, Germany, supported by the SFB 1085 ``Higher Invariants''. The authors are grateful to all the institutions for their support during various stages of this work. {Finally, the authors would like to thank the referee for a thorough reading of the paper and providing helpful comments which filled some gaps in the arguments and led to an improved presentation.}

\begin{warning}\label{warning}
	\begin{enumerate}
		\item[(i)]In the whole paper, we will commit the following abuse of notation: for $G$ a smooth commutative group scheme, we still write $G$ for the associated \'etale sheaf with transfers. For a ring $\Lambda$, we often write $G\in \Shv_{\et}(X,\Lambda)$ for the sheaf $G\otimes_\Z\Lambda$. Notice that if $\Lambda$ is torsion free, the functor $\_\otimes_\Z\Lambda$ is exact, hence if \[\mathbf{1}\to H\to G\to Q\to \mathbf{1}\] is an exact sequence of commutative algebraic groups, then \[0\to H\otimes_\Z \Lambda\to G\otimes_\Z \Lambda\to Q\otimes_\Z \Lambda\to 0\] is an exact sequence of \'etale sheaves with transfers. 
		\item[(ii)] We will use the following two notations:\begin{itemize}
			\item[$\pro$:] the pro-category of an abelian category, which is itself abelian
			\item[$\Pro$:] the pro-$\infty$-category of an $\infty$-category.
			\end{itemize}
		\end{enumerate}
\end{warning}

\section{Reciprocity sheaves and logarithmic motives with rational coefficients}\label{sec:recall}
We work over a fixed ground field $k$, which is assumed to be perfect. Let $\Lambda$ be a (commutative) ring of coefficients.
 In this section, we recall the main results on reciprocity sheaves and logarithmic motives and we state some general results on the categories with rational coefficients. 

Let $\Sm(k)$ be the category of separated smooth schemes of finite type over $k$, and let $\Cor(k)$ be the additive category of finite correspondences. It has the same objects as $\Sm(k)$, and for $X, Y \in \Sm(k)$, the hom group $\Cor(X,Y)$ is the free abelian group on the set of integral closed subschemes of $X\times Y$ which are finite and surjective over a connected component of $X$ (see \cite[Def. 1.1]{MVW}). We denote by $\PSh^{\tr}(k,\Lambda)$ the category of additive presheaves of $\Lambda$-modules on $\Cor(k)$, whose objects are called \emph{presheaves with transfers}. For $X\in \Sm(k)$, we let $\Lambda_{\tr}(X)=\Cor(-,X)\tensor_\Z \Lambda$ be the representable object. For $\tau$ the Nisnevich or the \'etale topology, we let $\Shv^{\tr}_{\tau}(k,\Lambda)\subseteq \PSh^{\tr}(k,\Lambda)$ be the category of $\tau$-sheaves with transfers and we let\[
a_{\tau}^V\colon \PSh^{\tr}(k,\Lambda)\to \Shv^{\tr}_{\tau}(k,\Lambda)
\]
be Voevodsky's $\tau$-sheafification functor: it is induced by the classical sheafification functor defined on the category of presheaves of $\Lambda$-modules without transfers.
Let $\HI\subseteq \PSh^{\tr}(k,\Lambda)$ be the category of $\A^1$-invariant presheaves, i.e. objects $F$ such that the projection $X\times \A^1\to X$ induces an isomorphism $F(X\times \mathbb{A}^1)\xrightarrow{\simeq} F(X)$ for every $X\in \Sm(k)$. Set  $\HI_\tau=\HI\cap \Shv^{\tr}_{\tau}(k,\Lambda)\subseteq \Shv^{\tr}_{\tau}(k,\Lambda)$.

We recall the following result:
\begin{prop}[\cite{MVW}, Cor.~14.22, Prop.~14.23]\label{prop:et-Nis-Voe} Let $\Lambda$ be a $\Q$-algebra. Then for every $F\in \PSh^{\tr}(k,\Lambda)$ we have $a_{\rm Nis} F = a_{\rm \acute{e}t} F$. Moreover, for all smooth $X$ and $n>0$ we have \[H^{n}_{\rm Nis}(X, F) = H^{n}_{\rm \acute{e}t}(X, F).\]
\end{prop}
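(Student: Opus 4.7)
The plan is to establish both statements simultaneously by showing that the higher direct images of $F$ along the morphism of sites $\epsilon : X_{\et} \to X_{\Nis}$ vanish, for every presheaf with transfers $F$ over a $\Q$-algebra $\Lambda$. The whole argument rests on the standard ``transfer averaging'' trick: for a finite étale cover $f : Y \to X$ of constant degree $n$ between smooth $k$-schemes, the composition $\Tr(f) \circ f^{\ast}$ acts as multiplication by $n$ on $F$, which is invertible once $\Lambda$ is a $\Q$-algebra. Therefore the pullback $F(X) \to F(Y)$ admits a functorial retraction $\tfrac{1}{n}\Tr(f)$.

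First, I would reduce the comparison of cohomologies to the vanishing of $R^q\epsilon_{\ast} F$ for $q > 0$ via the Leray spectral sequence
\[
E_2^{p,q} = H^p_{\Nis}(X, R^q\epsilon_{\ast} F) \Rightarrow H^{p+q}_{\et}(X, F),
\]
whose degeneration at $E_2$ would yield the identification of étale and Nisnevich cohomology. Computing stalks, $R^q\epsilon_{\ast} F$ vanishes iff $H^q_{\et}(S, F) = 0$ for every Nisnevich-local henselian $S$ at a point $s \in X \in \Sm(k)$, with $q > 0$. On such an $S$, the étale cohomology is computed by the direct limit of Galois cohomology $H^q(\Gal(L/\kappa(s)), F(S_L))$ running over finite Galois extensions $L/\kappa(s)$ with associated finite étale cover $S_L \to S$.

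The core step is then to show that this Galois cohomology is trivial for $q > 0$. Given a finite étale Galois cover $S_L \to S$ with group $G$ of order $n$, the averaging map $\tfrac{1}{n}\Tr$ provides a $G$-equivariant retraction of $F(S) \to F(S_L)^G = F(S_L)$; by the standard splitting argument for group cohomology with invertible order, this forces $H^q(G, F(S_L)) = 0$ for $q > 0$. Taking the limit over $L$ gives the desired vanishing of $H^q_{\et}(S, F)$. This establishes the second assertion of the proposition.

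For the first assertion, the vanishing $R^q \epsilon_{\ast} F = 0$ applied to $q = 0$ gives $\epsilon_{\ast} \epsilon^{\ast} a_{\Nis} F = a_{\Nis} F$, that is, $a_{\Nis} F$ is already an étale sheaf. Since the natural map $F \to a_{\Nis} F$ is a Nisnevich-local isomorphism, it is a fortiori an étale-local isomorphism on the underlying presheaves of $\Lambda$-modules, and the universal property of étale sheafification gives $a_{\et} F \iso a_{\Nis} F$; transfers on $a_{\Nis} F$ are inherited from $F$ by construction, so the identification is compatible with the transfer structure. The main obstacle is the transfer averaging step, where one must verify that $\tfrac{1}{n}\Tr$ really provides a $G$-equivariant splitting functorially on the Čech nerve of $S_L/S$; this is where the hypothesis that $\Lambda$ is a $\Q$-algebra enters in an essential way, and it is precisely the content of the MVW reference.
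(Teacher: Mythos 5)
The paper does not prove this statement at all --- it is quoted directly from [MVW, Cor.~14.22, Prop.~14.23] --- and your argument is exactly the standard proof from that reference: reduce to stalks at henselian local schemes via the Leray spectral sequence for $\epsilon\colon X_{\et}\to X_{\Nis}$, kill the higher Galois cohomology because the coefficients are $\Q$-modules, and use the transfer averaging $\tfrac1n\Tr(f)\circ f^*=\mathrm{id}$ to identify the degree-zero stalk $F_{\Nis}(S)$ with $F(S^{sh})^{\Gal}$, which yields the first assertion. The only imprecision is the phrase ``$R^q\epsilon_*F=0$ applied to $q=0$'': what is actually needed there is that the unit $a_{\Nis}F\to\epsilon_*\epsilon^*a_{\Nis}F$ is an isomorphism on henselian stalks, and this $q=0$ step (not the vanishing for $q>0$, which follows from $\Q$-linearity alone) is precisely where the transfer structure is essential.
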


\subsection{The \texorpdfstring{$\infty$}{infinity}-category of logarithmic motives}
We recall the construction of the $\infty$-category of logarithmic motives of \cite{BPO} and some properties. The standard reference for log schemes is \cite{ogu}. 
We denote by ${\mathbf{lSm}}(k)$ the category of fine and saturated (fs for short) log smooth log schemes over $\Spec(k)$,  considered as a log scheme with trivial log structure. 

\subsubsection{Log geometry}\label{ssec:loggeom} For $X\in {\mathbf{lSm}}(k)$, we write $\ul{X}\in \mathbf{Sch}(k)$ for the underlying $k$-scheme. We also write $\partial X$ for the (closed) subset of $\ul{X}$ where the log structure of $X$ is not trivial. Let $\SmlSm(k)$ be the full subcategory of $\lSm(k)$ having for objects $X\in \lSm(k)$ such that $\ul{X}$ is smooth over $k$. By e.g.\ \cite[A.5.10]{BPO}, if $X\in \SmlSm(k)$, then $\partial X$ is a strict normal crossing divisor  on $\ul{X}$ and the log scheme $X$  is isomorphic to $(\ul{X}, \partial {X})$, i.e. to the compactifying log structure associated to the open embedding $(\ul{X}\setminus \partial X) \to \ul{X}$. If $X, Y\in \SmlSm(k)$, we will write $X\times Y$ for the fiber product of $X$ and $Y$ over $k$ computed in the category of fine and saturated log schemes: it exists by \cite[Cor. III.2.1.6]{ogu} and it is again an object of $\lSm(k)$ using \cite[Cor. IV.3.1.11]{ogu}. Since $k$ has trivial log structure, the underlying scheme $\ul{X\times Y}$ agrees with $\ul{X}\times_k \ul{Y}$ and the support of $\partial (X\times Y)$ is $|\partial X|\times \ul{Y} \cup \ul{X}\times|\partial Y|$, in particular $X\times Y \in \SmlSm(k)$. See \cite[\S III.2.1]{ogu} for more details.

A morphism $f\colon X\to Y$ of fs log schemes is called \emph{strict} if the log structure on $X$ is the pullback log structure from $Y$. Geometrically, if both $X$ and $Y$ are objects in $\SmlSm(k)$, this amounts to require that there is an equality $\partial X = f^*(\partial Y)$ as reduced normal crossing divisors on $X$. For $\tau$ a Grothendieck topology on $\Sch(k)$, the $\emph{strict}$ topology $s\tau$ on $\SmlSm(k)$ is the Grothendieck topology generated by covers $\{e_i\colon X_i\to X\}$ such that $\underline{e_i}\colon \underline{X_i}\to \underline{X}$ is a $\tau$-cover and each $e_i$ is strict. Recall from \cite[3.1.4]{BPO} that a cartesian square of fs log schemes
\[
\begin{tikzcd}
    Y' \ar[r, "g'"]\ar[d, "f'"] & Y \ar[d, "f"]\\
    X' \ar[r, "g"] & X
\end{tikzcd}
\]
is a \emph{dividing distinguished square} (or \emph{elementary dividing square}) if $Y'=X'=\emptyset$ and $f$ is a log modifications, in the sense of F.\ Kato \cite{FKato} (see \cite[A.11.9]{BPO} for more details on log modifications). The collection of dividing distinguished squares forms a cd structure on $\SmlSm(k)$, called the \emph{dividing cd structure}. For $\tau$ a Grothendieck topology on $\Sch(k)$, the $\emph{dividing}$ topology $d\tau$ on $\SmlSm(k)$ is the topology on $\SmlSm(k)$ generated by the strict topology $s\tau$ and the dividing cd structure.

From now until the end of the section, we will consider $\tau\in \{\Nis,\et\}$.

\subsubsection{Correspondences and transfers}Following \cite{BPO}, we denote by $\lCor(k)$ the category of finite log correspondences over $k$. It is a variant of the Suslin--Voevodsky category of finite correspondences $\Cor(k)$. It has the same objects as $\SmlSm(k)$\footnote{Notice that this notation conflicts with the notation of \cite{BPO} where the objects were the same as $\lSm(k)$, although the categories of sheaves are the same in light of \cite[Lemma 4.7.2]{BPO}}, and morphisms are given by the free abelian subgroup
\[ \lCor(X,Y) \subseteq  \Cor(X- \partial X, Y- \partial Y)\]
generated by elementary correspondences  $V^o\subset (X- \partial X) \times (Y- \partial Y)$ such that the closure $V\subset \ul{X}\times \ul{Y}$ is finite and surjective over (a component of) $\ul{X}$ and such that there exists a morphism of log schemes $V^N \to Y$, where $V^N$ is the fs log scheme whose underlying scheme is the normalization of $V$ and whose log structure is given by the inverse image log structure along the composition $\underline{V^N} \to \ul{X}\times \ul{Y} \to \ul{X}$. See \cite[2.1]{BPO} for more details, and for the proof that this definition gives indeed a category. 

Additive presheaves (of $\Lambda$-modules) on the category $\lCor(k)$ will be called \emph{presheaves (of $\Lambda$-modules) with log transfers}. Write $\PShltr(k, \Lambda)$ for the resulting category. As usual, for $X\in \lCor(k)$ we denote by $\Lambda_{\ltr}(X)$ the representable presheaf $\lCor(-, X)\tensor_\Z \Lambda$. 
As in \cite{BindaMerici}, we let $\widetilde{\SmlSm}(k)$ be the category of fs log smooth $k$-schemes $X$ which are essentially smooth over $k$, i.e. $X$ is a limit $\lim_{i \in I} X_i$ over a filtered set $I$, 
where $X_i \in \SmlSm(k)$ and all transition maps are strict \'etale (i.e. they are strict maps of log schemes such that the underlying maps $f_{ij}\colon \ul{X}_i\to \ul{X}_j$ are {affine and} \'etale). For $X\in \SmlSm(k)$ and $x\in \underline{X}$, we put
\begin{equation}\label{eq;henselization}
X_x^h = (\underline{X}_x^h,\partial X_x^h) \in \widetilde{\SmlSm}(k)
\end{equation}
where $(\underline{X})_{x}^h$ denotes the henselization of $X$ at $x$ and $(\partial X)_{x}^h$ denotes the pullback of $\partial X$ along the henselization map. For $F\in \PShltr(k,\Lambda)$ and $X \in \widetilde{\SmlSm}(k)$ such that $X=\lim_{i \in I} X_i$ for $X_i\in \SmlSm(k)$ we put as usual 
$F(X) := \colim_{i \in I} F(X_i)$.

We denote by $\mathbf{Shv}_{d\tau}^{\rm ltr}(k, \Lambda)\subset \PShltr(k, \Lambda)$ the subcategory of $d\tau$-sheaves. By \cite[Prop. 4.5.4]{BPO} and \cite[Thm. 4.5.7]{BPO}, the inclusion $\mathbf{Shv}_{d\tau}^{\rm ltr}(k, \Lambda)\subset \PShltr(k, \Lambda)$ admits an exact left adjoint $a_{d\tau}$ (see \cite[Prop. 4.2.10]{BPO}), and  the category $\mathbf{Shv}_{d\tau}^{\rm ltr}(k, \Lambda)$ is a Grothendieck abelian category (\cite[Prop. 4.2.12]{BPO}). For $\tau\in \{\Nis,\et\}$, \cite[
Theorem 5.1.8]{BPO} implies that for $F\in \PShltr(k, \Lambda)$ and $X\in \lSm(k)$, 
\begin{equation}\label{eq;colimdiv}
H^i_{d\tau}(X, a_{d\tau} F) = \colim_{Y \in X_{\rm div}^{\Sm}} H^i_{s\tau}(Y, a_{s\tau}F).
\end{equation}
where $X_{\rm div}^{\Sm}$ is the filtered category of log modifications $Y\to X$ such that $Y\in \SmlSm(k)$. 
The following statement can be shown by imitating the proof of \cite[Prop.~14.23]{MVW} using \eqref{eq;colimdiv}.
\begin{prop}\label{prop:rat-coeff-nisequaletale} Let $\Lambda$ be a $\Q$-algebra and let $F$ be an object of $\PSh^{\ltr}(k, \Lambda)$. Then  there is a natural isomorphism \[
	H^{n}_{\dNis}(X, a_{\dNis}F) = H^{n}_{\mathrm{d\acute{e}t}}(X, a_{\mathrm{d\acute{e}t}}F),\]
	for all $X\in \SmlSm(k)$ and $n\geq 0$.
\end{prop}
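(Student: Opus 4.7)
The plan is to combine formula \eqref{eq;colimdiv} with an analogue of Voevodsky's comparison theorem (Proposition \ref{prop:et-Nis-Voe}) for the strict topology on $\SmlSm(k)$. Since filtered colimits of $\Lambda$-modules are exact and commute with sheaf cohomology, \eqref{eq;colimdiv} applied to $\tau=\Nis$ and $\tau=\et$ reduces the statement to proving that for every $Y\in \SmlSm(k)$ and every $F\in \PShltr(k,\Lambda)$,
\[
H^n_{s\Nis}(Y, a_{s\Nis} F) \iso H^n_{s\et}(Y, a_{s\et} F).
\]

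Next, I would establish a strict-topology analogue of \cite[Lemma~14.21]{MVW}: for $F\in \PShltr(k,\Lambda)$ with $\Lambda$ a $\Q$-algebra, the canonical map $a_{s\Nis}F \to a_{s\et}F$ is an isomorphism. The key geometric observation is that the small strict $\tau$-site of $Y$ is equivalent to the small classical $\tau$-site of $\underline{Y}$ (one equips each $\tau$-cover of $\underline{Y}$ with the pulled-back log structure). The MVW argument then transfers, using the fact that a strict finite \'etale cover $e\colon E\to Y_y^h$ of the log henselization \eqref{eq;henselization} gives rise to an element of $\lCor$, and that the composition of $e$ with its transfer is multiplication by $\deg(e)$; since $\deg(e)$ is invertible in $\Lambda$, no nonzero stalk of $a_{s\Nis}F$ can die on every strict \'etale refinement.

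Finally, for the cohomology comparison I would use the Leray spectral sequence associated to the change-of-topology morphism $\epsilon\colon Y_{s\et}\to Y_{s\Nis}$. It suffices to show that $R^q\epsilon_*(a_{s\et}F)=0$ for $q>0$, and since this is the strict Nisnevich sheafification of $Y'\mapsto H^q_{s\et}(Y',a_{s\et}F)$, I am reduced to the vanishing of $H^q_{s\et}(Y_y^h, a_{s\et}F)$, which again follows by the same trace argument on strict finite \'etale covers of $Y_y^h$. The main obstacle is to ensure rigorously that this trace/transfer argument works in the log setting: one has to verify that strict finite \'etale covers of essentially log smooth objects as in $\widetilde{\SmlSm}(k)$ give well-defined log correspondences satisfying the projection formula $e\circ e^t = \deg(e)\cdot \id$. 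Once this compatibility is in place, Voevodsky's argument goes through essentially unchanged and yields the asserted isomorphism.
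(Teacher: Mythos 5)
Your proposal is correct and follows essentially the same route as the paper, whose proof is precisely the instruction to imitate \cite[Prop.~14.23]{MVW} using \eqref{eq;colimdiv}: reduce to the strict sites via the colimit over $X_{\rm div}^{\Sm}$, identify the strict $\tau$-site with the classical small $\tau$-site of the underlying scheme, and run Voevodsky's trace argument on strict finite \'etale covers (which do define log correspondences with $e\circ e^t=\deg(e)\cdot\id$). No gaps.
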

Finally the monoidal structure of  $\SmlSm(k)$ induces a monoidal structure on $\Shv^{\rm ltr}_{d\tau}(k,\Lambda)$, and recall from  \cite[(4.3.4)]{BPO} that the functor $\omega^{\log}:X\mapsto X^{\circ}$ induces an adjunction
\begin{equation}\label{omegaadjunction}
	\begin{tikzcd}
		\Shv^{\rm ltr}_{d\tau}(k,\Lambda)\arrow[rr,shift left=1.5ex,"\omega_\sharp^{\log} "]&& \Shv^{\rm tr}_{\tau}(k,\Lambda)\arrow[ll,"\omega_{\log}^*"]
	\end{tikzcd}
\end{equation} 
where for $Y\in \Cor(k)$, $\omega_{\sharp}^{\log} F(Y) = F(Y,\textrm{triv})$ and for $X\in \lCor(k)$, $\omega^*_{\log}F(X)=F(\underline{X}-|\partial X|)$. Moreover, since $\omega^{\log}$ is monoidal by construction (see \ref{ssec:loggeom}), $\omega_\sharp^{\log}$ is monoidal.
We will need later the following immediate result. 
\begin{prop}\label{prop;internal-hom-omega}
	For all $A\in \Shv_{\tau}^{\tr}$ and $B\in \Shv_{d\tau}^{\ltr}$, we have that
\[	\underline{\Hom}_{\Shv_{d\tau}^{\ltr}(k, \Lambda)}(B, \omega^*_{\log}A)\cong \omega^*_{\log}\underline{\Hom}_{\Shv_{\tau}^{\tr}(k, \Lambda)}(\omega_\sharp^{\log} B,A).
	\]	
\end{prop}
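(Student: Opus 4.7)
The plan is to prove this by evaluating both sides on a test object $X \in \SmlSm(k)$ and using the adjunction \eqref{omegaadjunction} together with the fact that $\omega_\sharp^{\log}$ is monoidal. Recall that the internal hom in $\Shv_{d\tau}^{\ltr}(k,\Lambda)$ satisfies the standard formula
\[ \underline{\Hom}_{\Shv_{d\tau}^{\ltr}}(B, C)(X) = \Hom_{\Shv_{d\tau}^{\ltr}}(B \otimes \Lambda_{\ltr}(X), C), \]
and similarly for $\Shv_{\tau}^{\tr}$ using $\Lambda_{\tr}$. Thus the problem reduces to a natural isomorphism of hom-groups.

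First I would observe that $\omega_\sharp^{\log}$, being defined as a left adjoint, is obtained by left Kan extension along the functor $\lCor(k) \to \Cor(k)$, $X \mapsto \underline{X} - |\partial X|$. In particular it sends representables to representables via $\omega_\sharp^{\log} \Lambda_{\ltr}(X) = \Lambda_{\tr}(\underline{X} - |\partial X|)$, and being a left adjoint in a symmetric monoidal context it is symmetric monoidal for the tensor product of sheaves (which is itself defined by left Kan extension from the tensor product on representables). Consequently one has the key identity
\[ \omega_\sharp^{\log}\bigl(B \otimes \Lambda_{\ltr}(X)\bigr) \cong \omega_\sharp^{\log}(B) \otimes \Lambda_{\tr}(\underline{X} - |\partial X|). \]

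Now evaluating the left-hand side of the claimed isomorphism on $X \in \SmlSm(k)$ and applying the $(\omega_\sharp^{\log}, \omega^*_{\log})$ adjunction gives
\[ \underline{\Hom}_{\Shv_{d\tau}^{\ltr}}(B, \omega^*_{\log} A)(X) = \Hom_{\Shv_{\tau}^{\tr}}\bigl(\omega_\sharp^{\log}(B \otimes \Lambda_{\ltr}(X)), A\bigr). \]
Applying the monoidality isomorphism above, this equals
\[ \Hom_{\Shv_{\tau}^{\tr}}\bigl(\omega_\sharp^{\log}(B) \otimes \Lambda_{\tr}(\underline{X} - |\partial X|), A\bigr) = \underline{\Hom}_{\Shv_{\tau}^{\tr}}(\omega_\sharp^{\log} B, A)(\underline{X} - |\partial X|), \]
which by definition is precisely $\omega^*_{\log} \underline{\Hom}_{\Shv_{\tau}^{\tr}}(\omega_\sharp^{\log} B, A)(X)$.

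There is no real obstacle here; the only point that needs a moment of care is verifying the monoidality of $\omega_\sharp^{\log}$, which follows formally from its definition as left Kan extension along a symmetric monoidal functor. The resulting isomorphism is natural in $X$, so one gets an isomorphism of sheaves, and naturality in $A$ and $B$ is automatic.
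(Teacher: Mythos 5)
Your proof is correct and follows essentially the same route as the paper's: evaluate both internal homs on a test object $X\in\SmlSm(k)$, use the $(\omega_\sharp^{\log},\omega^*_{\log})$ adjunction, and invoke the monoidality of $\omega_\sharp^{\log}$ (as a left Kan extension) to identify $\omega_\sharp^{\log}(B\otimes\Lambda_{\ltr}(X))$ with $\omega_\sharp^{\log}(B)\otimes\Lambda_{\tr}(\underline{X}-|\partial X|)$. Nothing is missing.
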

\subsubsection{Log motives}\label{ssec;logDM} In light of Proposition \ref{prop:rat-coeff-nisequaletale}, from now until the end of the section, we will consider one of the following situations:
 \begin{itemize}
 	\item $\tau$ is the Nisnevich topology 
 	\item $\tau$ is the \'etale topology and $\Lambda$ is a $\Q$-algebra.
\end{itemize}

Let $\cD(\mathbf{Shv}_{d\tau}^{\rm ltr}(k, \Lambda))$ be the derived stable $\infty$-category of the Grothendieck abelian category $\mathbf{Shv}_{d\tau}^{\rm ltr}(k, \Lambda)$ as in \cite[Section 1.3.5]{HA}: it is equivalent to the underlying $\infty$-category of the model category $\Cpx(\PSh^{\rm ltr}(k, \Lambda))$ with the $d\tau$-local model structure used in \cite{BPO} and \cite{BindaMerici}. 

The adjunction $(\omega^{\log}_\sharp, \omega_{\log}^*)$ of \eqref{omegaadjunction} 
induces the following adjunction of $\infty$-categories of sheaves (see \cite[4.3.4]{BPO}):
\begin{equation}\label{eq:adjunctionomega-derived1} \begin{tikzcd}
		L\omega_\sharp^{\log}\colon \cD(\Shv^{\rm ltr}_{d\tau}(k,\Lambda))\arrow[r,shift left=.5ex ]\arrow[r, description,leftarrow, shift right=.5ex ] & \cD(\Shv^{\rm tr}_{\tau}(k,\Lambda)): R\omega^*_{\log}.
	\end{tikzcd}
\end{equation}
Finally (see \cite[Section 5.2]{BPO}), let $\bcube:=(\P^1,\infty)$. Notice that $\omega^{\log}(\bcube)=\A^1$.
\begin{defn} The stable $\infty$-category $\mathbf{log}\mathcal{DM}^{\textrm{eff}}(k,\Lambda)$ is the localization of the stable $\infty$-category $\cD(\mathbf{Shv}_{\tau}^{\rm ltr}(k, \Lambda))$ with respect to the class of maps
\[
(a_{d\tau}\Lambda(\bcube\times X))[n]\to (a_{d\tau}\Lambda(X))[n]
\]
for all $X\in \lSm(k)$ and $n\in \Z$. 
We let\[
L_{(d\tau,\bcube)}\colon \cD(\mathbf{Shv}_{d\tau}^{\rm ltr}(k, \Lambda))\to \mathbf{log}\mathcal{DM}^{\textrm{eff}}(k,\Lambda)
\]
be the localization functor. For $X\in \SmlSm(k)$, we will let $M(X)=L_{d\tau,\bcube}(\Lambda_{\ltr}(X))$.
\end{defn}
The interested reader can verify that this is equivalent to the underlying $\infty$-category of the model category $\Cpx(\PSh^{\rm ltr}(k, \Lambda))$ with the $(\bcube,d\tau)$-local model structure of \cite[Def. 5.2.1]{BPO} and \cite[Def. 2.9]{BindaMerici}. The derived (triangulated) category of effective log motives $\lDM(k, \Lambda)$ is by definition the homotopy category of $\logDM(k,\Lambda)$.

We recall the following result, which follows naturally from \cite[Thm. 5.7]{BindaMerici}:
\begin{thm}\label{thm:t-structure}
The standard $t$-structure of $\cD(\Shv^{\rm ltr}(k, \Lambda)$ induces an accessible $t$-structure on $\logDM(k,\Lambda)$ compatible with filtered colimits in the sense of \cite[Def. 1.3.5.20]{HA}, called the \emph{homotopy $t$-structure.}
\end{thm}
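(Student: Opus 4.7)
The plan is to deduce the theorem directly from \cite[Thm. 5.7]{BindaMerici}, which is stated there for the Nisnevich topology with arbitrary coefficients, by handling the only remaining case — namely $\tau = \et$ with $\Lambda$ a $\Q$-algebra — via a reduction to the Nisnevich setting. I would first invoke Proposition \ref{prop:rat-coeff-nisequaletale}: with $\Q$-coefficients, dividing Nisnevich and dividing étale sheafifications on $\PShltr(k,\Lambda)$ coincide, so the categories $\Shv_{\dNis}^{\ltr}(k,\Lambda)$ and $\Shv_{\det}^{\ltr}(k,\Lambda)$ agree, and hence so do their derived $\infty$-categories and the localizations defining $\logDM(k,\Lambda)$. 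Thus it suffices to treat $\tau = \Nis$.

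In the Nisnevich case, the structure is the standard recipe for transporting a $t$-structure across a Bousfield localization. By construction, $\logDM(k,\Lambda)$ is the left Bousfield localization of $\cD(\Shv_{\dNis}^{\ltr}(k,\Lambda))$ at the class of maps $\Lambda_{\ltr}(\bcube\times X)[n]\to \Lambda_{\ltr}(X)[n]$; equivalently, it is the full reflective $\infty$-subcategory of $\bcube$-local complexes, with reflector $L_{(\dNis,\bcube)}$. The heart condition one must check is that if a complex $F^\bullet$ is $\bcube$-local, then each of its standard truncations $\tau^{\leq 0}F^\bullet$, $\tau^{\geq 0}F^\bullet$ is again $\bcube$-local; this is exactly the content of the strict $\bcube$-invariance of the cohomology sheaves proved in \cite[Thm. 5.7]{BindaMerici}, whose heart is identified with $\logCI_{\dNis}$. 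Once this is in place, one declares $\logDM(k,\Lambda)^{\leq 0}$ (resp.\ ${}^{\geq 0}$) to consist of those $M$ whose underlying complex lies in $\cD(\Shv_{\dNis}^{\ltr}(k,\Lambda))^{\leq 0}$ (resp.\ ${}^{\geq 0}$), and the truncation functors are the restrictions of the standard ones.

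For the accessibility and filtered-colimit compatibility in the sense of \cite[Def.~1.3.5.20]{HA}, I would argue as follows. The localization $L_{(\dNis,\bcube)}$ is accessible because $\logDM(k,\Lambda)$ is presentable (it is a Bousfield localization of a presentable $\infty$-category at a set of morphisms), so $\logDM(k,\Lambda)^{\leq 0}$ is stable under filtered colimits: the truncation $\tau^{\leq 0}$ on $\cD(\Shv_{\dNis}^{\ltr}(k,\Lambda))$ preserves filtered colimits (since cohomology sheaves in a Grothendieck abelian category commute with filtered colimits), and being $\bcube$-local is preserved by filtered colimits in the homotopy $t$-structure in view of \cite[Thm.~5.7]{BindaMerici}. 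Combined, these two facts give that the homotopy $t$-structure on $\logDM(k,\Lambda)$ is accessible and compatible with filtered colimits, as required by \cite[Def.~1.3.5.20]{HA}.

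The main obstacle is the preservation of $\bcube$-locality under the classical truncation functors, which is not formal: a priori, truncating a $\bcube$-local complex could destroy strict $\bcube$-invariance of the cohomology sheaves. Precisely this step is what \cite[Thm.~5.7]{BindaMerici} supplies, by showing that the category of strictly $\bcube$-invariant sheaves is itself an abelian subcategory of $\Shv^{\ltr}_{\dNis}(k,\Lambda)$ and forms the heart of a $t$-structure; once this is granted, the rest of the argument is formal, and the étale case with $\Q$-coefficients follows from the Nisnevich case via Proposition~\ref{prop:rat-coeff-nisequaletale}.
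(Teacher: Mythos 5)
Your proposal is correct and follows essentially the same route as the paper, which gives no argument beyond asserting that the statement ``follows naturally from'' \cite[Thm. 5.7]{BindaMerici}: you reduce the \'etale case to the Nisnevich case via Proposition \ref{prop:rat-coeff-nisequaletale} (exactly as the paper does implicitly throughout Section \ref{ssec;logDM}) and then deduce the $t$-structure from the same key input, namely that truncations of $\bcube$-local complexes remain $\bcube$-local because their cohomology sheaves are strictly $\bcube$-invariant. Your added remarks on accessibility and compatibility with filtered colimits are the standard formal complements and are consistent with what the paper leaves unsaid.
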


We denote by $\mathbf{logCI}_{d\tau}$ its heart\footnote{In \cite{BindaMerici}, it is denoted by $\CIltr$}, which is then identified with the category of strictly $\bcube$-invariant $d\tau$-sheaves and it is a Grothendieck abelian category. The inclusion
\[
	i^{\rm ltr}\colon \mathbf{logCI}_{d\tau}\hookrightarrow \Shv^{\rm ltr}_{d\tau}(k, \Lambda)
\]
admits both a left adjoint $h_0^{\ltr}\colon F\mapsto \pi_0(L_{(d\tau,\bcube)}(F[0]))$ and a right adjoint $h^0_{\ltr}$  (see \cite[Proposition 5.8]{BindaMerici}), in particular it is exact and $\logCI_{d\tau}$ inherits a monoidal structure from $\Shv^{\rm ltr}_{d\tau}(k, \Lambda)$ given by:
\begin{equation}\label{eq:logCI-monoidal}
	F\otimes_{\logCI} G :=h_0^{\ltr}\big(i^{\ltr}(F)\otimes_{\Shv_{d\tau}^{\ltr}}i^{\ltr}(G)\big).
\end{equation}

\subsection{Comparison with Voevodsky motives}
In this subsection, we assume that $k$ admits resolution of singularities (see e.g. \cite[Def. 7.6.3]{BPO} for a precise definition). This assumption is always satisfied if $\ch(k)=0$.

\begin{defn}\label{defn;cartier-comp}
Let $X\in \Sm(k)$. A \emph{smooth Cartier compactification} or simply a \emph{Cartier compactification} of $X$ is a pair $(\overline{X},D)$ where $\overline{X}\in \Sm(k)$ is proper and $D\subseteq \overline{X}$ is an effective Cartier divisor with simple normal crossing such that $\overline{X}-|D| \cong X$.
\end{defn}
Note that if $k$ admits resolution of singularities, every $X\in \Sm(k)$ admits a (smooth) Cartier compactification. This definition is slightly different from the one used in \cite{MotModulusI} and \cite{MotModulusII}, where the total space $\overline{X}$ is not required to be smooth over $k$, but simply normal. Under our assumption on $k$, this difference is irrelevant.

By \cite[Prop. 8.2.12]{BPO}, the adjunction of \eqref{eq:adjunctionomega-derived1} descends to an adjunction:
\begin{equation}\label{eq;adjunctionomega-motivic}
\begin{tikzcd}
	L^{\bcube}\omega_\sharp^{\log}\colon \logDM(k,\Lambda) \arrow[r,shift left=.5ex ]\arrow[r, description,leftarrow, shift right=.5ex ] & \mathcal{DM}^{\rm eff}(k,\Lambda) : R^\bcube\omega^*_{\log},
\end{tikzcd}
\end{equation}
where the right-hand side is the $\infty$-category of Voevodsky motives. By \cite[Thm. 8.2.16 and Thm. 8.2.17]{BPO}, the functor $R^\bcube\omega^*_{\log}$ is fully faithful and for $X\in \Sm(k)$ and $(\overline{X},D)$ a Cartier compactification we have a natural equivalence
\[R^\bcube\omega^*_{\log}M(X) \simeq M(\overline{X},\partial X)\] 
with $\partial X$ supported on $|D|$. In particular, the essential image of $R^\bcube\omega^*_{\log}$ is the full subcategory spanned by $M(X)$ with $X\in \SmlSm(k)$ and $\underline{X}$ proper. Finally, by \cite[Prop. 5.12]{BindaMerici} it is $t$-exact with respect to the  homotopy $t$-structure of \ref{thm:t-structure} on $\logDM(k,\Lambda)$ and the Morel-Voevodsky $t$-structure on  $\mathcal{DM}^{\rm eff}(k,\Lambda)$. In particular, we have  a fully faithful functor (still denoted by $\omega^*_{\log}$) $\HI_{\tau}\to\mathbf{logCI}_{d\tau}$ between the hearts that commutes with the inclusions. The following result will be crucial in the proof of Proposition \ref{prop:key-prop-Alb-local-admissible}:

\begin{lemma}\label{lem:uneful-tensor-hi-closed}
	The functor $\omega^*_{\log} \colon \HI_{\tau}\to\mathbf{logCI}_{d\tau}$ admits a right adjoint (in particular it commutes with all colimits) and is monoidal with respect to the structure \eqref{eq:logCI-monoidal}.
	\begin{proof}
 By \cite[Proposition 8.2.12]{BPO}, the functor $R^\bcube\omega_{\log}^*$ has a right adjoint $R^\bcube\omega^{\log}_*$, hence since $R^\bcube\omega_{\log}^*$ is $t$-excact by \cite[Proposition 5.12]{BindaMerici}, the functor induced on the hearts are still adjoints by \cite[Prop 1.3.17-(iii)]{BBD}.
	In particular, the functor $\omega^*_{\log}$ commutes with all colimits, so to conclude it is enough to show that for $X,Y\in \Sm(k)$ we have\[
		\omega^*_{\log}(h_0^{\A^1}(X\times Y)) = \omega^*_{\log}h_0^{\A^1}(X)\otimes_{\mathbf{logCI}} \omega^*_{\log}h_0^{\A^1}(Y),
		\]
		where $h_0^{\A^1}:\Shv_{\tau}^{\tr}\to \HI_{\tau}$ is left adjoint to the inclusion (the $0$-th Suslin homology sheaf).
		Thanks to \cite[Proposition 8.2.4]{BPO}, for any choice of a smooth Cartier compactification $X\subseteq \overline{X}$ and $Y\subseteq \overline{Y}$, 
 by putting as log structure $\partial X$ and $\partial Y$ associated with the simple normal crossing divisor $\overline{X}-X$ and $\overline{Y}-Y$, we have equivalences in $\logDM$:\[
		R^\bcube \omega^*_{\log}M^{\A^1}(X) = M(\overline{X},\partial X)\quad \textrm{and}\quad R^\bcube\omega^*_{\log}M^{\A^1}(Y) = M(\overline{Y},\partial Y).
		\]
		By taking $\pi_0$, \cite[Proposition 5.12]{BindaMerici} implies that\[
		\omega^*h_0^{\A^1}(X)=h_0^{\ltr}(\overline{X},\partial X) \quad \textrm{and}\quad 	\omega^*h_0^{\A^1}(Y)=h_0^{\ltr}(\overline{Y},\partial Y).
		\] 
		Hence, we have that\[
		\omega^*h_0^{\A^1}(X)\otimes_{\mathbf{logCI}} \omega^*h_0^{\A^1}(Y)=h_0^{\ltr}(\overline{X},\partial X)\otimes_{\mathbf{logCI}}h_0^{\ltr}(\overline{Y},\partial Y) = h_0^{\ltr}((\overline{X},\partial X)\times (\overline{Y},\partial Y)).
		\]
		Finally, since the underlying scheme of $(\overline{X},\partial X)\times (\overline{Y},\partial Y)$ is $\overline{X}\times\overline{Y}$, which is proper, and the subscheme where the log structure is trivial is $X\times Y$, we have that the log scheme $(\overline{X},\partial X)\times (\overline{Y},\partial Y)$ is a Cartier compactification of $X\times Y$, hence again by \cite[Proposition 8.2.4]{BPO} we have\[
		h_0^{\ltr}((\overline{X},\partial X)\times (\overline{Y},\partial Y))\cong \omega^*h_0^{\A^1}(X\times Y),
		\]
		which concludes the proof.
	\end{proof}
\end{lemma}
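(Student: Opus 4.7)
The plan is to exploit the adjunction $(L^{\bcube}\omega_\sharp^{\log}, R^\bcube\omega^*_{\log})$ from \eqref{eq;adjunctionomega-motivic} together with the $t$-exactness of $R^\bcube\omega^*_{\log}$ recorded from \cite[Prop. 5.12]{BindaMerici}. First, to produce the right adjoint on the hearts, I would argue as follows: since $R^\bcube\omega^*_{\log}$ is $t$-exact, it is in particular right $t$-exact, hence its right adjoint $R^\bcube\omega_*^{\log}$ (\cite[Prop. 8.2.12]{BPO}) is left $t$-exact. By the standard adjunction transfer on hearts (\cite[Prop. 1.3.17-(iii)]{BBD}), the functors $H^0 R^\bcube\omega^*_{\log}$ and $H^0 R^\bcube\omega_*^{\log}$ on the hearts are themselves adjoint; the first one is identified with $\omega^*_{\log}\colon \HI_\tau \to \mathbf{logCI}_{d\tau}$, which therefore has the desired right adjoint and in particular commutes with all colimits.

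Next, for the monoidal property, the key reduction is that every object of $\HI_\tau$ is a colimit of sheaves of the form $h_0^{\A^1}(X)$ for $X \in \Sm(k)$. Since $\omega^*_{\log}$ and the tensor products on both sides commute with colimits, it suffices to verify the isomorphism $\omega^*_{\log}(h_0^{\A^1}(X) \otimes h_0^{\A^1}(Y)) \cong \omega^*_{\log} h_0^{\A^1}(X) \otimes_{\mathbf{logCI}} \omega^*_{\log} h_0^{\A^1}(Y)$ on representables; using that $h_0^{\A^1}$ is monoidal, the left-hand side is $\omega^*_{\log} h_0^{\A^1}(X\times Y)$.

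For this computation I would fix smooth Cartier compactifications $(\overline{X},\partial X)$ and $(\overline{Y},\partial Y)$ of $X$ and $Y$ (which exist by the resolution of singularities hypothesis on $k$). By \cite[Prop. 8.2.4]{BPO}, we have $R^\bcube\omega^*_{\log}M^{\A^1}(X) \simeq M(\overline{X},\partial X)$ and similarly for $Y$; applying $\pi_0$ and using again the $t$-exactness yields
\[
\omega^*_{\log} h_0^{\A^1}(X) \cong h_0^{\ltr}(\overline{X},\partial X), \qquad \omega^*_{\log} h_0^{\A^1}(Y) \cong h_0^{\ltr}(\overline{Y},\partial Y).
\]
Since $h_0^{\ltr}$ is a left adjoint, it is monoidal, so the right-hand side of the claimed identity is $h_0^{\ltr}\bigl((\overline{X},\partial X)\times(\overline{Y},\partial Y)\bigr)$. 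The product log scheme has underlying scheme $\overline{X}\times\overline{Y}$, which is smooth and proper, and its log-trivial locus is $X\times Y$; hence it is a smooth Cartier compactification of $X\times Y$, and one final application of \cite[Prop. 8.2.4]{BPO} identifies this with $\omega^*_{\log} h_0^{\A^1}(X\times Y)$, completing the argument.

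The main subtlety is ensuring that the chosen product $(\overline{X},\partial X)\times(\overline{Y},\partial Y)$ is genuinely a Cartier compactification in the sense of Definition \ref{defn;cartier-comp}, i.e. that its boundary is a strict normal crossing Cartier divisor with complement $X\times Y$; this is precisely the kind of compatibility that \cite[Prop. 8.2.4]{BPO} is designed to handle, so the argument is clean once one confirms the product of two s.n.c.\ boundaries is again s.n.c., which is standard. All remaining steps are formal consequences of the adjunction machinery and the monoidality of the left adjoint $h_0^{\ltr}$.
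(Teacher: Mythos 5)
Your proposal follows the paper's own proof essentially verbatim: the same adjunction transfer on hearts via \cite[Prop. 1.3.17-(iii)]{BBD} and the $t$-exactness from \cite[Prop. 5.12]{BindaMerici}, the same reduction to representables $h_0^{\A^1}(X)$, and the same double application of \cite[Prop. 8.2.4]{BPO} with the observation that the product of Cartier compactifications is again a Cartier compactification of $X\times Y$. The argument is correct and no gaps remain.
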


\subsection{The abelian category of reciprocity sheaves}\label{reciprocitysheaves}
We recall the construction of the abelian category of reciprocity sheaves via modulus sheaves of \cite{MotModulusI} and \cite{MotModulusII} as done in \cite{KSY-RecII} and some properties. 

A pair $\kX = (X,D)$ where $X$ is a proper scheme of finite type over $k$ and $D$ is an effective Cartier divisor on $X$ is called a \emph{proper modulus pair} if $X - |D| \in \Sm(k)$.  
Let $\kX=(X,D_X)$, $\sY=(Y,D_Y)$ be proper modulus pairs and $\Gamma \in \Cor(X-|D_X|,Y-|D_Y|)$ be a prime correspondence. Let $\overline{\Gamma} \subseteq X \times Y$ be the closure of $\Gamma$, and let $\overline{\Gamma}^N\to X\times Y$ be the normalization. We say that $\Gamma$ is \emph{admissible} if $(D_X)_{\overline{\Gamma}^N}\geq (D_Y)_{\overline{\Gamma}^N}$ as Weil divisors, where $(E)_{\overline{\Gamma}^N}$ denotes the pullback of the  divisor $E$ to the normalization $\overline{\Gamma}^N$. By \cite[Proposition 1.2.7]{MotModulusI}, proper modulus pairs and admissible correspondences define an additive category, denoted $\MCor(k)$. {For $\kX=(X,D)$ and $n\geq 0$, we let $\kX^{(n)}:=(X,nD)$.}

We denote by $\MPST(k,\Lambda)$ or simply $\MPST$ the category of additive presheaves of $\Lambda$-modules on $\MCor(k)$, whose objects are called \emph{proper modulus presheaves with trasnfers}. For $\kX \in \MCor(k)$, we let $\Lambda_{tr}(\kX)=\MCor(-,\kX)\tensor_\Z \Lambda \in \MPST$ be the representable object.

	\begin{defn}\label{defn;comp-modulus}
		For $X\in \Sm(k)$, we let $\Comp(X)$ be the cofiltered category given by modulus pairs $(\overline{X},D)$ given by Cartier compactifications of $X$ (see \cite[Lemma 1.8.2]{MotModulusI} and Definition \ref{defn;cartier-comp}).
	\end{defn}

There is a functor:\[
\omega: \MCor(k)\to \Cor(k)\quad (X,D)\mapsto X-|D|,
\]
which induces adjoint functors (cf. \cite[Pr. 2.2.1]{MotModulusI}):\[\begin{tikzcd}
	\omega_!: \MPST(k,\Lambda)\ar[r,shift left = .5ex]\arrow[r,shift left=.5ex ]\arrow[r, description,leftarrow, shift right=.5ex ] & \PSh^{\tr}(k,\Lambda): \omega^*
\end{tikzcd}
\]
where $\omega^*$ is fully faithful. For $\kX=(X,D)\in \MCor(k)$, we have\[
\omega^*F(\kX) = F(\omega(\kX)) = F(X-|D|).
\]
The functor $\omega_!$ is given by left Kan extension, so that  for $X\in \Sm(k)$ and any choice of $\kX\in \Comp(X)$, we have 
\begin{equation}\label{eq;omega-shriek}
\omega_!F (X) = \colim_{\kY\in \Comp(X)} F(\kY)\xrightarrow{\simeq} \colim_{n} F(\kX^{(n)}).
\end{equation}
where the displayed isomorphism follows from \cite[Lemma 1.27 (1)]{SaitoPurity} (with $X_{\infty}^+=0$), which implies that we have an isomorphism in $\pro\MPST$: \[``\lim_{n}"\Z_{\tr}(\kX^{(n)})\cong ``\lim"_{\kY\in \mathbf{Comp}(X)}\Z_{\tr}(\kY).\]
As in the logarithmic case, let $\bcube:=(\P^1,\infty)\in \MCor(k)$ and for any $\kX=(X,D)\in \MCor(k)$ let (see \cite{MotModulusI})\[
\kX\otimes \bcube := (X\times \P^1,X\times \infty + D\times \P^1).
\]
We say $F \in \MPST$ is $\bcube$-invariant if for any $\kX \in \MCor(k)$, the projection $p:\kX\otimes \bcube\to \kX$ induces an isomorphism \[p^* : F(\kX) \to F(\kX \otimes \bcube).\]
We let $\CI$ be the full subcategory of $\MPST$ consisting of all $\bcube$-invariant objects. By \cite[Lemma 2.1.2]{KSY-RecII}, it is a Serre subcategory of $\MPST$ and that the inclusion functor $i^\bcube : \CI \to \MPST$ has a left adjoint $h_0^\bcube$ and a right adjoint $h^0_\bcube$ given for $F \in \MPST$ and $\kX \in \MCor(k)$ by
\begin{align*} 
	&h_0^\bcube(F)(\kX)
	=\Coker(i_0^* - i_1^* : F(\kX \otimes \bcube) \to F(\kX)),
	\\
	\notag
	&h^0_\bcube(F)(\kX)=\Hom(h_0^\bcube(\kX), F),
\end{align*}
where for $a\in k$ the section $i_a: \kX\to \kX\otimes \bcube$ is induced by the map $k[t]\to k[t]/(t-a)\cong k$.
We write $\RSC(k,\Lambda)\subseteq \PSh^{\tr}(k,\Lambda)$ for the essential image of $\CI$ under $\omega_!$. It is an abelian subcategory of $\PSh^{\tr}(k,\Lambda)$. 
\begin{remark}\label{rmk;reciprocity-only-(n)}
By \eqref{eq;omega-shriek}, for $F\in \PSh^{\tr}(k,\Lambda)$ the following conditions are equivalent:
\begin{enumerate}
	\item[(i)] $F\in \RSC(k,\Lambda)$,
	\item[(ii)] for every $X\in \Sm(k)$ and every section $a:\Z_{\tr}(X)\to F$, there exists $\kY\in \Comp(X)$ such that $a$ factors through $\Z_{\tr}(X)\to \omega_!h_0^\bcube(\kY)$,
	\item[(iii)] for every $X\in \Sm(k)$ and every section $a:\Z_{\tr}(X)\to F$, for any choice of $\kX\in \Comp(X)$ there exists $n$ such that $a$ factors through $\Z_{\tr}(X)\to \omega_!h_0^\bcube(\kX^{(n)})$.
\end{enumerate}
\end{remark}
For $\tau$ the Nisnevich or the \'etale topology, we let $\RSC_{\tau}(k,\Lambda):=\RSC(k,\Lambda)\cap \Shv^{\tr}_{\tau}(k,\Lambda)$.
The objects of $\RSC(k,\Lambda)$ (resp. $\RSC_{\tau}(k,\Lambda)$) are called reciprocity presheaves (resp. $\tau$-reciprocity sheaves) of $\Lambda$-modules. 
By \cite[Thm. 0.1]{SaitoPurity}, the Nisnevich sheafification restricts to a functor\[
a_{\Nis}^V\colon \RSC(k,\Lambda)\to \RSC_{\Nis}(k,\Lambda),
\]
which makes $\RSC_{\Nis}(k,\Lambda)$ a Grothendieck abelian category (see \cite[Corollary 2.4.2]{KSY-RecII}). Notice in particular that $\RSC_{\Nis}(k,\Lambda)$ is closed under sub-objects and quotients in $\Shv^{\tr}_{\Nis}(k,\Lambda)$, and that the inclusion functor $i\colon \RSC_{\Nis}(k,\Lambda) \to \Shv^{\tr}_{\Nis}(k,\Lambda)$ is exact. As in \cite[Theorem 2.4.3 (1)]{KSY-RecII}, we denote by $\rho$ the right adjoint to the inclusion $i$. 

By \cite[Theorem 0.2]{SaitoPurity}, each $F\in\RSC_{\Nis}(k,\Lambda)$ satisfies \emph{global injectivity}, i.e. for every $X\in \Sm$ connected with generic point $\eta_X$, the restriction map gives an injective map:\[
F(X)\hookrightarrow F(\eta_X).
\]
By Proposition \ref{prop:et-Nis-Voe}, if $\Lambda$ is a $\Q$-algebra the \'etale sheafification coincides with the Nisnevich sheafification, hence it restricts to a functor
\begin{equation}\label{eq;et-Nis-RSC}
	\begin{tikzcd}
		\RSC(k,\Lambda)\ar[r,"a_{\Nis}^V"]\ar[rr,bend right=10,swap,"a_{\et}^V"] &\RSC_{\Nis}(k,\Lambda) &\RSC_{\et}(k,\Lambda)\ar[l,swap,"\simeq"],
	\end{tikzcd}	
\end{equation}
in particular, if $\Lambda$ is a $\Q$-algebra, $\RSC_{\et}(k,\Lambda)$ is a Grothendieck abelian category and every $F\in \RSC_{\et}(k,\Lambda)$ satisfies global injectivity.

We have two important examples of reciprocity sheaves:
\begin{enumerate}
\item Let $\sG^*$ be the category of smooth commutative $k$-group schemes {(i.e. $G\in \sG^*$ is a group scheme such that the connected component of the identity $G^0$ is a smooth commutative algebraic group and $\pi_0(G)$ is finitely generated, see \cite[1.3]{Ayoub-BV} for an analogous definition on semi-abelian group schemes)}. It is classical that the corresponding \'etale sheaf has a unique structure of sheaf with transfers (see \cite[Lemma 3.2]{SS}) and by \cite[Theorem 4.4]{KSY} it has reciprocity in the sense of \cite[Definition 2.1.3]{KSY}.
This  defines a functor $\sG^*\to \RSC_{\et}(k,\Z)$, generalizing the functor $\sG^*_{\rm sab} \to \HI_{\et}(k,\Z)$ considered in \cite{BVKahn}. 
\item
For $\kC=(C,C_{\infty})\in \MCor(k)$ with $C$ geometrically connected and $\dim(C)=1$, let $\uPic(C,C_{\infty})$ denote the relative Picard group scheme. We would like to underline that $C_{\infty}$ is not supposed to be reduced. By \cite[Thm. 1.1]{RulYama} combined with \cite[Lem 2.2.2]{KSY-RecII}, we have that
\begin{equation}\label{eq;pic}
	\omega_!h_0^\bcube(\Z_{tr}(\kC)) = \uPic(C,C_\infty).
\end{equation}
\end{enumerate}
We end this subsection recalling the following result:
\begin{prop}\label{prop;forget-transfer-fully-faithful} If $\Lambda$ is a $\Q$-algebra, the ``forgetting transfers" functor $\RSC_{\et}(k,\Lambda)\to \Sh_{\et}(k,\Lambda)$ is  fully faithful and exact. 
	\begin{proof}The argument for  homotopy invariant sheaves with transfers given in \cite[3.9]{BVKahn} works here as well, replacing the reference to Voevodsky's purity theorem for homotopy invariant sheaves with the global  injectivity provided by \cite[Theorem 0.2]{SaitoPurity}.\end{proof}\end{prop}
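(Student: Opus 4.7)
The plan is to adapt verbatim the argument of \cite[3.9]{BVKahn} for homotopy invariant sheaves with transfers, substituting the global injectivity of reciprocity sheaves provided by \cite[Theorem 0.2]{SaitoPurity} for Voevodsky's purity theorem used there. There are three things to check: exactness, faithfulness, and fullness; the first two are essentially formal, and the real work is in fullness.

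Exactness of the forgetful functor $u\colon \RSC_{\et}(k,\Lambda)\to \Sh_{\et}(k,\Lambda)$ is immediate: it factors as the inclusion $\RSC_{\et}(k,\Lambda)\hookrightarrow \Shv_{\et}^{\tr}(k,\Lambda)$, which is exact since $\RSC_{\et}(k,\Lambda)$ is closed under subobjects and quotients in $\Shv_{\et}^{\tr}(k,\Lambda)$ by Subsection \ref{reciprocitysheaves} combined with \eqref{eq;et-Nis-RSC}, followed by restriction along $\Sm(k)\hookrightarrow\Cor(k)$, which is visibly exact. Faithfulness is obvious because $u$ is the identity on underlying sets of sections. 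For fullness, fix $F,G\in \RSC_{\et}(k,\Lambda)$ and a morphism of underlying \'etale sheaves $\varphi\colon u(F)\to u(G)$, and note that we must verify the identity $\varphi_X\circ\alpha^*=\alpha^*\circ\varphi_Y\colon F(Y)\to G(X)$ for every $X,Y\in\Sm(k)$ and every finite correspondence $\alpha\in\Cor(X,Y)$.

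Since $G\in \RSC_{\et}(k,\Lambda)$ satisfies global injectivity by \cite[Theorem 0.2]{SaitoPurity}, it suffices to test this identity after restriction to the generic point $\eta_X$ of each connected component of $X$. Pulling $\alpha$ back to $\eta_X$ and decomposing into closed points reduces the verification to compatibility of $\varphi$ with the trace $\Tr_{L/K}\colon F(L)\to G(K)$ along finite separable extensions of fields. Here the classical fact intervenes: for an \'etale sheaf of $\Q$-vector spaces, the trace along a finite separable extension $L/K$ is determined canonically by the underlying \'etale sheaf structure, via Galois descent (writing $F(K)=F(M)^{\Gal(M/K)}$ and $F(L)=F(M)^{\Gal(M/L)}$ for $M$ the Galois closure and taking the averaging operator). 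Any morphism of \'etale sheaves of $\Q$-modules commutes with restrictions and with Galois averaging, hence compatibility of $\varphi$ with traces on fields is automatic, and propagates back through the reduction to give compatibility on all of $\Sm(k)$.

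The main obstacle is precisely the reduction to function fields: in the homotopy invariant case this is given by Voevodsky's Gersten-style purity, which relied on $\A^1$-invariance, a property no longer available for reciprocity sheaves. The essential input is therefore Saito's purity theorem \cite[Theorem 0.2]{SaitoPurity}, which upgrades the desired global injectivity to the entire category $\RSC_{\et}(k,\Lambda)$ and allows the argument of \cite[3.9]{BVKahn} to run through unchanged.
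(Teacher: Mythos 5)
Your proposal is correct and follows exactly the route the paper intends: the paper's proof is precisely the argument of \cite[3.9]{BVKahn}, with the global injectivity of \cite[Theorem 0.2]{SaitoPurity} replacing Voevodsky's purity to reduce compatibility with transfers to the case of function fields, where the traces of \'etale sheaves of $\Q$-modules are forced by Galois averaging. The only cosmetic point is that the finite extensions of function fields arising from a correspondence need not be separable when $k$ has positive characteristic, but with $\Q$-coefficients the purely inseparable part is handled as in loc.\ cit.\ (restriction is then an isomorphism and the trace is determined as the degree times its inverse).
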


\subsection{Reciprocity sheaves and logarithmic motives}\label{RSC-and-logRec}

In this subsection, we continue to assume that $k$ satisfies resolution of singularities. 
Let $\tau$ and $\Lambda$ be as in \ref{ssec;logDM}.

By \cite{shujilog}, there exists a fully faithful and exact functor
\[\mathcal{L}og: \RSC_{\Nis}(k,\Lambda) \to \mathbf{logCI}_{\dNis}(k,\Lambda)\]
such that $\omega_{\sharp}^{\log}\mathcal{L}og\simeq id$. 
If $\Lambda$ is a $\Q$-algebra, by Proposition \ref{prop:rat-coeff-nisequaletale}, we have similarly a fully faithful and exact functor:
\begin{equation}\label{eq;shuji-functor2}
	\omega^{\CI}_{\log}:\RSC_{\et}(k,\Lambda)\overset{\eqref{eq;et-Nis-RSC}}{\cong }\RSC_{\Nis}(k,\Lambda)
	\overset{\mathcal{L}og}{\rightarrow} \mathbf{logCI}_{\dNis}(k,\Lambda)\cong \mathbf{logCI}_{\textrm{d\'et}}(k,\Lambda)\hookrightarrow \mathbf{Shv}_{\textrm{d\'et}}^{\mathrm{ltr}}(k, \Lambda).
\end{equation}

	\begin{remark}\label{rmk;omegaCI-commutes-with-colimits}
The functor $\Log$ commutes with all colimits: to see this, let $\{F_i\}\subseteq \RSC_{\Nis}(k, \Lambda)$ be an inductive system, then we have a natural map
\begin{equation}\label{eq:colim-log-map}
\colim^{\logCI_{\dNis}}\Log(F_i)\to \Log (\colim^{\RSC_{\Nis}}F_i)
\end{equation}
Since $\omega_{\sharp}^{\log}(k, \Lambda)\colon \logCI_{\dNis}\to \Shv^{\tr}_{\Nis}(k, \Lambda)$ is faithful, hence conservative (this follows from the results of \cite{BindaMerici}, see \cite[Prop. 0.1]{BindaMericiErratum}), it is enough to show that \eqref{eq:colim-log-map} is an isomorphism after applying $\omega_\sharp^{\log}$. We have that 
\begin{align*}
\omega_{\sharp}^{\log}\colim^{\logCI_{\dNis}}\Log(F_i)&\overset{(*1)}{\cong} \colim^{\Shv^{\tr}_{\Nis}}\omega_{\sharp}^{\log}\Log(F_i) \overset{(*2)}{\cong} \colim^{\Shv^{\tr}_{\Nis}}F_i \overset{(*3)}{\cong} \colim^{\RSC_{\Nis}}F_i \\ &\overset{(*4)}{\cong} \omega_{\sharp}^{\log}\Log(\colim^{\RSC_{\Nis}}F_i)
\end{align*}
where $(*1)$ comes from the fact that $\omega_{\sharp}^{\log}$ has a right adjoint, so it preserves colimits, $(*2)$ and $(*4)$ follow from the fact that $\omega_{\sharp}^{\log}\Log=id_{\RSC_{\Nis}}$, $(*3)$ from the fact that the inclusion $\RSC_{\Nis}(k, \Lambda)\hookrightarrow \Shv^{\tr}_{\Nis}(k,\Lambda)$ has a right adjoint, so it preserves colimits. In particular, this implies that the composition \eqref{eq;shuji-functor2} preserves all colimits.
\end{remark}
 \begin{remark}
Let $F\in \mathbf{logCI}_{d\tau}(k, \Lambda)$ such that $\omega_{\sharp}^{\log}F\in \HI_{\tau}(k, \Lambda)$, we have that \[
\omega^*_{\log}\omega_{\sharp}^{\log}F \cong \Log(\omega_{\sharp}^{\log}F) \in \mathbf{logCI}_{\tau}(k, \Lambda),
\] hence we have that the natural map
$\eta\colon F \to \omega^{*}_{\log}\omega_{\sharp}^{\log}F$ is a map in $\mathbf{logCI}_{d\tau}(k, \Lambda)$ such that $\omega_{\sharp}^{\log}(\eta)$ is an isomorphism. Since $\omega_{\sharp}^{\log}$ is faithful and exact, it is conservative. We conclude that:
\begin{equation}\label{eq;only-one-homotopy-invariant}
	F\cong \omega^*_{\log}\omega_{\sharp}^{\log}F\cong \Log(\omega_{\sharp}^{\log}F).
\end{equation}
Notice that \eqref{eq;only-one-homotopy-invariant} strictly depends on the fact that $\omega^*_{\log}\omega_{\sharp}^{\log} F\in \mathbf{logCI}_{d\tau}$, which is not true unless $\omega_{\sharp}^{\log}F\in \HI_{\tau}$.
\end{remark}

\begin{remark}\label{rmk:Ga} 
Recall that the functor $\omega^{\log}$ from \eqref{omegaadjunction} admits a left adjoint $\lambda^{\log}\colon \Sm(k)\to \SmlSm(k)$ such that 
$\lambda^{\log}(Y) = (Y,\triv)$ for $Y\in \Sm(k)$,
which itself has a left adjoint which associates the underlying scheme $\ul{X}$ to $X\in \SmlSm(k)$. Moreover, these functors preserve transfers.
 In particular, we have a left exact functor 
\[\lambda_{\sharp}^{\log}\colon \PSh^{\tr}(k,\Z)\to \PSh^{\ltr}(k,\Z)\]
such that $\lambda^{\log}_\sharp F(X) = F(\ul{X})$ for $X\in \SmlSm(k)$ and it is a left adjoint to $\omega_\sharp^{\log}$.
Moreover, as observed in \cite[4.3, p.64]{BPO}, the functor $\lambda_\sharp^{\log}$ sends $\tau$-sheaves to $d\tau$-sheaves, which implies that we have an adjunction:\begin{equation}\label{omegaadjunction2}
	\begin{tikzcd}
	\Shv^{\rm tr}_{\tau}(k,\Lambda)	\arrow[rr,shift left=1.5ex,"\omega^\sharp_{\log} "]&& \Shv^{\rm ltr}_{d\tau}(k,\Lambda)\arrow[ll,"\omega_\sharp^{\log}"]
	\end{tikzcd}
\end{equation} 
where $\omega^\sharp_{\log}$ is the restriction of $\lambda_\sharp^{\log}$ under
the forgetful functor $\Sh_{\tau}(k,\Lambda)\to \PSh(k,\Lambda)$.
Since the latter functor commutes with limits as a right adjoint, both $\omega^\sharp_{\log}$ and $\omega_\sharp^{\log}$ are exact, so they derive trivially. 
Hence, \eqref{omegaadjunction2} induces the following adjunction of $\infty$-categories of sheaves:
\begin{equation}\label{eq:adjunctionomega-derived1-2} \begin{tikzcd}
		\cD(\omega^\sharp_{\log}) \colon \cD(\Shv^{\rm tr}_{\tau}(k,\Lambda) )\arrow[r,shift left=.5ex ]\arrow[r, description,leftarrow, shift right=.5ex ] & \cD(\Shv^{\rm ltr}_{d\tau}(k,\Lambda)): \cD(\omega_\sharp^{\log}).
	\end{tikzcd}
\end{equation}
We remark that by the construction of $\mathcal{L}og$ in \cite[\S6]{shujilog} and 
\cite[Cor. 6.8(1)]{RulSaito}, we have
\begin{equation}\label{omegaCIGa}
	\Log\G_a(X)=
	 \Gamma(\ul{X},\sO_{X}) = \omega^\sharp_{\log} \G_a(X).
\end{equation} 
\end{remark}

\section{Categories of rational maps and universal problems}\label{sec:Albanese}

\subsection{Commutative groups schemes and torsors under them}\label{ssec;groups}
We recall some well-known facts on commutative group schemes over a perfect field $k$ and we fix some notation. 

Let $\sG^*$ be again the category of smooth commutative $k$-group schemes, locally of finite
type over $k$ {and such that $\pi_0(G)$ is finitely generated} (for short, a commutative $k$-group scheme). Write $\sG$ for the subcategory of smooth commutative algebraic $k$-groups (i.e. objects of $\sG^*$ which are of finite type). Given $G\in \sG^*$, let $G^0$ be the connected component of the identity in $G$.
Recall (see \cite[Definition 1.1.2]{BVKahn} or \cite[Proposition 5.1.4]{DemazureGabriel}) the following definition. 
\begin{defn} A group scheme $L\in \sG^*$ is called \textit{discrete} if $L^0 = \Spec(k)$ and the abelian group $L(\ol{k})$ is finitely generated (equivalently, if $L$ is \'etale over $k$). A discrete $k$-group scheme $L\in \sG^*$ is called a \textit{lattice} if $L(\ol{k})$ is torsion free. 
\end{defn}
As in \cite{BVKahn}, we denote by ${}^t\mathcal{M}_0$  the subcategory of $\sG^*$ consisting of discrete $k$-group schemes. By \cite[Lemma 1.1.3]{BVKahn} it is a Serre subcategory of $\sG^*$, hence it is an Abelian category. We denote by $\cM_0$ the full subcategory of lattices. 
By \cite[Proposition 5.1.8]{DemazureGabriel}, for $G\in \sG^*$, there is an exact sequence
\[ 0\to G^0 \to G\to \pi_0(G)\to0\]
where $\pi_0(G)$ is an \'etale $k$-group, which is universal for homomorphisms from $G$ to discrete groups. The fibers of $G\to \pi_0(G)$ are the irreducible components of $G$.
\begin{defn}Let $G\in \sG$ be a smooth commutative algebraic $k$-group. By \textit{$k$-torsor under $G$ or for $G$} we mean a $k$-scheme $P$, locally of finite type over $k$, equipped with an action $P\times G\to P$ such that the induced morphism $(s,g) \mapsto (s, sg)\colon P\times G\to P\times P$ is an isomorphism. Write $\sP_G$ for the category of \textit{$k$-torsors under $G$}: morphisms between torsors are $G$-equivariant $k$-morphisms.
\end{defn}
\subsubsection{}\label{ssec:torsor-construction}

Given a $k$-torsor $P$ under $G\in \sG$, we can construct a commutative $k$-group scheme $P_G = \coprod_{n\in \Z} P^{\vee n}\in \sG^*$ using the sum of torsors $\vee_G$ (see \cite[III.4.8.b]{MilneEtCoh}) following  \cite[1.2]{Ramachandran}. It fits in a short exact sequence 
\[ 0 \to G \to P_G \xrightarrow{a_P} \underline{\Z}\to 0, \]
presenting $P_G$ as extension of $\underline{\Z}$ by the group $G$. Moreover, we can identify the torsor $P$ with the fiber of the section $1\in\underline{\Z}$ along the map $a_P$, so that we have a natural inclusion $P\hookrightarrow P_G$.

\subsubsection{}Write $\sP$ for the category whose objects are pairs $(P,G)$, where $P\in \sP_G$ for a  $G\in \sG$ a smooth commutative connected algebraic group. A morphism in $\sP$ is the datum of a pair of morphisms $(f^1, f^0)\colon (P,G)\to (P', G')$, where $f^0\colon G\to G'$ is a $k$-morphism of algebraic groups and $f^1\colon P\to P'$ is $f^0$-equivariant. If $X$ is a $k$-scheme, we write $\XP$ for the comma category over $X$: its objects are triples $(u, P, G)$, where $(P,G)\in \sP$ and $u\colon X\to P$ is a $k$-morphisms. Morphisms in $\XP$ are defined in the obvious way. 

\begin{defn}\label{defn:fibtors}A \textit{fibration to torsors} is the datum, for each $X\in \Sm$, of a full category $\sM_X$ of $\XP$, contravariantly functorial in $X$. 
Similarly,  a fibration to torsors \textit{for proper modulus pairs} is the datum, for each $\kX =(\ol{X}, X_\infty)\in \MCor$, of a full subcategory $\sM_{\kX}$ of $\XP$, where $X = \ol{X}\setminus X_{\infty} \in \Sm$, contravariantly functorial in $\kX$ for maps in $\MSm^{\rm fin}$ (see \cite[Definition 1.3.3.(2)]{MotModulusI}). 
	The initial object (if it exists) of $\sM_X$ is called the \textit{$\sM$-Albanese torsor of $X$}. By definition, it is the datum of a smooth commutative connected algebraic group $\Alb^0_{\sM}(X)$, a $k$-torsor $\Alb^1_{\sM}(X)$ under $\Alb^0_{\sM}(X)$ and a $k$-morphism $X\to \Alb^1_{\sM}(X)$ which is universal for maps in $\sM_X$. The algebraic group $\Alb^0_{\sM}(X)$ is called the $\sM$-Albanese variety of $X$. Similarly, if $\sM_{-}$ is a fibration to torsors for proper modulus pairs, the initial object (if it exists) of $\sM_{\kX}$ is called the \textit{$\sM$-Albanese torsor of $\kX$}. The corresponding algebraic group, $\Alb^0_{\sM}(\kX)$ will be called the $\sM$-Albanese variety of $\kX$.
\end{defn}

\begin{example}\label{ex:SAb-case}For any $X$, let $\mathbf{SAb}_X$ be the full subcategory of $\XP$ consisting of maps to torsors $P$ under semi-Abelian varieties. In this case, the existence of an initial object for $\mathbf{SAb}_X$ was proven by Serre \cite{SerreChevalley1} in the case the base field $k$ is algebraically closed. In \cite[Appendix A]{WittenbergAlbaneseTorsors}, a Galois descent argument is used to show that the Albanese variety, the Albanese torsor and the universal map \[X\to \Alb^{(1)}_{\mathbf{SAb}}(X)\] always exist, without any assumption on $k$. 
	
	If $X$ is smooth and proper over $k$, the semi-abelian variety $\Alb^0_{\mathbf{SAb}}(X)$ is in fact an Abelian variety, and coincides with the classical Albanese variety of $X$, dual (as abelian variety) to the Picard variety $\mathrm{Pic}_X^{0, red}$.
\end{example}
\subsection{A universal construction}\label{ssec:AlbaneseOmega} We will discuss a number of situations in which the $\sM$-Albanese torsor of a proper modulus pair $\kX$ exists. In fact, we will consider different fibrations to torsors $\sM_{-}$ for proper modulus pairs, giving sufficient conditions for the initial object to exist. In the end, all the fibrations that we consider will turn out to be equivalent, giving then a \textit{unique notion of Albanese torsor} for a proper modulus pair $\kX$. We recall the following result due to Serre \cite{SerreChevalley1}: 
\begin{thm}\label{thm:SerreChevalley1}
	Let $X\in \Sm(k)$ and $\sC$ be a subcategory of $X\backslash \sG$ satisfying the following conditions:
	\begin{enumerate}
		\item[$\mathrm{(I)}$]
		If $u_i: X\to G_i$ are in $\sC$ for $i=1,2$, then $u_1\times u_2:X \to G_1\times G_2$ is in $\sC$.
		\item[$\mathrm{(II)}$]
		For a homomorphism $f: H\to G$ in $\sG$ such that $\Ker(f)$ is finite and 
		$v:X\to H$ such that $u={f\circ v}: X\to G$ is in $\sC$, $v:X\to H$ is in $\sC$.
	\end{enumerate}
Then:
		\begin{itemize}
			\item[(1)]
			A morphism $u: X \to G$ in $X\backslash \sG$ is universal if and only if it is maximal in the sense of [Def.2, Ser60] and for any maximal morphism $v:X\to H$ in $\sC$, we have
			$\dim H\leq \dim G$. 
			\item[(2)]
			There exists a universal object in $\sC$ if and only if there exists $N>0$ such that
			for any maximal morphism $u:X\to G$ in $\sC$, we have $\dim(G)\leq N$.
		\end{itemize}
\end{thm}

\begin{defn}\label{defn:COmega}Let $\kX=(\ol{X}, X_\infty)$ be a proper geometrically integral modulus pair, and write $U(\kX) = U(\ol{X}, X_\infty)$ for the $k$-vector space $H^0(\ol{X}, \Omega^1_{\ol{X}, cl}(X_\infty))$, where $\Omega^1_{\ol{X}, cl}(X_{\infty}) := \Omega^1_{\ol{X}, cl}\cap \Omega^1_{\ol{X}}(X_{\infty})$ denotes the subsheaf of closed forms.
	Let  $\sM^\Omega_{\kX}$ be the full subcategory of $\XP$ consisting of triples $(u,P,G)$ with the following property: Let $k\subset \ol{k}$ be an algebraic closure of $k$ and let $u_{\ol{k}}\colon {X}_{\ol{k}}\to P_{\ol{k}} \cong G_{\ol{k}}$ be the base change of $u$ to $\ol{k}$. 
	Then $(u,P,G)\in \sM^\Omega_\kX$ if and only if $(u_{\ol{k}})^* \Omega(G_{\ol{k}}) \subseteq U(\ol{X}_{\ol{k}}, X_{\infty, \ol{k}})$, where $\Omega(G_{\ol{k}})$ denotes the space of invariant differential forms on $G_{\ol{k}}$. The assignment $\kX\mapsto \sM^\Omega_{\kX}$ defines a fibration to torsors for proper modulus pairs in the sense of Definition \ref{defn:fibtors}
\end{defn}
\begin{remark}\label{rmk:COmegakbarenough} It follows immediately that $(u,P,G) \in \XP$ belongs to $\sM^\Omega_{\kX}$  if and only if \[u_{L}^*\Omega(G_{L})\subseteq U(\ol{X}_{L}, X_{\infty, L}) \]
	for any algebraically closed field $L\subset k$. Moreover, $\sM^\Omega_{\kX}$ satisfies condition (I) of Theorem \ref{thm:SerreChevalley1} and, if $\ch(k)=0$, it satisfies (II) too since $\Omega(G_{\ol{k}})\to\Omega(H_{\ol{k}})$ is surjective for $f:H \to G$ as in (II). 
\end{remark}
\begin{thm}\label{thm:existenceAlbaneseOmega}If $\ch(k)=0$, then for any $\kX \in \MCor$, the $\sM^\Omega_{\kX}$-Albanese torsor of $\kX$ exists.
	\begin{proof}Suppose that $k=\ol{k}$ is algebraically closed. Since any $k$-torsor under an algebraic group $G\in \sG$ is trivial in this case, the category $\sM^\Omega_{\kX}$ is equivalent to the category of morphisms $u: X \to G$ in $X\backslash \sG$ satisfying the condition $u^*(\Omega(G))\subseteq U(\kX)$. Morphisms in $\sM^\Omega_{\kX}$ are $k$-morphisms $f\colon G\to G'$ of \textit{torsors} commuting with the structural morphisms $u\colon X\to G$ and $u'\colon X\to G'$ (where we view $G$ and $G'$ acting on themselves). Although the morphism $f$ is not a homomorphism of algebraic groups in general, it can be written as $f = f_0 + \tau$, where $f_0$ is a group homomorphism  and $\tau$ is a translation.
	Following \cite{SerreChevalley1}, in order to check whether an initial object for  $\sM^\Omega_{\kX}$ exists using Theorem \ref{thm:SerreChevalley1}, it is enough to restrict to the subcategory   $\sM^{\Omega,g}_{\kX}$ of morphisms which are \textit{generating} (see \cite[Definition 1]{SerreChevalley1}).  
	For them, one has the following simple 
		\begin{lemma}[Lemma 6, p.198,  \cite{FaltingsWustholz}]\label{lem:FW-pullback-injective-forms}Let $u\colon X\to G \in \sM^\Omega_{\kX}$ and suppose that $u$ is generating in the sense of \cite[Definition 1]{SerreChevalley1}. Then the pullback map
			\[u^*\colon H^0(G, \Omega^1_G)^{\rm inv} = \Omega(G)\to H^0(X, \Omega^1_X)\]
			is injective.\end{lemma}
		By e.g.~\cite[Prop.~III.16]{SerreGACC}, the dimension of any $G\in \sG$ agrees with the dimension of the $k$-vector space $\Omega(G)$ of invariant differential forms.  The previous Lemma implies that for any $u\colon X\to G \in \sM^{\Omega,g}_{\kX}$, one has $\dim G \leq \dim U(\kX)$. 
By Theorem \ref{thm:SerreChevalley1}, this concludes the case $k=\ol{k}$.

		Suppose now that $k$ is any perfect field and let $\ol{k}$ be an algebraic closure of $k$. Write $\Xkbar$ for the base change $\ol{X}\tensor_k \ol{k}$ and $\kX_{\ol{k}}$ for the pair $(\Xkbar, (X_\infty)_{\ol{k}})$. According to the above argument, the category $\sM^{\Omega}_{\kX_{\ol{k}}}$ admits a universal object, 
		\[ {\rm alb}^{\Omega}_{\kX_{\ol{k}}} \colon X_{\ol{k}}\to \Alb^{\Omega}_{\kX_{\ol{k}}}.\]
		The descent to the base field $k$ can be done following  the proof of Serre \cite[V.22]{SerreGACC} in the case of generalized Jacobians of curves to get a triple $({\rm alb}_{\kX}^{\Omega}, \Alb^{\Omega,(1)}_\kX, \Alb^{\Omega, (0)}_\kX)$  defined over $k$.
	\end{proof} 
\end{thm}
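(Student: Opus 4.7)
The plan is to adapt Serre's classical strategy from \cite{SerreChevalley1, SerreGACC} (used for generalized Jacobians of curves) to our modulus setting. The argument splits into two main stages: constructing the universal object over an algebraic closure, then descending to $k$.

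First, I would reduce to the case $k = \overline{k}$. When $k$ is algebraically closed, every $k$-torsor under a connected algebraic group is trivial, so up to choosing a base point the category $\sM^\Omega_{\kX}$ is equivalent to the category whose objects are pairs $(u, G)$ with $G \in \sG$ connected and $u \colon X \to G$ a $k$-morphism satisfying $u^*\Omega(G) \subseteq U(\kX)$; morphisms $(u, G) \to (u', G')$ are $k$-morphisms $f \colon G \to G'$ of varieties with $fu = u'$, which decompose as $f = f_0 + \tau$ with $f_0$ a homomorphism and $\tau$ a translation. Following Serre, I would then restrict to the full subcategory $\sM^{\Omega,g}_{\kX}$ of objects $(u, G)$ for which $u$ is \emph{generating}, i.e.\ the image of $u$ is not contained in a strict algebraic subgroup of a translate of $G^0$. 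By \cite[Corollaire, p.~05]{SerreChevalley1}, an initial object of $\sM^\Omega_{\kX}$ exists as soon as the dimensions of the $G$'s appearing in $\sM^{\Omega,g}_{\kX}$ are uniformly bounded.

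Second, I would establish the uniform bound using the Faltings--W\"ustholz lemma recalled in the excerpt: for any generating $u \colon X \to G$, the pullback $u^* \colon \Omega(G) \to H^0(X, \Omega^1_X)$ is injective. Since by definition of $\sM^\Omega_{\kX}$ the image of $u^*$ lies in $U(\kX) = H^0(\overline{X}, \Omega^1_{\overline{X}, cl}(X_\infty))$, and since for a smooth commutative $k$-group scheme one has $\dim G = \dim_k \Omega(G)$ (cf.\ \cite[Prop.~III.16]{SerreGACC}), we obtain
\[ \dim G \;\leq\; \dim_k U(\kX), \]
which is finite because $\overline{X}$ is proper. Applying Serre's criterion gives the universal object $\mathrm{alb}^{\Omega}_{\kX_{\overline{k}}} \colon X_{\overline{k}} \to \Alb^{\Omega}_{\kX_{\overline{k}}}$ over $\overline{k}$.

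Finally, for the descent from $\overline{k}$ to $k$, I would mimic \cite[V.22]{SerreGACC}. The universal property implies that every $\sigma \in \mathrm{Gal}(\overline{k}/k)$ induces, via its action on $X_{\overline{k}}$, a canonical isomorphism between $\Alb^{\Omega}_{\kX_{\overline{k}}}$ and its $\sigma$-twist, compatible with the structure map; uniqueness of initial objects yields the cocycle condition automatically. Effective descent then produces $\Alb^{\Omega,(0)}_{\kX} \in \sG$ together with a $k$-torsor $\Alb^{\Omega,(1)}_{\kX}$ and a $k$-morphism $\mathrm{alb}^{\Omega}_{\kX} \colon X \to \Alb^{\Omega,(1)}_{\kX}$, which is readily checked to be initial in $\sM^\Omega_{\kX}$ using Remark \ref{rmk:COmegakbarenough} (so that membership in $\sM^\Omega_{\kX}$ is tested after base change to $\overline{k}$). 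The main technical point in this last step is effectivity of the descent datum, which is guaranteed by the quasi-projectivity of the torsor and of the underlying group scheme.
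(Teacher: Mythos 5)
Your proposal follows the paper's proof essentially verbatim: reduction to $k=\overline{k}$ where torsors trivialize, restriction to generating morphisms, the Faltings--W\"ustholz injectivity lemma combined with $\dim G = \dim_k\Omega(G)$ to get the uniform bound $\dim G \leq \dim_k U(\kX)$, Serre's criterion for the existence of the initial object, and Galois descent \`a la \cite[V.22]{SerreGACC}. The extra detail you supply on the descent step (cocycle condition from uniqueness of initial objects, effectivity via quasi-projectivity) is exactly what the paper's citation is implicitly invoking, so there is nothing to correct.
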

\begin{remark}
	It should be possible to closely follow the construction of \cite{ESV} of the universal regular quotient of the Chow group of zero cycles, that works over any field, {to remove the hypothesis on the characteristic of $k$}. This was used in \cite{BK} to construct the universal regular quotient of the Kerz-Saito Chow group of zero cycles with modulus. Since the applications we have in mind in the later sections will require the characteristic zero assumption, we have decided to not pursue this goal here.
\end{remark}
\subsection{Cutting curves}\label{ssec:CuttingCurvesII} Assume now that $\kX = (\ol{X},X_\infty)\in \MCor$ is such that $\ol{X}$ is smooth over $k$. A finite morphism $\nu\colon \ol{C} \to \ol{X}$, with $\ol{C}$ a normal and geometrically integral curve, is \textit{admissible} for $\kX$ if $\nu(\ol{C}) \not\subseteq X_\infty$. In this case, write $C_\infty$ for the effective Cartier divisor $\nu^* X_\infty$ on $\ol{C}$. Write $C$ for the open subset $\ol{C}\setminus C_\infty$.  We will use the following Lemma, taken from \cite{BS}.
\begin{lemma}[Lemma 10.14, \cite{BS}]\label{lem:restriction-injective}
	Let $\gamma$  be  the restriction map
	\[ \gamma\colon H^0(X, \Omega^1_X) \to \prod_{\nu\colon \ol{C}\to \ol{X}} H^0(C, \Omega^1_C)/ H^0(\ol{C}, \Omega^1_{\ol{C}}(\nu^* X_\infty)),\]
	where the product runs over the set of admissible curves $\nu\colon \ol{C}\to \ol{X}$.	Then the  kernel of $\gamma$ agrees with $H^0(\ol{X}, \Omega^1_{\ol{X}}(X_\infty))$. \end{lemma}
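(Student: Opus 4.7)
The plan is to prove the two inclusions separately.

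The inclusion $H^0(\ol{X}, \Omega^1_{\ol{X}}(X_\infty)) \subseteq \ker(\gamma)$ is immediate from functoriality of the pullback of differentials: if $\omega$ is a section of $\Omega^1_{\ol{X}}(X_\infty)$ and $\nu\colon \ol{C}\to \ol{X}$ is admissible (so that $\nu^*X_\infty$ is a well-defined Cartier divisor on $\ol{C}$), then $\nu^*\omega$ lies in $H^0(\ol{C}, \Omega^1_{\ol{C}}(\nu^*X_\infty))$, hence its class in the displayed quotient vanishes.

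For the reverse inclusion $\ker(\gamma) \subseteq H^0(\ol{X}, \Omega^1_{\ol{X}}(X_\infty))$, write $X_\infty = \sum_{i} n_i D_i$ with $D_i$ the distinct irreducible components of the support $|X_\infty|$. Because $\ol{X}$ is smooth and $\Omega^1_{\ol{X}}(X_\infty)$ is locally free, Hartogs' lemma (or the $S_2$ property) reduces the question of whether a form $\omega \in H^0(X, \Omega^1_X)$ extends to $H^0(\ol{X}, \Omega^1_{\ol{X}}(X_\infty))$ to a codimension-one check at the generic point $\eta_i$ of each $D_i$: we must show that the pole order of $\omega$ along $D_i$ is at most $n_i$. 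I would verify this by producing, for each $i$, an admissible morphism $\nu\colon \ol{C}\to \ol{X}$ whose image meets $D_i$ transversally at a general smooth point $x$ of $|X_\infty|$ lying only on $D_i$, and such that the unique preimage $p$ of $x$ is a smooth point of $\ol{C}$. Such a $\nu$ is obtained as the normalization of a curve cut out by $\dim(\ol{X})-1$ general hyperplane sections of $\ol{X}$ through $x$ (relative to some fixed projective embedding), via a standard Bertini argument. By transversality, $\nu^*X_\infty$ has multiplicity exactly $n_i$ at $p$, and a local parameter of $\ol{C}$ at $p$ pulls back from a local equation of $D_i$ at $x$. The hypothesis $\gamma(\omega)=0$ forces $\nu^*\omega$ to have pole order at most $n_i$ at $p$; by varying the tangent direction of $\ol{C}$ at $x$ in $T_x\ol{X}$ through different choices of general hyperplane sections, one recovers the pole behaviour of $\omega$ in each coordinate direction and concludes that the pole of $\omega$ along $D_i$ is at most $n_i$.

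The main obstacle is the Bertini step over small base fields: over an infinite perfect field this is standard, but over a finite field one may need a Gabber--Poonen style refinement to guarantee the existence of enough transversal admissible curves defined over $k$. Alternatively, since the formation of both sides is compatible with flat base change, one can pass to a sufficiently large field extension to produce enough curves there, and then descend the conclusion back to $k$ by faithful flatness.
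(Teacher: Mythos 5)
First, a remark on the comparison itself: the paper does not prove this statement --- it is imported as \cite[Lemma 10.14]{BS} and used as a black box --- so there is no internal proof to measure you against. That said, your strategy is the expected one and agrees with the argument in loc.\ cit.: the inclusion $H^0(\ol{X},\Omega^1_{\ol{X}}(X_\infty))\subseteq\ker(\gamma)$ by functoriality of pullback, and for the converse a reduction, using that $\Omega^1_{\ol{X}}(X_\infty)$ is locally free (hence reflexive) on the smooth $\ol{X}$, to the pole-order bound at the generic point of each component $D_i$ of $|X_\infty|$, which is then detected by a curve transverse to $D_i$ at a general point. The skeleton is sound and completable.

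Two soft spots. (1) The decisive computation is phrased as ``recovering the pole behaviour of $\omega$ in each coordinate direction by varying the tangent direction''; taken literally this is not available, since restricting to a curve with tangent $v$ only records $\sum_j f_j v_j$ composed with the parametrization, and one cannot separate the coefficients $f_j$ afterwards. The clean version is a contradiction argument needing only one generic curve per component: if the pole order $m$ of $\omega$ along $D_i$ exceeds $n_i$, write $z_1^{m}\omega=\sum_j g_j\,dz_j$ locally with not every $g_j$ vanishing on $D_i$; at a general point $x\in D_i$ (general on the component, not only in its tangent direction --- your sketch does include this) the vector $(g_j(x))_j$ is nonzero, and for a transversal curve through $x$ with generic tangent $v$ the leading coefficient of $\nu^*\omega$ at $p$ is a unit times $\sum_j g_j(x)v_j\neq 0$, so $\nu^*\omega$ has pole order exactly $m>n_i=\mathrm{mult}_p(\nu^*X_\infty)$, contradicting $\gamma(\omega)=0$. (2) The Bertini-over-finite-fields detour is unnecessary in context: the lemma is only invoked (in Lemma \ref{lem:cuttingCurvesOmega} and Proposition \ref{prop:AlbaneseChow}) after base change to $\ol{k}$, and the source \cite{BS} works in characteristic zero, so the ground field is infinite and classical Bertini suffices. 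Moreover your proposed fallback is the one genuinely shaky step of the write-up: $\ker(\gamma)$ is a product indexed by admissible curves \emph{defined over the base field}, and this indexing set grows under field extension, so ``compatibility of both sides with flat base change'' followed by faithfully flat descent is precisely what would need an argument rather than something one may invoke. I would simply delete that paragraph and assume $k=\ol{k}$, as the paper's applications do.
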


If $u\colon X\to P$ is a $k$-morphism from $X$ to a torsor $P$ for an algebraic group $G\in \sG$, we get by composition a morphism
\[u_C \colon C:=\ol{C}\times_{\ol{X}} X \to X \xrightarrow{u} P. \]
Write $\nu^*(u, P, G)$ for the corresponding object in $ C \backslash \sP$.
\begin{lemma}\label{lem:cuttingCurvesOmega}A triple $(u, P, G)\in \XP$ belongs to $\sM^\Omega_{\kX}$ if and only if for any admissible curve $\nu\colon \ol{C}\to \ol{X}$, we have $\nu^*(u,P,G)\in \sM^{\Omega}_{(\ol{C}, C_\infty)}$.\end{lemma}
\begin{proof} The necessity of the condition is clear. According to  Definition  \ref{defn:COmega} and Remark \ref{rmk:COmegakbarenough}, the statement of the Lemma can be checked over an algebraic closure $\ol{k}$ of $k$, so that we can assume $k = \ol{k}$. As above, the category $\sM^\Omega_{\kX}$ is equivalent to the category of morphisms $\psi\colon X\to G$ from $X$ to algebraic groups satisfying the condition $\psi^*(\Omega(G))\subseteq U(\kX)$. 
	Let now $\psi\colon X\to G$ be a $k$-morphism, and let $\omega\in \Omega(G)$. We have to show that $\psi^*(\omega)\in H^0(\ol{X}, \Omega^1_{\ol{X}}(X_\infty))$ (note that $\psi^*(\omega)$ is automatically closed), i.e. that $\eta:=\psi^*(\omega)$ has poles along $|X_\infty|$ of order bounded by the multiplicity of $X_\infty$, assuming that this condition is satisfied after restriction admissible to curves. But this is precisely the content of Lemma \ref{lem:restriction-injective}.
\end{proof}

\subsection{The universal regular quotient of the Chow group of zero cycles} 
We start by recalling the definition of the Kerz-Saito Chow group of $0$-cycles with modulus (see \cite{KerzSaitoDuke}).
For an integral scheme $\overline{C}$ over $k$ and for $E$ a closed subscheme of $\overline{C}$, we set
\begin{align*}
	G(\ol{C},E)
	&=\bigcap_{x\in E}\mathrm{Ker}\bigl(\cO_{\overline{C},x}^{\times}\to \cO_{E,x}^{\times} \bigr) \\&= \varinjlim_{E\subset U\subset \overline{C}}\Gamma(U, \ker(\cO_{\overline{C}}^\times \to \cO_{E}^\times)),
\end{align*}
where  $U$ runs over the set of open subsets of $\ol{C}$ containing $E$ (the intersection taking place in the function field $k(\ol{C})^\times$). We say that a rational function $f\in G(\ol{C},E)$ satisfies the modulus condition with respect to $E$.

Let $\kX = (\ol{X}, X_\infty)\in {\bf MCor}$ be a proper modulus pair and write $X$ for the complement $\ol{X}\setminus{|X_\infty|}$. Let  $Z_0(X)$ be the free abelian group on the set of closed points of $X$. Let $\ol{C}$ be an integral normal curve over $k$ and
let $\varphi_{\ol{C}}\colon \ol{C}\to \ol{X}$ be a finite morphism such that $\varphi_{\ol{C}}(\ol{C})\not \subset X_\infty$ (so $\ol{C}$ is admissible in the sense of \ref{ssec:CuttingCurvesII}).   The push forward of cycles along the restriction of $\varphi_{\ol{C}}$ to $C = \ol{C}\times_{\ol{X}}X$ gives a well defined group homomorphism
\[
\tau_{\ol{C}}\colon G(\ol{C},\varphi_{\ol{C}}^*(X_\infty))\to Z_0(X),
\]
sending a function $f$ to the push forward of the divisor ${\rm div}_{\ol{C}}(f)$.
\begin{defn}[Kerz-Saito]\label{def:DefChowMod-Definition}
	We define the Chow group $\CH_0(\kX) = \CH_0(\ol{X}|X_\infty)$ of 0-cycles of $\ol{X}$ with modulus $X_\infty$ as the cokernel of the homomorphism 
	\begin{equation}\label{eq;def:DefChowMod-Definition}
	\tau\colon G(\ol{X},X_\infty)\to Z_0(X)\;\text{ with  }
	G(\ol{X},X_\infty)=\bigoplus_{\varphi_{\ol{C}}\colon \ol{C}\to \ol{X}}G(\ol{C},\varphi_{\ol{C}}^*(X_\infty)) 
\end{equation}
where the sum runs over the set of finite morphisms $\varphi_{\ol{C}}\colon \ol{C}\to X$  from admissible curves.
\end{defn}
\begin{remark}\label{rem:CH-dim-1}
	Let $\pi_\kX\colon \Z_0(X)\to \CH_0(\kX)$. If $\dim(X)=1$, then by construction the map $\tau$ is injective, so $G(\ol{X},X_{\infty})=\Ker(\pi_\kX)$ and $\CH_0(\kX)\simeq \Pic(\ol{X},X_\infty)$. In particular, if $X_\infty'\geq X_\infty$ we have an isomorphism
	\begin{equation}\label{eq:trick-divisible}
	\frac{G(\ol{X},X'_{\infty})}{G(\ol{X},X_{\infty})} \cong \ker(\Pic(X,X'_\infty)\to \Pic(X,X_\infty))
	\end{equation}
\end{remark}
\begin{defn}\label{defn:CChow} Let $X\in \Sm(k)$ geometrically connected and $\kX \in \Comp(X)$ (see Definition \ref{defn;comp-modulus}). 
		Let $\CH_0(\kX)$ 
		be the Chow group of $0$-cycles with modulus of $\kX$. Let $\sM^{\CH}_{\kX}$ be the full subcategory of $\XP$ consisting of triples $(u, P, G)$ with the following property. For any algebraically closed field $L\supset k$, write $u_L\colon Z_0(X)^0 \to P_L(L)\cong G_L(L)$ for the induced morphism on zero-cycles of degree zero. Then $(u,P,G) \in \sM^{\CH}_{\kX}$  if and only if  $u_L$ factors as
	\[ \xymatrix{ Z_0(X_L)^0\ar[d]\ar[rd]^{u_L} &\\
		\CH_0(\kX_L)^0\ar[r]& G_L(L), 
	}
	\]
	where $\CH_0(\kX_L)^0$ is the image of $Z_0(X_L)^0$. 
\end{defn}
\begin{remark}\label{rmk:reciprocity-inplies-cond-II}
	Since $G$ is a reciprocity sheaf by \cite[Cor.3.2.5]{KSY-RecII}, for $u\in G(X)$ there always exists a modulus $\kX\in \Comp(X)$ such that the induced map $\Z_{\tr}(X)\to G$ factors through $h_0(\kX)=\omega_!h_0^\bcube(\kX)$ (cf. Remark \ref{rmk;reciprocity-only-(n)}(ii)). 
 By taking sections over an algebraically closed field $L\supset k$ we get that $u_L$ factors as follows:\[
	\begin{tikzcd}
		\Z_{\tr}(X)(L)\cong Z_0(X_L)\ar[d]\ar[dr,"u_L"]\\
		h_0(\kX)(L)\underset{(*)}{\cong} \CH_0(\kX_L)\ar[r] &G_L(L),
	\end{tikzcd}
\]
where the isomorphism $(*)$ follows from \cite[Remark 2.2.3]{KSY-RecII}.
In particular, for all $\kX'\to \kX\in \Comp(X)$, the diagram above factors further through the map $\CH_0(\kX'_L)\to \CH_0(\kX_L)$.
\end{remark}
\begin{lemma}\label{lem:MCH-satisfies-I-and-II}
	If $\ch(k)=0$, then $\sM^{\CH}_{\kX}$ satisfies the conditions $\mathrm{(I)}$ and $\mathrm{(II)}$ of Theorem \ref{thm:SerreChevalley1}.
\end{lemma}
\begin{proof}
	The assertion is obvious for (I). We prove it for (II).
	Take a homomorphism $f: H\to G$ in $\sG$ such that $\Ker(f)$ is finite and 
	$v:X\to H$ such that $u=f\cdot v: X\to G$ is in $\sM^{\CH}_{\kX}$.
	We want to prove $v:X\to H$ is in $\sM^{\CH}_{\kX}$, equivalently
	\[\Ker(\pi_{L,\kX}: Z_0(X_L)^0\to \CH_0(\kX_L)^0)\subset \Ker(v_L: Z_0(X_L)^0 \to H_L(L))\]
	for every algebraically closed field $L\supset k$.
	Since $H$ is a reciprocity sheaf, by Remark \ref{rmk:reciprocity-inplies-cond-II} there exists $\kX'=(\ol{X},X'_\infty)$ such that $v_L$ factors through $\CH_0(\kX'_L)^0$, so $\Ker(\pi_{L,\kX'})\subset \Ker(v_L)$. Since we can always factor through maps $\kX''\to \kX\in \Comp(X)$ as observed in Remark \ref{rmk:reciprocity-inplies-cond-II}, we can choose $X'_\infty\geq X_{\infty}$.
	This implies that $v_L$ induces a map 
\[ \ol{v}_L: \frac{G(\ol{X}_L,(X_\infty)_L)}{G(\ol{X}_L,(X'_\infty)_L)} \to H(L),\]
	where the left hand side is equal to 
	\[ \bigoplus_{\varphi_{\ol{C}}\colon \ol{C}\to \ol{X}_L}
	\frac{G(\ol{C},C_\infty)}{G(\ol{C},C'_\infty) } 
\;\text{ with } C_\infty=\varphi_{\ol{C}}^*((X_\infty)_L),\; 
C'_\infty=\varphi_{\ol{C}}^*((X'_\infty)_L) ,\] 
where $\varphi_{\ol{C}}\colon \ol{C}\to \ol{X}_L$ are as in \eqref{eq;def:DefChowMod-Definition} with $\ol{X}$ replaced by $\ol{X}_L$. 
Moreover, the image of $\ol{v}_L$ lies in $\Ker(f)(L)$, which is finite by the assumption. We will show that the group 
$\frac{G(\ol{C},C_\infty)}{G(\ol{C},C'_\infty) }$ is divisible, which will imply that the map $\ol{v}_L$ is the zero map. As observed in Remark \ref{rem:CH-dim-1}, we have
\[\frac{G(\ol{C},C_\infty)}{G(\ol{C},C'_\infty))}= \Ker(\Pic(\ol{C},C'_\infty))\to \Pic(\ol{C}_L,C_\infty)),\] 
	and the latter equals to $(L_+)^r\times (L^\times)^s$, where $r,s$ are some non-negative integers. In particular, the above group is divisible thanks to the assumption $\ch(k)=0$, so $\ol{u}_L$ is the zero map.
\end{proof}
\begin{prop}\label{prop:AlbaneseChow} Assume $\ch(k)=0$ and that $\kX$ is a smooth and proper modulus pair over $k$. Then the $\sM^{\CH}_{\kX}$-Albanese torsor $({\rm alb}_{\kX}^{\CH}, \Alb^{\CH,(1)}_\kX, \Alb^{\CH, (0)}_\kX)$ of $\kX$ exists. If $\dim \kX =1$, it agrees with the Rosenlich-Serre generalized Jacobian $(\varphi_{X_\infty}, \Jac_{(\ol{X}, X_\infty)}^{(1)}, \Jac_{(\ol{X}, X_\infty)}^{(0)})$ of \cite[V.4.20]{SerreGACC}.
\end{prop}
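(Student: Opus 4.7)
The approach is to identify the fibration $\sM^{\CH}_{\kX}$ with the already-studied $\sM^{\Omega}_{\kX}$ of Definition \ref{defn:COmega}, thereby reducing the existence statement to Theorem \ref{thm:existenceAlbaneseOmega}, and then to match the one-dimensional case against Rosenlicht--Serre by their universal property.

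First I would reduce to $k=\bar k$ algebraically closed: once the universal object is constructed over $\bar{k}$, the Galois descent argument of Serre \cite[V.22]{SerreGACC}, already used in Theorem \ref{thm:existenceAlbaneseOmega}, produces a universal object defined over $k$. Over $\bar k$ every torsor is trivial, so $\sM^{\CH}_{\kX}$ becomes the category of morphisms $u\colon X\to G$ to smooth connected commutative algebraic groups for which $u\colon Z_0(X)^0\to G(\bar k)$ factors through $\CH_0(\kX)^0$.

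The core step is the equality $\sM^{\CH}_{\kX}=\sM^{\Omega}_{\kX}$, which I would prove by cutting into curves. On the Chow side, the very definition of $\CH_0(\kX)$ via admissible curves in Definition \ref{def:DefChowMod-Definition} shows that $u\in \sM^{\CH}_{\kX}$ if and only if for every finite admissible $\nu\colon \ol{C}\to \ol{X}$ the restriction $u_C\colon C\to G$ kills $\tau_{\ol{C}}(G(\ol{C},\nu^*X_\infty))$, equivalently, $\nu^*(u,P,G)\in \sM^{\CH}_{(\ol{C},\nu^*X_\infty)}$. On the forms side, Lemma \ref{lem:cuttingCurvesOmega} gives the analogous curve-by-curve criterion for $\sM^{\Omega}_{\kX}$. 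Hence the sought equality reduces to the case of curves. In dimension one, the Rosenlicht--Serre construction \cite[V.4.20]{SerreGACC} presents $(\varphi_{X_\infty},\Jac^{(1)}_{\ol{X},X_\infty},\Jac^{(0)}_{\ol{X},X_\infty})$ as the initial object of $\sM^{\CH}_{(\ol{X},X_\infty)}$ essentially by definition of its universal property. At the same time, in characteristic zero one has $\dim \Jac^{(0)}_{\ol{X},X_\infty}=\dim_k H^0(\ol{X},\Omega^1_{\ol{X}}(X_\infty))$, and $\varphi_{X_\infty}$ satisfies the pullback-on-invariant-forms condition of Definition \ref{defn:COmega}; by the dimensional bound in Proposition \ref{FaltingsWustholz}, this forces the Rosenlicht--Serre Jacobian to also be the $\sM^{\Omega}$-Albanese of $(\ol{X},X_\infty)$. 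Comparing initial objects gives $\sM^{\CH}_{(\ol{X},X_\infty)}=\sM^{\Omega}_{(\ol{X},X_\infty)}$ on curves, and by the cutting argument also in arbitrary dimension.

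Once $\sM^{\CH}_{\kX}=\sM^{\Omega}_{\kX}$ is established, Theorem \ref{thm:existenceAlbaneseOmega} immediately produces the $\sM^{\CH}_{\kX}$-Albanese torsor; and the identification with Rosenlicht--Serre in dimension one is built into the curve case of the comparison. The main obstacle is precisely this one-dimensional matching of the two fibrations: the fact that a morphism $u\colon C\to G$ factors through $\Jac^{(0)}_{\ol{C},C_\infty}$ exactly when its pullbacks of invariant differentials acquire poles bounded by $C_\infty$. This step requires both the Rosenlicht--Serre universal property and the characteristic-zero coincidence of $\dim \Jac^{(0)}_{\ol{C},C_\infty}$ with $\dim_k H^0(\ol{C},\Omega^1_{\ol{C}}(C_\infty))$; outside characteristic zero additional phenomena obstruct this equality, which is the reason for the blanket hypothesis recorded in the Introduction.
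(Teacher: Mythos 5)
Your route is genuinely different from the paper's, and it has two gaps. The paper does \emph{not} establish the equality $\sM^{\CH}_{\kX}=\sM^{\Omega}_{\kX}$ at this stage: it proves only the inclusion $\sM^{\CH}_{\kX}\subseteq\sM^{\Omega}_{\kX}$ (by the same curve argument you sketch, via Rosenlicht--Serre's factorization through $\Jac_{\ol{C},C_\infty}$, \cite[V, Prop. 5]{SerreGACC} and Lemma \ref{lem:cuttingCurvesOmega}), and then feeds the resulting uniform dimension bound from Lemma \ref{lem:FW-pullback-injective-forms} directly into Serre's existence criterion, exactly as in the proof of Theorem \ref{thm:existenceAlbaneseOmega}. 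That one-sided argument is characteristic-free, and the paper relies on this: Proposition \ref{prop:AlbaneseChow} carries no hypothesis on $\car(k)$, and the proof of Proposition \ref{prop:AlbOmega-AlbChow-char0} explicitly records that the inclusion ``does not require $k$ to be of characteristic $0$''. Your proof is instead built on the full equality, whose reverse direction (whether via Proposition \ref{prop:SerreModulusOmega} or via your dimension count $\dim\Jac^{(0)}_{\ol{C},C_\infty}=h^0(\ol C,\Omega^1_{\ol C}(C_\infty))$) genuinely needs characteristic zero. So, as written, you prove a strictly weaker statement than the one asserted; the equality of the two fibrations is deliberately deferred to Proposition \ref{prop:AlbOmega-AlbChow-char0}, where the characteristic-zero hypothesis is added.

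Second, even granting characteristic zero, the step ``comparing initial objects gives $\sM^{\CH}=\sM^{\Omega}$ on curves'' is a non sequitur: two full subcategories of $\XP$ can share an initial object without coinciding. What rescues it is the (easy but unstated) fact that for both fibrations membership of $(u,P,G)$ is \emph{equivalent} to $u$ factoring through the universal map via a morphism of torsors, i.e.\ both $\sM^{\Omega}$ and $\sM^{\CH}$ are closed under postcomposition with morphisms in $\sP$ (for $\sM^{\Omega}$ because such a morphism pulls invariant forms back to invariant forms; for $\sM^{\CH}$ because the translation part acts trivially on degree-zero cycles). You need to say this, or replace the whole detour by the direct curve-level equivalence ``$C_\infty$ is a Rosenlicht--Serre modulus for $u$'' $\iff$ ``$u^*\Omega(G)\subseteq H^0(\ol C,\Omega^1_{\ol C}(C_\infty))$'', which is \cite[V, Prop. 5]{SerreGACC} in one direction and Proposition \ref{prop:SerreModulusOmega} in the other --- again available only in characteristic zero.
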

\begin{proof} If $\dim \kX=1$, this is precisely the content of \cite[V.Theorem 1]{SerreGACC}, and the very definition of modulus for a rational map and local symbols. For the general case, as in the proof of Theorem \ref{thm:existenceAlbaneseOmega}, it is enough to show the existence in the case $k=\ol{k}$ is algebraically closed (the descent argument is identical). Similarly, we can restrict to the category $\sM^{\CH,g}_{\kX}$ of morphisms which are generating. Then by Theorem \ref{thm:SerreChevalley1} and Lemma \ref{lem:MCH-satisfies-I-and-II}, it is then enough to show that there exists a uniform bound on the dimensions of the groups appearing in $\sM^{\CH,g}_{\kX}$. We do this by showing  that $\sM^{\CH}_{\kX}$ is a (full) subcategory of $\sM^{\Omega}_{\kX}$. The required bound will be then provided by Lemma \ref{lem:FW-pullback-injective-forms}.
	
	Suppose then that that $u\colon X\to G$ is a $k$-morphism from $X = \ol{X}\setminus |X_\infty|$ to a commutative connected algebraic group $G$ such that the induced map on zero cycles factors through $\CH_0(\kX)$. Let $\varphi\colon \ol{C}\to \ol{X}$ be a finite morphism from normal integral curve $\ol{C}$ such that $\varphi(\ol{C})\not\subset X_\infty$. Put $C= \ol{C}\times_{\ol{X}} X$ and $C_\infty = \varphi^*(X_\infty)$. Let $u_C\colon C\to G$ be the composition $u\circ \varphi$. Then we have
	\[ u_C( \div_{\ol{C}}(f)) = u ( \varphi_* (\div_{\ol{C}}(f))) = 0 \text{ in } G(k) \text{ for any } f\in G(\ol{C}, C_\infty).\]
	In particular, the divisor $C_\infty$ is a modulus in  the sense of Rosenlicht-Serre for the rational map (still denoted $u_C$) $u_C\colon \ol{C}\dashrightarrow G$. Therefore we have then a factorization (\cite[V, Theorem 2]{SerreGACC})
	\[ \xymatrix{ C \ar[d]^a\ar[dr]^{u_C} &\\
		\Jac_{\ol{C}, C_\infty}\ar[r]_-{\tilde{u}_C} & G},\]
	where $a\colon C\to\Jac_{\ol{C}, C_\infty}$ is the universal map from $C$ to its generalized Jacobian (with respect to a chosen $k$-rational point). But now we have
	\[ u_C^*(\Omega(G)) = a^*(\tilde{u}_C^* (\Omega(G)))\subseteq a^*(\Omega(\Jac_{\ol{C}, C_\infty})) = H^0(\ol{C}, \Omega^1_{\ol{C}}\tensor_{\ol{C}} \cO_{\ol{C}}(C_\infty)),\]
	where the last equality follows from \cite[V, Proposition 5]{SerreGACC}. This implies that $(u_C, G)\in \sM^{\Omega}_{(\ol{C}, C_\infty)}$ for any admissible curve and so, by Lemma \ref{lem:cuttingCurvesOmega}, we deduce that $(u,G)\in \sM^{\Omega}_{\kX}$.
\end{proof}
\subsubsection{} 
Assume $\ch(k)=0$. Let $\kX$ be as above. From the proof of Proposition \ref{prop:AlbaneseChow}, we deduce immediately the existence of a natural surjective map of torsors
\[\rho^{\Omega, \CH}_{\kX}\colon  \Alb^{\Omega, (1)}_{\kX}\to \Alb^{ \CH, (1)}_{\kX} \]
equivariant with respect to a surjective homomorphism of algebraic $k$ groups $\Alb^{\Omega, (0)}_{\kX}\to \Alb^{ \CH, (0)}_{\kX}$
such that ${\rm alb}^{\CH}_{\kX}=\rho^{\Omega, \CH}_{\kX}\circ  {\rm alb}^{\Omega}_{\kX}$.  We will see below that those maps are isomorphisms.
To further relate the Chow groups of zero cycles with modulus of a pair $\kX$ with the $\sM^\Omega_{\kX}$ Albanese construction of \ref{ssec:AlbaneseOmega}, we also recall the following
\begin{prop}[{\cite[III, Proposition 10]{SerreGACC} or \cite[Proposition 4.3.1]{KSY}}]\label{prop:SerreModulusOmega}
	Let $G$ be a commutative algebraic group over a field $K$ of characteristic zero. Let $\ol{C}$ be a proper normal curve over $K$, $C$ an open dense subscheme of $\ol{C}$ and $\psi\colon C\to G$ a $K$-morphism. Let $D$ be an effective divisor on $\ol{C}$ supported on $\ol{C} \setminus C$ such that
	\[ \psi^* (\Omega(G)) \subset H^0(\ol{C}, \Omega^1_{\ol{C}}\tensor_{\ol{C}} \cO_{\ol{C}}(D)). \]
	Then we have $\div_{\ol{C}}(f)^*\psi=0$ in $G(K)$ for any 
	$f\in G(\ol{C},D)$.
\end{prop}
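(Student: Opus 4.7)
The plan is to reduce the statement to the classical Rosenlicht--Serre theory of moduli for rational maps from curves to commutative algebraic groups, and then to verify the hypothesis case by case after decomposing $G$ via Chevalley's structure theorem.

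First I would make a standard descent step to reduce to the case that $K$ is algebraically closed: both the hypothesis (a containment of subspaces of a vector space of global differential forms) and the conclusion (the vanishing of an element of $G(K)$) are stable under faithfully flat base change. Next, I invoke Chevalley's structure theorem, which in characteristic zero gives a short exact sequence $0 \to L \to G \to A \to 0$ with $A$ an abelian variety and $L$ a smooth connected commutative linear $K$-group, and moreover $L \cong \G_m^s \times \G_a^t$. Since $A$ is projective, any morphism $C \to G$ composed with $G \to A$ extends uniquely to $\overline{C} \to A$, and an abelian variety receives no nontrivial modulus condition from a smooth proper curve (any divisor of a rational function maps to $0$ in $A(K)$, regardless of the support of $D$). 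Using the exact sequence, this reduces the statement to the two separate cases $G = \G_m$ and $G = \G_a$.

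For $G = \G_m$ the space $\Omega(G)$ is one-dimensional, generated by $dt/t$, and $\psi^*(dt/t) = d\psi/\psi$. The hypothesis thus states that $d\psi/\psi$ has poles along $D$ of order bounded by the multiplicities of $D$. A local argument at each point $P \in |D|$ shows this forces $\psi$ itself, viewed as a unit in the completed local ring, to lie in $G(\overline{C}, D)_P$ (modulo units of order $\leq n_P$ correction). Then for any $f \in G(\overline{C}, D)$, the tame symbol reciprocity law $\prod_P (f, \psi)_P = 1$ together with the identity $(f, \psi)_P = 1$ whenever either factor is trivial modulo $D$ at $P$ yields $\psi(\div_{\overline{C}}(f)) = 1$ in $\G_m(K)$. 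For $G = \G_a$ the analogous generator is $dt$, so the hypothesis reads $d\psi \in H^0(\overline{C}, \Omega^1_{\overline{C}}(D))$. For $f \in G(\overline{C}, D)$, I would apply the residue theorem to the meromorphic differential $\psi \cdot df/f$ on $\overline{C}$: at points $P \notin |D|$, the residue is $\mathrm{ord}_P(f) \cdot \psi(P)$, while at points $P \in |D|$, writing $f = 1 + u^{n_P} h$ in a local parameter $u$ (since $f \in G(\overline{C}, D)$) and $d\psi = u^{-n_P} \omega$ with $\omega$ regular, one computes that $\psi \cdot df/f$ is a regular differential at $P$, hence has zero residue there. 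Summing over all $P$ yields $\sum_P \mathrm{ord}_P(f) \cdot \psi(P) = 0$ in $K$, which is the desired identity $\psi(\div_{\overline{C}}(f)) = 0$ in $\G_a(K)$.

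The main obstacle is the additive case: the computation at a point $P \in |D|$ of high multiplicity requires integration-by-parts manipulations that are only valid in characteristic zero (in positive characteristic, Artin--Schreier phenomena obstruct integrating $d\psi$ and the statement genuinely fails). The combinatorics of pole orders --- $f - 1$ vanishes to order $\geq n_P$ while $d\psi$ has pole of order $\leq n_P$ --- conspire exactly to make $\psi \cdot df/f$ regular at $P$, and tracking this carefully is the one nontrivial local computation. Since the result is already classical (going back to Serre and recast in \cite{KSY}), in the paper itself it is invoked as a black box and no original argument is needed.
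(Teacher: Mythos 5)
The paper does not actually prove this proposition: it is quoted from Serre \cite{SerreGACC} (and \cite{KSY}) and used as a black box, so there is no internal argument to compare yours against. Judged on its own terms, your sketch gets the three building blocks right --- Abel's theorem for the abelian quotient, Weil reciprocity for $\G_m$, and the residue-theorem computation for $\G_a$ (which is indeed where characteristic zero enters) --- but the step ``using the exact sequence, this reduces the statement to the two separate cases $G=\G_m$ and $G=\G_a$'' is a genuine gap. The Chevalley sequence $0\to L\to G\to A\to 0$ does not split in general (the universal vector extension of an abelian variety, or the $\G_m$-extension attached to a non-torsion point of the dual abelian variety), there is no retraction $G\to L$, and the characters $L\to\G_m$, $L\to\G_a$ do not extend to $G$. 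Knowing that $\psi(\div_{\ol C}(f))$ dies in $A(K)$ places it in $L(K)$, but the individual terms $\psi(P)$ of that sum do not lie in $L(K)$, so you cannot apply a coordinate of $L$ termwise and thereby reduce to a morphism $C\to\G_m$ or $C\to\G_a$; the d\'evissage does not go through as stated.

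The classical proof closes this gap with the local-symbol formalism rather than a product decomposition: Rosenlicht's theorem gives \emph{some} modulus for $\psi$, hence local symbols $(f,\psi)_P\in G(K)$ satisfying reciprocity, and the statement reduces to the local vanishing $(f,\psi)_P=0$ for $P\in|D|$ and $f\equiv 1\bmod n_P P$. There one uses that $f\mapsto(f,\psi)_P$ on principal units is an algebraic homomorphism from a unipotent group, hence in characteristic zero (where $\Hom(\G_a,A)=\Hom(\G_a,\G_m)=0$) lands in the unipotent part of $G$, where it is computed by the residue formula $\mathrm{res}_P(\psi^*\omega\cdot df/f)$ for invariant differentials $\omega$ of $G$; your pole-order bookkeeping from the $\G_a$ case then applies verbatim. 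As a smaller point, your description of the $\G_m$ case is off: the hypothesis does not force $\psi$ to be congruent to $1$ along $D$ (it may well have zeros and poles on $|D|$, since $d\psi/\psi$ then has only a simple pole); the tame symbol $(f,\psi)_P$ vanishes at $P\in|D|$ simply because $f$ is a unit there with $f(P)=1$, and Weil reciprocity does the rest.
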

\begin{prop}\label{prop:AlbOmega-AlbChow-char0}Let $k$ be a field of characteristic zero. Then the   $\sM^{\CH}_{\kX}$-Albanese torsor of $\kX$ agrees with the $\sM^{\Omega}_{\kX}$-Albanese torsor $({\rm alb}_{\kX}^{\Omega}, \Alb^{\Omega,(1)}_\kX, \Alb^{\Omega, (0)}_\kX)$ of Theorem \ref{thm:existenceAlbaneseOmega}. 
	\begin{proof} It is enough to show that the two categories  $\sM^{\CH}_{\kX}$ and $\sM^{\Omega}_{\kX}$ have the same objects (as they are both full subcategories of $\XP$). According to Definitions \ref{defn:CChow} and \ref{defn:COmega}, it is enough to show the statement under the assumption that $k=\ol{k}$ is algebraically closed. 
We already know thanks to the proof of Proposition \ref{prop:AlbaneseChow} that $\sM^{\CH}_{\kX}$ is a full subcategory of $\sM^{\Omega}_{\kX}$
(this does not require $k$ to be of characteristic $0$).
To prove the other inclusion, let $u\colon X\to G\in \sM^{\Omega}_{\kX}$. By functoriality, for any $\kX$-admissible  morphism $\varphi\colon \ol{C}\to \ol{X}$ from a normal integral curve, the composition $u_C\colon \ol{C}\times_{\ol{X}} X\to X\to G$ satisfies $u^*_C(\Omega(G)) \subseteq H^0(\ol{C}, \Omega^1_{\ol{C}}\tensor_{\ol{C}} \cO_{\ol{C}}(C_\infty))$, where $C_\infty$ denotes as before the pullback $\varphi^*(X_\infty)$. By Proposition \ref{prop:SerreModulusOmega} above, we have $u_C(\div_{\ol{C}}(f))= u( \varphi_{*} \div_{\ol{C}}(f)) = 0$ in $G(k)$ for any $f\in G(\ol{C}, C_\infty)$, so that the map induced by $u$ on the group of zero cycles of degree zero $Z_0(X)^0$ factors through $\CH_0(\kX)^0$. The same argument applies to any base-change to  $L\supset k$ algebraically closed, so that $(u,G)\in \sM^{\CH}_{\kX}$ as required.
	\end{proof}
\end{prop}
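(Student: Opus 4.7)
The key observation is that both $\sM^{\CH}_{\kX}$ and $\sM^{\Omega}_{\kX}$ are defined as \emph{full} subcategories of $\XP$, so to prove that their initial objects (Albanese torsors) coincide it suffices to show that they have the same objects. Moreover, both conditions are defined via base change to algebraically closed field extensions $L \supset k$, so I would immediately reduce to the case $k = \overline{k}$ where every torsor is trivial and one only has to compare $k$-morphisms $u\colon X \to G$ for $G$ a smooth commutative connected algebraic group.

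One inclusion, namely $\sM^{\CH}_{\kX} \subseteq \sM^{\Omega}_{\kX}$, has already been established inside the proof of Proposition~\ref{prop:AlbaneseChow}: the argument there (valid in any characteristic) restricts $u$ along an admissible curve $\varphi\colon \ol{C} \to \ol{X}$, uses the universal property of the Rosenlicht--Serre generalized Jacobian $\Jac_{\ol{C},C_\infty}$ to factor the rational map $u_C\colon \ol{C}\dashrightarrow G$, and invokes \cite[V, Proposition~5]{SerreGACC} which identifies $\Omega(\Jac_{\ol{C},C_\infty})$ with $H^0(\ol{C}, \Omega^1_{\ol{C}}(C_\infty))$. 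Then one glues over all admissible curves through Lemma~\ref{lem:cuttingCurvesOmega}.

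The real content of the proposition is the reverse inclusion $\sM^{\Omega}_{\kX} \subseteq \sM^{\CH}_{\kX}$, and this is where the characteristic zero hypothesis enters. My plan is to take $(u,G) \in \sM^{\Omega}_{\kX}$ and again cut by an admissible morphism $\varphi\colon \ol{C} \to \ol{X}$ using Lemma~\ref{lem:cuttingCurvesOmega}, so that the composition $u_C\colon C \to G$ satisfies $u_C^*(\Omega(G)) \subseteq H^0(\ol{C}, \Omega^1_{\ol{C}}(C_\infty))$. Now apply Proposition~\ref{prop:SerreModulusOmega} (valid in characteristic zero) to conclude that $u_C(\div_{\ol{C}}(f)) = 0$ in $G(k)$ for every $f \in G(\ol{C}, C_\infty)$. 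Pushing forward via $\varphi_*$ and summing over all admissible curves, this precisely says that the induced map $Z_0(X) \to G(k)$ annihilates the image of $\bigoplus_{\varphi_{\ol C}} G(\ol{C}, \varphi_{\ol C}^*(X_\infty))$, hence factors through the Kerz--Saito Chow group $\CH_0(\kX)$; running the identical argument over any algebraically closed extension $L \supset k$ then shows $(u,G)\in \sM^{\CH}_{\kX}$.

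The main obstacle is entirely localized in the invocation of Proposition~\ref{prop:SerreModulusOmega}: this is the characteristic-zero converse to the fact that a rational map to an algebraic group with modulus $D$ has logarithmic differential pullback with pole order bounded by $D$. The converse fails in positive characteristic because of Artin--Schreier type phenomena (the modulus condition seen by differential forms becomes strictly coarser than the one seen by rational functions), which is precisely the reason the paper keeps the standing assumption $\car(k) = 0$ throughout. Once this input is available, the rest of the proof is a routine comparison via cutting by curves.
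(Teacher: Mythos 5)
Your proposal is correct and follows essentially the same route as the paper: reduce to comparing objects over $k=\ol{k}$, quote the inclusion $\sM^{\CH}_{\kX}\subseteq\sM^{\Omega}_{\kX}$ from the proof of Proposition \ref{prop:AlbaneseChow}, and obtain the converse by cutting with admissible curves (Lemma \ref{lem:cuttingCurvesOmega}) and applying Proposition \ref{prop:SerreModulusOmega}, then checking the condition after base change to algebraically closed extensions. Your added remark correctly isolates Proposition \ref{prop:SerreModulusOmega} as the sole point where characteristic zero is used.
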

\begin{remark}\label{rmk:alb-surjective} Suppose $\ch(k)=0$ and that $k=\overline{k}$ is algebraically closed and let $\kX$ be a smooth proper integral modulus pair. Then the morphism 
	\[\rho_{\kX}\colon \CH_0(\kX)^0 \to \Alb_{\kX}(k)\]
	to the Albanese variety $\Alb_{\kX} = \Alb^{\CH, (0)}_\kX$ of $\kX$ induced by ${\rm alb}_{\kX}$ is surjective and regular. We can reformulate the universal property in $\sM^{\CH}_{\kX}$ by saying that $\Alb_{\kX}$ is the universal regular quotient of the Chow group of zero cycles with modulus. As such it agrees, a posteriori, with the Albanese variety $\Alb(\ol{X}|X_\infty)$ of \cite[Theorem 1.1]{BK}. If $k=\mathbb{C}$ and $|X_\infty|$ is a strict normal crossing divisor, it agrees with the generalized Jacobian $J^{d}_{\ol{X}|X_\infty}$ (for $d=\dim {\ol{X}}$) studied in \cite[10.2]{BS}. Despite the fact that it is defined starting from a different modulus condition on algebraic cycles, it agrees also with the Albanese variety with modulus $\Alb(\ol{X}, X_\infty)$ defined by Russell \cite{RussellANT}. This is a consequence of Lemma \ref{lem:cuttingCurvesOmega} and the fact that both {$\Alb(\ol{X}, X_\infty)$ and $\Alb^{\Omega,(0)}(\kX)$} agree with the Rosenlicht-Serre generalized Jacobian in the one-dimensional case\footnote{An independent (and explicit) proof of the fact that over $\mathbb{C}$ the generalized Jacobian $J^{d}_{\ol{X}|X_\infty}$ agrees with $\Alb(\ol{X}, X_\infty)$ defined by Russell has been given by T.~Yamazaki \cite{YamazakiLetter}, using Hodge-theoretic methods.}. 
\end{remark}
\subsection{The Albanese scheme with modulus} \label{ssec:AlbaneseScheme} 
Assume $\ch(k)=0$.
We  can now extend the construction of Ramachandran \cite{Ramachandran} to the modulus setting.  Let $X\in \Sm(k)$ be geometrically integral and $\kX\in \Comp(X)$. Thanks to Theorem \ref{thm:existenceAlbaneseOmega}, we have a map, defined over $k$, 
\begin{equation}\label{eq:univ-map-alb-torsor}{\rm alb}_{\kX}^{\Omega, (1)}\colon X \to \Alb^{\Omega, (1)}_{\kX}\end{equation}
universal for morphisms from $X$ to torsors under commutative algebraic groups in $\sM^{\Omega}_{\kX}$ (in the following, we shall say ``to torsors under commutative algebraic groups with modulus $\kX$''). Let $\mathbf{Alb}^{\Omega}_{\kX}\in \sG^*$ be the $k$-group scheme $\coprod_{n\in \Z}  (\Alb^{\Omega, (1)}_{\kX})^{\tensor n}$ constructed in \ref{ssec:torsor-construction}. The universal map \eqref{eq:univ-map-alb-torsor} composed with the natural inclusion of $\Alb^{\Omega, (1)}_{\kX}$ in $\mathbf{Alb}^{\Omega}_{\kX}$ gives then a canonical morphism 
\[ a_{\kX}\colon X\to \mathbf{Alb}^{\Omega}_{\kX}\]
which is now universal for morphisms to $k$-group schemes in the appropriate sense. By construction, the $k$-group scheme $\mathbf{Alb}^{\Omega}_{\kX}$ is an extension
\begin{equation}\label{eq:alb0-es}
	0\to  \Alb^{\Omega, (0)}_{\kX} \to \mathbf{Alb}^{\Omega}_{\kX} \to \underline{\Z}\to 0.
\end{equation}
If $X$ has a $k$-rational point, the extension is split, i.e. we have an isomorphism $\mathbf{Alb}^{\Omega}_{\kX}  \cong \Alb^{\Omega, (0)}_{\kX} \times \underline{\Z}$. This happens in particular when $k$ is algebraically closed, and corresponds to the fact that we can trivialize the torsor $\Alb^{\Omega, (1)}_{\kX} \cong \Alb^{\Omega,(0)}_{\kX}$. 
Recall now the following Proposition, which follows from \cite[Theorem 4.1.1]{KSY} (while the transfer structure follows from \cite[Proof of Lemma 3.2]{SS})
\begin{prop}\label{prop:Alb-Omega-PST} The $k$-group scheme $\mathbf{Alb}^{\Omega}_{\kX}$, regarded as \'etale sheaf on $\Sm(k)$,  has a canonical structure of sheaf with transfers, and as such it has reciprocity in the sense of \cite[Definition 2.1.3]{KSY}.
\end{prop}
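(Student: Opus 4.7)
The plan is to exploit the canonical decomposition $\mathbf{Alb}^{\Omega}_{\kX}=\coprod_{n\in \Z}P^{(n)}$, where $P^{(n)}=(\Alb^{\Omega,(1)}_{\kX})^{\otimes n}$ is a $k$-torsor under the smooth commutative connected algebraic group $G:=\Alb^{\Omega,(0)}_{\kX}\in \sG$, and to use the short exact sequence
\[ 0\to G \to \mathbf{Alb}^{\Omega}_{\kX}\to \underline{\Z}\to 0 \]
to reduce both claims to statements about $G$ itself, to which the two cited results apply directly.

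For the existence of the transfer structure, I would invoke \cite[Lemma 3.2]{SS}, which provides a canonical (and unique) structure of étale sheaf with transfers on the étale sheaf represented by any object of $\sG$, hence in particular on $G$. To extend this to $\mathbf{Alb}^{\Omega}_{\kX}\in \sG^*$ one notes that for a connected $Y\in \Sm(k)$ every section $s\in \mathbf{Alb}^{\Omega}_{\kX}(Y)$ lands in a single component $P^{(n)}(Y)$, because $\underline{\Z}$ is discrete; functoriality of $\pi_0$ shows that the transfer of a finite correspondence $\alpha \in \Cor(Y',Y)$ with $Y'$ connected maps $P^{(n)}(Y)$ into $P^{(n\cdot \deg(\alpha/Y'))}(Y')$, and within these components the transfer is forced to be the one induced by the torsor action of $G$ and its $\SS$-transfers. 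Uniqueness in \cite[Lemma 3.2]{SS} guarantees the gluing is well-defined and compatible with the group law.

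For the reciprocity property, by Remark \ref{rmk;reciprocity-only-(n)} it suffices to show that for every $Y\in \Sm(k)$ and every section $s\colon \Z_{\tr}(Y)\to \mathbf{Alb}^{\Omega}_{\kX}$ there is some $\kY \in \Comp(Y)$ and $n\geq 0$ through which $s$ factors as a map to $\omega_!h_0^{\bcube}(\kY^{(n)})$. As above, we may assume $Y$ is connected, in which case $s$ factors through one component $P^{(n)}\hookrightarrow \mathbf{Alb}^{\Omega}_{\kX}$; pre-composition with a suitable translation identifies this with a morphism $Y\to G$ into the smooth commutative \emph{algebraic} group $G$. Applying \cite[Theorem 4.1.1]{KSY} to $G$ yields the required factorization. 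Since reciprocity is preserved under the passage to subsheaves, the full sheaf $\mathbf{Alb}^{\Omega}_{\kX}$ inherits reciprocity.

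The main obstacle (though a mild one) is that neither \cite[Lemma 3.2]{SS} nor \cite[Theorem 4.1.1]{KSY} is stated for the locally-of-finite-type setting $\sG^*$; the $\pi_0 = \underline{\Z}$ extension must be handled by hand, as above. A secondary but essentially formal point is Galois descent: both constructions are compatible with base change along $k\hookrightarrow \ol{k}$ and are uniquely characterized by their restriction to the algebraic closure, so the structure descends along $\Gal(\ol{k}/k)$ without further effort.
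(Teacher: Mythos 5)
Your proposal is correct and follows essentially the same route as the paper, which simply cites \cite[Proof of Lemma 3.2]{SS} for the transfer structure and \cite[Theorem 4.1.1]{KSY} for reciprocity, reading those results as already applying to smooth commutative group schemes locally of finite type, so your component-by-component reduction to the identity component is extra care rather than a genuinely different argument. The one fragile step is the ``suitable translation'' identifying a component $P^{(n)}$ with $G$, which requires a rational point of the torsor and hence really does need the Galois descent you mention at the end (or, more simply, the observation that $\mathbf{Alb}^{\Omega}_{\kX}$ is itself an object of $\sG^*$ to which the cited theorems apply directly).
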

Thanks to the Proposition, there is a unique map of preshaves with transfers
\[a_{\kX}\colon \Z_{tr}(X) \to \mathbf{Alb}^{\Omega}_{\kX}\]
extending the map $a_{\kX}\colon X\to \mathbf{Alb}^{\Omega}_{\kX}$ defined above. 

\subsubsection{}When $\kX$ is moreover smooth over $k$, we can apply Proposition \ref{prop:AlbaneseChow} to get a map, defined over $k$, ${\rm alb}_{\kX}^{\CH, (1)}\colon X \to \Alb^{\CH, (1)}_{\kX}$, universal for morphisms to torsors in $\sM^{\CH}_{\kX}$. We can repeat the constructions of the previous point to get yet another $k$-group scheme $\mathbf{Alb}^{\CH}_{\kX}\in \sG^*$ together with a canonical morphism
\[a_{\kX}^{\CH}\colon X\to \mathbf{Alb}_{\kX}^{\CH}.\]
The group  $\mathbf{Alb}_{\kX}^{\CH}$ has by the same argument of Proposition \ref{prop:Alb-Omega-PST} a canonical structure of \'etale sheaf with transfers, with reciprocity in the sense of \cite[Definition 2.1.3]{KSY}. This gives us a unique map of presheaves with transfers
\[a_{\kX}^{\CH}\colon \Z_{tr}(X)\to \mathbf{Alb}_{\kX}^{\CH}\]
extending $a_{\kX}^{\CH}$. By Proposition \ref{prop:AlbOmega-AlbChow-char0}, we canonically identify $\mathbf{Alb}_{\kX}^{\CH}$ with $\mathbf{Alb}_{\kX}^{\Omega}$. 

\subsection{The maximal semi-abelian quotient} Let $X\in \Sm(k)$ be geometrically connected. 
Serre's Albanese map
of Example \ref{ex:SAb-case} can be extended to a unique map of presheaves with transfers:
\begin{equation}\label{eq:SerreAlb}\Z_{tr}(X)\to \mathbf{Alb}_{X}\end{equation}
where $\mathbf{Alb}_X$ is the semi-abelian Albanese scheme of $X$, defined by \cite{Ramachandran} or \cite[Lemma 3.2]{SS} using the same recipe of Section \ref{ssec:AlbaneseScheme}. Since $\mathbf{Alb}_{X}$ is semi-abelian, it is a homotopy invariant \'etale sheaf with transfers. Thus, taking sections over any field $L\supset k$, we have a factorization of the map \eqref{eq:SerreAlb}  through
\[h_0^{\mathbb{A}^1}(X_L) \to \mathbf{Alb}_{X}(L),\]
where $h_0^{\mathbb{A}^1}(X_L) $ denotes the zeroth Suslin homology group of $X_L= X\tensor_k L$. If $X$ admits a $k$-point, the  scheme  $\mathbf{Alb}_{X}$ decomposes as $\underline{\Z}\times \Alb^{(0)}_{X}$, where $\Alb^{(0)}_{X}$ denotes Serre's semi-abelian Albanese variety of $X$. In particular, we get for any $L\supset k$ algebraically closed an induced (surjective) map on the degree zero part
\begin{equation}\label{eq:SerreAlb-H0} h_0^{\mathbb{A}^1}(X_L)^0 \to {\Alb}_{X_L}^{(0)}(L) = \Alb_{X_L}(L).\end{equation}
Now assume $\ch(k)=0$.
Let $\kX\in \Comp(X)$. By e.g. \cite[Proposition 2.6]{tor-div-rec}, there is a natural surjection $\CH_0(\kX)\to h_0^{\mathbb{A}^1}(X)$, which can be composed with \eqref{eq:SerreAlb-H0} to give a surjective homomorphism
\[ \CH_0(\kX_L)^0\to \Alb_{X_L}(L).\]
By Definition \ref{defn:CChow}, we have then that the object $({\rm alb}_X, \Alb^{(1)}_{X}, \Alb^{(0)}_{X} )$ belongs to $\sM_{\kX}^{\CH}$. By Proposition \ref{prop:AlbaneseChow}, the universal property of $\Alb^{\CH, (0)}_{\kX}$ gives a  unique surjection 
\[\Alb^{\CH, (0)}_{\kX}\to \Alb^{(0)}_{X} \]
(and similarly for $\Alb^{\CH, (1)}_{\kX}$ and $\Alb^{(1)}_{X}$), which factors through the semi-abelian quotient $\Alb^{\CH, (0)}_{\mathbf{SAb}, \kX}$ of $\Alb^{\CH, (0)}_{\kX}$. It is straightforward to show that  if $X$ has a $k$-rational point, the algebraic groups   $\Alb^{\CH, (0)}_{\mathbf{SAb}, \kX}$ and $\Alb_X^{(0)}$ are isomorphic. 
We have therefore  the following 
\begin{prop}\label{prop:max-semiab-quotient}Let $\kX$ be as above, and suppose that $X$ has a $k$-rational point. Then the semi-abelian part of $\Alb_{\kX}^{\CH, (0)}$ agrees with Serre's semi-abelian Albanese variety of $X$. 
\end{prop}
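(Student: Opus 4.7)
The plan is to produce an inverse to the natural surjection $\Alb^{\CH,(0)}_{\mathbf{SAb},\kX}\twoheadrightarrow \Alb^{(0)}_{X}$ already constructed in the paragraph preceding the proposition, and to verify that the two maps are mutually inverse by invoking the uniqueness clauses of the two universal properties. Throughout, I would exploit the $k$-rational point $x_0\in X(k)$ to rigidify all torsors in sight: it allows us to identify $\Alb^{\CH,(1)}_{\kX}\cong \Alb^{\CH,(0)}_{\kX}$ and $\Alb^{(1)}_X\cong \Alb^{(0)}_X$, so that both Albanese constructions can be treated as pointed maps to algebraic groups rather than torsors.

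For the inverse map, I would first observe that composing the universal map $\mathrm{alb}^{\CH}_{\kX}\colon X\to \Alb^{\CH,(1)}_{\kX}$ with the quotient $\Alb^{\CH,(0)}_{\kX}\twoheadrightarrow \Alb^{\CH,(0)}_{\mathbf{SAb},\kX}$ (acting equivariantly on torsors) yields a morphism $X\to \Alb^{\CH,(1)}_{\mathbf{SAb},\kX}$. Using $x_0$ to trivialize the target, this becomes a pointed morphism from $X$ to the semi-abelian variety $\Alb^{\CH,(0)}_{\mathbf{SAb},\kX}$. By the universal property of Serre's semi-abelian Albanese (Example \ref{ex:SAb-case}), there is a unique homomorphism of semi-abelian varieties
\[
\psi\colon \Alb^{(0)}_X \longrightarrow \Alb^{\CH,(0)}_{\mathbf{SAb},\kX}
\]
through which the map above factors.

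Let $\phi\colon \Alb^{\CH,(0)}_{\mathbf{SAb},\kX}\to \Alb^{(0)}_X$ be the surjection constructed just before the proposition. To conclude, I would verify that $\phi\circ\psi=\mathrm{id}$ and $\psi\circ\phi=\mathrm{id}$. Both identities reduce, after the rigidification by $x_0$, to the assertion that a certain self-map of an algebraic group making the relevant triangle commute must be the identity; this is precisely the uniqueness clause of the universal property (Serre's in one case, the Chow-with-modulus one in the other). Since both $\Alb^{\CH,(0)}_{\mathbf{SAb},\kX}$ and $\Alb^{(0)}_X$ are semi-abelian, and both sit under a common map from $X$, a descent argument as in the proof of Theorem \ref{thm:existenceAlbaneseOmega} reduces the verification to the case $k=\overline{k}$, where the two universal properties can be compared directly.

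The main (and essentially only) subtlety I anticipate is checking that $\Alb^{\CH,(0)}_{\mathbf{SAb},\kX}$ really receives the Serre Albanese universally among semi-abelian varieties--- that is, that any pointed morphism $X\to H$ with $H$ semi-abelian factors through it. This amounts to saying that such an $H$ lies in $\sM^{\CH}_{\kX}$, which follows from the fact that semi-abelian varieties are $\A^1$-invariant (so that the map on zero cycles factors through $h_0^{\A^1}(X)$, a quotient of $\CH_0(\kX)$ by \cite[Proposition 2.6]{tor-div-rec}). Once this is in place, the universal properties produce $\phi$ and $\psi$ and force them to be mutually inverse.
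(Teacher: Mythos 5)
Your proposal is correct and follows essentially the same route as the paper: the paper's preceding paragraph already establishes (via the surjection $\CH_0(\kX)\to h_0^{\A^1}(X)$ from \cite[Proposition 2.6]{tor-div-rec}) that Serre's Albanese lies in $\sM^{\CH}_{\kX}$ and hence receives a surjection from $\Alb^{\CH,(0)}_{\kX}$ factoring through the semi-abelian quotient, and then declares the resulting isomorphism "straightforward". Your rigidification by the rational point and the mutual-inverse check via the two uniqueness clauses is exactly the omitted verification, and your identified subtlety (that any map to a semi-abelian variety lies in $\sM^{\CH}_{\kX}$ by homotopy invariance) is precisely the paper's key input.
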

\subsection{Universal problem for presheaves with transfers} \label{ssec:univ-prop}

We suppose $\ch(k)=0$ and continue with the notations of \ref{ssec:AlbaneseScheme}. As observed in Remark \ref{rmk;reciprocity-only-(n)} (ii), for any {$F \in \RSC(k,\Z)$} and for every section $g : \Z_{tr}(X) \to F$, there exists $\kX\in \Comp(X)$ such that g factors through $\omega_!h_0^{\bcube}(\kX)$. In this case, we say that $g$ \textit{has modulus} $\kX$. We apply this to the case $\mathbf{Alb}^{\CH}_{\kX}$.
\begin{prop}\label{prop:UniversalPropertyPST}
	Let $X\in \Sm(k)$ be geometrically connected and $\kX\in \Comp(X)$.
	Then the canonical map $a_{\kX}\colon \Z_{tr}(X) \to \mathbf{Alb}_{\kX}^{\CH}$ factors through $\omega_!h_0^{\bcube}(\kX)$ and it is universal with respect to this property: for any smooth commutative $k$-group scheme $G$, seen as \'etale reciprocity sheaf, and for any section $g\colon \Z_{tr}(X)\to G$ with modulus $\kX$, there is a unique morphism $\tilde{g}\colon \mathbf{Alb}^{\CH}_{\kX} \to G$ in $\PST$ such that \[
	\begin{tikzcd}
		\Z_{tr}(X)\ar[rr,"g"]\ar[dr,swap,"a_{\kX}"]&&G\\
		&\mathbf{Alb}^{\CH}_{\kX}.\ar[ur,swap,"\tilde{g}"]	
	\end{tikzcd}
	\]
	\begin{proof}
		We first prove that $a_{\kX}\colon \Z_{tr}(X) \to \mathbf{Alb}_{\kX}^{\CH}$ factors through $\omega_!h_0^{\bcube}(\kX)$. We have to show that for any smooth $k$-scheme $S$, the map $\Z_{tr}(X)(S) \to \mathbf{Alb}_{\kX}^{\CH}(S)$ factors through $h_0(\kX)(S)$ with $h_0(\kX)=\omega_!h_0^\bcube(\kX)$ (cf. Remark \ref{rmk;reciprocity-only-(n)}(ii)). 
Since $\mathbf{Alb}_{\kX}^{\CH}$, as any commutative $k$-group scheme, satisfies global injectivity, it is enough to check the factorization after passing to the function field $k(S)$ of $S$, and in fact even to its algebraic closure. Let then $K\supset k$ be an algebraically closed field, and look at the map $\Z_{\tr}(X)(K) \to \mathbf{Alb}_{\kX_K}^{\CH}(K)$. Let $\pi_0(X)$ be the spectrum of the integral closure of $k$ in $\Gamma(X, \cO_X)$. The assignment $X\mapsto \pi_0(X)$ is universal for morphisms from $X$ into \'etale $k$-schemes. 
		Since $X$ is geometrically integral by assumption, we have $\Z_{\tr}(\pi_0(X)) = \underline{\Z}$ (as \'etale sheaves), and the map $\Z_{\tr}(X_K)\to \mathbf{Alb}_{\kX_K}^{\CH}$ induces then a map \[\Z_{\tr}(X_K)^0\to \Alb^{\CH,(0)}_{\kX_K},\] where $\Z_{\tr}(X_K)^0$ denotes the kernel of $\Z_{\tr}(X_K) \to \Z_{\tr}(\pi_0(X_K)).$ We can then identify $\Z_{tr}(X_K)^0(K)$ with the group of $0$-cycles of degree zero $Z_0(X_K)^0$ of $X_K$. Since $\kX$ is a proper modulus pair, we have $\CH_0(\kX_K)^0 = h_0(\kX_K)^0(K)$ thanks to \cite[Remark 2.2.3]{KSY-RecII},
		and thus the claim follows from Proposition \ref{prop:AlbaneseChow}. The same argument proves the universal property as well.
	\end{proof}
\end{prop}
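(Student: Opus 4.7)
The plan is to deduce everything from the universal property of the $\sM^{\CH}_{\kX}$-Albanese torsor established in Proposition \ref{prop:AlbaneseChow}, together with the fact that every smooth commutative $k$-group scheme is a reciprocity sheaf and, as such, satisfies the global injectivity property of \cite[Theorem 0.2]{SaitoPurity}. First I would address the factorization of $a_{\kX}$ through $\omega_!h_0^{\bcube}(\kX)$. Global injectivity of $\mathbf{Alb}^{\CH}_{\kX}$ reduces the claim, for a fixed $S\in \Sm(k)$, to checking the analogous statement after passing to an algebraic closure $K$ of the function field $k(S)$.

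Over such a $K$, because $X$ is geometrically connected, the module $\Z_{\tr}(X_K)(K)$ splits into a degree part and a degree-zero part, with the degree-zero part identifying canonically with the group $Z_0(X_K)^0$ of zero cycles of degree zero on $X_K$. On the target side, \cite[Remark 2.2.3]{KSY-RecII} identifies the degree-zero part of $\omega_!h_0^{\bcube}(\kX_K)(K)$ with the Kerz-Saito Chow group $\CH_0(\kX_K)^0$ of zero cycles with modulus. The required factorization thus reduces to the assertion that the map $Z_0(X_K)^0\to \mathbf{Alb}^{\CH}_{\kX_K}(K)$ factors through $\CH_0(\kX_K)^0$, which is precisely the way in which $\mathbf{Alb}^{\CH}_{\kX}$ was built from Proposition \ref{prop:AlbaneseChow} in \ref{ssec:AlbaneseScheme}.

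For the universal property, take $g\colon \Z_{\tr}(X)\to G$ admitting modulus $\kX$. By Remark \ref{rmk;reciprocity-only-(n)}(ii), $g$ factors through $\omega_!h_0^{\bcube}(\kX)$, and running the same reduction as above yields, on $K$-points over any algebraically closed $K\supset k$, a homomorphism $\CH_0(\kX_K)^0\to G^0(K)$ compatible with all base changes. This places $g$ inside $\sM^{\CH}_{\kX}$, so that the universal property in Proposition \ref{prop:AlbaneseChow} furnishes a unique morphism $\Alb^{\CH,(0)}_{\kX}\to G^0$ of algebraic $k$-groups, which extends canonically to $\tilde{g}\colon \mathbf{Alb}^{\CH}_{\kX}\to G$ via the extension structure of \ref{ssec:torsor-construction}. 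Uniqueness of $\tilde{g}$ as a morphism of presheaves follows again from global injectivity of $G$, which forces any two candidate extensions to agree as soon as they agree on $K$-points for every algebraically closed $K$.

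The main obstacle is conceptual rather than computational: one needs to know that the morphism of underlying $k$-group schemes so produced lifts uniquely to a morphism in $\PST$. This is supplied by Proposition \ref{prop:Alb-Omega-PST} (ultimately \cite[Lemma 3.2]{SS} combined with \cite[Theorem 4.4]{KSY}), which equips every commutative $k$-group scheme with a canonical and unique structure of \'etale sheaf with transfers enjoying reciprocity, so that any morphism of the underlying \'etale sheaves automatically respects transfers. Once this is granted, the commutativity of the diagram and the passage between the torsor formulation of Proposition \ref{prop:AlbaneseChow} and the group-scheme formulation required here pose no further difficulty.
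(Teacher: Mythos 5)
Your proposal is correct and follows essentially the same route as the paper: reduce via global injectivity to algebraically closed fields, identify the degree-zero part of $\Z_{\tr}(X_K)(K)$ with $Z_0(X_K)^0$ and $h_0(\kX_K)^0(K)$ with $\CH_0(\kX_K)^0$ via \cite[Remark 2.2.3]{KSY-RecII}, and invoke the universal property of Proposition \ref{prop:AlbaneseChow}. The only implicit step worth flagging is that uniqueness of $\tilde{g}$ needs the surjectivity of $Z_0(X_K)\to \mathbf{Alb}_{\kX_K}(K)$ (Remark \ref{rmk:alb-surjective}) so that agreement on the image of $a_{\kX}$ propagates to all $K$-points, but this is harmless and the paper leaves it equally implicit.
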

From now on, we write simply $\mathbf{Alb}_{\kX}$ for the $k$-group scheme $\mathbf{Alb}^{\CH}_\kX$. We end this section with the following result, which will be crucial for the construction of the category of $1$-reciprocity sheaves. 
\begin{lemma}\label{lem:Alb-map-surjective}Under the assumptions of Proposition \ref{prop:UniversalPropertyPST}, the map\[ a_{\kX}\otimes_{\Z}\Q\colon \Q_{tr}(X) \to \mathbf{Alb}_{\kX}\otimes_{\Z}\Q
	\] 
	is a surjective morphism of \'etale sheaves with transfers with rational coefficients.
	\begin{proof}
		Let ${\rm Im}(a_{\kX}) \subset \mathbf{Alb}_{\kX}$  be the image of $a_{\kX}$ in $\PST$.
		Since $\bAlb_{\kX}$ is a smooth $k$-group scheme, we have $\bAlb_{\kX}\in \RSC_{\Nis}$ by \cite[Cor.3.2.5]{KSY-RecII}.
		Let $C = \mathbf{Alb}_{\kX}/{\rm Im}(a_{\kX})\in {\RSC(k,\Z)}$. We have to show that $a_{\et}^V (C\otimes_{\Z} \Q)=0$. By \eqref{eq;et-Nis-RSC}, we have that $a_{\et}^V (C\otimes_{\Z} \Q)\in \RSC_{\et}(k,\Q)$, in particular it satisfies global injectivity by \cite[Theorem 0.2]{SaitoPurity}, i.e. for any $Y\in \Sm$ geometrically connected with function field $k(Y)$ with algebraic closure $\ol{k(Y)}$ there is an injective map \[a_{\et}^V (C\otimes_{\Z}\Q)(Y)\hookrightarrow a_{\et}^V (C\otimes_{\Z}\Q)(k(Y))\hookrightarrow (C\otimes_{\Z}\Q)(\ol{k(Y)}) = C(\ol{k(Y)})\otimes_{\Z}\Q.\] 
		
		To complete the proof, it is then enough to show that $C(\ol{k(Y)})=0$. This follows from the fact that for any $K\supset k$ algebraically closed, the map  \[\Z_{tr}(X\otimes_k K)(K) \to \mathbf{Alb}_{\kX}(K)\] is surjective, since $\CH_0(\kX_K)^0\to  \Alb^{(0)}_{\kX}(K)$ is surjective.
	\end{proof}
\end{lemma}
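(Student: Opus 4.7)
The goal is to show that the cokernel of $a_{\kX}\otimes_{\Z}\Q$, computed in the category of \'etale sheaves with transfers with rational coefficients, vanishes. My plan is to first form the cokernel $C$ inside $\PST$ (or equivalently $\RSC$, since both source and target of $a_{\kX}$ lie in $\RSC_{\Nis}$ by \cite[Theorem 4.1.1]{KSY}). Because the kernel of a map of reciprocity presheaves is a reciprocity presheaf and $\RSC_{\Nis}$ is closed under quotients inside $\Shv^{\tr}_{\Nis}(k,\Q)$, the cokernel will again belong to $\RSC_{\Nis}$; in particular, by \eqref{eq;et-Nis-RSC}, applying $a^V_{\et}$ lands in $\RSC_{\et}(k,\Q)$.

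The second step is to leverage \textbf{global injectivity} of reciprocity sheaves with rational coefficients, provided by \cite[Theorem 0.2]{SaitoPurity} together with the identification $a^V_{\Nis}=a^V_{\et}$ of Proposition \ref{prop:et-Nis-Voe}. For any geometrically connected $Y\in \Sm$ with function field $k(Y)$ and algebraic closure $\overline{k(Y)}$, this yields an injection
\[a^V_{\et}(C\otimes_{\Z}\Q)(Y) \hookrightarrow (C\otimes_{\Z}\Q)(\overline{k(Y)}).\]
Hence it suffices to show that $C(K)=0$ for every algebraically closed extension $K\supset k$, i.e.\ that the map $\Q_{\tr}(X)(K)\to (\mathbf{Alb}_{\kX}\otimes\Q)(K)$ is surjective.

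The final step is the geometric input: for $K$ algebraically closed, surjectivity at the level of $K$-points reduces to the surjectivity of $Z_0(X_K)\to \mathbf{Alb}_{\kX}(K)$ (no $\Q$-coefficients are needed here). Decomposing $\mathbf{Alb}_{\kX_K} = \underline{\Z}\times \Alb^{(0)}_{\kX_K}$ using a $K$-rational point, this amounts to the surjectivity of $Z_0(X_K)^0 \to \Alb^{(0)}_{\kX_K}(K)$, which in turn follows from the factorization through $\CH_0(\kX_K)^0$ (Proposition \ref{prop:UniversalPropertyPST}) together with the surjectivity of the regular quotient $\CH_0(\kX_K)^0 \twoheadrightarrow \Alb^{(0)}_{\kX_K}(K)$ built into the construction of the Albanese via Proposition \ref{prop:AlbaneseChow} (and Remark \ref{rmk:alb-surjective}).

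I expect the only subtle point to be bookkeeping the categories of coefficients: one must ensure that the cokernel formed in $\PST$ genuinely belongs to $\RSC_{\et}(k,\Q)$ so that global injectivity is available. Once this is in place, the reduction to $K$-points is straightforward, and the surjectivity on $K$-points is immediate from the definitions.
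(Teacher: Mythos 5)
Your proposal is correct and follows essentially the same route as the paper's proof: form the cokernel $C$ of $a_{\kX}$ as a quotient of $\mathbf{Alb}_{\kX}$ in $\RSC$, invoke global injectivity from \cite[Theorem 0.2]{SaitoPurity} via \eqref{eq;et-Nis-RSC} to reduce to sections over algebraically closed fields, and conclude from the surjectivity of $\CH_0(\kX_K)^0\to \Alb^{(0)}_{\kX_K}(K)$. One small slip: $\Q_{tr}(X)$ is \emph{not} in $\RSC_{\Nis}$ in general, but this is harmless since your argument only needs that $C$ is a quotient of the target $\mathbf{Alb}_{\kX}\in\RSC_{\Nis}$ and that $\RSC_{\Nis}$ is closed under quotients.
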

\section{The Albanese functors}\label{sec:1-mot-sheaves-with-modulus}

\subsection{\texorpdfstring{$n$}{n}-reciprocity sheaves} 
For any $n\geq 0$, let $\Cor(k)_{\leq n}$ be the category of finite correspondences on smooth $k$-schemes of dimension $\leq n$, 
and let $\MCor(k)_{\leq n}$ be the category of modulus correspondences on smooth proper modulus pairs $\kX = (\ol{X}, X_\infty)$ with $\dim (\ol{X})\leq n$.
We let $\MPST(k_{\leq n},\Lambda)$ be the category of additive presheaves of $\Lambda$-modules on $\MCor(k)_{\leq n}$.



The natural inclusions of subcategory of objects of dimension $\leq n$ give rise to a standard string of adjoint functors between the category of presheaves
\begin{equation}\label{eq;sigma}(\sigma_{n, !}, \sigma_n^*), \quad \sigma_{n, !}\colon \MPST(k_{\leq n},\Lambda) \leftrightarrows \MPST{(k,\Lambda)}\colon \sigma_n^*.\end{equation}
Here, we follow the convention of \cite{MotModulusI} for the left Kan extension of the restriction functor $\sigma_n^*$. Note that this is different from the one adopted in \cite{Ayoub-BV}. 


\begin{remark}\label{rmk:comp-ABV} Let \begin{equation}\label{eq;sigma-Voe}(\sigma^{V}_{n, !}, \sigma_n^{V,*}), \quad (\sigma^{V}_{n, !}\colon \PST(k_{\leq n},\Lambda) \leftrightarrows \PST{(k,\Lambda)}\colon\sigma_n^{V,*})
	\end{equation}
	be the analogous adjoint functors from \cite{Ayoub-BV}. 
Since the functor $\omega$ clearly restricts to a functor $\MCor(k)_{\leq n}\to \Cor(k)_{\leq n}$, for all $F\in \PST(k,\Lambda)$ we have that \[\sigma_n^*\omega^*F \cong \omega^*\sigma_n^{V,*}F\textrm{ in }\MPST(k_{\leq n},\Lambda).\]
By adjunction, we conclude that for all $F\in \MPST(k_{\leq n},\Lambda)$, \[\sigma_{n,!}^{V}\omega_! F \cong \omega_!\sigma_{n,!} F\textrm{ in }\PST(k,\Lambda).\] For $F\in \MPST(k,\Lambda)$ and $X\in \Cor(k)_{\leq n}$, for any modulus pair $\kX\in \Comp(X)$, we have $\kX\in \MCor(k)_{\leq n}$, hence $\sigma_n^*F(\kX)=F(\kX)$\[
	\omega_!\sigma_n^*F (X) = \colim_{\kX\in \Comp(X)}\sigma_n^*F(\kX) = \omega_!F(X)= \sigma_n^{V,*}(\omega_!F)(X).
	\]
	Finally, for every modulus pair $\kX$ and every $\alpha\colon Y\to \omega(\kX)\in \Cor(k)$, by \cite[Theorem 1.6.2]{MotModulusII} there exists a proper modulus pair $\kY'\in \Comp(Y)$ and $\alpha'\colon \kY'\to \kX\in \MCor(k)$ such that $\alpha=\omega(\alpha')$. In particular, since $\kY\in \MCor(k)_{\leq n}$, the system $\{F(Y)\}$ for $Y\in \Cor(k)_{\leq n}$ running over the maps $Y \to \omega(X)$ is cofinal in the system $\{F(\omega(\kY))\}$ for $\kY\in \MCor(k)_{\leq n}$ running over the maps $\kY \to \kX$. Hence we have that
    \begin{align*}
	\omega^*\sigma_{n,!}^V F(\kX) = \sigma_{n,!}^V F(\omega(\kX))  & = \colim_{\substack{(\omega(\kX)\to Y)\\ Y\in \Cor(k)_{\leq n}}} F(Y) \\ & = \colim_{\substack{(\kX\to \kY')\\Y\in \Cor(k)_{\leq n}}} F(\omega(\kY'))\\ & = \colim_{\substack{(\kX\to \kY)\\\kY\in \MCor(k)_{\leq n}}} F(\omega(\kY)) = \sigma_{n,!} \omega^*F(\kX).
	\end{align*}
\end{remark}

\begin{remark}\label{rmk;commutecolimits}
	The functors $\sigma_{n,!}$ commute with colimits of presheaves since they are left adjoint. The functor $\sigma_n^*$ also commutes with colimits of presheaves, since they are computed section-wise and $\sigma_n^*F(\kX) = F(\kX)$ for all $\kX\in \MCor(k)_{\leq n}$.
	In particular for all diagrams $\{F_i\}$ in $\MPST(k,\Lambda)$, $\sigma_{n,!}\sigma_n^*\colim F_i\cong \colim \sigma_{n,!}\sigma_n^* F_i$
\end{remark}
The following lemma is mutuated from \cite[Lemma 1.1.16]{Ayoub-BV}:
\begin{lemma}\label{lem:unit-maps-sigma-invertible}
	The unit map $\id \xrightarrow{\sim} \sigma_{n}^* \sigma_{n,!}$ is invertible.
\end{lemma}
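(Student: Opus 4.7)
The plan is to first verify the statement on representable presheaves and then extend to arbitrary $F\in \MPST(k_{\leq n})$ by a colimit argument, exactly in the spirit of \cite[Lemma 1.1.12]{Ayoub-BV}. The key structural input is that the inclusion $\iota\colon \MCor(k)_{\leq n}\hookrightarrow \MCor(k)$ is fully faithful, so that Kan extension along it preserves representables.

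First, fix $\kX\in \MCor(k)_{\leq n}$ and write $\Z_{tr}(\kX)_{\leq n}:=\MCor(-,\kX)$ for the Yoneda presheaf on $\MCor(k)_{\leq n}$. Since $\sigma_n^*$ is the restriction along $\iota$, the relation $\sigma_n^*\Z_{tr}(\kX)=\Z_{tr}(\kX)_{\leq n}$ is immediate. On the other hand, $\sigma_{n,!}$ is defined as a left Kan extension along the fully faithful functor $\iota$, and hence sends the Yoneda object $\Z_{tr}(\kX)_{\leq n}$ to the Yoneda object $\Z_{tr}(\kX)$ in $\MPST(k)$. Combining, we get $\sigma_n^*\sigma_{n,!}\Z_{tr}(\kX)_{\leq n}=\Z_{tr}(\kX)_{\leq n}$ and one checks that the induced map is the unit of the adjunction, so it is invertible on every representable.

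Next, recall that every $F\in \MPST(k_{\leq n})$ can be written as a colimit of representables,
\[F=\colim_{i} \Z_{tr}(\kX_i)_{\leq n},\]
with $\kX_i\in \MCor(k)_{\leq n}$. By Remark \ref{rmk;commutecolimits}, both $\sigma_{n,!}$ (as a left adjoint) and $\sigma_n^*$ (because restriction is computed sectionwise and colimits in $\MPST$ are sectionwise) commute with arbitrary colimits. Therefore
\[\sigma_n^*\sigma_{n,!}F \;=\; \sigma_n^*\sigma_{n,!}\bigl(\colim_{i}\Z_{tr}(\kX_i)_{\leq n}\bigr)\;=\;\colim_{i}\sigma_n^*\sigma_{n,!}\Z_{tr}(\kX_i)_{\leq n}\;=\;\colim_{i}\Z_{tr}(\kX_i)_{\leq n}\;=\;F,\]
and the composite agrees with the unit map by naturality.

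There is no real obstacle here: the only nontrivial ingredient is the full faithfulness of $\iota$, which is automatic since $\MCor(k)_{\leq n}$ is defined as the full subcategory of objects of dimension $\leq n$. The rest is a formal colimit extension, and the same pattern will be reused repeatedly later in the section.
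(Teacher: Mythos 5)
Your proof is correct and follows essentially the same route as the paper's own argument: verify the unit on representables using that the left Kan extension along the (fully faithful) inclusion preserves Yoneda objects, then extend to all of $\MPST(k_{\leq n})$ by writing $F$ as a colimit of representables and invoking Remark \ref{rmk;commutecolimits}.
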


\begin{defn} We say that $F\in \MPST(k,\Lambda)$ is \emph{$n$-generated} (resp. \emph{strongly $n$-generated}) if the counit map $\sigma_{n,!}\sigma_{n }^* F\to F$ is surjective (resp. an isomorphism).
\end{defn}
\begin{remark} If $F$ is \textit{$n$-generated} (resp. \textit{strongly $n$-generated}), then $\omega_!F$ is $n$-generated (resp. strongly $n$-generated) in the sense of \cite{Ayoub-BV}. Indeed, the functor $\omega_!$ is exact and $\omega_!\sigma_{n,!}\sigma_{n }^* F = \sigma_{n,!}^{V}\sigma_{n }^{*,V} \omega_! F$ by Remark \ref{rmk:comp-ABV}.
\end{remark}

For example, if $\kX = (\ol{X}, X_\infty)$ with {$\dim(\ol{X}) \leq n$,} then $\Z_{tr}(\kX)$ is strongly $n$-generated. 
The proof of the following Lemma is a diagram chase.
\begin{lemma}\label{lem:n-generated-closed-under-ext-and-quotients}Quotients and extensions of (strongly) $n$-generated sheaves are again (strongly) $n$-generated.
\end{lemma}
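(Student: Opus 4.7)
The plan is to do a direct diagram chase, exploiting the exactness of $\sigma_n^*$ (restriction to a full subcategory, which commutes with all sectionwise colimits and limits --- see Remark~\ref{rmk;commutecolimits}) and the right exactness of $\sigma_{n,!}$ (as a left adjoint between abelian categories). I will use throughout the naturality of the counit $\epsilon\colon \sigma_{n,!}\sigma_n^* \Rightarrow \mathrm{id}$ and the triangle identity $\sigma_n^*\epsilon = \mathrm{id}$, which is a formal consequence of the invertibility of the unit (Lemma~\ref{lem:unit-maps-sigma-invertible}).

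For the quotient case, given $\pi\colon F\twoheadrightarrow G$ with $F$ $n$-generated, the naturality of $\epsilon$ produces a commuting square whose top arrow $\sigma_{n,!}\sigma_n^*\pi$ is surjective by the exactness properties above. Since $\pi\circ\epsilon_F = \epsilon_G\circ\sigma_{n,!}\sigma_n^*\pi$ is surjective (as $\epsilon_F$ is so by hypothesis), $\epsilon_G$ is forced to be surjective as well.

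For the extension case, apply $\sigma_{n,!}\sigma_n^*$ to a short exact sequence $0\to F'\to F\to F''\to 0$ to produce a ladder whose top row is right-exact:
\[
\begin{tikzcd}
& \sigma_{n,!}\sigma_n^* F' \ar[r]\ar[d,two heads,"\epsilon_{F'}"'] & \sigma_{n,!}\sigma_n^* F \ar[r]\ar[d,"\epsilon_F"] & \sigma_{n,!}\sigma_n^* F'' \ar[r]\ar[d,two heads,"\epsilon_{F''}"] & 0 \\
0\ar[r] & F' \ar[r] & F \ar[r] & F'' \ar[r] & 0.
\end{tikzcd}
\]
A standard five-lemma-style chase (lift a target element through the right vertical, pull it back along the top surjection, and correct by an element coming from $F'$) then shows that $\epsilon_F$ is surjective.

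For the strongly $n$-generated refinement, both claims reduce to upgrading surjectivity of the counit to an isomorphism. The strategy is to apply the snake lemma to the ladder above in the case where both outer verticals are isomorphisms, yielding an exact sequence that relates $\ker(\epsilon_F)$ and $\mathrm{coker}(\epsilon_F)$ to the possible failure of left-exactness of the top row. Applying the exact functor $\sigma_n^*$ to this sequence, and invoking the triangle identity $\sigma_n^*\epsilon=\mathrm{id}$, forces these kernels and cokernels to vanish after restriction to $\MCor(k)_{\leq n}$; the full vanishing then follows by adjunction, using that the essential image of $\sigma_{n,!}$ consists precisely of objects on which $\epsilon$ is invertible. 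The main obstacle I anticipate is precisely this last step: passing from ``$\sigma_n^*$ annihilates the defect'' to ``the defect itself vanishes'' requires a careful use of the Kan extension formula for $\sigma_{n,!}$ together with the adjunction, and is the only genuinely non-formal ingredient of the proof.
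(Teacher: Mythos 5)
Your handling of the two non-strong statements is correct and is the diagram chase the paper intends: right exactness of $\sigma_{n,!}\sigma_n^*$ together with naturality of the counit settles both quotients and extensions. For \emph{extensions} of strongly $n$-generated sheaves your snake-lemma idea also works, but notice that it already finishes the job by itself: the snake lemma only needs the top row exact at the middle and right terms (which right exactness of $\sigma_{n,!}\sigma_n^*$ provides), and with both outer verticals $\epsilon_{F'},\epsilon_{F''}$ invertible the six-term sequence collapses to $0\to\ker(\epsilon_F)\to 0$ and $0\to\coker(\epsilon_F)\to 0$. The possible failure of injectivity of $\sigma_{n,!}\sigma_n^*F'\to\sigma_{n,!}\sigma_n^*F$ never enters. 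The additional step you append --- deducing that the defect vanishes because $\sigma_n^*$ annihilates it --- is therefore unnecessary there, which is fortunate, because it is not a valid principle: $\sigma_n^*N=0$ does not imply $N=0$. Indeed the triangle identity gives $\sigma_n^*\coker(\epsilon_F)=0$ for \emph{every} $F\in\MPST(k)$, so if this implication held, every presheaf would be $n$-generated and the whole notion would be vacuous. Your proposed rescue ("the essential image of $\sigma_{n,!}$ consists precisely of objects on which $\epsilon$ is invertible") does not help, since the kernel and cokernel of $\epsilon_F$ are not known to lie in that essential image.

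The genuine gap is the case of \emph{quotients} of strongly $n$-generated sheaves, which your proposal never actually sets up: your "strongly" paragraph only discusses the ladder with both outer verticals isomorphisms, i.e.\ the extension case. Running the same snake-lemma ladder on $0\to K\to F\xrightarrow{\pi}G\to 0$ with $\epsilon_F$ an isomorphism yields a canonical isomorphism $\ker(\epsilon_G)\cong\coker(\epsilon_K)$, so $G$ is strongly $n$-generated if and only if the kernel $K$ is $n$-generated. This is not formal: $K$ is an arbitrary subobject of $F$, and the assertion that subobjects of ($n$-)generated sheaves remain $n$-generated is exactly the kind of statement the paper later proves for $n=1$ with substantial geometric input (Lemma \ref{lem:sub-1-gen-is-1-gen}, via surjectivity of Albanese maps); it is not available at this stage and is not a diagram chase. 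So you must either supply an argument that kernels of surjections out of strongly $n$-generated objects are $n$-generated, or restrict the "(strongly)" refinement to the extension half of the statement; in any case your write-up needs to isolate this case explicitly rather than fold it into the same reduction as extensions.
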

\begin{defn}\label{def;hetrec}
Let $F\in \CI(k,\Lambda)$. Following \cite[Definition 1.1.20]{Ayoub-BV}, we say that $F$ is an \textit{$n$-modulus presheaf} 
if the natural map
	\[\hetrec (\sigma_{n,!}\sigma_{n }^* F) \to \hetrec F =a_{\et}^V\omega_{\CI} F\]
	is an isomorphism of \'etale sheaves with transfers. Here, for any $G\in \MPST(k,\Lambda)$, we denote by $\hetrec(G)$ 
the \'etale sheaf with transfers $a_{\et}^V\omega_{\CI} h_0^{\bcube}G$, where $a_{\et}^V\colon \PST(k,\Lambda)\to \Shv^{\tr}_{\et}(k,\Lambda)$ is Voevodksy's \'etale sheafification functor and $\omega_{\CI}$ is the composition $\omega_! \circ i^{\bcube}$, where $i^{\bcube}$ is the inclusion $\CI(k,\Lambda)\to \MPST(k,\Lambda)$, which has a right adjoint. Notice that the functor $\hetrec$ is a composition of left adjoints, hence it commutes with all colimits.
	
	We write $\CI_{\leq n}(k,\Lambda)$ for the full subcategory of $n$-modulus presheaves.
\end{defn}
The following is identical to \cite[Remark 1.1.21]{Ayoub-BV}:
\begin{lemma}\label{lem:h0-nstrgen-is-motivic} Let $F\in \MPST(k,\Lambda)$ be strongly $n$-generated. Then $h_{0}^{\bcube}(F)$ is an $n$-modulus sheaf.
\end{lemma}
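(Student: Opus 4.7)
The plan is to run the natural diagram chase already hinted at in the statement. The input data are: the canonical surjection $F \twoheadrightarrow h_0^{\bcube}(F)$ in $\MPST$ (this is a cokernel by definition), the strong $n$-generation isomorphism $\sigma_{n,!}\sigma_n^* F \xrightarrow{\sim} F$, and the fact (Lemma \ref{lem:n-generated-closed-under-ext-and-quotients}) that $h_0^{\bcube}(F)$ is $n$-generated as a quotient of a strongly $n$-generated sheaf, so that the counit $\sigma_{n,!}\sigma_n^* h_0^{\bcube}(F) \twoheadrightarrow h_0^{\bcube}(F)$ is surjective.

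First I would apply the right-exact composite $\sigma_{n,!}\sigma_n^*$ to the surjection $F \twoheadrightarrow h_0^{\bcube}(F)$; since both $\sigma_n^*$ (restriction) and $\sigma_{n,!}$ (left Kan extension) commute with colimits by Remark \ref{rmk;commutecolimits}, the top row of the square
\[
\begin{tikzcd}
\sigma_{n,!}\sigma_n^* F \ar[r, twoheadrightarrow] \ar[d, "\simeq"'] & \sigma_{n,!}\sigma_n^* h_0^{\bcube}(F) \ar[d, twoheadrightarrow] \\
F \ar[r, twoheadrightarrow] & h_0^{\bcube}(F)
\end{tikzcd}
\]
is a surjection, and the square commutes by naturality of the counit. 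Next I would apply $\hetrec = a_{\et}^V \omega_{\CI} h_0^{\bcube}$. Since $\hetrec$ is a composition of left adjoints (and hence right exact), surjections are preserved. The left vertical becomes an iso by the strong $n$-generation hypothesis. The bottom horizontal becomes an iso because $\hetrec$ factors through the idempotent functor $h_0^{\bcube}$, so that $\hetrec(F) = \hetrec(h_0^{\bcube}(F))$.

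The conclusion then follows by a purely formal diagram-chase: we have two consecutive surjections in the top row whose composition, by commutativity, equals an isomorphism. If $f\colon A\twoheadrightarrow B$ and $g\colon B\twoheadrightarrow C$ satisfy $g\circ f$ iso, then $\ker(g\circ f) = f^{-1}(\ker g) = 0$, and surjectivity of $f$ forces $\ker g=0$, so $g$ is iso. Applied to our situation, the right vertical map $\hetrec(\sigma_{n,!}\sigma_n^* h_0^{\bcube}(F)) \to \hetrec(h_0^{\bcube}(F))$ is an isomorphism, which is exactly the condition that $h_0^{\bcube}(F)$ be an $n$-modulus sheaf.

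I do not anticipate a genuine obstacle: the only point that needs care is verifying that $\hetrec$ indeed preserves surjections (right-exactness, from the fact that $a_{\et}^V$, $\omega_{\CI}$ and $h_0^{\bcube}$ are all left adjoints) and that $h_0^{\bcube}\circ h_0^{\bcube}=h_0^{\bcube}$, both of which are standard for reflective localizations.
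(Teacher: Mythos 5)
Your proof is correct and follows essentially the same route as the paper's: the same commutative square obtained by applying $\sigma_{n,!}\sigma_n^*$ to $F\twoheadrightarrow h_0^{\bcube}(F)$, followed by the right-exact functor $\hetrec$, with the left vertical an isomorphism by strong $n$-generation and the bottom an isomorphism since $\hetrec h_0^{\bcube}F=\hetrec F$, forcing the right vertical (the counit map defining the $n$-modulus condition) to be an isomorphism. The supporting facts you flag (right-exactness of $\hetrec$ as a composite of left adjoints, idempotence of $h_0^{\bcube}$) are exactly the ones the paper relies on.
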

\begin{remark}
Notice that in this case, differently from \cite[Remark 1.1.21]{Ayoub-BV}, if $F$ is an $n$-modulus presheaf then it is not automatic that $F$ is the $h_{0}^{\bcube}$ of a strongly $n$-generated sheaf: we only know that the map \[
h_{0}^{\bcube}\sigma_{n,!}\sigma_n^*F \to F
\]
is an isomorphism after applying $a_{\et}^V\omega_{\CI}$.
\end{remark}


\begin{defn}
	We define the category of \emph{$n$-reciprocity sheaves} $\RSC_{\et,\leq n}(k,\Lambda)$ as the essential image of $\CI_{\leq n}(k,\Lambda)$ via the functor $a_{\et}^V\omega_{\CI}$. 
\end{defn}
The following result is immediate:

\begin{lemma}\label{lem;concrete-n-reciprocity}
Let $F\in \Shv^{\tr}_{\et}(k,\Lambda)$. The following are equivalent
\begin{itemize}
\item[(i)]
$F$ is an $n$-reciprocity sheaf;
\item[(ii)]
$F\cong \hetrec \sigma_{n,!}\sigma_{n }^* G$ for some $G\in \MPST(k,\Lambda)$.
\end{itemize}
Moreover, if the above conditions hold, we can take $G\in \CI_{\leq n}(k,\Lambda)$ in (ii).
\end{lemma} 

\begin{remark}
	If $F\in \RSC_{\et,\leq n}(k,\Lambda)$, then $F$ is an $n$-generated \'etale sheaf in the sense of \cite{Ayoub-BV}. Indeed, for $F=\hetrec \sigma_{n,!}\sigma_{n }^* G$ with $G\in  \CI_{\leq n}(k,\Lambda)$, then there is a surjective map in $\Shv^{\tr}_{\et}(k,\Lambda)$
	\[
	a_{\et}^V\sigma^{V}_{n,!}\sigma_{n }^{V,*}(F) = a_{\et}^V\omega_! \sigma_{n,!}\sigma_{n }^* G\twoheadrightarrow a_{\et}^{V}\omega_{\CI} \hetcube\sigma_{n,!}\sigma_{n }^* G \cong \hetrec \sigma_{n,!}\sigma_{n }^* G = F.
	\]
\end{remark}

Recall that the category $\HI_{\et,\leq n}(k,\Lambda)$ of \cite[Definition 1.2.20]{Ayoub-BV}, is the full subcategory of $\HI_{\et}(k,\Lambda)$ of objects $F$ such that 
$a^V_{\et}h_0^{\A^1}\sigma^V_{n,!}\sigma_{n}^{V,*}F \to a^V_{\et}h_0^{\A^1}F=F$ is an isomorphism. 

\begin{prop}\label{prop;HI-n-sub-RSC-n} If $F\in \HI_{\et,\leq n}(k,\Lambda)$, then $F\in \RSC_{\et,\leq n}(k,\Lambda)$.
\begin{proof}
Take $F\in \HI_{\et,\leq n}(k,\Lambda)$.
By Remark \ref{rmk:comp-ABV} we have
\begin{equation}\label{eq1;prop;HI-n-sub-RSC-n}
	\hetrec \sigma_{n,!}\sigma_{n }^* \omega^*F = \hetrec\omega^*\sigma^V_{n,!}\sigma_{n}^{V,*}F.
	\end{equation}

	Notice that $h_{0}^{\bcube}\omega^* = \omega^*h_{0}^{\A^1}$, 
so by 
full faithfulness of $\omega^*$ we conclude that\[
	\hetrec\omega^*\sigma^V_{n,!}\sigma_{n}^{V,*}F = a^V_{\et}\omega_!h_0^{\bcube}\omega^*\sigma^V_{n,!}\sigma_{n}^{V,*}F =  
a^V_{\et}\omega_!\omega^*h_0^{\A^1}\sigma^V_{n,!}\sigma_{n}^{V,*}F = a^V_{\et}h_0^{\A^1}\sigma^V_{n,!}\sigma_{n}^{V,*}F = F.
	\]
In view of \eqref{eq1;prop;HI-n-sub-RSC-n} and Lemma \ref{lem;concrete-n-reciprocity}, this implies $F\in \RSC_{\et,\leq n}(k,\Lambda)$.
\end{proof}
\end{prop}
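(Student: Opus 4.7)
The plan is to verify the characterisation of $n$-reciprocity sheaves given by Lemma \ref{lem;concrete-n-reciprocity}: namely, it suffices to exhibit some $G\in \MPST$ such that $F\cong \hetrec \sigma_{n,!}\sigma_n^*G$. The most natural candidate, given that $F$ is already a homotopy invariant sheaf with transfers on $\Sm(k)$, is $G = \omega^* F$. Everything then reduces to a compatibility check between the two ``truncation'' functors $\sigma_n^{V,*}, \sigma_{n,!}^V$ on $\PST$ and the corresponding ones $\sigma_n^*, \sigma_{n,!}$ on $\MPST$, together with the interaction of $\omega^*$ with the homotopy invariance constructions.

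First, I would record the comparisons from Remark \ref{rmk:comp-ABV}, namely the natural isomorphisms
\[
\sigma_n^* \omega^* \cong \omega^* \sigma_n^{V,*} \qquad \text{and} \qquad \sigma_{n,!}\omega^* \cong \omega^*\sigma_{n,!}^V,
\]
valid on all of $\PST$. Chaining them gives $\sigma_{n,!}\sigma_n^*\omega^* F \cong \omega^* \sigma_{n,!}^V\sigma_n^{V,*}F$. Next I would observe the compatibility of $\omega^*$ with the contraction functors; unravelling the definitions of $h_0^{\bcube}$ and $h_0^{\A^1}$ via internal Homs from $\bcube$ and $\A^1$ respectively, and using $\omega(\bcube)=\A^1$ together with full faithfulness of $\omega^*$, one gets
\[
h_0^{\bcube}\omega^* \cong \omega^* h_0^{\A^1}.
\]

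Putting these together and using $\omega_!\omega^*\cong \id$ (full faithfulness of $\omega^*$), I compute
\[
\hetrec \sigma_{n,!}\sigma_n^*\omega^* F \;\cong\; a_{\et}^V \omega_! h_0^{\bcube} \omega^* \sigma_{n,!}^V\sigma_n^{V,*} F \;\cong\; a_{\et}^V \omega_! \omega^* h_0^{\A^1} \sigma_{n,!}^V\sigma_n^{V,*} F \;\cong\; a_{\et}^V h_0^{\A^1} \sigma_{n,!}^V\sigma_n^{V,*} F.
\]
The last term is isomorphic to $F$ by the very definition of $\HI_{\et,\leq n}$ recalled just above the statement. By Lemma \ref{lem;concrete-n-reciprocity}(ii)$\Rightarrow$(i), this proves $F\in \RSC_{\et,\leq n}$.

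No serious obstacle is expected; the argument is essentially a diagram chase with adjunctions. The only mildly delicate point is the identity $h_0^{\bcube}\omega^*\cong \omega^*h_0^{\A^1}$, which I would justify by writing $h_0^{\bcube}(-)(\kX) = \coker(i_0^*-i_1^* \colon (-)(\kX\otimes\bcube) \to (-)(\kX))$ and using that $\omega(\kX\otimes\bcube)= \omega(\kX)\times \A^1$ together with the fully faithful embedding $\omega^*$.
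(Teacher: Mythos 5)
Your proposal is correct and follows essentially the same route as the paper's own proof: take $G=\omega^*F$, use the comparisons of Remark \ref{rmk:comp-ABV} to rewrite $\sigma_{n,!}\sigma_n^*\omega^*F$ as $\omega^*\sigma_{n,!}^V\sigma_n^{V,*}F$, apply $h_0^{\bcube}\omega^*\cong\omega^*h_0^{\A^1}$ and $\omega_!\omega^*\cong\id$ to identify $\hetrec\sigma_{n,!}\sigma_n^*\omega^*F$ with $a_{\et}^Vh_0^{\A^1}\sigma_{n,!}^V\sigma_n^{V,*}F=F$, and conclude by Lemma \ref{lem;concrete-n-reciprocity}. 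Your justification of $h_0^{\bcube}\omega^*\cong\omega^*h_0^{\A^1}$ via the cokernel description is exactly the verification the authors indicate as well.
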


The following lemma is analogue to \cite[Lemma 1.1.23]{Ayoub-BV}
\begin{lemma}\label{lem;isonmotivic}
	For any $G \in \CI(k,\Lambda)$, the natural map 
\begin{equation}\label{eq;isonmotivic}\sigma_{n }^{V,*}\hetrec(\sigma_{n,!}\sigma_{n }^* G) \to \sigma_{n }^{V,*}a^V_{\et}\omega_{\CI}G
	\end{equation}
induced by the counit map $\sigma_{n,!}\sigma_{n }^*G \to G$ is an isomorphism in $\Shv^{\tr}_{\et}(k_{\leq n},\Lambda)$
\end{lemma}

\begin{corollary}[cfr. {\cite[Corollary 1.1.26]{Ayoub-BV}}]\label{cor:Key-corollary}
Let $F\in \RSC(k,\Lambda)$ such that $F=\omega_{\CI}G$ with $G\in \CI(k,\Lambda)$ and consider the natural map 
\[\hetrec (\sigma_{n,!}\sigma_{n }^* G)= 
a_{\et}^V \omega_{\CI}h_{0}^{\bcube}\sigma_{n,!}\sigma_{n }^* G
\to a_{\et}^V \omega_{\CI}h_{0}^{\bcube} G =a_{\et}^V \omega_{\CI} G =a_{\et}^VF\]
induced by the counit map $\sigma_{n,!}\sigma_{n }^*G\to G$.
Let $N$ be the kernel of the above map. 
If $N$ is an $n$-generated \'etale sheaf in the sense of \cite{Ayoub-BV}, then it is zero.
\end{corollary}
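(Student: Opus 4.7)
The plan is to reduce the corollary to two essentially formal ingredients: Lemma~\ref{lem;isonmotivic}, which already contains the real computation, and the definition of an $n$-generated sheaf in the sense of \cite{Ayoub-BV}, which serves as the final cancellation step.

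First, I would apply the restriction functor $\sigma_n^{V,*}\colon \Shv^{\tr}_{\et}(k)\to \Shv^{\tr}_{\et}(k_{\leq n})$ to the short exact sequence
\[
0 \longrightarrow N \longrightarrow \hetrec(\sigma_{n,!}\sigma_n^{*}G) \longrightarrow \mathrm{Im}(\alpha) \longrightarrow 0,
\]
where $\alpha\colon \hetrec(\sigma_{n,!}\sigma_n^{*}G) \to a_{\et}^V F$ denotes the natural map whose kernel is $N$ by assumption. By Lemma~\ref{lem;isonmotivic}, the arrow $\sigma_n^{V,*}\alpha$ is an isomorphism in $\Shv^{\tr}_{\et}(k_{\leq n})$; in particular it is injective, so the exactness of $\sigma_n^{V,*}$ yields $\sigma_n^{V,*} N = 0$.

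Next, the hypothesis that $N$ is $n$-generated in the sense of \cite{Ayoub-BV} amounts to the surjectivity of the counit map $\sigma^V_{n,!}\sigma_n^{V,*} N \twoheadrightarrow N$. Combined with the vanishing just established, the source of this map is zero, forcing $N=0$, which is the desired conclusion.

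The only point requiring a brief verification is the exactness of $\sigma_n^{V,*}$ on \'etale sheaves with transfers. This is a formality: on presheaves it is computed sectionwise as $(\sigma_n^{V,*}F)(X) = F(X)$ for $X \in \Cor(k)_{\leq n}$ (compare with Remark~\ref{rmk;commutecolimits}), hence it is exact, and it commutes with \'etale sheafification because its left adjoint $\sigma^V_{n,!}$ preserves representables and hence \'etale covering families. No genuinely new ingredient is needed beyond what is already proved in Lemma~\ref{lem;isonmotivic}; the corollary is essentially a transcription of \cite[Corollary~1.1.26]{Ayoub-BV} to the modulus setting, so the anticipated main obstacle is really ensuring that Lemma~\ref{lem;isonmotivic} is applied correctly through the factorization $\alpha = (\hetrec(\sigma_{n,!}\sigma_n^{*}G)\twoheadrightarrow \mathrm{Im}(\alpha)\hookrightarrow a_{\et}^V F)$.
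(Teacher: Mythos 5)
Your proof is correct and follows essentially the same route as the paper's: apply the exact functor $\sigma_n^{V,*}$ to identify $\sigma_n^{V,*}N$ with the kernel of the map from Lemma \ref{lem;isonmotivic}, conclude $\sigma_n^{V,*}N=0$, and then use the surjection $\sigma_{n,!}^{V}\sigma_n^{V,*}N\twoheadrightarrow N$ coming from the $n$-generation hypothesis. Your extra remark justifying the exactness of $\sigma_n^{V,*}$ is a harmless elaboration of a step the paper simply asserts.
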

\begin{proof}
Since the functor $\sigma_{n }^{V,*}$ is exact, we have by the definition of $N$ an exact sequence of \'etale sheaves:
\[ 0\to \sigma_{n }^{V,*}(N) \to \sigma_{n }^{V,*}\hetrec(\sigma_{n,!}\sigma_{n }^* G) \to \sigma_{n }^{V,*}a_{\et}^VF, \]
hence by Lemma \ref{lem;isonmotivic} we conclude $\sigma_{n }^{V,*}(N) = 0$. Since $N$ is $n$-generated, we have a surjective map of \'etale sheaves with transfers $\sigma_{n,!}^{V} \sigma_{n }^{V,*}(N)  \to N$, showing that $N =0$. \end{proof}

\subsection{\texorpdfstring{$0$}{Zero}-reciprocity sheaves}\label{ssec:zero-mot-sheaves}

We specialize the general results of the previous section to the case $n=0$. By definition, the objects of the category $\MCor_{\leq 0}$ of smooth modulus pairs of dimension $\leq 0$ are the finite \'etale extensions $\ell\supset k$ (with empty modulus divisor). 
The essential image of the restriction of the functor $\omega\colon \MCor\to \Cor$ to $\MCor_{\leq 0}$ induces an equivalence of categories 
\[\omega_{|{\leq 0}} \colon \MCor_{\leq 0} \simeq \Cor_{\leq 0}\]
whose inverse is given by \[\lambda_{|\leq 0}\colon\Spec(\ell)\mapsto (\Spec(\ell),\emptyset).\] 
and induces an equivalence of categories
\begin{equation}\label{eq;mpst-equals-pst}
\omega_{|\leq 0,!}\colon \ulMPST(k_{\leq 0},\Lambda) \simeq \PST(k_{\leq 0},\Lambda)\colon \lambda_{|\leq 0,!}.
\end{equation}
Moreover, for any $\kX\in \MCor$ with $X=\omega(\kX)$, we have 
\[\MCor(\kX, (\Spec(\ell), \emptyset)) \cong \Cor(\omega(\kX), \Spec(\ell)) \cong \Cor(\pi_0(X), \Spec (\ell)) \cong \mathbb{Z}^{\pi_0(|X\tensor_k \ell|)},\]
where $\pi_0(X)$ is the spectrum of the integral closure of $k$ in $\Gamma(X,\mathcal{O}_X)$ and $\pi_0(|Y|)$ for  a scheme $Y$ denotes the set of connected components of the underlying topological space $|Y|$. The first isomorphism follows from the fact that $\Spec(\ell)\in \Cor^{\rm prop}$ (see \cite[Lemma 1.5.1]{MotModulusI}),
while the second and the third are classical (see \cite[Lecture 1]{MVW} and \cite[1.2.1]{Ayoub-BV}). From this we get an adjunction
\begin{equation} \label{eq:pi0sigma0} \lambda_{|\leq 0}\circ\pi_0\circ\omega\colon \MCor   \leftrightarrows \MCor_{\leq 0}\colon \sigma_{0} .
\end{equation}
We let $\Pi_0$ denote $\lambda_{|\leq 0}\circ\pi_0\circ\omega$.    
Passing to the categories of presheaves, we have that the functor $\sigma_{0,!}$ of \eqref{eq;sigma} has a left adjoint:
\begin{equation} \label{eq:pi0sigma0adjunction}  \Pi_{0,!}\colon \MPST(k,\Lambda) \leftrightarrows \MPST(k_{\leq 0},\Lambda)  \colon \Pi_0^*=\sigma_{0,!} .
\end{equation}
In particular, $\sigma_{0,!}$ is given explicitly by    
\begin{equation}\label{eq;explicit-sigma-0}
	\sigma_{0,!} (F)(\kX) = F(\Pi_0(\kX)) = F(\pi_0(\omega(\kX)),\emptyset)\;\text{ for } F\in \MPST(k_{\leq 0},\Lambda).
\end{equation}
The following Corollary shows that the category of $0$-reciprocity sheaves is simply equivalent to the category of $0$-motivic sheaves in the sense of Ayoub--Barbieri-Viale.
\begin{cor}\label{cor:CI_0-equiv-MEST0} Let $G\in \MPST(k,\Lambda)$, then $\sigma_{0,!}\sigma_0^*G\in \CI(k,\Lambda)$. In particular, every $0$-reciprocity sheaf is strongly $0$-generated in the sense of \cite{Ayoub-BV}, and we have equivalences
	\begin{equation}\label{eq;0-gen-equivalent}
	\HI_{\et,\leq 0}(k,\Lambda)\simeq \RSC_{\et,\leq 0}(k,\Lambda)\simeq \Sh_{\et}^{\rm tr}(k_{\leq 0},\Lambda).
	\end{equation}
	\begin{proof} 
		Let $\kX\in \MCor$ and $X=\omega(\kX)$. By\eqref{eq;explicit-sigma-0}, we have that\[
		\sigma_{0,!}\sigma_0^*G(\kX\otimes \P^1) = \sigma_0^*G(\pi_0(\omega(\kX\otimes \P^1)),\emptyset).
		\]
		On the other hand, we have that $\omega(\kX\otimes \P^1)=X\times \A^1$ and as observed in the proof of \cite[Lemma 1.2.2]{Ayoub-BV}, we have that $\pi_0(X\times \A^1)=\pi_0(X)$, hence\[
		\sigma_{0,!}\sigma_0^*G(\kX\otimes \P^1)=\sigma_0^*G(\pi_0(\omega(\kX\otimes \P^1)),\emptyset)=\sigma_0^*G(\pi_0(\omega(\kX)),\emptyset)=\sigma_{0,!}\sigma_0^*G(\kX).
		\]
		Hence $\sigma_{0,!}\sigma_0^*G\in \CI(k,\Lambda)$ proving the first assertion. 
Let $F\in \RSC_{\et,\leq 0}(k,\Lambda)$ and let $F =\hetrec\sigma_{0,!}\sigma_0^*G'$ with 
$G'\in \CI(k,\Lambda)$ be as in Lemma \ref{lem;concrete-n-reciprocity}(ii). 
Then $\sigma_{0,!}\sigma_0^*G'\in \CI(k,\Lambda)$,
so
\[F = a_{\et}^V\omega_{\CI}h_0^{\bcube}\sigma_{0,!}\sigma_0^*G' \simeq  a_{\et}^V\omega_!\sigma_{0,!}\sigma_0^*G' = a_{\et}^V\sigma_{0,!}^V\sigma_0^{V,*}\omega_!G'.
		\]
		This implies that $F\in \Sh_{\et}^{\rm tr}(k_{\leq 0},\Lambda)$. On the other hand, by Proposition \ref{prop;HI-n-sub-RSC-n} we have\[
		\begin{tikzcd}
		\HI_{\et,\leq 0}(k,\Lambda)\ar[r,hook]&\RSC_{\et,\leq 0}(k,\Lambda)\ar[r,hook] &\Sh_{\et}^{\rm tr}(k_{\leq 0},\Lambda).
		\end{tikzcd}
		\]
		By \cite[Lemma 1.2.2]{Ayoub-BV}, the composition above is an equivalence, hence we deduce the equivalences of \eqref{eq;0-gen-equivalent}.
	\end{proof}
\end{cor}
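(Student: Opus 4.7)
The plan is to prove the three claims in sequence, relying essentially on the explicit formula \eqref{eq;explicit-sigma-0} and the comparison of $\sigma_{0,!}$ with its Voevodsky analogue from Remark \ref{rmk:comp-ABV}.

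First I would tackle $\bcube$-invariance of $\sigma_{0,!}\sigma_0^*G$ for an arbitrary $G\in \MPST(k)$. By \eqref{eq;explicit-sigma-0}, for any $\kX\in \MCor(k)$ one has
\[\sigma_{0,!}\sigma_0^*G(\kX \otimes \bcube) = \sigma_0^*G(\pi_0(\omega(\kX\otimes\bcube)),\emptyset).\]
Since $\omega(\kX\otimes\bcube)=\omega(\kX)\times\A^1$ and $\A^1$ is geometrically connected, the elementary identity $\pi_0(\omega(\kX)\times\A^1)=\pi_0(\omega(\kX))$ (already used in the proof of \cite[Lemma 1.2.2]{Ayoub-BV}) forces the projection $p\colon \kX\otimes\bcube\to \kX$ to induce the identity on both sides. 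This gives $\sigma_{0,!}\sigma_0^*G\in\CI$.

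Next, for the ``strongly $0$-generated'' part, I would start from the characterization in Lemma \ref{lem;concrete-n-reciprocity}(ii): any $F\in\RSC_{\et,\leq 0}$ can be written $F\simeq \hetrec\sigma_{0,!}\sigma_0^*G'$ with $G'\in\CI(k)$. Since $\sigma_{0,!}\sigma_0^*G'$ is itself $\bcube$-invariant by the first part, the counit $h_0^{\bcube}(\sigma_{0,!}\sigma_0^*G')\to \sigma_{0,!}\sigma_0^*G'$ is an isomorphism, so
\[F \simeq a_{\et}^V\omega_{\CI}\,\sigma_{0,!}\sigma_0^*G' \simeq a_{\et}^V\omega_!\sigma_{0,!}\sigma_0^*G'\simeq a_{\et}^V\sigma_{0,!}^V\sigma_0^{V,*}\omega_!G',\]
where the last isomorphism uses the base-change identity from Remark \ref{rmk:comp-ABV}. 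This realises $F$ as a sheaf in the essential image of $a_{\et}^V\sigma_{0,!}^V$ applied to an object on the $\leq 0$ side, i.e.\ as strongly $0$-generated in the Ayoub-Barbieri-Viale sense, and in particular places $F$ in $\Shv_{\et}^{\tr}(k_{\leq 0})$.

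Finally, for the chain of equivalences in \eqref{eq;0-gen-equivalent}, I would string together: the inclusion $\HI_{\et,\leq 0}\hookrightarrow \RSC_{\et,\leq 0}$ from Proposition \ref{prop;HI-n-sub-RSC-n}, the inclusion $\RSC_{\et,\leq 0}\hookrightarrow \Shv_{\et}^{\tr}(k_{\leq 0})$ from the previous step, and the equivalence $\HI_{\et,\leq 0}\simeq \Shv_{\et}^{\tr}(k_{\leq 0})$ of \cite[Lemma 1.2.2]{Ayoub-BV}. Since the overall composition is an equivalence and each intermediate functor is a fully faithful inclusion, both middle inclusions must themselves be equivalences. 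There is no real obstacle: the whole argument is formal once one observes the key fact that $\sigma_{0,!}\sigma_0^*$ already lands in $\CI$, which collapses the distinction between $h_0^{\bcube}$ and the identity in this special case.
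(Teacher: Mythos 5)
Your proposal is correct and follows essentially the same route as the paper: the $\bcube$-invariance of $\sigma_{0,!}\sigma_0^*G$ via the explicit formula \eqref{eq;explicit-sigma-0} and $\pi_0(X\times\A^1)=\pi_0(X)$, the identification $F\simeq a_{\et}^V\sigma_{0,!}^V\sigma_0^{V,*}\omega_!G'$ using Lemma \ref{lem;concrete-n-reciprocity}(ii) and Remark \ref{rmk:comp-ABV}, and the two-out-of-three argument with the chain of inclusions from Proposition \ref{prop;HI-n-sub-RSC-n} and \cite[Lemma 1.2.2]{Ayoub-BV}. No substantive differences.
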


\subsection{\texorpdfstring{$1$}{One}-reciprocity sheaves and Albanese functors} \label{ssec:MotAlbFunctor} In this subsection, we will assume that $k$ has characteristic zero. In particular, $k$ satisfies resolutions of singularities and for any smooth proper modulus pair, we can identify $\mathbf{Alb}_{\kX}^{\CH}$ with $\mathbf{Alb}_{\kX}^{\Omega}$, and we simply write $\mathbf{Alb}_{\kX}$ for the Albanese scheme of $\kX$.

\begin{remark}\label{rmk:h0-curve-is-1-motivic}
	If $\kC = (\ol{C}, C_\infty)$ denotes a $1$-dimensional smooth and proper modulus pair with $\ol{C}$ geometrically integral, by Lemma \ref{lem:h0-nstrgen-is-motivic}, $\hetrec(\kC)$ is a $1$-reciprocity sheaf. Moreover, as observed in \eqref{eq;pic}, we have:
	\[
	\hetrec(\kC) =a_{\et}^V\omega_{\CI} h_0^{\bcube}(\Z_{tr}(\kC))= a_{\et}^V h_0(\kC) = \uPic(\ol{C}, C_\infty),
	\]
	where $\uPic(\ol{C}, C_\infty)$ denotes the relative Picard group scheme, whose connected component  of the identity $\uPic^0(\ol{C}, C_\infty)$ agrees with the Rosenlicht-Serre generalized Jacobian ${\rm Jac}(\ol{C}, C_\infty)$. By Proposition \ref{prop:UniversalPropertyPST} and the proof of Proposition \ref{prop:AlbaneseChow}, we have $\uPic(\ol{C}, C_\infty) = {{\bf Alb}}_{\kC}$. Hence we finally have that $\hetrec (\kC) = {{\bf Alb}}_{\kC}$ is represented by a commutative group scheme.
\end{remark}

\begin{remark}\label{rmk:Alb-1-generated}
	More generally, let $\kX$ be a smooth and proper modulus pair. Then the sheaf $\mathbf{Alb}_{\kX}$ is $1$-generated. In fact, $\mathbf{Alb}_{\kX}$ can be written as extension of a semi-abelian $k$-group scheme (i.e. a $k$-group scheme $G$ such that $G^0$ is a semi-abelian variety) and a unipotent algebraic group $U$. In characteristic $0$, the group $U$ is a product of $\G_a$, and $\G_a$ is a direct summand of $\hetrec (\mathbb{P}^1, 2\infty)$, so that it is $1$-generated. The semi-abelian $k$-group scheme $G$ is a quotient of the generalized Jacobian of a suitable curve contained in $G$ by a theorem of Matsusaka \cite{Matsusaka}, which is $1$-generated by the previous remark (see \cite[1.3]{Ayoub-BV}). Hence $G$ itself is $1$-generated by \cite[Lemma 1.1.15]{Ayoub-BV}. Applying again \cite[Lemma 1.1.15]{Ayoub-BV} to $\mathbf{Alb}_{\kX}$ we get the statement.
\end{remark} 

    We deduce from the previous remarks the following analogue to \cite[Lemma 1.3.4]{Ayoub-BV}.
\begin{lemma}\label{lem:sub-1-gen-is-1-gen} Let $F\in \RSC_{\et,\leq 1}(k,\Lambda)$. Then any subsheaf of $F$ is a $1$-generated \'etale sheaf in the sense of \cite{Ayoub-BV}.
	\begin{proof}We essentially follow the steps in the proof of \cite[Lemma 1.3.4]{Ayoub-BV}, starting from the case of $F= \hetrec(\kC)$, for $\kC$ a smooth and proper modulus pair of dimension $1$. This is a $1$-reciprocity sheaf by Remark \ref{rmk:h0-curve-is-1-motivic}. Let $E\subset F$ be a subsheaf. 
		Since colimits of $1$-generated \'etale sheaves are $1$-generated \'etale sheaves, 
		(see \ref{rmk;commutecolimits}), we can assume that $E$ is the image of a map $a\colon \Lambda_{tr}(X)\to \hetrec(\kC)$, for $X\in \Sm$. Since for $k\subseteq k'$ a finite extension, the map $X_{k'}\to X$ is an \'etale cover: this implies that $ \Lambda_{tr}(X_{k'})\to  \Lambda_{tr}(X)$ is a surjective map of \'etale sheaves, so we can assume $X$ geom. connected. By Remark \ref{rmk:h0-curve-is-1-motivic}, we have then a map
		\[a\colon \Lambda_{tr}(X) \to {\bf Alb}_{\kC} \cong \hetrec (\kC). \]
		Since ${\bf Alb}_{\kC}\in \RSC(k,\Lambda)$, by Remark \ref{rmk;reciprocity-only-(n)} there exists a smooth proper modulus pair $\kX$ with $X=\kX^\circ$ such that $a$ factors through $h_0(\kX)$, and by Proposition \ref{prop:UniversalPropertyPST}, it uniquely factors through $a_{\kX}\colon h_0(\kX)\to \mathbf{Alb}_{\kX}$:
		\[\xymatrix{ h_0(\kX)\ar[d]_{a_{\kX}} \ar[dr]^{a}& \\
			\mathbf{Alb}_{\kX}\ar[r]^{a'} & {\bf Alb}_{\kC}.
		}\]
		By Lemma \ref{lem:Alb-map-surjective}, the motivic Albanese map $a_{\kX}$ is a surjective morphism of \'etale sheaves, hence the image of $a$ agrees with the image of $a'$. Thus $E = {\rm Im}(a')$ is a $1$-generated \'etale sheaf by \cite[Lemma 1.1.15]{Ayoub-BV} and Remark \ref{rmk:Alb-1-generated}.
		
		Now the general case. Let $E\subset F \cong \hetrec (\sigma_{1,!}\sigma_{1}^* G)$ with $G\in \MPST(k,\Lambda)$ (cf. Lemma \ref{lem;concrete-n-reciprocity}(ii)).
The sheaf $\sigma_{1}^* G \in \MPST(k_{\leq 1},\Lambda)$ can be written as colimit of representable sheaves  in $\MPST(k_{\leq 1},\Lambda)$, i.e.
		\[\sigma_{1}^* G  = \colim_{\kC\to\sigma_{1}^* G} \Z_{tr}(\kC)_{\leq 1}, \quad \kC = (\ol{C}, C_\infty),\quad \dim(\ol{C}) = 1.\]
		Since $\sigma_{1,!}$ and $\hetrec$ commute with colimits (being left adjoints), we have then  
		\begin{equation}\label{eq:F-colimit-curves}
			F \cong \hetrec (\sigma_{1,!} (\sigma_{1}^* G)) = \colim \hetrec (\sigma_{1,!} \Z_{tr}(\kC)_{\leq 1} ) = \colim \hetrec(\kC) .
		\end{equation}
		For any $\kC\in (\kC\to\sigma_{1}^* G)_{\leq 1}$, we have in particular a map $\hetrec(\kC) \to F$, and thus a map from the fiber product
		\[\colim_{\kC\to\sigma_{1, *} F } (\hetrec(\kC)\times_F E ) \to E \]
		which is surjective (the proof of surjectivity is formal and identical to the corresponding statement in the proof of \cite[Lemma 1.3.4]{Ayoub-BV}). Now it is enough to notice that each $\hetrec(\kC)\times_F E  \subset \hetrec(\kC)$ is a $1$-generated \'etale sheaf by the previous step and the fact that $1$-generated \'etale sheaves are stable by colimits. To conclude we apply again \cite[Lemma 1.1.15]{Ayoub-BV}.
	\end{proof}
\end{lemma}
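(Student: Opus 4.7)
The plan is to reduce the statement, via a double dévissage, to the already established fact that the Albanese group scheme of a smooth proper modulus pair is $1$-generated (Remark \ref{rmk:Alb-1-generated}), following closely the blueprint of Ayoub--Barbieri-Viale's argument but with the sheaf $\bAlb_{\kX}$ with modulus in place of Serre's semi-abelian Albanese.

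First, I would reduce to the case $F=\hetrec(\kC)$ for $\kC=(\ol{C},C_\infty)$ a $1$-dimensional smooth proper modulus pair. For this, I would use Lemma \ref{lem;concrete-n-reciprocity}\textit{(ii)} to write $F\cong \hetrec(\sigma_{1,!}\sigma_1^\ast G)$ for some $G$, and then write $\sigma_1^\ast G$ as a colimit of representables $\Z_{tr}(\kC)_{\leq 1}$ in $\MPST(k_{\leq 1})$. Since both $\sigma_{1,!}$ and $\hetrec$ are left adjoints (hence commute with colimits), $F$ becomes a colimit of the form $\colim \hetrec(\kC)$. A given subsheaf $E\subset F$ is then surjected onto by the colimit of the fiber products $\hetrec(\kC)\times_F E$, and the class of $1$-generated \'etale sheaves is stable under colimits (Remark \ref{rmk;commutecolimits}) and under extensions/quotients (Lemma \ref{lem:n-generated-closed-under-ext-and-quotients} together with \cite[Lemma 1.1.15]{Ayoub-BV}); so it suffices to treat subsheaves of a single $\hetrec(\kC)$.

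Next, for $F=\hetrec(\kC)$, I would further reduce to the case where $E$ is the image of a single morphism $a\colon \Z_{tr}(X)\to \hetrec(\kC)$ with $X\in \Sm(k)$, again by writing $E$ as a colimit and using stability of $1$-generated sheaves under colimits. By Remark \ref{rmk:h0-curve-is-1-motivic}, $\hetrec(\kC)\cong \bAlb_{\kC}$, which is a reciprocity sheaf; thus by Remark \ref{rmk;reciprocity-only-(n)} and the universal property of Proposition \ref{prop:UniversalPropertyPST}, the map $a$ factors for some $\kX\in \Comp(X)$ as
\[
\Z_{tr}(X)\twoheadrightarrow h_0(\kX)\xrightarrow{a_{\kX}} \bAlb_{\kX}\xrightarrow{a'} \bAlb_{\kC}.
\]
By Lemma \ref{lem:Alb-map-surjective}, $a_{\kX}$ is surjective as an \'etale sheaf, so $E=\Image(a)=\Image(a')$. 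Finally, since $\bAlb_{\kX}$ is $1$-generated (Remark \ref{rmk:Alb-1-generated}), its quotient $\Image(a')$ is $1$-generated by \cite[Lemma 1.1.15]{Ayoub-BV}, which concludes the argument.

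The main obstacle, compared to the parallel homotopy-invariant statement in \cite{Ayoub-BV}, is that the natural receptacle $\bAlb_{\kX}$ is no longer semi-abelian: it acquires a unipotent part. This matters because one needs $\bAlb_{\kX}$ to remain $1$-generated in the modulus sense, and this is precisely what Remark \ref{rmk:Alb-1-generated} secures (via the fact that $\G_a$ is a direct summand of $\hetrec(\P^1,2\infty)$, hence itself $1$-generated). The surjectivity of the Albanese map with modulus, provided by Lemma \ref{lem:Alb-map-surjective}, is the second non-trivial ingredient that replaces the classical surjectivity onto Serre's semi-abelian Albanese.
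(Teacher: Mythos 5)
Your proposal is correct and follows essentially the same argument as the paper: the same dévissage via Lemma \ref{lem;concrete-n-reciprocity}(ii) and the colimit over curves, the same reduction to the image of a single map $\Z_{tr}(X)\to \hetrec(\kC)$, and the same factorization through $\bAlb_{\kX}$ using Proposition \ref{prop:UniversalPropertyPST}, Lemma \ref{lem:Alb-map-surjective} and Remark \ref{rmk:Alb-1-generated}. The only difference is the order of presentation (you state the reduction to the curve case first, while the paper treats the curve case first), which is immaterial.
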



\begin{prop}\label{prop:CI1-thick} Let $F\in \Shv_{\et}^{\tr}(k,\Q)$ be an \'etale sheaf of $\Q$-vector spaces which is $1$-generated in the sense of \cite{Ayoub-BV}. If it is a reciprocity sheaf, then it is a $1$-reciprocity sheaf. In particular, any subsheaf of a $1$-reciprocity sheaf of $\Q$-vector spaces is again a $1$-reciprocity sheaf and the category $\RSC_{\et, \leq 1}(k,\Q)$ is closed under taking subobjects, {colimits} and extensions in $\RSC_{\et}(k,\Q)$.
	\begin{proof}By Lemma \ref{lem:sub-1-gen-is-1-gen}, any subsheaf of a $1$-reciprocity sheaf is again $1$-generated, and by \cite[Corollary 2.4.2]{KSY-RecII} any subsheaf of a reciprocity sheaf is a reciprocity sheaf. Then the second part of the Proposition follows from the first, since $\RSC_{\et}(k,\Q)$ is an abelian category {stable by colimits in $\Shv_{\et}^{\tr}(k,\Q)$} (here we are using the fact that we consider $\Q$-coefficients in order to exploit Proposition \ref{prop:et-Nis-Voe}, since \cite[Corollary 2.4.2]{KSY-RecII} is a statement about the Nisnevich sheafification) and $1$-generated \'etale sheaves are stable by colimits and extensions by \cite[Lemma 1.1.15]{Ayoub-BV}.

	We now prove the first assertion. Let $F\in \Shv_{\et}^{\tr}(k,\Q)$ be a $1$-generated \'etale sheaf of $\Q$-vector spaces and suppose that $F\in \RSC(k,\Q)$, i.e. that there exists $G\in \CI(k,\Q)$ such that $F=\omega_!G$. By Remark \ref{rmk:comp-ABV}, we have that\[
	\sigma^V_{1,!}\sigma_1^{V,*}F = \omega_!\sigma_{1,!}\sigma_1^*G
	\]
	and the counit $\sigma^V_{1,!}\sigma_1^{V,*}F\to F$ is the image via $\omega_!$ of the counit $\sigma_{1,!}\sigma_1^*G\to G$. Since $G\in \CI(k,\Q)$, the map $\sigma_{1,!}\sigma_1^*G\to G$ factors through $h_0^{\bcube}\sigma_{1,!}\sigma_1^*G$, which induces a factorization\[
	\begin{tikzcd}
	a^V_{\et}\omega_!\sigma_{1,!}\sigma_1^*G\ar[r,equal]\ar[drr,bend right = 10]&a^V_{\et}\sigma^V_{1,!}\sigma_1^{V,*}F\ar[rr,"(*)"]&&F\\
	&&\hetrec (\sigma_{1,!}\sigma_1^*G).\ar[ur]
	\end{tikzcd}	
	\]
Since $F$ is a $1$-generated \'etale sheaf, the map $(*)$ is surjective, hence the induced map $\hetrec (\sigma_{1,!}\sigma_1^*G)\to F$ is surjective. 
Let $N=\ker(\hetrec (\sigma_{1,!}\sigma_1^*G)\to F)$.
By Lemma \ref{lem;concrete-n-reciprocity}, $\hetrec (\sigma_{1,!} \sigma_{1}^* G)\in \RSC_{\et, \leq 1}(k,\Q)$ so that $N$ is a $1$-generated \'etale sheaf by Lemma \ref{lem:sub-1-gen-is-1-gen}. Hence $N=0$ by Corollary \ref{cor:Key-corollary} so that $\hetrec (\sigma_{1,!}\sigma_1^*G)\cong F$, which concludes the proof by Lemma \ref{lem;concrete-n-reciprocity}.
	\end{proof}
\end{prop}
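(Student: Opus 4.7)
My plan is to reduce the closure statements to the main claim and then prove the main claim via the general criterion from Lemma \ref{lem;concrete-n-reciprocity}(ii) together with the key vanishing Corollary \ref{cor:Key-corollary}.

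For the secondary assertions, I would observe that a subsheaf of a $1$-reciprocity sheaf is automatically (i) a reciprocity sheaf by \cite[Corollary 2.4.2]{KSY-RecII} (applied Nisnevich-locally, which equals \'etale-locally with $\Q$-coefficients by Proposition \ref{prop:et-Nis-Voe}) and (ii) a $1$-generated \'etale sheaf by Lemma \ref{lem:sub-1-gen-is-1-gen}. Combined with the first statement, subobject closure in $\RSC_{\et,\leq 1}(k,\Q)$ follows. Closure under extensions and colimits is then a combination of the same closure properties of $\RSC_{\et}(k,\Q)$ (again rational coefficients are essential) with the corresponding closure for $1$-generated \'etale sheaves provided by \cite[Lemma 1.1.15]{Ayoub-BV}.

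The substantial part is the first assertion, and my strategy is as follows. Assuming $F = \omega_! G$ with $G\in \CI$, the compatibility of $\sigma_{1,!},\sigma_1^*$ with $\omega_!,\omega^*$ established in Remark \ref{rmk:comp-ABV} identifies the counit $\sigma_{1,!}^V\sigma_1^{V,*}F \to F$ with $\omega_!$ applied to the counit $\sigma_{1,!}\sigma_1^* G \to G$. Since $G$ is $\bcube$-invariant, this latter counit factors canonically through $h_0^{\bcube}\sigma_{1,!}\sigma_1^* G$; applying $a_{\et}^V\omega_!$ produces a factorization
\[
a_{\et}^V \sigma_{1,!}^V\sigma_1^{V,*}F \;\twoheadrightarrow\; \hetrec(\sigma_{1,!}\sigma_1^* G) \;\longrightarrow\; F,
\]
and the $1$-generation hypothesis forces the second arrow to be surjective. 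By Lemma \ref{lem;concrete-n-reciprocity}, the middle term $\hetrec(\sigma_{1,!}\sigma_1^* G)$ lies in $\RSC_{\et,\leq 1}$, so its kernel $N$ is $1$-generated by Lemma \ref{lem:sub-1-gen-is-1-gen}. Corollary \ref{cor:Key-corollary} then kills $N$, giving $F \cong \hetrec(\sigma_{1,!}\sigma_1^* G)\in \RSC_{\et,\leq 1}(k,\Q)$.

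The main obstacle I anticipate is verifying that the kernel $N$ of the surjection $\hetrec(\sigma_{1,!}\sigma_1^* G)\twoheadrightarrow F$ is $1$-generated, which is precisely what lets Corollary \ref{cor:Key-corollary} do its job. This is where Lemma \ref{lem:sub-1-gen-is-1-gen} intervenes: the delicate input is that subsheaves of $\hetrec(\kC)$ for $\kC$ one-dimensional are $1$-generated because every section factors through the Albanese map of some smooth proper modulus compactification, combined with the surjectivity of $a_{\kX}\otimes \Q$ of Lemma \ref{lem:Alb-map-surjective}. Here the use of $\Q$-coefficients is essential, both for the identification $a_{\Nis}^V = a_{\et}^V$ underlying the global injectivity arguments and for the surjectivity of the Albanese map as a morphism of \'etale sheaves.
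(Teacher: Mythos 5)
Your proposal is correct and follows essentially the same route as the paper: the secondary closure statements are reduced to the first assertion via Lemma \ref{lem:sub-1-gen-is-1-gen}, \cite[Corollary 2.4.2]{KSY-RecII} and \cite[Lemma 1.1.15]{Ayoub-BV}, and the main claim is proved by factoring the counit through $\hetrec(\sigma_{1,!}\sigma_1^*G)$, using $1$-generation for surjectivity, and killing the ($1$-generated) kernel with Corollary \ref{cor:Key-corollary}. No gaps.
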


Recall that $\RSC_{\et}(k,\Q)$ is a Grothendieck abelian category by \cite[Corollary 2.4.2]{KSY-RecII}. The following corollary is immediate from the previous proposition. 
\begin{cor}\label{cor:CI1-is-Grothendieck}The inclusion $\RSC_{\et,\leq 1}(k,\Q) \subset \RSC(k,\Q)$ (and consequently \par
$\RSC_{\et,\leq 1}(k,\Q)\subseteq \Shv_{\et}^{\tr}(k,\Q)$) is exact, and the category $\RSC_{\et,\leq1}(k,\Q)$ is a Grothendieck abelian category (in particular, it has enough injectives).
\end{cor}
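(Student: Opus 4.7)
The plan is to deduce everything directly from Proposition \ref{prop:CI1-thick}, which already packages the key stability properties of $\RSC_{\et,\leq 1}(k,\Q)$ as a subcategory of $\RSC_{\et}(k,\Q)$. Recall from \cite[Corollary 2.4.2]{KSY-RecII} that $\RSC_{\et}(k,\Q)=\RSC_{\Nis}(k,\Q)$ is Grothendieck abelian, and that it is exactly embedded in $\Shv_{\et}^{\tr}(k,\Q)$.

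First I would verify exactness of the inclusion $\RSC_{\et,\leq 1}(k,\Q)\hookrightarrow \RSC_{\et}(k,\Q)$. By Proposition \ref{prop:CI1-thick}, $\RSC_{\et,\leq 1}(k,\Q)$ is closed under subobjects, quotients and extensions inside $\RSC_{\et}(k,\Q)$; in particular, if $f\colon F\to G$ is a morphism in $\RSC_{\et,\leq 1}(k,\Q)$, then its kernel and cokernel computed in $\RSC_{\et}(k,\Q)$ again lie in $\RSC_{\et,\leq 1}(k,\Q)$. This gives an abelian structure on $\RSC_{\et,\leq 1}(k,\Q)$ for which the inclusion into $\RSC_{\et}(k,\Q)$, and hence into $\Shv^{\tr}_{\et}(k,\Q)$, is exact. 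It follows also that $\RSC_{\et,\leq 1}(k,\Q)$ is a Serre subcategory of $\RSC_{\et}(k,\Q)$.

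Next I would check that $\RSC_{\et,\leq 1}(k,\Q)$ is closed under arbitrary small coproducts in $\RSC_{\et}(k,\Q)$; this is explicitly included in the list of stability properties in Proposition \ref{prop:CI1-thick} (``closed under colimits''), and can alternatively be seen from Lemma \ref{lem;concrete-n-reciprocity}(ii) together with the fact that $\sigma_{1,!}$, $\sigma_1^*$ and $\hetrec$ all preserve colimits.

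Finally, to conclude that $\RSC_{\et,\leq 1}(k,\Q)$ is Grothendieck abelian, I would invoke the general criterion \cite[Theorem A.10.1]{MotModulusI}: a Serre subcategory of a Grothendieck abelian category which is stable under arbitrary direct sums is itself Grothendieck abelian. Since we have verified all three ingredients, the criterion applies and the conclusion follows; the existence of enough injectives is then automatic from being Grothendieck. The only step that requires any work has already been done in Proposition \ref{prop:CI1-thick}, so there is no real obstacle; the proof is essentially a formal assembly of previously established facts.
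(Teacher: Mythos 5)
Your proof is correct and follows essentially the same route as the paper's own argument: exactness and the Serre-subcategory property from Proposition \ref{prop:CI1-thick}, closure under direct sums (which the proposition indeed records as closure under colimits), and then the criterion of \cite[Theorem A.10.1]{MotModulusI} applied inside the Grothendieck abelian category $\RSC_{\et}(k,\Q)$ of \cite[Corollary 2.4.2]{KSY-RecII}. Nothing to add.
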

The proof of the following Lemma is identical to the proof of \cite[Lemma 1.3.6]{Ayoub-BV}, using Lemma \ref{lem:sub-1-gen-is-1-gen}.
\begin{lemma}\label{lem:subsheaf-of-G-is-1-subgroup}Let $G\in \sG^*$ be a smooth commutative $k$-group scheme and let $F$ be an \'etale subsheaf of $G$ with transfers such that its sheaf of connected components $\pi_0(F)$ is zero. Then $F$ is represented by a closed subgroup of $G$.
	\end{lemma}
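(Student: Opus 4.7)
The plan is to follow \cite[Lemma 1.3.6]{Ayoub-BV} almost verbatim, replacing its use of the $\HI$-analogue by our Lemma \ref{lem:sub-1-gen-is-1-gen}. The argument splits into three main steps.

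First, I would reduce to the case where $G = G^0 \in \sG$ is smooth commutative connected of finite type. Indeed, the composite $F \hookrightarrow G \twoheadrightarrow \pi_0(G)$ lands in an étale $k$-group, so by the universal property of $\pi_0$ it must be zero given the hypothesis $\pi_0(F) = 0$; hence $F \subseteq G^0$.

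Second, I would exhibit $F$ as a filtered union of closed subgroup schemes of $G$. Since $G$ is a smooth commutative $k$-group scheme, $G \in \RSC_{\et}$ by \cite[Theorem 4.4]{KSY}, and $G$ is in fact a $1$-reciprocity sheaf: this follows from Remark \ref{rmk:Alb-1-generated} combined with Proposition \ref{prop:CI1-thick} once one reduces via the Chevalley decomposition to the cases of $\G_a$, $\G_m$ and abelian varieties. Lemma \ref{lem:sub-1-gen-is-1-gen} then implies that the subsheaf $F \subseteq G$ is $1$-generated, and by Proposition \ref{prop:CI1-thick} it is actually a $1$-reciprocity sheaf. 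Mimicking the presentation \eqref{eq:F-colimit-curves}, $F$ is the filtered colimit of the images of maps $\hetrec(\kC) \to F$ indexed by $1$-dimensional smooth proper modulus pairs $\kC$ mapping to $F$. By Remark \ref{rmk:h0-curve-is-1-motivic}, each such map is a morphism of sheaves $\uPic(\ol{C}, C_\infty) \to G$. The hypothesis $\pi_0(F) = 0$ forces it to factor through the generalized Jacobian $\Jac(\ol{C}, C_\infty) = \uPic^0(\ol{C}, C_\infty)$, and the image $H_{\kC} \subseteq G$ of the resulting homomorphism between smooth connected commutative algebraic groups is a closed smooth connected subgroup scheme of $G$.

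Finally, I would conclude by a stabilization argument. The filtered family $\{H_{\kC}\}$ consists of smooth connected closed subgroups of the finite-type commutative algebraic group $G$; such subgroups satisfy the ascending chain condition. One can see this via Chevalley's structure theorem: subtori of the maximal torus, vector subgroups of the unipotent part, and abelian subvarieties of the abelian quotient each form Noetherian posets, and a dévissage along the filtration yields ACC on connected smooth subgroups of $G$. Thus the union stabilizes to a single closed subgroup $H \subseteq G$ which represents the sheaf $F$.

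The main technical obstacle I anticipate is precisely this last stabilization: the ACC in the non-split case (a genuine extension of an abelian variety by an affine group) requires a careful dévissage rather than a direct argument, together with the observation that the transition maps in the filtered system $\{H_{\kC}\}$ are closed immersions of smooth connected group schemes. A subsidiary verification is that $G$ itself is a $1$-reciprocity sheaf, which—as noted above—reduces via Chevalley to the three basic cases $\G_a$, $\G_m$, and abelian varieties, all handled by Remark \ref{rmk:Alb-1-generated} and Proposition \ref{prop:CI1-thick}.
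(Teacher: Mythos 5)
Your proposal takes the same route the paper intends: the paper's proof is literally ``follow \cite[Lemma 1.3.6]{Ayoub-BV}, replacing their Lemma 1.3.4 by Lemma \ref{lem:sub-1-gen-is-1-gen}'', and that is what you reconstruct --- write $F$ as a filtered union of images of generalized Jacobians and stabilize. Two corrections on the details. First, the step ``the hypothesis $\pi_0(F)=0$ forces the map $\uPic(\ol{C},C_\infty)\to F$ to factor through $\uPic^0(\ol{C},C_\infty)$'' is false for an individual $\kC$: for $F=G=\G_m$ and $\kC=(\P^1,0+\infty)$ the homomorphism $\uPic(\P^1,0+\infty)\cong \G_m\times\underline{\Z}\to\G_m$, $(t,n)\mapsto t\cdot 2^{n}$, does not kill the $\underline{\Z}$-factor even though $\pi_0(\G_m)=0$. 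The hypothesis is global and must be used globally: let $H_{\kC}\subseteq G$ be the image of the identity component $\uPic^0(\ol{C},C_\infty)$ only, let $H=\bigcup_{\kC} H_{\kC}$, which is a connected closed subgroup by your stabilization argument; then $F/H$ is generated by the classes of single points of the curves, hence is a discrete ($0$-motivic) sheaf, so the universal property defining $\pi_0(F)=0$ gives $F/H=0$ and $F=H$. Second, the ``main technical obstacle'' you anticipate is not one: for a \emph{directed} family of connected closed subgroup schemes of a finite-type group scheme over a field of characteristic zero, stabilization follows from a dimension count alone (two nested smooth connected closed subgroups of equal dimension coincide), and no d\'evissage along the Chevalley decomposition is needed.
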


\begin{defn}\label{defn:fin-pres1mot}  A $1$-reciprocity sheaf $F\in \RSC_{\et, \leq 1}(k,\Q)$ is called \emph{finitely generated} if there exists a commutative $k$-group scheme $G\in \sG^*$ with $\pi_0(G)$ finitely generated together with a surjection $q\colon G\otimes_\Z \Q\to F$. 
If the kernel of $q$ is itself finitely generated, we say that $F$ is \emph{finitely presented}.
\end{defn}	
We write $\RSC_{\et, \leq 1}^{\star}(k,\Q)\subset \RSC_{\et, \leq1}(k,\Q)$ for the full subcategory of finitely presented $1$-reciprocity sheaves.  	
An almost word-by-word translation of \cite[Proposition 1.3.8]{Ayoub-BV}, using Lemma \ref{lem:sub-1-gen-is-1-gen} and Lemma \ref{lem:subsheaf-of-G-is-1-subgroup} gives the following canonical presentation of every finitely presented $1$-reciprocity sheaf. This result will be repeatedly used in the rest of the paper. 
\begin{prop}\label{prop:can-pres-1-mot-sheaf} Any $1$-reciprocity sheaf is filtered colimit of finitely presented $1$-reciprocity sheaves. If $F$ is a finitely presented $1$-reciprocity sheaf, then there is a unique and functorial exact sequence
	\[ 0\to L\to  G\otimes_{\Z}\Q \to F\to 0\]
	where $G\in \sG^*$ is a smooth commutative $k$-group scheme such that $\pi_0(G)$ is finitely generated and $L\in \RSC_{\et, \leq0}(k,\Q)  \cong \Shv_{\et}^{\tr}(k_{\leq0},\Q)$ is a finitely generated $0$-motivic sheaf.
\end{prop}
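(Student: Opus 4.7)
The plan is to mirror the strategy of \cite[Prop.~1.3.8]{Ayoub-BV}, using the tools developed above (in particular Lemmas \ref{lem:sub-1-gen-is-1-gen} and \ref{lem:subsheaf-of-G-is-1-subgroup}, together with Proposition \ref{prop:CI1-thick}). A key simplification is that the rational coefficients make the torsion-freeness of $L$ automatic.

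For the first claim, I first observe that any $F \in \RSC_{\et,\leq 1}(k,\Q)$ is the filtered colimit of its finitely generated subsheaves. Indeed $F$ is $1$-generated in the sense of \cite{Ayoub-BV}, hence a quotient of a direct sum $\bigoplus_i G_i$ with $G_i \in \sG^*$; the images of the finite subsums exhaust $F$ and give finitely generated subsheaves, which by Proposition \ref{prop:CI1-thick} remain in $\RSC_{\et,\leq 1}(k,\Q)$. To upgrade from ``finitely generated'' to ``finitely presented'', I would pick for a given finitely generated $F' \in \RSC_{\et,\leq 1}(k,\Q)$ a surjection $q\colon G \twoheadrightarrow F'$ with $G \in \sG^*$ and $\pi_0(G)$ finitely generated; the kernel $K := \ker(q)$ is a subsheaf of $G$, hence again $1$-generated by Lemma \ref{lem:sub-1-gen-is-1-gen}, and therefore a filtered union of its finitely generated subsheaves $K_j$. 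Then $F' = \colim_j G/K_j$ expresses $F'$ as a filtered colimit of finitely presented sheaves.

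For the canonical exact sequence, I start from a surjection $q\colon G_0 \twoheadrightarrow F$ with $G_0 \in \sG^*$ and $\pi_0(G_0)$ finitely generated (which exists by Definition \ref{defn:fin-pres1mot}); since $F$ is finitely \emph{presented}, $K := \ker(q)$ is also finitely generated. Let $K^0$ be the kernel of the canonical map $K \twoheadrightarrow \pi_0(K)$. Then $K^0$ is a subsheaf of $G_0$ with $\pi_0(K^0)=0$, so Lemma \ref{lem:subsheaf-of-G-is-1-subgroup} represents it by a closed subgroup of $G_0$. Setting $G := G_0/K^0 \in \sG^*$ and $L := \pi_0(K)$ produces the desired exact sequence $0 \to L \to G \to F \to 0$. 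The finitely generated discrete $\Q$-sheaf $L$ is automatically torsion-free because we work with $\Q$-coefficients, so it is a lattice.

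The main obstacle will be establishing that $(G,L)$ is canonical, hence functorial in $F$, independently of the choice of the presentation $q$. My plan is to characterize the pair intrinsically: given a second surjection $q'\colon G' \twoheadrightarrow F$ with $G' \in \sG^*$ and lattice kernel $L'$, I form the fibre product $G \times_F G'$ inside $\RSC_{\et,\leq 1}(k,\Q)$ and verify, using Lemma \ref{lem:subsheaf-of-G-is-1-subgroup} once more, that it is itself a smooth commutative $k$-group scheme projecting to $G$ (resp.\ $G'$) with lattice kernel $L'$ (resp.\ $L$). The fact that $L$ and $L'$ are lattices (so in particular have trivial connected component of the identity) forces both projections to identify the connected quotient uniquely, yielding a canonical isomorphism $G\cong G'$ compatible with $q$ and $q'$; this simultaneously identifies $L$ and $L'$. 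Functoriality of $F \mapsto (L,G)$ under morphisms $F \to F'$ then follows by applying this uniqueness to lifts through the fibre product $G \times_F G'_0$, where $G'_0\twoheadrightarrow F'$ is the canonical presentation of the target.
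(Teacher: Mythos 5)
Your overall strategy --- translating \cite[Prop.~1.3.8]{Ayoub-BV} by means of Lemmas \ref{lem:sub-1-gen-is-1-gen} and \ref{lem:subsheaf-of-G-is-1-subgroup} and Proposition \ref{prop:CI1-thick} --- is exactly the route the paper takes (its proof is a one-line deferral to that translation), and the first half of your argument (writing $F$ as a filtered colimit of finitely presented subquotients, and producing a sequence $0\to L\to G\to F\to 0$ by dividing $G_0$ by the ``connected part'' $K^0$ of the kernel) is essentially correct. One small point: to apply Lemma \ref{lem:subsheaf-of-G-is-1-subgroup} you need $\pi_0(K^0)=0$, which is not formal from the definition of $K^0$ as $\ker(K\to \pi_0(K))$ (right-exactness of $\pi_0$ only gives that $\pi_0(K^0)\to\pi_0(K)$ vanishes). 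The clean fix is to choose a surjection $G_1\twoheadrightarrow K$ from a group scheme (possible since $K$ is a finitely generated $1$-reciprocity sheaf) and set $K^0:=\mathrm{Im}(G_1^0\to K)$; then $\pi_0(K^0)=0$ since $\pi_0(G_1^0)=0$, and $K/K^0$ is discrete, so this $K^0$ agrees with your kernel.

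The genuine gap is in the uniqueness/functoriality step. A sequence $0\to L\to G\to F\to 0$ with $G\in\sG^*$ and $L$ a lattice is \emph{not} unique without a further normalization: for $F=\G_m\otimes\Q$, besides the trivial presentation one has $0\to \Q\to (\G_m\oplus\underline{\Z})\otimes\Q\to \G_m\otimes\Q\to 0$ via $(x,n)\mapsto x-nu$ for any $0\neq u\in k^\times\otimes\Q$, whose kernel $\{(nu,n)\}\cong\Q$ is a lattice. Your fibre product $H=G\times_F G'$ only identifies the identity components $H^0\cong G^0\cong G'^0$ (your argument is fine when both $G$ and $G'$ are connected), but it cannot identify $G$ with $G'$ in general because they genuinely differ --- in the example, already in their $\pi_0$. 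The missing ingredient is the normalization $L\subseteq G^0$ (equivalently, $\pi_0(G)\xrightarrow{\sim}\pi_0(F)$), and your construction $G:=G_0/K^0$, $L:=\pi_0(K)$ does not guarantee it: applied to the two presentations of $\G_m$ above it returns two non-isomorphic answers, so the output depends on the chosen $q$. The repair, as in \cite{BVKahn} and \cite{Ayoub-BV}, is to first produce the presentation $0\to L\to G^{0}\to F^0\to 0$ of $F^0:=\ker(F\to\pi_0(F))$ with $G^{0}$ \emph{connected} --- there your fibre-product rigidity argument does apply --- and then extend uniquely over the discrete quotient $\pi_0(F)$ using that $\Ext^i_{\Shv_{\et}(k,\Q)}(\pi_0(F),L)$ vanishes for $i\geq 1$ between discrete sheaves of $\Q$-vector spaces; uniqueness and functoriality then hold among the presentations satisfying $L\subseteq G^0$, which is the sense in which the statement must be read.
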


\begin{remark}\label{rmk:Fsab}
	For a smooth commutative $k$-group scheme $G\in \sG^*$, write $G^{sab}$ for its semi-abelian quotient: it is a smooth commutative $k$-group scheme such that the connected component of the identity $(G^{sab})^0$ is a semi-abelian variety. We have by Chevalley's theorem an extension 
	\begin{equation}\label{eq:Chev}0\to U\to G\to G^{sab}\to 0\end{equation}
	where $U$ is a unipotent group. Since $k$ is of characteristic zero, $U\cong \G_a^{r}$ where $r$ is the \textit{unipotent rank} of $G$. 
	More generally, for any $F$ finitely presented $1$-motivic sheaf, by Proposition \ref{prop:can-pres-1-mot-sheaf} we have a functorial commutative diagram
	\begin{equation} \label{eq:finpresA1}
		\begin{tikzcd}	
			0\ar[r] &L \ar[r]\ar[rd,"\varphi"] & G\otimes_\Z\Q \ar[r] \ar[d] & F\ar[d] \ar[r]&0\\
			&& G^{sab}\otimes_\Z\Q \ar[r] & F^{\A^1} \ar[r]&0
		\end{tikzcd}
	\end{equation}
	where $F^{\A^1}:=h_{0,\et}^{\A^1}(F)$, and since $\ker(G\to G^{sab})$ is unipotent, {by \cite[Example 2.23]{MVW}} we also conclude that $C_{*}^{\A^1}(F)\simeq F^{\A^1}[0]$, where $C_{*}^{\A^1}(F)$ is the Suslin complex. Moreover, by the right exactness of $h_{0,\et}^{\A^1}$, the kernel of the map $G^{sab}\otimes_{\Z}\Q\to F^{\A^1}$ is a quotient of the lattice $L$, hence it is itself a lattice.
	
	In general, if $F=\colim F_1$ is a $1$-reciprocity sheaf with $F_i$ finitely presented, then $F^{\A^1}=\colim(F_i^{{A^1}})$ and $\ker(F\twoheadrightarrow F^{\A^1}) = \colim\ker(F_i\twoheadrightarrow F_i^{\A^1})$. By \eqref{eq:finpresA1}, for each $i$ there is a lattice $L'_i$ and a finite dimensional $k$-vector space $U_i$ such that \[\ker(F_i\twoheadrightarrow F_i^{\A^1})\cong (U_i\otimes \G_a)/L'_i.\]
	Since $\colim (U_i\otimes_k\G_a)\cong (\colim U_i)\otimes_k\G_a$, which is (non canonically) isomorphic to a (possibly infinite) sum of $\G_a$, and $\mathcal{L}=\colim L_i$ is a $0$-motivic sheaf, we have an exact sequence\[
		0\to \mathcal{L}\to \oplus \G_a\to  \ker(F_i\twoheadrightarrow F_i^{\A^1})\to 0.
	\]
Thus, for every $1$-reciprocity sheaf $F$, we have an exact sequence
\begin{equation}\label{eq:Ga-A1}
0\to (\oplus \G_a)/ \mathcal{L} \to F \to F^{\A^1}\to 0
\end{equation}
describing $F$ as an extension of an $\A^1$-invariant sheaf by a quotient of a (possibly infinite dimensional) vector group by a $0$-motivic sheaf. 
\end{remark}

Recall that by \cite[Prop. 11.1]{Isaksenmodel}, if $\cC$ has all colimits, then $\pro\cC$ has all colimits. We can now  prove the following generalization  to \cite[Proposition 1.3.11]{Ayoub-BV}. 

\begin{thm}\label{thm:Alb} Let $\ch(k)=0$.
The embedding $\RSC_{\et,\leq 1}(k,\Q) \subseteq \Shv_{\et}^{\tr}(k,\Q)$ has a pro-left adjoint:
	\[\Alb\colon \Shv_{\et}^{\tr}(k,\Q)\to \pro\RSC_{\et,\leq 1}(k,\Q)\]
induced by colimit from \[
\Q_{\tr}(X)\mapsto  ``\lim_{n}"(\bAlb_{\kX^{(n)}}\otimes_{\Z}\Q)
\]
for any choice of $\kX\in \Comp(X)$ smooth. 
\end{thm}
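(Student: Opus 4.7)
The plan is to define $\Alb$ first on the representable objects, verify the pro-left adjoint universal property there, and then extend by colimit to all of $\Shv_{\et}^{\tr}(k,\Q)$. For $X \in \Sm(k)$ geometrically connected, a smooth Cartier compactification $\kX=(\overline{X},D)\in \Comp(X)$ exists because $\mathrm{char}(k)=0$ implies resolution of singularities. The Albanese maps $a_{\kX^{(n)}}\colon \Q_{\tr}(X)\to \bAlb_{\kX^{(n)}}$ of Proposition \ref{prop:UniversalPropertyPST} have modulus $\kX^{(n)}$, hence also modulus $\kX^{(n+1)}$ (since $nD\leq (n+1)D$), so the universal property of $\bAlb_{\kX^{(n+1)}}$ produces canonical transition morphisms $\bAlb_{\kX^{(n+1)}}\to \bAlb_{\kX^{(n)}}$; these assemble into a pro-object $``\lim_n"\bAlb_{\kX^{(n)}}$ in $\RSC_{\et,\leq 1}(k,\Q)$, which is independent of the choice of smooth $\kX$ by the cofinality of $\{\kX^{(n)}\}_n$ in $\Comp(X)$ (again using resolution of singularities to take common refinements).

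The heart of the proof is to verify, for every $F \in \RSC_{\et,\leq 1}(k,\Q)$, the natural isomorphism
\[ \colim_n \Hom_{\RSC_{\et,\leq 1}}(\bAlb_{\kX^{(n)}}, F) \;\cong\; F(X). \]
By Lemma \ref{lem;concrete-n-reciprocity}(ii) combined with the decomposition strategy in the proof of Lemma \ref{lem:sub-1-gen-is-1-gen}, every such $F$ admits a filtered colimit presentation
\[ F \;=\; \colim_{\kC} \hetrec(\kC) \;=\; \colim_{\kC}\bAlb_{\kC}, \]
where $\kC$ ranges over smooth proper modulus pairs of dimension one, with the last identification supplied by Remark \ref{rmk:h0-curve-is-1-motivic}. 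Each $\bAlb_{\kX^{(n)}}$ is compact in $\RSC_{\et}$ by Remark \ref{rmk;RSC-star-compact}, and $\RSC_{\et,\leq 1}(k,\Q)$ is closed under colimits in $\RSC_{\et}(k,\Q)$ by Proposition \ref{prop:CI1-thick}, so $\Hom(\bAlb_{\kX^{(n)}},-)$ commutes with the colimit over $\kC$. Swapping the two filtered colimits reduces the claim to
\[ \colim_n \Hom(\bAlb_{\kX^{(n)}}, \bAlb_{\kC}) \;\cong\; \bAlb_{\kC}(X), \]
which is exactly the universal property of $\bAlb_{\kX^{(n)}}$ (Proposition \ref{prop:UniversalPropertyPST}) applied to the smooth commutative $k$-group scheme $\bAlb_{\kC}$, together with Remark \ref{rmk;reciprocity-only-(n)}(iii) ensuring that every section of a reciprocity sheaf over $X$ has modulus $\kX^{(n)}$ for some $n$.

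Finally, any $F\in \Shv_{\et}^{\tr}(k,\Q)$ can be written as a colimit $F=\colim_i \Q_{\tr}(X_i)$, and we set $\Alb(F) := \colim_i\, ``\lim_n" \bAlb_{\kX_i^{(n)}}$ in $\pro\RSC_{\et,\leq 1}(k,\Q)$; this is well-defined since $\RSC_{\et,\leq 1}(k,\Q)$ is Grothendieck abelian by Corollary \ref{cor:CI1-is-Grothendieck}, and hence its pro-category admits small colimits. The pro-adjunction property for general $F$ then follows formally from the representable case via the fact that $\Hom_{\pro\RSC_{\et,\leq 1}}(-,G)$ converts colimits into limits. The principal obstacle lies in the colimit exchange in the middle step, which crucially relies on both the compactness of $\bAlb_{\kX^{(n)}}$ and the clean presentation of $1$-reciprocity sheaves as filtered colimits of Albanese varieties of smooth proper one-dimensional modulus pairs; both ingredients are already available from the structural results established in the section.
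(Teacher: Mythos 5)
Your overall strategy (check the pro-adjunction on representables against a presentation of $F$, then extend by colimit) is reasonable, and the final assembly step as well as the computation $\colim_n\Hom(\bAlb_{\kX^{(n)}},\bAlb_{\kC})\cong\bAlb_{\kC}(X)$ for a fixed group scheme are fine. The problem is the middle step. The presentation $F=\colim_{\kC}\hetrec(\kC)=\colim_{\kC}\bAlb_{\kC}$ coming from Lemma \ref{lem;concrete-n-reciprocity} and the proof of Lemma \ref{lem:sub-1-gen-is-1-gen} is a colimit over the category of elements of $\sigma_1^*G$ (the canonical co-Yoneda presentation), and this indexing category is \emph{not} filtered: it has cocones over pairs of objects (via direct sums in $\MCor_{\leq 1}$) but no way to coequalize parallel arrows. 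Compactness of $\bAlb_{\kX^{(n)}}$ (Remark \ref{rmk;RSC-star-compact}) only lets $\Hom(\bAlb_{\kX^{(n)}},-)$ commute with \emph{filtered} colimits, so the interchange you perform is not justified. Worse, no filtered presentation of a general $F$ by group schemes can exist: a finitely presented $1$-reciprocity sheaf such as $E/\langle P\rangle$ ($E$ an elliptic curve, $P$ non-torsion) is itself compact, so if it were a filtered colimit of group schemes it would be a retract of one, hence representable — which it is not. Thus the reduction to $E=\bAlb_{\kC}$ cannot work, and the gap is not repairable by a better choice of diagram.

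The paper's proof goes around this precisely at the point where your argument breaks. It first reduces (by Galois descent) to $k$ algebraically closed, then uses compactness of $\Q_{\tr}(X)$ and Proposition \ref{prop:can-pres-1-mot-sheaf} to reduce to $E$ \emph{finitely presented}, written as $E=\coker(L\hookrightarrow G)$ with $G$ a group scheme and $L$ a lattice — not as a colimit of Albanese schemes of curves. Injectivity of the comparison map is extracted from the surjectivity of the Albanese map $a_{\kX^{(n)}}\colon\Q_{\tr}(X)\to\bAlb_{\kX^{(n)}}$ (Lemma \ref{lem:Alb-map-surjective}), and surjectivity is obtained by lifting any $s\colon\Q_{\tr}(X)\to E$ to $G$ using the vanishing $H^1_{\et}(X,L)=H^1_{\Nis}(X,\Q^r)=0$, after which Remark \ref{rmk;reciprocity-only-(n)}(iii) and Proposition \ref{prop:UniversalPropertyPST} apply as in your argument. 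Your proposal uses neither Lemma \ref{lem:Alb-map-surjective} nor this $H^1$ vanishing, and these are exactly the inputs needed to pass from group schemes to their non-representable quotients.
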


\begin{proof}
    For $X\in \Sm(k)$, recall from Definition \ref{defn;comp-modulus} the cofiltered category of Cartier compactifications $\mathbf{Comp}(X)$.
	It is enough to show that for any $X$, for any  choice of $\kX\in \Comp(X)$ with total space smooth (since $\ch(k)=0$, such choice exists), and for any $E\in \RSC_{\et, \leq1}(k,\Q)$, we have 
	\[\colim_{n}\Hom_{\Shv^{\tr}_{\et}(k)}(\bAlb_{\kX^{(n)}}\otimes_{\Z}\Q, E) = \Hom_{\Shv^{\tr}_{\et}(k)}(\Q_{\tr}(X),E).\]
	By Galois descent  it is also enough to prove the claim for $k$ algebraically closed. By Lemma \ref{lem:Alb-map-surjective}, for any $\kX^{(n)}$ {the Albanese map $a_{\kX}\colon \Q_{tr}(X)\to \bAlb_{\kX^{(n)}}\otimes_\Z\Q$ is a surjective morphism of $\Shv^{\tr}_{\et}(k,\Q)$}, hence since filtered colimits are exact, we only need to show that 
	\[\colim_{n}\Hom_{\Shv^{\tr}_{\et}(k)}(\bAlb_{\kX^{(n)}}\otimes_{\Z}\Q, E) \to \Hom_{\Shv^{\tr}_{\et}(k)}(\Q_{\tr}(X),E)\]
	is surjective, i.e. that for any choice of $\kX\in \Comp(X)$, every map $\Q_{\tr}(X)\to E$ factors through $\bAlb_{\kX^{(n)}}\otimes_{\Z}\Q$ for some $n$.  
	Since $\Q_{\tr}(X)$ is compact in $\Shv^{\tr}_{\et}(k,\Q)$, by Proposition \ref{prop:can-pres-1-mot-sheaf} we can suppose that $E$ is finitely presented. Let $E=\coker(L\hookrightarrow G\otimes_{\Z}\Q)$  be as in Proposition \ref{prop:can-pres-1-mot-sheaf}. 
	Then we get a long exact sequence 
	
	\[\ldots\to \Hom(\Q_{tr}(X), G\otimes_\Z\Q)\to \Hom(\Q_{tr}(X), E)\to 
	H^1_{\et}(X, L)\to \ldots.\]
	Since $L\cong \Q^r$, being $k$ separably closed,
	$H^1_{\et}(X, L) = H^1_{\Nis}(X, \Q^r) =0$. Thus the map \[\Hom(\Q_{tr}(X), G)\to \Hom(\Q_{tr}(X), E)\] is surjective, i.e. every map $s\colon \Q_{tr}(X)\to E$ factors via $\sigma\colon \Q_{\tr}(X)\to G\otimes_{\Z}\Q$. By Remark \ref{rmk;reciprocity-only-(n)} (iii) for every $\sigma$ as above and for any choice of $\kX\in \Comp(X)$, there exists $n$ such that $s$ factors through $\hetrec(\kX^{(n)})$. Considering now $G$ with integral coefficients, we deduce that there exists $m$ such that $m\sigma$ defines a section of $G$ over $X$ with modulus $\kX^{(n)}$ {as in \ref{ssec:univ-prop}}, hence by Proposition \ref{prop:UniversalPropertyPST}, $m\sigma$ factors through $\bAlb_{\kX^{(n)}}$, so $\sigma$ factors through $\bAlb_{\kX^{(n)}}\otimes_{\Z}\Q$, concluding the proof.
\end{proof}
\subsection{The log Albanese functor}\label{omegaCIet} We now generalize this to the logarithmic setting. As in Definition \ref{defn;comp-modulus}, we introduce the following
\begin{defn}\label{defn;comp-modulus-log}
Let $X=(\ul{X},\partial X)\in \SmlSm(k)$ and $|\partial X|_{\red}$ be the reduced divisor on $\ul{X}$ associated to $\partial X$. We let $\Comp(X)$ be the cofiltered category formed by the modulus pairs $(\overline{X},\widetilde{|\partial X|}_{\red} + D)$, where $(\overline{X},D)\in \Comp(\ul{X})$ and $\widetilde{|\partial X|}_{\red}\subset \ol{X}$ is an effective Cartier divisor supported on the closure of $|\partial X|_{\red}$ in $\ol{X}$ such that $\widetilde{|\partial X|}_{\red}\times_{\ol{X}} \ul{X}=|\partial X|_{\red}$. 
\end{defn} 
Let \[
\Log_{\det}\colon \colon \RSC_{\et}(k,\Q)\to \Shv_{\textrm{d\'et}}(k,\Q)
\]
be the composition  in \eqref{eq;shuji-functor2}, which is fully faithful and exact.
As observed in  Remark \ref{rmk;omegaCI-commutes-with-colimits}, $\Log_{\det}$ is fully faithful, exact and commutes with all colimits. 
\begin{thm}\label{thm:logAlb}Let $\ch(k)=0$. The fully faithful exact functor 
	\[\RSC_{\et,\leq 1}(k,\Q) \subseteq \RSC_{\et}(k,\Q) \xrightarrow{\Log_{\det}}\mathbf{Shv}_{\mathrm{d}{\et}}^{\mathrm{ltr}}(k,\Q)\]
	has a pro-left adjoint, called the log Albanese functor 
	\[\Alb^{\log}\colon \mathbf{Shv}_{\mathrm{d}\et}^{\mathrm{ltr}}(k,\Q)\to \textrm{pro-}\RSC_{\et,\leq 1}(k,\Q).\]
	induced by colimit from \[
		\Q_{\tr}(X)\mapsto  ``\lim_{n}"(\bAlb_{\kX^{(n)}}\otimes_{\Z}\Q)
	\]
	for any choice of $\kX\in \Comp(X)$ smooth.

	\begin{proof} We proceed as in the non-log case. Since $\Log$ commutes with filtered colimits, we can reduce to prove the adjunction for maps against $E$ finitely presented, quotient of a smooth commutative group $k$-scheme $G\otimes_{\Z}\Q$ by a lattice $L$. As before, it is enough to prove the claim with   $k$ algebraically closed, by Galois descent. Since we are considering sheaves of $\Q$-vector spaces, we can assume that $L\cong\Q^r$. For  $X\in \SmlSm(k)$ we have\[
		H^1_{\textrm{d\'et}}(X,\Log_{\det}(L))\cong H^1_{\et}(X-|\partial X|,\Q^{r})=0.
		\] 
	So following the steps of the previous proof, it is enough to show that {for $G\in \sG^*$ and $X\in \SmlSm(k)$,} any map\[
	\sigma:\Q_{\ltr}(X)\to \Log_{\det}(G\otimes_\Z\Q)
	\]
	factors through $\Log_{\det}(\bAlb_{\kX^{(n)}}\otimes_{\Z}\Q)$ for some $n$, and conclude by full faithfulness of $\Log_{\det}$. 
{On the other hand, 
for $X=(\ul{X},\partial X)\in \SmlSm(k)$ let $\kX=(\overline{X},\widetilde{|\partial X|}_{\red} + D)\in \Comp(X)$ as in Definition \ref{defn;comp-modulus-log}, and for $n\geq 1$ let $\kX^{(n)}=(\ol{X},\widetilde{|\partial X|}_{\red} + nD)\in \Comp(X)$. We have
(see \S\ref{reciprocitysheaves} for the notation)
	\begin{align*}
	\Hom_{\Shv_{\det}^{\ltr}}(\Q_{\ltr}(X),\Log_{\det}(G\otimes_\Z\Q) 
&=\Log(\G\otimes_\Z\Q)(X)\\
&=	\colim_n\Hom_{\RSC_{\Nis}(k,\Z)}(\omega_!h_0^\bcube(\kX^{(n)}),G\otimes_\Z\Q) \\ 
&=	\colim_n\Hom_{\RSC_{\et}(k,\Q)}(\hetrec(\kX^{(n)}),G\otimes_\Z\Q) \\ 
&=\colim_n\Hom_{\logCI_{\det}(k,\Q)}(\Log_{\det}(\hetrec(\kX^{(n)})),\Log_{\det}(G\otimes_\Z\Q)),
	\end{align*}
where the first equality follows from Proposition \ref{prop:rat-coeff-nisequaletale},
the second follows from the definition of $\Log$ in \cite{shujilog} and Remark \ref{rmk;reciprocity-only-(n)}, the third from the isomorphsim \[
(a_{\et}^V\omega_!h_0^{\bcube}(\kX^{(n)}))\otimes_{\Z}\Q\cong a_{\et}^V\omega_!(h_0^{\bcube}(\kX^{(n)})\otimes_{\Z}\Q) = \hetrec(\kX)\quad \textrm{(see Def. \ref{def;hetrec})}
\]
and the fourth from the full faithfulness of $\Log_{\det}$. }
So, there exists $n$ such that $\sigma$ factors through $\Log_{\det}\hetrec(\kX^{(n)})$, hence again there exists $m$ such that $m\sigma$ is a section of $G$ with modulus $\kX^{(n)}$ {as in \ref{ssec:univ-prop}}, so by Proposition \ref{prop:UniversalPropertyPST} and full faithfulness of $\Log_{\det}$, it factors through $\Log_{\det}(\bAlb_{\kX^{(n)}}\otimes_{\Z}\Q)$, proving the claim. 
	\end{proof}
\end{thm}

\section{Extension of 1-reciprocity sheaves}

In this section, we prove some technical results about extensions in $\RSC_{\et,\leq 1}(k,\Q)$. First of all, recall the Breen--Deligne resolution, whose proof is contained in \cite[Appendix to Lecture IV]{condensed}: 
\begin{thm}[Eilenberg--Maclane, Breen, Deligne, Clausen--Scholze]\label{thm:Breen-Deligne}
	Let $G\in \sG^{*}$. Then there exists a complex of presheaves (the Breen--Deligne resolution) $C_\bullet(G)$ such that  $C_i(G) = \Z(G^{\times t_{i}})$ for  some integer $t_i>0$ together with an augmentation map $s\colon C(G)\to G$ in $\Cpx(\PSh(k,\Z))$ that is an equivalence in $\cD(\PSh(k,\Z))$. 
	Moreover, the first terms are computed as follows:\[
		\Z[G^{\times 3}\times G^{\times 2}]\xrightarrow{\partial_2}\Z[G\times G]\xrightarrow{\partial_1} \Z[G]\xrightarrow{\partial_0} G\to 0
	\]
	\begin{itemize}
		\item $\partial_0$ is the sum map,
		\item $\partial_1([x,y]) = [x+y]-[x]-[y]$
		\item $\partial_2([x,y,z,t,u]) = [x+y,z]-[x,y+z] - [y,z] + [x,y] + [t,u] - [u,t]$.
	\end{itemize}
\end{thm}
{\begin{remark}\label{rmk;G-star-compact}
	Every $G\in \sG^*$ is a compact object in $\PSh(k,\Z)$: since $G$ is an extension of $G^{0}$ and $\pi_0(G)$, it is enough to show that $G^{0}$ and $\pi_0(G)$ are compact. By our assumption, $\pi_0(G)$ is finitely generated, so it is compact. Since $G^{0}\in \Sm(k)$ by assumption, it is is compact in $\PSh(\Sm(k),\mathrm{Sets})$, and since the forgetful functor $\PSh(\Sm(k),\Z)\to \PSh(\Sm(k),\mathrm{Sets})$ preserves filtered colimits, $G^0$ is compact in $\PSh(\Sm(k),\Z)$ too.
Moreover, since the inclusion $\Sh_{\et}(k,\Z)\to \PSh(k,\Z)$ preserves filtered colimits, we have that $G$ is a compact object in $\Sh_{\et}(k,\Z)$ too.
\end{remark}}

\begin{cor}\label{cor:ext-et-fppf}
Let $G_1,G_2\in \sG^*$. Then\[
\Map_{\cD(\Sh_{\rm fppf}(k,\Z))}(G_1,G_2) \cong \Map_{\cD(\Sh_{\et}(k,\Z))}(G_1,G_2)
\]
\begin{proof}
	By the Breen--Deligne resolution, for $\tau\in \{\et,{\rm fppf}\}$ we have\[
	\Map_{\cD(\Sh_{\tau}(k,\Z))}(G_1,G_2) \simeq R\Gamma_{\tau}(G_1^{\times t_\bullet},G_2)
	\]
	and by a theorem of Grothendieck (see \cite[III, Theorem 3.9]{MilneEtCoh}):\[
	R\Gamma_{\et}(G_1^{\times t_\bullet},G_2)\simeq R\Gamma_{\rm fppf}(G_1^{\times t_\bullet},G_2),
	\]
	which allows us to conclude.
\end{proof}
\end{cor}
Recall the following result proved in \cite[Appendix B]{AHPL}:
\begin{cor}\label{cor:AHPL}
	Let $G\in \sG^*$ and let $F\in \Sh_{\et}^{\tr}(k,\Q)$. Let $i^{\rm tr}$ be the functor ``forget transfers''. Then\[
	\Map_{\cD(\Sh_{\et}(k,\Q))}(G\otimes_\Z\Q,i^{\rm tr}F) \cong \Map_{\cD(\Sh_{\et}^{\tr}(k,\Q))}(G\otimes_\Z\Q,F)
	\]
	In particular, $G\otimes_\Z\Q$ is a compact object in $\Sh_{\et}^{\tr}(k,\Q)$.
	\begin{proof}
		The second part follows from the first and Remark \ref{rmk;G-star-compact} since $i^{\tr}$ preserves filtered colimits. Recall the left adjoint $L\gamma^*$ of $i^{\rm tr}$, which is computed by colimits by the formula \[L\gamma^*(\Q(X)[i]) := (\gamma^*\Q(X))[i] = \Q_{\ltr}(X)[i].\] If $C_\bullet(G)\to G$ is the Breen--Deligne resolution, we have that \[L\gamma^*(G\otimes_\Z\Q) = L\gamma^*(C_\bullet(G)\otimes_\Z\Q) = \gamma^*(C_{\bullet}(G)\otimes_\Z\Q),\] and the last equality follows from the fact that $C_{i}(G)$ is representable. This gives the following equivalence:
		\begin{align*}
		\Map_{\cD(\Sh_{\et}(k,\Q))}(G\otimes_\Z\Q,i^{\rm tr}F) &\simeq \Map_{\cD(\Sh_{\et}^{\rm tr}(k,\Q))}(L\gamma^*(G\otimes_\Z\Q),F) \\&\simeq
		\Map_{\cD(\Sh_{\et}^{\rm tr}(k,\Q))}(\gamma^*(C_\bullet(G)\otimes_\Z\Q),F)
		\end{align*}
		By \cite[Lemma B.4]{AHPL}, we have an equivalence in $\Sh_{\et}^{\tr}(k,\Q)$:\[
		\gamma^*(C_\bullet(G)\otimes_\Z\Q) \simeq G\otimes_\Z\Q
		\]
		which allows us to conclude.
	\end{proof}
\end{cor}
We need the following easy but fundamental result:
\begin{prop}\label{prop:map-sh-Q-coeff}
	Let $G\in \sG^*$. Then for all $F\in \Sh_{\et}(k,\Z)$ we have an equivalence:\[
	\Map_{\cD(\Sh_{\et}(k,\Z))}(G,F)\otimes_{\Z} \Q \simeq \Map_{\cD(\Sh_{\et}(k,\Q))}(G\otimes_{\Z} \Q,F\otimes_{\Z}\Q)
	\]
	\begin{proof}
		By adjunction we have\[
		\Map_{\cD(\Sh_{\et}(k,\Q))}(G\otimes_{\Z} \Q,F\otimes_{\Z}\Q)\simeq \Map_{\cD(\Sh_{\et}(k,\Z))}(G,F\otimes_{\Z}\Q).
		\]
		Notice that the tensor product is not derived since $\Q$ is flat. Consider the fiber sequence in $\cD(\Sh_{\et}(k,\Z))$: \[
		 F \to F\otimes_{\Z}\Q \to \colim_m F\otimes^L_{\Z}\Z/m\Z
		\]
		Then since the colimit is filtered and $G$ is compact, we have a fiber sequence\[
		\Map_{\cD(\Sh_{\et}(k,\Z))}(G,F)\to \Map_{\cD(\Sh_{\et}(k,\Z))}(G,F\otimes_{\Z}\Q)\to 	\colim \Map_{\cD(\Sh_{\et}(k,\Z))}(G,F\otimes^L_{\Z} \Z/m\Z).
		\]
		On the other hand, since $\Q$ is flat, $\otimes_\Z\Q$ commutes with homotopy groups, so since $\pi_i$ and $\otimes_\Z\Q$ commute with filtered colimits, we have that \begin{align*}
		&\pi_i\bigl((\colim \Map_{\cD(\Sh_{\et}(k,\Z))}(G,F\otimes^L_{\Z} \Z/m\Z))\otimes_{\Z}\Q\bigr) \\
		&\cong  \colim \bigl(\pi_i (\Map_{\cD(\Sh_{\et}(k,\Z))}(G,F\otimes^L_{\Z} \Z/m\Z)\otimes_{\Z}\Q)\bigr).
		\end{align*}
		Since $\pi_i\Map_{\cD(\Sh_{\et}(k,\Z))}(G,F\otimes^L_{\Z} \Z/m\Z)$ are $\Z/m\Z$-modules for all $i$, the above formula is equal to zero, so we conclude.
	\end{proof}
\end{prop}
Finally, we can deduce the following result, which (for the most part) is a well-known application of the Breen--Deligne resolution:
\begin{prop}\label{prop:ext}
	Let $G_1,G_2\in \sG^*$. Then the groups $\Ext^n_{\Sh_{\et}(k,\Z)}(G_1,G_2)$ are torsion for $n\geq 2$
	\begin{proof}
		If $G_2$ is discrete, this exactly \cite[Proposition 3.2.1, (i)]{BVKahn}. If $G_2$ is connected, as pointed out in \cite[Proposition 3.2.1, (ii)]{BVKahn} this can be deduced from Breen's method from \cite{Breen}. In fact, let $X$ be the underlying smooth scheme of $G_1$: using the Breen--Deligne resolution of Theorem \ref{thm:Breen-Deligne}, one reduces to show that $H^q_{\et}(X^{\times s},G_2)$ is torsion for $q\geq 2$. If $G_2$ is represented by a semi-abelian variety, this was pointed out in \cite[\S 7,8, and 9]{Breen}. If $G_2=\G_a$, then $H^q_{\et}(X^{\times s},\G_a) = H^q_{\Zar}(X^{\times s},\G_a)$:
		\begin{itemize}
			\item[(i)] If $G_1$ is discrete, then $\dim(X^{\times s})=0$ so $H^q_{\Zar}(X^{\times s},\G_a) = 0$ for $q\geq 1$.
			\item[(ii)] If $G_1=\G_m$ or $\G_a$, then $X^{\times s}$ is affine so $H^q_{\Zar}(X^{\times s},\G_a) = 0$ for $q\geq 1$.
		\end{itemize}
	By Chevalley's classification, we are left to the case where $G_1$ is an abelian variety. In this case, since $\ch(k)=0$, we have that $\G_a$ is already a sheaf of $\Q$-vector spaces, so by Corollary \ref{cor:AHPL} and Proposition \ref{prop:rat-coeff-nisequaletale} we have:\[
	\Ext^n_{\Sh_{\et}(k,\Q)}(G_1\otimes\Q,\G_a) \cong \Ext^n_{\Sh_{\Nis}^{\tr}(k,\Z)}(G_1,\G_a).
	\]
	So we will conclude by the following:
	\begin{claim}\label{claim:ext-abelian-Ga}
		Let $G_1$ be an abelian variety. Then $\Ext^n_{\Sh_{\Nis}^{\tr}(k,\Z)}(G_1,\G_a)=0$ for $n\geq 2$
		\proof
		For a curve $C\subseteq G_1$ intersection of ample divisors, the map $J_C\otimes_\Z\Q\to G_1\otimes_\Z\Q$ is split surjective, which implies that we have a split inclusion:
		\begin{align*}
			 \Ext^n_{\Sh_{\Nis}^{\tr}(k,\Z)}(G_1,\G_a)\cong &\Ext^n_{\Sh_{\et}(k,\Q)}(G_1\otimes\Q,\G_a)\hookrightarrow \\
			 &\Ext^n_{\Sh_{\et}(k,\Q)}(J_C\otimes\Q,\G_a)\cong \Ext^n_{\Sh_{\Nis}^{\tr}(k,\Z)}(J_C,\G_a),
		\end{align*}
	which reduces to the case where $G_1=J_C$. 
	Since $J_C$ is an abelian variety, it is a birational sheaf in the sense of \cite{KS}, so for $X\in \SmlSm(k)$ we have an isomorphism\[
	J_C(\ul{X})\cong J_C(X^\circ)
	\]
	which implies that we have an isomorphism of $\dNis$ sheaves (cf. Remark \ref{rmk:Ga}) :\[
	\omega^\sharp_{\log}(J_C)\cong \omega^{*}_{\log}(J_C).
	\]
Since $\G_a\cong \omega_{\sharp}^{\log}\Log(\G_a)$, the adjunction 
\eqref{eq:adjunctionomega-derived1-2} implies an equivalence 
\[	\Map_{\cD(\Sh_{\Nis}^{\rm tr}(k,\Z))}(J_C,\G_a) \simeq 	\Map_{\cD(\Sh_{\dNis}^{\rm ltr}(k,\Z))}(\omega^*_{\log}J_C,\Log\G_a).
	\] 
	On the other hand, since $J_C[0]$ is a direct summand of the Suslin--Voevodsky complex $C_*^{\A^1}(C)$ in $\cD(\Sh_{\Nis}^{\rm tr}(k,\Z))$ by \cite[Section 3.4]{VoevTriangCat}, we have that $\omega^*_{\log}J_C[0]$ is a direct summand of $\iota (M(C,\triv))$ by \cite[Theorem 8.2.11]{BPO}, where $\iota$ is the inclusion of $\logDM(k,\Z)$ in $\cD(\Sh_{\dNis}(k,\Z))$. Since now $\Log(\G_a)$ is $\bcube$-local, we have an injective map:
	\begin{align*}
	\Ext^n_{\Sh_{\dNis}^{\rm ltr}(k,\Z)}(\omega^*_{\log}J_C,\Log\G_a)&\hookrightarrow \pi_{-n}\Map_{\cD(\Sh_{\dNis}^{\rm ltr}(k,\Z))}(M(C,\triv),\Log(\G_a))\\
	&\cong \pi_{-n}\Map_{\logDM(k,\Z)}(M(C,\triv),\Log(\G_a))\\
	&\cong H^n(C,\cO_{C})
	\end{align*}
	and the latter is zero for $n\geq 2$. This allows us to conclude. 
	\end{claim}
	\end{proof}
\end{prop}
\begin{remark}\label{rmk;RSC-star-compact}
	Recall that if $F\in \RSC_{\et,\leq 1}(k,\Q)^{\star}$, then there exist $L\to G$ 
as in Proposition \ref{prop:can-pres-1-mot-sheaf} such that\[
	0\to L\to G\otimes \Q \to F\to 0.
	\]
	By Remark \ref{rmk;G-star-compact} and \ref{cor:AHPL} (plus the fact that  $\RSC_{\et,\leq 1}(k,\Q)$ is closed under colimits in $\Sh_{\et}^{\tr}(k,\Q)$), $L$ and $G$ are compact in both $\Shv_{\et}(k,\Q)$ and $\RSC_{\et,\leq 1}(k,\Q)$, which implies that $F$ is compact in both $\Shv_{\et}(k,\Q)$ and $\RSC_{\et,\leq 1}(k,\Q)$.
\end{remark}
We are now ready to prove the following:
\begin{prop}\label{prop;1-rec-closed-under-ext}
	The category $\RSC_{\et,\leq 1}(k,\Q)$ is closed under extensions in $\Shv_{\et}(k,\Q)$. 
	\begin{proof}
		Let $F_1,F_2\in \RSC_{\et,\leq 1}(k,\Q)$.	Since $\RSC_{\et,\leq 1}(k,\Q)$ is a full subcategory of $\Shv_{\et}(k,\Q)$ by Proposition \ref{prop;forget-transfer-fully-faithful}, we have that an extension in $\RSC_{\et,\leq 1}(k,\Q)$ splits in $\Shv_{\et}(k,\Q)$ if and only if it splits in $\RSC_{\et,\leq 1}(k,\Q)$, so \[\Ext^1_{\RSC_{\et, \leq 1}(k,\Q)}(F_1,F_2)\hookrightarrow \Ext^1_{\Shv_{\et}(k,\Q)}(F_1,F_2).\] 
		By Proposition \ref{prop:can-pres-1-mot-sheaf}, we can write $F_1 = \colim F_{i_1}$ with $F_{i_1}\in \RSC_{\et,\leq 1}^{\star}$. In particular, there is a surjective map $\oplus F_{i_1}\twoheadrightarrow F_1.$
		Let $K$ be its kernel: we have the following commutative diagram with exact rows:
		\begin{equation}\label{eq;reduce-fin-pres}
			\begin{small}\begin{tikzcd}
					\Hom_{\RSC_{\et,\leq 1}}(K,F_{2})\ar[r]\ar[d,"\simeq"]&\Ext^1_{\RSC_{\et,\leq 1}}(F_{1},F_{2})\ar[r]\ar[d,"(*)"]&\prod\Ext^1_{\RSC_{\et,\leq 1}}(F_{i_1},F_{2})\ar[r]\ar[d,"(**)"]&\Ext^1_{\RSC_{\et,\leq 1}}(K,F_{2})\ar[d,hook]\\
					\Hom_{\Shv_{\et}}(K,F_2)\ar[r]&\Ext^1_{\Shv_{\et}}(F_{1},F_{2})\ar[r]&\prod\Ext^1_{\Shv_{\et}}(F_{i_1},F_{2})\ar[r]&\Ext^1_{\Shv_{\et}}(K,F_2)
				\end{tikzcd}
			\end{small}
		\end{equation}
so by the five-lemma $(*)$ is surjective if $(**)$ is surjective, hence we can suppose $F_1\in \RSC_{\et,\leq 1}^\star(k,\Q)$. Since $F_{1}$ is compact in both $\Shv_{\et}(k,\Q)$ and $\RSC_{\et,\leq 1}(k,\Q)$ by Remark \ref{rmk;RSC-star-compact} and filtered colimits are exact, again by \ref{prop:can-pres-1-mot-sheaf} it is enough to suppose that $F_2\in \RSC_{\et,\leq 1}^\star(k,\Q)$.
Let $F_{i}=\coker(L_i \hookrightarrow G_i\otimes_{\Z}\Q)$ as in Proposition \ref{prop:can-pres-1-mot-sheaf}, we have a commutative square with exact rows (we omit the $\otimes_\Z\Q$ here):
\[\begin{tikzcd}
		\Ext^1_{\RSC_{\et,\leq 1}}(G_{1},G_{2})\ar[d,hook,"(1)"]\ar[r] &\Ext^1_{\RSC_{\et,\leq 1}}(G_{1},F_{2})\ar[d,hook,"(3)"]\ar[r]&\Ext^2_{\RSC_{\et,\leq 1}}(G_{1},L_{2})\ar[d]\\
		\Ext^1_{\Shv_{\et}}(G_{1},G_{2})\ar[r,"(2)"] &\Ext^1_{\Shv_{\et}}(G_{1},F_{2})\ar[r]&\Ext^2_{\Sh_{\et}}(G_{1},L_{2}).
		\end{tikzcd}
		\] 
		By Proposition \ref{prop:map-sh-Q-coeff} and Corollary \ref{cor:ext-et-fppf}, we have that\[
			\Ext^1_{\Shv_{\et}(k,\Q)}(G_{1}\otimes_\Z\Q,G_{2}\otimes_\Z\Q) \cong \Ext^1_{\Shv_{\rm fppf}(k,\Z)}(G_{1},G_{2})\otimes_\Z\Q.
		\] 
		By \cite[Exercise 5-10]{milnegroups} we have that $\sG^*$ is closed by extensions in $\textrm{fppf}$ sheaves, which implies that if $E\in \Ext^1_{\Shv_{\et}(k,\Q)}(G_{1},G_{2})$, there exists $G'\in \sG^*$ such that $E=G'\otimes_{\Z}\Q$, in particular $E$ is in the image of $(1)$. This implies that $(1)$ is an isomorphism. 
		Moreover, $\Ext^2_{\Sh_{\et}(k,\Z)}(G_{1},L_{2})\otimes_\Z\Q =0$ by Proposition \ref{prop:ext}, which implies that $(2)$ is surjective, so $(3)$ is surjective, hence an isomorphism. 
		We have now the following commutative diagram with exact rows:\[
		\begin{small}\begin{tikzcd}
				\Hom_{\RSC_{\et,\leq 1}}(L_{1},F_{2})\ar[r]\ar[d,"\simeq"]&\Ext^1_{\RSC_{\et,\leq 1}}(F_{1},F_{2})\ar[r]\ar[d,hook]&\Ext^1_{\RSC_{\et,\leq 1}}(G_{1},F_{2})\ar[r]\ar[d,"\simeq"]&\Ext^1_{\RSC_{\et,\leq 1}}(L_{1},F_{2})\ar[d,hook]\\
				\Hom_{\Shv_{\et}}(L_{1},F_2)\ar[r]&\Ext^1_{\Shv_{\et}}(F_{1},F_{2})\ar[r]&\Ext^1_{\Shv_{\et}}(G_{1},F_{2})\ar[r]&\Ext^1_{\Shv_{\et}}(L_1,F_2).
			\end{tikzcd}
		\end{small}
		\]
        and we conclude using the 5-lemma. 
	\end{proof}
\end{prop}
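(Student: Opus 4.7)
The plan is to show that the natural injection $\Ext^1_{\RSC_{\et,\leq 1}(k,\Q)}(F_1,F_2)\hookrightarrow \Ext^1_{\Shv_{\et}(k,\Q)}(F_1,F_2)$ (which comes immediately from the full faithfulness of the forgetful functor in Proposition \ref{prop;forget-transfer-fully-faithful}, together with the observation that an extension splits in one category iff it splits in the other) is in fact surjective. Surjectivity is equivalent to the desired closure property, since the middle term of an exact sequence in $\Shv_{\et}(k,\Q)$ that comes from an extension in $\RSC_{\et,\leq 1}(k,\Q)$ is automatically an object of $\RSC_{\et,\leq 1}(k,\Q)$.

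The first reduction is to $F_1\in \RSC_{\et,\leq 1}^\star$. By Proposition \ref{prop:can-pres-1-mot-sheaf} we can write $F_1$ as a filtered colimit of finitely presented $1$-reciprocity sheaves, and choose a surjection $\bigoplus_i F_{i_1}\twoheadrightarrow F_1$ with kernel $K$. Applying $\Hom_{-}(-,F_2)$ to this presentation in both categories and using the long exact $\Ext$-sequence gives a ladder; since $\Hom$ agrees in the two categories, a five-lemma/diagram chase as in \eqref{eq;reduce-fin-pres} reduces surjectivity to the case $F_1=F_{i_1}\in \RSC_{\et,\leq 1}^\star$. The second reduction is to $F_2\in \RSC_{\et,\leq 1}^\star$, using that such $F_{i_1}$ is compact in $\Shv_{\et}^{\tr}$ (Remark \ref{rmk;RSC-star-compact}), so that $\Ext^1(F_{i_1},-)$ commutes with filtered colimits in both categories, and again Proposition \ref{prop:can-pres-1-mot-sheaf} expresses $F_2$ as a filtered colimit of finitely presented objects.

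Now suppose both $F_i$ are finitely presented and fit in exact sequences $0\to L_i\to G_i\to F_i\to 0$ with $G_i\in \sG^*$ and $L_i$ a torsion free lattice. Applying the long exact $\Ext$-sequence with respect to the presentation of $F_1$ reduces the surjectivity of $\Ext^1(F_1,F_2)\to \Ext^1_{\Shv_{\et}^{\tr}}(F_1,F_2)$ to the analogous statement with $G_1$ in place of $F_1$. Here the key vanishing is $\Ext^2_{\Shv_{\et}^{\tr}(k,\Q)}(G_1,L_2)=0$; since $L_2$ is a rational lattice this will follow from Milne's bound on the cohomological dimension of the \'etale site of a field \cite{MilneHomDim}. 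One then applies the analogous argument to the presentation of $F_2$: the long exact sequence reduces everything further to $\Ext^1(G_1,G_2)\to \Ext^1_{\Shv_{\et}^{\tr}}(G_1,G_2)$, and this map is an isomorphism because commutative group schemes form a full subcategory of $\Shv_{\et}^{\tr}(k,\Q)$ which is closed under extensions (any extension of smooth commutative $k$-group schemes by smooth commutative $k$-group schemes is representable by a smooth commutative $k$-group scheme, an \emph{fppf} statement from \cite[Exercise 5-10]{milnegroups} combined with Proposition \ref{prop;forget-transfer-fully-faithful}). A final five-lemma argument on the two long exact sequences concludes.

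The main obstacles are keeping the two reductions consistent and invoking the right vanishing results for lattices: the whole argument collapses without the step $\Ext^2_{\Shv_{\et}^{\tr}(k,\Q)}(G,L)=0$, and one needs to be careful that this uses $\Q$-coefficients in an essential way (so that $L$ is divisible and the \'etale and Nisnevich comparison of Proposition \ref{prop:et-Nis-Voe} can be applied). The second delicate input is the identification $\Ext^1_{\RSC_{\et,\leq 1}}(G_1,G_2)\iso \Ext^1_{\Shv_{\et}^{\tr}}(G_1,G_2)$, which is where the fact that we are considering group schemes (and not arbitrary $1$-reciprocity sheaves) is used crucially.
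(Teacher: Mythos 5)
Your proposal is correct and follows essentially the same route as the paper's proof: the same two reductions to finitely presented objects (via the presentation $\bigoplus F_{i_1}\twoheadrightarrow F_1$ and compactness of objects of $\RSC_{\et,\leq 1}^{\star}$), the same use of the canonical presentations $0\to L_i\to G_i\to F_i\to 0$, and the same two key inputs, namely closure of $\sG^*$ under extensions of fppf sheaves and Milne's vanishing $\Ext^2_{\Shv_{\et}^{\tr}(k,\Q)}(G_1,L_2)=0$. The only cosmetic difference is that you peel off the presentation of $F_1$ before that of $F_2$ (the paper does the opposite), and your remark placing the $\Ext^2$-vanishing ``here'' is slightly misleading since it is actually used in the $F_2$-step; neither affects the validity of the argument.
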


\begin{lemma}\label{lem:hom-dim-in-Shv} 
	For all $F_1,F_2\in \RSC_{\et,\leq 1}(k, \Q)$ we have \[
	\Ext^i_{\Shv_{\et}(k,\Q)}(F_1,F_2)=0\quad \textrm{for }i\geq 2.
	\]
	\begin{proof}
First suppose that $F_1\in \RSC_{\et, \leq 1}^{\star}$, hence it is a compact object in $\Shv_{\et}(k,\Q)$. Since filtered colimits are exact, as in the proof of Proposition \ref{prop;1-rec-closed-under-ext} we can assume that $F_2\in \RSC_{\et,\leq 1}^{\star}(k,\Q)$. For $i=1,2$, let $F_i=\coker(L_i\hookrightarrow G_i)$ be as in Proposition \ref{prop:can-pres-1-mot-sheaf} (again, we omit $\otimes_\Z\Q$). Since $L_1$ is a lattice, we have that $\Ext^i_{\Sh_{\et}(k,\Q)}(L_1,F_2)=0$ for $i\geq 1$, so we can suppose $F_1=G_1$. Since by Proposition \ref{prop:ext}, we have\[
	\Ext^i_{\Sh_{\et}(k,\Q)}(G_1,L_2) = \Ext^i_{\Sh_{\et}(k,\Q)}(G_1,G_2) = 0 \quad \textrm{for }i\geq 2,
		\]
we get $\Ext^i_{\Shv_{\et}(k,\Q)}(F_1,F_2)=0$ for $i\geq 2$, which concludes
the case $F_1\in \RSC_{\et, \leq 1}^{\star}$.
In general, let $0\to \bigoplus \G_a/ \mathcal{L}_1\to F_1\to F_1^{\A^1}\to 0$ be as in \eqref{eq:Ga-A1}.  
Then we have a long exact sequence sequence
\begin{equation}\label{eq:reduction-F1-A1local}
 \Ext^i_{\Sh_{\et}(k,\Q)}(F_1^{\A^1},F_2)\to \Ext^i_{\Sh_{\et}(k,\Q)}(F_1,F_2)\to \Ext^i_{\Sh_{\et}(k,\Q)}(\oplus\G_a/ \mathcal{L}_1,F_2).
\end{equation}
As in \cite[Proposition 2.4.10]{Ayoub-BV}, $\mathcal{L}_1$ splits as a direct sum $\mathcal{L}_1\cong \oplus_a\mathcal{L}_{a_1}$ of finitely presented $0$-motivic sheaves, which implies that \[
\Ext^i_{\Sh_{\et}(k,\Q)}(\mathcal{L}_1,F_2) \cong \prod \Ext^i_{\Sh_{\et}(k,\Q)}(\mathcal{L}_{a_1},F_2)=0\quad \textrm{for }i\geq 1.
\] 
Moreover, $\G_a\in \RSC_{\et, \leq 1}^{\star}$, so by the previous case:\[
\Ext^i_{\Sh_{\et}(k,\Q)}(\oplus \G_a,F_2) \cong \prod \Ext^i_{\Sh_{\et}(k,\Q)}(\G_a,F_2)=0\quad \textrm{for }i\geq 2.
\] 
This imples that the last term of \eqref{eq:reduction-F1-A1local} vanishes for $i\geq 2$. In order to conclude, it is enough to show that $\Ext^i_{\Sh_{\et}(k,\Q)}(F_1^{\A^1},F_2)=0$. In other words, we can suppose that $F_1\in \HI_{\et,\leq 1}$. Arguing as before, consider now the exact sequence 
\[0\to \oplus \G_a/ \mathcal{L}_2\to F_2\to F_2^{\A^1}\to 0.\] It induces a long exact sequence\[
\Ext^i_{\Sh_{\et}(k,\Q)}(F_1,\bigoplus \G_a/ \mathcal{L}_2)\to \Ext^i_{\Sh_{\et}(k,\Q)}(F_1,F_2)\to  \Ext^i_{\Sh_{\et}(k,\Q)}(F_1,F_2^{\A^1})
\]
By \cite[Proposition 2.4.10]{Ayoub-BV} we have that for $i\geq 2$\[
\Ext^i_{\Sh_{\et}(k,\Q)}(F_1,F_2^{\A^1})=\Ext^i_{\Sh_{\et}(k,\Q)}(F_1,\mathcal{L}_2) = 0,
\]
so we need to show that \[\Ext^i_{\Sh_{\et}(k,\Q)}(F_1,\oplus \G_a)=0\;\text{ for $i\geq 2$. }\]
As in the proof of \cite[Proposition 2.4.10]{Ayoub-BV}, we can reduce to separately analyze the following cases:
\begin{enumerate}
	\item[(i)] $F_1=\mathcal{L}$ is a $0$-motivic sheaf.
	\item[(ii)] $F_1=\mathcal{L}\otimes\G_m$, where $\mathcal{L}$ is a $0$-motivic sheaf.
	\item[(iii)] $F_1\simeq \bigoplus_{\beta} A_{\beta}\otimes\Q$, where $A_\beta$ are abelian varieties.
\end{enumerate}
If $\mathcal{L}$ is $0$-motivic, again it splits as a direct sum $\bigoplus \mathcal{L}_a$ of finitely presented $0$-motivic sheaves.
In particular 
\[ \Ext^i_{\Sh_{\et}(k,\Q)}(\mathcal{L}\oplus \G_a)\cong \prod \Ext^i_{\Sh_{\et}(k,\Q)}(\mathcal{L}_a,\oplus \G_a),
\]
and\[
\Ext^i_{\Sh_{\et}(k,\Q)}(\mathcal{L}\otimes \G_m,\oplus \G_a)\cong \prod \Ext^i_{\Sh_{\et}(k,\Q)}(\mathcal{L}_a\otimes \G_m,\oplus \G_a)
\]
Since now both $\mathcal{L}_a$ and $\mathcal{L}_a\otimes \G_m$ are in $\RSC_{\et,\leq 1}^{\star}$, so they are compact objects in $\Sh_{\et}(k,\Q)$, we can put the direct sum outside the Ext and conclude the vanishing for $i\geq 2$ in case  (i) and (ii) thanks to Proposition \ref{prop:ext}.
 In case (iii), $A_\beta\otimes\Q\in\RSC_{\et,\leq 1}^{\star}$, so 
 \begin{align*}
 \Ext^i_{\Sh_{\et}(k,\Q)}(\oplus_{\beta} A_{\beta}\otimes\Q,\oplus \G_a)&=
\prod_{\beta}  \Ext^i_{\Sh_{\et}(k,\Q)}(A_\beta\otimes\Q,\oplus \G_a) \\
&\cong \prod_{\beta}  \bigoplus\Ext^i_{\Sh_{\et}(k,\Q)}(A_\beta\otimes\Q,\G_a)
\end{align*}
and the last term vanishes by Claim \ref{claim:ext-abelian-Ga}.
    \end{proof}
\end{lemma}
\begin{prop}\label{prop:hom-dim}
	For $F_1,F_2\in \RSC_{\et,\leq 1}(k,\Q)$, we have
\[\Ext^i_{\RSC_{\et,\leq 1}(k,\Q)}(F_1,F_2)\cong \Ext^i_{\Shv_{\et}(k,\Q)}(F_1,F_2)\;\text{  for all $i\geq 0$.}\] 
In particular the category $\RSC_{\et, \leq 1}(k, \Q)$ has cohomological dimension $1$, i.e. for $i\geq 2$ we have \[\Ext^i_{\RSC_{\et,\leq 1}(k,\Q)}(F_1,F_2)=0.\] 
	\begin{proof}
		{
			The case $i=0$ is Proposition \ref{prop;forget-transfer-fully-faithful} and $i=1$ is Proposition \ref{prop;1-rec-closed-under-ext}. In light of Lemma \ref{lem:hom-dim-in-Shv}, the first part for $i\geq 2$ follows from the second. Let $0\to F_2\to I\to B\to 0$ with $I$ injective in $\RSC_{\et,\leq 1}(k,\Q)$,
			so that for $i\geq 2$ we have that \[
			\Ext^i_{\RSC_{\et,\leq 1}(k,\Q)}(F_1,F_2)\cong\Ext^{i-1}_{\RSC_{\et,\leq 1}(k,\Q)}(F_1,B).\]  Notice that by Proposition \ref{prop;1-rec-closed-under-ext}, we have a commutative diagram where the vertical maps are isomorphisms:}\[
			\begin{tikzcd}
				\Ext^1_{\Sh_{\et}(k,\Q)}(F_1,I)\ar[r,"(*1)"]\ar[d,"\cong"]&\Ext^1_{\Sh_{\et}(k,\Q)}(F_1,B)\ar[d,"\cong"]\\
				\Ext^1_{\RSC_{\et,\leq 1}(k,\Q)}(F_1,I)\ar[r,"(*2)"]&\Ext^1_{\RSC_{\et,\leq 1}(k,\Q)}(F_1,B).
			\end{tikzcd}
			\]
		{
		Since $\Ext^2_{\Sh_{\et}(k,\Q)}(F_1,F_2)=0$ by Lemma \ref{lem:hom-dim-in-Shv}, the map $(*1)$ is surjective, so the map $(*2)$ is surjective. Since $I$ is injective, $\Ext^1_{\RSC_{\et,\leq 1}(k,\Q)}(F_1,I)=0$, so $\Ext^1_{\RSC_{\et,\leq 1}(k,\Q)}(F_1,B)=0$. We conclude that: \[
		\Ext^2_{\RSC_{\et}(k,\Q)}(F_1,F_2)=0.
		\]
	which concludes the case $i=2$. In general, suppose that $\Ext^i_{\RSC_{\et, \leq 1}(k,\Q)}(F_1,F_2)=0$ for all $F_1,F_2\in \RSC_{\et, \leq 1}(k,\Q)$, in particular $\Ext^i_{\RSC_{\et, \leq 1}(k,\Q)}(F_1,B)=0$, so we conclude that $\Ext^{i+1}_{\RSC_{\et, \leq 1}(k,\Q)}(F_1,F_2)=0$, which by induction finishes the proof.}
	\end{proof}	
\end{prop}

\begin{thm}\label{lem:split}
	Every complex in $C\in \cD(\RSC_{\et, \leq 1}(k,\Q))$ is formal, i.e. we have an equivalence:\[
	C\simeq \bigoplus \pi_i(C)[i]
	\]
	\begin{proof}
		By Proposition \ref{prop:hom-dim}, we have that for every $F\in \RSC_{\et,\leq 1}$, and $i\geq 2$, \[\Ext^i_{\RSC_{\et, \leq 1}}(\pi_jC,F)=0,\]
		so the spectral sequence
		\begin{equation}\label{eq;spectral-sequence-ext-coherent}
			E_2^{ij}=	\Ext^i_{\RSC_{\et, \leq 1}}(\pi_j C,F) \Rightarrow \pi_{i+j}\Map_{\cD(\RSC_{\et, \leq 1})}(C,F[0])
		\end{equation}
		degenerates at page 2, which allows us to conclude by \cite[Prop (1.2) and Rem (1.4)]{DeligneLefschetz}
	\end{proof}
\end{thm}

\section{The derived Albanese functor}\label{sec:derived}
Assume that $k$ has characteristic zero. 
The goal of this section is to show the existence of a left derived Albanese functor in the sense of Definition \ref{defn;left-derived}. To ease the notation, we will write $\Shv^{\tr}$ (resp. $\Shv^{\ltr}$, resp. $\logCI$) for $\Shv^{\tr}_{\Nis}(k, \Q)$ (resp. $\Shv^{\ltr}_{\dNis}(k, \Q)$,  resp. $\logCI_{\dNis}$) and we will identify it with $\Shv^{\tr}_{\et}(k, \Q)$ (resp. $\Shv^{\ltr}_{\textrm{d\'et}}(k, \Q)$, resp. $\logCI_{\det}$) by Proposition \ref{prop:et-Nis-Voe} (resp. Proposition \ref{prop:rat-coeff-nisequaletale}). We also write $\RSC_{\et,\leq 1}\subset \RSC_{\et}$ for 
$\RSC_{\et,\leq 1}(k,\Q)\subset \RSC_{\et}(k,\Q)$.
In this section we let $\Log_{\det}$ denote the functor from Theorem \ref{thm:logAlb}:
\begin{equation}\label{omegaCI-1mot}
\begin{tikzcd}\RSC_{\et,\leq 1}\arrow[r, hook] & \Shv^{\ltr}.\end{tikzcd}
\end{equation} 
By Remark \ref{rmk;omegaCI-commutes-with-colimits} and Proposition \ref{prop:CI1-thick} this functor is fully faithful, exact and commutes with all colimits. 

We announce the main theorem (cfr.~\cite[Thm. 2.4.1]{Ayoub-BV}), whose proof will occupy the rest of the Section.
\begin{thm}\label{thm:derivedAlbanese}
	The functor $\Alb^{\log}$ of Theorem \ref{thm:logAlb} has a pro-left derived functor $L\Alb^{\log}$ which factors through the stable $\infty$-category of effective log motives, giving rise to the \textit{log motivic Albanese functor} (still denoted $L\Alb^{\log}$):
	\[L\Alb^{\log}\colon \logDM(k, \Q) \to \Pro \cD(\RSC_{\et, \leq 1}), \]
	which is a pro-left adjoint of the functor \[\omega^{\lDM}_{\leq 1}\colon \cD(\RSC_{\et, \leq 1})\xrightarrow{\cD(\Log_{\det})} \cD(\Shv^{\ltr})\xrightarrow{L_{\bcube}} \logDM(k,\Q).\]

\end{thm}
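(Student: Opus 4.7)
The plan is to follow the general structure of the argument of Ayoub--Barbieri-Viale, but adapted to the log setting and the pro-adjunction formalism of the Appendix. Roughly, one shows that the abelian-level pro-adjunction of Theorem \ref{thm:logAlb} admits a derived enhancement because $\Shv^{\ltr}$ contains sufficiently many objects on which the (pro-)functor $\Alb^{\log}$ is already ``exact up to Mittag-Leffler''; these are the $BC$-admissible objects of Appendix \ref{sec:appendix}.

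\textbf{Step 1 (Constructing enough $BC$-admissible objects).} The first and most technical step is to prove that every object of $\Shv^{\ltr}$ is weakly equivalent to a bounded-above complex whose terms are direct sums of representables $\Q_{\ltr}(X)$ with $X\in\SmlSm(k)$. For such an $X$ one picks any smooth Cartier compactification $\kX\in\Comp(X)$ (which exists in characteristic zero) and observes, via the cofiltered category $\Comp(X)$ together with Remark \ref{rmk;reciprocity-only-(n)} and Theorem \ref{thm:logAlb}, that
\[
\Alb^{\log}(\Q_{\ltr}(X))\;\simeq\;\text{``}\lim_n\text{''}\,\bAlb_{\kX^{(n)}}\qquad\text{in }\Pro\RSC_{\et,\leq 1}.
\]
The $BC$-admissibility amounts to checking that applying $\bAlb_{(-)^{(n)}}$ to a strict Nisnevich/admissible-blowup/$\bcube$-distinguished square produces a tower of short exact sequences in $\sG^*$ satisfying the Mittag-Leffler/Beck--Chevalley condition from the Appendix. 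For strict distinguished squares this is the additivity of the Albanese construction; for admissible blow-ups it follows from Proposition \ref{FaltingsWustholz} and the invariance of $H^0(\ol{X},\Omega^1_{\ol{X}}(nD))$ under log modifications for $n$ sufficiently large. Once this is checked, the Appendix produces a pro-left derived functor
\[
L\Alb^{\log}\colon \cD(\Shv^{\ltr})\longrightarrow \Pro\cD(\RSC_{\et,\leq 1}),
\]
automatically pro-left adjoint to $\cD(\Log_{\det})$.

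\textbf{Step 2 (Descent to $\logDM$).} To show that $L\Alb^{\log}$ inverts the $\bcube$-local and $d\et$-local equivalences that define $\logDM(k,\Q)$, it suffices, by the universal property of the localization $L_\bcube$, to verify that for any $X\in\SmlSm(k)$ the projection $\bcube\otimes X\to X$ induces an equivalence after applying $L\Alb^{\log}$. Using Step 1, this reduces to an explicit computation in the pro-category: for a Cartier compactification $\kX=(\overline{X},|\partial X|+D)$ of $X$, a natural compactification of $\bcube\otimes X$ is $(\overline{X}\times\P^1,\,|\partial X|\times\P^1+D\times\P^1+\overline{X}\times\infty)$, and one checks that the induced morphism of pro-group schemes $\text{``}\lim_n\text{''}\bAlb_{(\kX\otimes\bcube)^{(n)}}\to \text{``}\lim_n\text{''}\bAlb_{\kX^{(n)}}$ is a pro-isomorphism. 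Via Proposition \ref{FaltingsWustholz} and the K\"unneth decomposition
\[
H^0\bigl(\overline{X}\times\P^1,\Omega^1(n(|\partial X|\times\P^1+D\times\P^1+\overline{X}\times\infty))\bigr)
\]
one sees that the extra contributions coming from the $\P^1$-direction produce only $\G_a^?$-factors that vanish in the pro-system, so the Albanese with modulus does not see the $\bcube$-direction in the limit. The same kind of cofinality argument handles dividing covers, since any log modification of $\overline{X}$ becomes admissible for some shift of the modulus.

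\textbf{Step 3 (Adjunction and the main obstacle).} The pro-adjunction between $L\Alb^{\log}$ and $\omega^{\lDM}_{\leq 1}$ then follows from the general principle (Appendix \ref{sec:appendix}) that a pro-left derived functor of a pro-left adjoint is a pro-left adjoint, combined with the fact that $\cD(\Log_{\det})$ sends $\RSC_{\et,\leq 1}$ into $\logCI_{\dNis}\subset\logDM(k,\Q)$, so no further $\bcube$-localization is needed on the target. The principal obstacle lies entirely in Step 1: verifying the Beck--Chevalley condition for $\bAlb_{(-)^{(n)}}$ on distinguished squares requires genuinely new input compared to \cite{Ayoub-BV}, because unipotent contributions $\G_a^r$ appear in $\bAlb_{\kX^{(n)}}$ (absent in the semiabelian case) and cannot be disregarded; they are controlled only after passing to the pro-limit, which is precisely the reason why $L\Alb^{\log}$ has to take values in a \emph{pro}-derived category rather than in $\cD(\RSC_{\et,\leq 1})$ itself.
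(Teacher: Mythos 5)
Your skeleton (find $BC$-admissible generators, invoke the Appendix, check $\bcube$-invariance on those generators, then conclude formally) matches the paper, but Step 1 contains a genuine error that the rest of the argument inherits. You assert that \emph{every} representable $\Q_{\ltr}(X)$, $X\in\SmlSm(k)$, is $BC$-admissible, and that admissibility "amounts to" a Mittag--Leffler/exactness condition for $\bAlb_{(-)^{(n)}}$ applied to distinguished squares. Neither is right. By Lemma \ref{lem:equivalent-condition-admissible}, $BC$-admissibility of the compact object $\Q_{\ltr}(X)[0]$ is the vanishing $\Ext^i_{\Shv^{\ltr}}(\Q_{\ltr}(X),\Log_{\det}(I))=H^i_{\dNis}(X,\Log_{\det}(I))=0$ for $i>0$ and all injectives $I$ of $\RSC_{\et,\leq 1}$; it says nothing about distinguished squares. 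And it fails for general $X$: if all representables were admissible, Remark \ref{rmk;BC-adm-is-LF-trivial} would force $L\Alb^{\log}(\Q_{\ltr}(X))$ to be concentrated in degree $0$ for every $X$, contradicting Theorem \ref{thm;compute-LAlbX} (already $L_1\Alb^{\log}$ contains $\NS^*(\omega(X))$, and $L_i\Alb^{\log}$ for $i\geq 2$ is built from $H^i(\overline{X},\cO_{\overline{X}}(nD))$, nonzero e.g.\ for proper surfaces with $p_g>0$). The paper instead isolates the class of \emph{$\Alb^{\log}$-trivial} $X$ ($X^\circ$ affine, $\NS^1(X^\circ_{\overline k})=0$, $H^{>0}_{\Zar}(\underline X,\cO_{\underline X})=0$), proves admissibility only for these (Proposition \ref{prop:key-prop-Alb-local-admissible}) by analyzing the kernel and cokernel of the adjunction map $\omega^*_{\log}\G_m\otimes_{\logCI}\underline{\Hom}(\omega^*_{\log}\G_m,\Log_{\det}I)\to\Log_{\det}I$ — birational sheaves plus $\G_a$-parts controlled by coherent cohomology, with the residual $H^1$ killed by the injectivity of $I$ via the $\Ext^1$-comparison against $\bAlb_{\kX}$ — and then shows these objects generate by a Zariski-local covering argument (Lemma \ref{lem:enough-admissible}). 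Your proposal is missing this entire mechanism, which is the actual content of the theorem.

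Your Step 2 is also more complicated than necessary and partly off target. Once $X$ is $\Alb^{\log}$-trivial, so is $X\times\bcube$, hence both representables are $BC$-admissible and $L\Alb^{\log}$ on them is computed by $\Alb^{\log}$ itself; the identification $\bAlb_{\kX^{(i)}\otimes\bcube}\cong\bAlb_{\kX^{(i)}}$ is immediate from the universal property (the Albanese map factors through $h_0^{\bcube}$), with no K\"unneth computation on $H^0(\Omega^1)$ required. No separate argument for dividing covers is needed either, since $L\Alb^{\log}$ is constructed on $\cD(\Shv^{\ltr}_{\dNis})$, where the dividing topology is already inverted. Your Step 3 (formal adjunction, using that $\cD(\Log_{\det})$ lands in $\bcube$-local objects) is correct and agrees with the paper.
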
	

\subsection{Some preliminary results} 
We collect now some technical lemmas that will be used in the proof of the main theorem.

Recall the pair of adjoint functors from \eqref{omegaadjunction}
\[\begin{tikzcd}
	{\omega^*_{\log} \colon \Shv^{\tr}_{\et}(k,\Q)}\ar[r,shift right = 1] &{\Shv_{\textrm{d\'et}}^{\ltr}(k,\Q)\colon \omega_{\sharp}^{\log}}\ar[l,shift right=1.5]
\end{tikzcd}
\]
where $\omega^*_{\log}F(X) = F(\underline{X}-|\partial X|)$. Recall that $\omega_{\sharp}^{\log} \omega^*_{\log}=\mathrm{id}$ and that 
by \cite[Proposition 8.2.8]{BPO}, for $F\in \HI_{\et}$ we have that $\omega^*_{\log}(F)\cong \Log_{\det}F$.

\begin{lemma}\label{lem:useful-lemma} For any finitely presented $1$-reciprocity sheaf $F\in \RSC_{\et, \leq 1}$, we have an isomorphism:
	\[\omega_{\sharp}^{\log}\underline{\Hom}_{\Shv^{\ltr}}(\omega^*_{\log}\G_m,\Log_{\det}F)\cong   \underline{\Hom}_{\Shv^{\tr}}(\G_m, F^{\A^1}),\]
	where $F^{\A^1}$ denotes the maximal $\A^1$-invariant quotient of $F$ (see \eqref{eq:finpresA1}).

\begin{proof}
Consider the presentation $0\to L\to G\to F\to 0$ of Proposition \ref{prop:can-pres-1-mot-sheaf}. 
Since $L\in \HI$, we have that $\Log_{\det}L=\omega^*_{\log}L$ and by Proposition \ref{prop;internal-hom-omega}, by \cite[Lemma 3.1.4]{BVKahn} and by Proposition \ref{prop;forget-transfer-fully-faithful}, we have
\begin{align*}
	&\omega_{\sharp}^{\log}\uHom_{\Shv^{\ltr}}(\omega^*_{\log}\G_m,\omega^*_{\log}L) = \uHom_{\Shv^{\tr}}(\G_m,L)=0 \quad \textrm{and} \\ 
	&\omega_{\sharp}^{\log}\uExt^1_{\Shv^{\ltr}}(\omega^*_{\log}\G_m,\omega^*_{\log}L) =\uExt^1_{\Shv^{\tr}}(\G_m,L)=0.
\end{align*}
Since $\Log_{\det}$ is exact, this gives an isomorphism\[
\omega_{\sharp}^{\log}\underline{\Hom}_{\Shv^{\ltr}}(\omega^*\G_m, \Log_{\det}G)\cong \omega_{\sharp}^{\log}\underline{\Hom}_{\Shv^{\ltr}}(\omega^*_{\log}\G_m, \Log_{\det}F)
\]
Moreover, 
by \cite[8.2.4]{BPO} together with \cite[Theorem 7.16]{MVW}, $\omega^*_{\log}\G_m[0]\oplus \Q= M(\P^1,0+\infty)$, so for all $i\geq 0$\[
\omega_{\sharp}^{\log}\underline{\rm Ext}^i_{\Shv^{\ltr}}(\omega^*_{\log}\G_m, \Log_{\det}\G_a^r) = \pi_i\omega_{\sharp}^{\log}\underline{\rm Map}_{\Shv^{\ltr}}(M(\P^1,0+\infty), \Log_{\det}\G_a^r)/\Log_{\det}\G_a^r.
\]
By the fiber sequence $M(\P^1,\triv)\to M(\P^1,0)\oplus M(\P^1,\infty)\to M(\P^1,0+\infty)$ and the fact that $\Log_{\det}(\G_a)$ is both $(\P^1,\triv)$ and $\bcube$-local, we conclude that \[
\omega_{\sharp}^{\log}\underline{\rm Ext}^i_{\Shv^{\ltr}}(\omega^*_{\log}\G_m, \Log_{\det}\G_a^r) = 0\quad\textrm{for }i\geq 0.\]
We can now conclude from the sequence \eqref{eq:Chev} and Proposition \ref{prop;internal-hom-omega}, since we have that:
\[\omega_{\sharp}\underline{\Hom}_{\Shv^{\ltr}}(\omega^*_{\log}\G_m, \Log_{\det}G ) = \omega_{\sharp}\underline{\Hom}_{\Shv^{\tr}}(\omega^*_{\log}\G_m, \omega^*_{\log}G^{sab})=\underline{\Hom}_{\Shv^{\tr}}(\G_m, G^{sab}).\]

\end{proof}
\end{lemma}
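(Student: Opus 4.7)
The plan is to use the canonical presentation of Proposition \ref{prop:can-pres-1-mot-sheaf} combined with Chevalley's theorem to reduce the statement to vanishing assertions for the unipotent part, which can then be checked via the logarithmic motive of $(\P^1, 0+\infty)$.

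First, I would start from the exact sequence $0 \to L \to G \to F \to 0$ of Proposition \ref{prop:can-pres-1-mot-sheaf}. Since $L$ is a lattice it lies in $\HI_{\et}$, so $\Log_{\det} L = \omega^*_{\log} L$. Applying $\omega^{\log}_{\sharp}\uHom_{\Shv^{\ltr}}(\omega^*_{\log}\G_m, \Log_{\det}(-))$ to the sequence and using Proposition \ref{prop;internal-hom-omega} to turn these into $\uHom$'s of \'etale sheaves with transfers, together with the classical vanishings $\uHom_{\Shv^{\tr}}(\G_m, L) = \uExt^1_{\Shv^{\tr}}(\G_m, L) = 0$ (e.g.~\cite[Lemma 3.1.4]{BVKahn}), yields an isomorphism
\[
\omega^{\log}_{\sharp}\uHom_{\Shv^{\ltr}}(\omega^*_{\log}\G_m, \Log_{\det}G) \cong \omega^{\log}_{\sharp}\uHom_{\Shv^{\ltr}}(\omega^*_{\log}\G_m, \Log_{\det}F).
\]
This reduces the computation to the representable sheaf $G$.

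Next I would apply Chevalley: the sequence $0 \to U \to G \to G^{sab} \to 0$ with $U \cong \G_a^r$ (since $\ch(k)=0$). Because $G^{sab}$ is $\A^1$-invariant, $\Log_{\det}G^{sab} = \omega^*_{\log}G^{sab}$, and Proposition \ref{prop;internal-hom-omega} gives
\[
\omega^{\log}_{\sharp}\uHom_{\Shv^{\ltr}}(\omega^*_{\log}\G_m, \omega^*_{\log} G^{sab}) \cong \uHom_{\Shv^{\tr}}(\G_m, G^{sab}),
\]
which agrees with $\uHom_{\Shv^{\tr}}(\G_m, F^{\A^1})$ by the identification $F^{\A^1} \cong G^{sab}$ from Remark \ref{rmk:Fsab}. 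Therefore the whole lemma reduces to the vanishings
\[
\omega^{\log}_{\sharp}\uHom_{\Shv^{\ltr}}(\omega^*_{\log}\G_m, \Log_{\det}\G_a) = 0 = \omega^{\log}_{\sharp}\uExt^1_{\Shv^{\ltr}}(\omega^*_{\log}\G_m, \Log_{\det}\G_a).
\]

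This last vanishing is the main obstacle and is where the log structure enters essentially. The approach is to exploit the decomposition $\omega^*_{\log}\G_m[0] \oplus \Q \simeq M(\P^1, 0+\infty)$ provided by \cite[8.2.4]{BPO} together with \cite[Theorem 7.16]{MVW}, so that the $\uExt$'s in question can be read off from $\mathrm{Map}_{\Shv^{\ltr}}(M(\P^1, 0+\infty), \Log_{\det}\G_a)$. Using the cofiber sequence
\[
M(\P^1, \triv) \to M(\P^1, 0) \oplus M(\P^1, \infty) \to M(\P^1, 0+\infty),
\]
the mapping spaces from the first two terms into $\Log_{\det}\G_a$ are controlled by the $\bcube$- and $(\P^1, \triv)$-locality of $\Log_{\det}\G_a$, and a careful bookkeeping of these contributions forces the mapping space from $M(\P^1, 0+\infty)$ to reduce to $\Log_{\det}\G_a$ itself, which is exactly what delivers the required vanishing after taking $\omega^{\log}_{\sharp}$.
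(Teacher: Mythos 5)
Your proposal is correct and follows essentially the same route as the paper: reduce via the canonical presentation $0\to L\to G\to F\to 0$ and the lattice vanishings to the group scheme $G$, then via Chevalley to the vanishing of $\omega_{\sharp}^{\log}\uHom$ and $\omega_{\sharp}^{\log}\uExt^1$ out of $\omega^*_{\log}\G_m$ into $\Log_{\det}\G_a$, which is checked exactly as you describe using $\omega^*_{\log}\G_m[0]\oplus\Lambda\simeq M(\P^1,0+\infty)$ and the Mayer--Vietoris cofiber sequence together with $(\P^1,\triv)$- and $\bcube$-locality of $\Log_{\det}\G_a$. One small imprecision: Remark \ref{rmk:Fsab} does not give $F^{\A^1}\cong G^{sab}$ but only that $G^{sab}\to F^{\A^1}$ is surjective with lattice kernel, so the final identification $\uHom_{\Shv^{\tr}}(\G_m,G^{sab})\cong\uHom_{\Shv^{\tr}}(\G_m,F^{\A^1})$ needs one more application of the same vanishing of $\uHom$ and $\uExt^1$ against a lattice that you already invoked for $L$.
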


Let $\kX =(\ol{X}, D)$ be a  smooth proper and geometrically integral modulus pair, and let $\bAlb_{\kX}^{sab}$ be the maximal semi-abelian quotient of $\bAlb_{\kX}$. Suppose that $X = \ol{X} \setminus D$ has a $k$-rational point. Then, by Proposition \ref{prop:max-semiab-quotient}, $\bAlb_{\kX}^{sab}$ agrees with Serre's semi-abelian Albanese variety of $X$, $\Alb_X$. 
{We have an exact sequence as in Remark \ref{rmk:Fsab}}: 
\begin{equation}\label{eq:can-map-Alb-Albsemiab} 0\to  U(\kX)\otimes_k\G_a\to \bAlb_{\kX} \to \bAlb_\kX^{sab} = \Alb_{X} \to 0
\end{equation}
{where $U(\kX)$ is a finite dimensions $k$-vector space.}
Since $\Alb_{X}\in \HI_{\leq 1, \et}$, which is a Serre subcategory of $\RSC_{\leq 1,\et}$, 
the natural surjection of \eqref{eq:can-map-Alb-Albsemiab} (with rational coefficients) gives then rise to a map
\begin{equation}\label{eq:ext-hi-ext-ci}  \Ext^1_{\HI_{\et, \leq 1}} ( \Alb_{X}, \G_m) \to \Ext^1_{\RSC_{\et,\leq 1}} (\bAlb_{\kX}, \G_m). 
\end{equation}
\begin{lemma}\label{lem:ext-hi-ext-ci}The map \eqref{eq:ext-hi-ext-ci} is an isomorphism.
\begin{proof}
We need to show that 
\[ \Hom_{\RSC_{\et, \leq 1}}  (U,\G_m) =0, \quad \text{and}\quad	  \Ext^1_{\RSC_{\et, \leq 1}} (U,\G_m)=0.  \]
which by Proposition \ref{prop:hom-dim} follow form analogous vanishing in $\Shv_{\et}$, which are well-known (see e.g. \cite[VII, Proposition 7]{SerreGACC}).
\end{proof}
\end{lemma}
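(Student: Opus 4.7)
The plan is to split the map \eqref{eq:ext-hi-ext-ci} as the composition
\[
\Ext^1_{\HI_{\et,\leq 1}}(\Alb_X,\G_m)\xrightarrow{\alpha} \Ext^1_{\RSC_{\et,\leq 1}}(\Alb_X,\G_m)\xrightarrow{\beta} \Ext^1_{\RSC_{\et,\leq 1}}(\bAlb_\kX,\G_m),
\]
where $\alpha$ is induced by the Serre embedding $\HI_{\et,\leq 1}\subset \RSC_{\et,\leq 1}$ and $\beta$ is pullback along the surjection $\bAlb_\kX\twoheadrightarrow \Alb_X$ of \eqref{eq:can-map-Alb-Albsemiab}. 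We will show that both $\alpha$ and $\beta$ are isomorphisms.

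For $\alpha$, we use the fact that Proposition \ref{prop:hom-dim} identifies the right-hand side with $\Ext^1_{\Shv_{\et}(k,\Q)}(\Alb_X,\G_m)$. The analogous statement for $\HI_{\et,\leq 1}$ is already known (it is implicit in the results of \cite{BVKahn}, and can be proved by the same two-step argument used here for $\RSC_{\et,\leq 1}$: one first shows that $\HI_{\et,\leq 1}(k,\Q)$ is closed under extensions in $\Shv_{\et}(k,\Q)$, and then that it has cohomological dimension one). Hence both Ext groups canonically agree with $\Ext^1_{\Shv_{\et}(k,\Q)}(\Alb_X,\G_m)$, and $\alpha$ is the identification.

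For $\beta$, apply $\Hom_{\RSC_{\et,\leq 1}}(-,\G_m)$ to the short exact sequence
\[ 0\to U\to \bAlb_\kX\to \Alb_X\to 0,\]
with $U=U(\kX)\otimes_k\G_a\cong \G_a^r$. The five-term sequence
\[
\Hom(U,\G_m)\to \Ext^1(\Alb_X,\G_m)\xrightarrow{\beta}\Ext^1(\bAlb_\kX,\G_m)\to \Ext^1(U,\G_m)
\]
in $\RSC_{\et,\leq 1}$ then shows that $\beta$ is an isomorphism provided both outer terms vanish. By Proposition \ref{prop:hom-dim}, this reduces to showing
\[
\Hom_{\Shv_{\et}(k,\Q)}(\G_a,\G_m)=0\quad\text{and}\quad \Ext^1_{\Shv_{\et}(k,\Q)}(\G_a,\G_m)=0,
\]
which are classical: in characteristic zero every morphism of algebraic groups $\G_a\to \G_m$ is trivial, and every extension of $\G_a$ by $\G_m$ splits (cf.\ \cite[VII, Prop.~7]{SerreGACC}).

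The only non-routine point is the comparison $\Ext^i_{\HI_{\et,\leq 1}}\cong\Ext^i_{\Shv_{\et}}$ underlying $\alpha$; it requires exactly the $\HI$-analogue of the arguments already established in Propositions \ref{prop;1-rec-closed-under-ext} and \ref{prop:hom-dim}, together with the well-known fact that $\HI_{\et,\leq 1}(k,\Q)$ is closed under extensions in $\Shv_{\et}(k,\Q)$ (which is a classical consequence of the structure theory of $1$-motivic sheaves).
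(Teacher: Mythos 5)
Your proof is correct and follows essentially the same route as the paper: the real content is the vanishing of $\Hom(U,\G_m)$ and $\Ext^1(U,\G_m)$ in $\RSC_{\et,\leq 1}$, reduced via Proposition \ref{prop:hom-dim} to the classical vanishing in $\Shv_{\et}$ and fed into the long exact sequence of the extension \eqref{eq:can-map-Alb-Albsemiab}. The only cosmetic difference is your step $\alpha$: the paper disposes of it by observing that $\HI_{\et,\leq 1}$ is a Serre subcategory of $\RSC_{\et,\leq 1}$ (so the Yoneda $\Ext^1$ groups already agree for objects of the subcategory), which is slightly more direct than your double comparison of both sides with $\Ext^1_{\Shv_{\et}}$, but both arguments are fine.
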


Notice that for every  $G\in \mathbf{logCI}$, we have that $\underline{\Hom}_{\Shv^{\ltr}}(\omega^*_{\log} \G_m, G)\in \mathbf{logCI}$ 
since \[
\underline{\Hom}_{\Shv^{\ltr}}(\omega^*_{\log} \G_m, G) = \pi_0(R\underline{\Hom}_{\Shv^{\ltr}}(\omega^*_{\log} \G_m, G))
\] 
and $R\underline{\Hom}_{\Shv^{\ltr}}(\omega^*_{\log} \G_m, G)$ is clearly $\bcube$-local (see \cite[Lemma 2.10]{BindaMerici}).
\begin{lemma}\label{lem;applying-uneful-tensor-hi-closed}
For any finitely presented $1$-reciprocity sheaf $F\in \RSC_{\et, \leq 1}$, we have an isomorphism:\[
			\omega^*_{\log}\G_m\otimes_{\mathbf{logCI}}\underline{\Hom}_{\Shv^{\ltr}}(\omega^*_{\log}\G_m, \Log_{\det}(F)) \xrightarrow{\simeq}  \omega^*_{\log}\big(\G_m \tensor_{\HI} \underline{\Hom}_{\Shv^{\tr}}(\G_m, F^{\A^1})\big).
		\]
where $\otimes_{\mathbf{logCI}}$ is the tensor product of the category $\mathbf{logCI}$.
	\begin{proof}
By Lemma \ref{lem:useful-lemma}, we have that	
	\[
		\omega_{\sharp}^{\log}\underline{\Hom}_{\Shv^{\ltr}_{\et}(k,\Q)}(\omega^*_{\log} \G_m, \Log_{\det} F) \cong \underline{\Hom}_{\Shv^{\tr}_{\et}(k,\Q)}(\G_m, F^{\A^1}).
		\] 
		Moreover, the right-hand side is in $\HI_{\et}(k,\Q)$, hence by \eqref{eq;only-one-homotopy-invariant} we have an isomorphism
			\[
			\underline{\Hom}_{\Shv^{\ltr}}(\omega^*_{\log} \G_m, \Log_{\det} F) \cong \omega^*_{\log}\omega_{\sharp}^{\log}\underline{\Hom}_{\Shv^{\ltr}}(\omega^*_{\log} \G_m, \Log_{\det} F) \cong \omega^*_{\log}\underline{\Hom}_{\Shv^{\tr}}(\G_m, F^{\A^1}),
			\]
			so we conclude since by Lemma \ref{lem:uneful-tensor-hi-closed} we have that
			\[
			\omega^*_{\log}\G_m\otimes_{\mathbf{logCI}}\omega^*_{\log}\underline{\Hom}_{\Shv^{\tr}}(\G_m, F^{\A^1}) \cong  \omega^*_{\log}\big(\G_m \tensor_{\HI} \underline{\Hom}_{\Shv^{\tr}}(\G_m, F^{\A^1})\big).
			\]
	\end{proof}		
\end{lemma}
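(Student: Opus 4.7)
The plan is to assemble the isomorphism from three ingredients that are already in place: Lemma \ref{lem:useful-lemma}, the identification \eqref{eq;only-one-homotopy-invariant}, and the monoidality of $\omega^*_{\log}$ provided by Lemma \ref{lem:uneful-tensor-hi-closed}.

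First I would verify that the internal Hom $H := \underline{\Hom}_{\Shv^{\ltr}}(\omega^*_{\log}\G_m, \Log_{\det} F)$ lies in $\mathbf{logCI}$. This is the observation recorded just before the statement: since $\Log_{\det}F$ is strictly $\bcube$-invariant, the same holds for $R\underline{\Hom}_{\Shv^{\ltr}}(\omega^*_{\log}\G_m, \Log_{\det}F)$, hence for its $\pi_0$. Once $H\in \mathbf{logCI}$, Lemma \ref{lem:useful-lemma} gives an isomorphism $\omega_{\sharp}^{\log} H \cong \underline{\Hom}_{\Shv^{\tr}}(\G_m, F^{\A^1})$. The right-hand side belongs to $\HI_{\et}$, so by \eqref{eq;only-one-homotopy-invariant} the unit map $H\to \omega^*_{\log}\omega_{\sharp}^{\log}H$ is an isomorphism. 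Chaining these identifications yields
\[
\underline{\Hom}_{\Shv^{\ltr}}(\omega^*_{\log}\G_m, \Log_{\det}F) \;\cong\; \omega^*_{\log}\underline{\Hom}_{\Shv^{\tr}}(\G_m, F^{\A^1}).
\]

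Tensoring on the left with $\omega^*_{\log}\G_m$ in $\mathbf{logCI}$ and invoking Lemma \ref{lem:uneful-tensor-hi-closed}, which asserts that $\omega^*_{\log}\colon \HI_\tau \to \mathbf{logCI}_{d\tau}$ is monoidal, I would obtain
\[
\omega^*_{\log}\G_m\otimes_{\mathbf{logCI}}\underline{\Hom}_{\Shv^{\ltr}}(\omega^*_{\log}\G_m, \Log_{\det}F) \;\cong\; \omega^*_{\log}\bigl(\G_m\otimes_{\HI}\underline{\Hom}_{\Shv^{\tr}}(\G_m, F^{\A^1})\bigr),
\]
which is the desired isomorphism.

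The only step that is not entirely formal is the verification that $H\in \mathbf{logCI}$, since Lemma \ref{lem:useful-lemma} only records the identification of $\omega_{\sharp}^{\log} H$. I would therefore be slightly careful to argue at the level of derived internal Hom: $R\underline{\Hom}_{\Shv^{\ltr}}(\omega^*_{\log}\G_m, \Log_{\det}F)$ is $\bcube$-local because $\Log_{\det}F$ is, so its $H^0$ is a strictly $\bcube$-invariant sheaf, landing in $\mathbf{logCI}$. This legitimizes the use of \eqref{eq;only-one-homotopy-invariant}. The rest is a clean assembly of known functorial properties, with the monoidality statement of Lemma \ref{lem:uneful-tensor-hi-closed} doing the real work by allowing us to push the tensor product across $\omega^*_{\log}$.
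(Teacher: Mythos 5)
Your proof is correct and follows essentially the same route as the paper: Lemma \ref{lem:useful-lemma} to compute $\omega_{\sharp}^{\log}$ of the internal Hom, then \eqref{eq;only-one-homotopy-invariant} to upgrade this to an identification of the internal Hom itself with $\omega^*_{\log}\underline{\Hom}_{\Shv^{\tr}}(\G_m, F^{\A^1})$, and finally the monoidality of $\omega^*_{\log}$ from Lemma \ref{lem:uneful-tensor-hi-closed}. Your extra care in checking that the internal Hom lies in $\mathbf{logCI}$ (so that \eqref{eq;only-one-homotopy-invariant} applies) is exactly the observation the paper records in the paragraph immediately preceding the lemma.
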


\begin{remark}\label{rmk;colimits-hom-tensor}
Thanks to Proposition \ref{prop:can-pres-1-mot-sheaf}, we can write every $F\in \RSC_{\et,\leq 1}(k,\Q)$ as filtered colimit of finitely presented $1$-reciprocity sheaves, $F=\colim_i F_i$. Since the functor $\Log_{\det}$ from \eqref{omegaCI-1mot} commutes with all colimits, we have
$\Log_{\det}(F)=\colim_i \Log_{\det}F_i$. By the isomorphism  \[\omega^*_{\log}\G_m\cong \coker( \Q\to h_{0}^{\log}({\P^1}, [0]+[\infty])),\]
given for example by \cite[8.2.4]{BPO} together with \cite[Theorem 7.16]{MVW}, we see that $\omega^*_{\log}\G_m$ is a compact object in $\mathbf{logCI}$, hence in $\Shv^{\ltr}$. 
By Lemma \ref{lem;applying-uneful-tensor-hi-closed} we have an isomorphism
\begin{equation}\label{eq;tensor-colim-fg}
\omega^*_{\log}\G_m \tensor_{\mathbf{logCI}}  \underline{\Hom}_{\Shv^{\ltr}}(\omega^*_{\log}\G_m, \Log_{\det} F)\cong \colim_i\omega^*_{\log}\big(\G_m \tensor_{\HI} \underline{\Hom}_{\Shv^{\tr}}(\G_m, F_i^{\A^1})\big).
\end{equation}
Moreover, since $\omega_{\sharp}^{\log}$ commutes with (filtered) colimits, we have that 
\begin{equation}\label{eq;tensor-in-HI}
\omega_{\sharp}^{\log}\bigl(\omega^*_{\log}\G_m \tensor_{\mathbf{logCI}}  \underline{\Hom}_{\Shv^{\ltr}}(\omega^*_{\log}\G_m, \Log_{\det} F)\big) = \colim \big(\G_m \tensor_{\HI} \underline{\Hom}_{\Shv^{\tr}}(\G_m, F_i^{\A^1})\big).
\end{equation}
In particular, since $\HI_{\et}$ is closed under colimits we have that for exery $F\in \RSC_{\et,\leq 1}$, \[\omega_{\sharp}^{\log}\bigl(\omega^*_{\log}\G_m \tensor_{\mathbf{logCI}}  \underline{\Hom}_{\Shv^{\ltr}}(\omega^*_{\log}\G_m, \Log_{\det} F)\big)\in \HI_{\et}.
\]
\end{remark}

\begin{lemma}\label{lem;vanish-tensor}
Let $F\in \RSC_{\et, \leq 1}(k, \Q)$, then:
	\begin{equation}\label{eq:vanishing-coh-GmtensorHom-jlarg1} 
		H^j_{\dNis}(X, \omega^*_{\log}\G_m \tensor_{\mathbf{logCI}}\underline{\Hom}_{\Shv^{\ltr}}( \omega^*_{\log}\G_m, \Log_{\det} F)) =0 \quad\textrm{for }j>1.
	\end{equation}
\begin{proof}
	Recall that for a torus $T$, we have that
	\begin{equation}\label{eq;torus-vanish}
		H^j_{\rm Nis} (X^{\circ},T) = H^j_{\rm Zar} (X^{\circ},T) = 0 \quad\textrm{for }j>1.
	\end{equation}
	By Proposition \ref{prop:can-pres-1-mot-sheaf}, let $F=\colim_i F_i$ with $F_i\in \RSC_{\et, \leq 1}^\star$. Since $\dNis$-cohomology commutes with filtered colimits, the left hand side of \eqref{eq:vanishing-coh-GmtensorHom-jlarg1} is isomorphic to\[
	\colim H^j_{\dNis}(X,\omega^*_{\log}\big(\G_m \tensor_{\HI} \underline{\Hom}_{\Shv^{\tr}}(\G_m, F_i^{\A^1})\big)).
	\]
	The sheaf $\underline{\Hom}_{\Shv^{\tr}}(\G_m, F_i^{\A^1})$ is a $0$-motivic homotopy invariant sheaf by \cite[Corollary 1.3.9]{Ayoub-BV}, hence $T_i:=\G_m\tensor_{\HI} \underline{\Hom}_{\Shv^{\tr}}(\G_m, F_i^{\A^1})$ is a torus. Then since $\omega^*_{\log}$ is exact and preserves injective sheaves, we have by \eqref{eq;torus-vanish} that
\[ 
H^j_{\dNis} (X,\omega^*_{\log}T_i) = H^j_{\rm Nis} (X^{\circ},T_i) = 0 \quad\textrm{for }j>1.
\]
\end{proof}
\end{lemma}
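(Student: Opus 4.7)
My plan is to reduce the statement to a vanishing result for the cohomology of a torus on the trivial locus, exploiting the fact that the tensor product in question turns out to live in the image of $\omega^*_{\log}$.

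First, I would use Proposition \ref{prop:can-pres-1-mot-sheaf} to write $F=\colim_i F_i$ as a filtered colimit of finitely presented $1$-reciprocity sheaves $F_i\in\RSC_{\et,\leq 1}^\star$. Since $\Log_{\det}$ commutes with all colimits (Remark \ref{rmk;omegaCI-commutes-with-colimits}), $\omega^*_{\log}\G_m$ is compact in $\mathbf{logCI}$ (Remark \ref{rmk;colimits-hom-tensor}), and both $\underline{\Hom}_{\Shv^{\ltr}}(\omega^*_{\log}\G_m,-)$ applied to a filtered colimit with compact source and the tensor product $\omega^*_{\log}\G_m\otimes_{\mathbf{logCI}}(-)$ commute with filtered colimits, we obtain
\[ \omega^*_{\log}\G_m\otimes_{\mathbf{logCI}}\uHom_{\Shv^{\ltr}}(\omega^*_{\log}\G_m,\Log_{\det}F)\cong \colim_i\Bigl(\omega^*_{\log}\G_m\otimes_{\mathbf{logCI}}\uHom_{\Shv^{\ltr}}(\omega^*_{\log}\G_m,\Log_{\det}F_i)\Bigr).\]
Because $\dNis$-cohomology of a fixed $X$ commutes with filtered colimits of sheaves, the problem is reduced to proving the vanishing for each $F_i$ in place of $F$.

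Next, for each finitely presented $F_i$, Lemma \ref{lem;applying-uneful-tensor-hi-closed} identifies
\[ \omega^*_{\log}\G_m\otimes_{\mathbf{logCI}}\uHom_{\Shv^{\ltr}}(\omega^*_{\log}\G_m,\Log_{\det}F_i)\;\cong\;\omega^*_{\log}\bigl(\G_m\otimes_{\HI}\uHom_{\Shv^{\tr}}(\G_m,F_i^{\A^1})\bigr).\]
By \cite[Corollary 1.3.9]{Ayoub-BV}, the sheaf $\uHom_{\Shv^{\tr}}(\G_m,F_i^{\A^1})$ is a $0$-motivic homotopy invariant sheaf, so $T_i:=\G_m\otimes_{\HI}\uHom_{\Shv^{\tr}}(\G_m,F_i^{\A^1})$ is (representable by) a torus. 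Hence we are left to show
\[ H^j_{\dNis}(X,\omega^*_{\log}T_i)=0\qquad \text{for }j>1.\]

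Finally, the structure of $\omega^*_{\log}$ interchanges $\dNis$ cohomology on $X\in\SmlSm(k)$ with Nis cohomology on the trivial locus $\underline{X}-|\partial X|$, so
\[ H^j_{\dNis}(X,\omega^*_{\log}T_i)\cong H^j_{\Nis}(\underline{X}-|\partial X|,T_i),\]
and this vanishes for $j>1$ by the classical fact that on a Noetherian scheme of finite Krull dimension the Nisnevich (equivalently Zariski, after Hilbert~90) cohomology of a torus is concentrated in degrees $\leq 1$. The main point requiring care is the comparison of the cohomology of $\omega^*_{\log}G$ with the cohomology of $G$ on the trivial locus; this is a standard fact that follows from the construction of $\omega^*_{\log}$ as a right adjoint and the compatibility of sheafifications, but it is the technical heart of the argument. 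Once this comparison is in place, the vanishing is immediate.
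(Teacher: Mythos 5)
Your proposal is correct and follows essentially the same route as the paper: reduce to finitely presented $F_i$ via filtered colimits, apply Lemma \ref{lem;applying-uneful-tensor-hi-closed} together with \cite[Corollary 1.3.9]{Ayoub-BV} to identify the tensor product with $\omega^*_{\log}$ of a torus $T_i$, and conclude from the vanishing of $H^j_{\Nis}(X^{\circ},T_i)$ for $j>1$ using that $\omega^*_{\log}$ is exact and preserves injectives.
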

Let $X\in \SmlSm(k)$ and $\kX\in \Comp(X)$. For $H\in \HI_{\et}$, consider the composition:
\begin{equation}
	\Ext^1_{\RSC_{\et}} (\bAlb_{\kX},  H) \xrightarrow{(1)} \Ext^1_{\logCI_{\dNis}} (\Log_{\det}(\bAlb_{\kX}),  \omega^*_{\log}(H)) \xrightarrow{(2)} H^1_{\dNis}(X,\omega^*_{\log}(H))
\end{equation}
where $(1)$ is the application of the exact functor $\Log$ and the isomorphism \eqref{eq;only-one-homotopy-invariant} and $(2)$ is given by the map $\Q_{\ltr}(X)\to \Log_{\det}\bAlb_{\kX}$. 
For a smooth scheme $X$, let ${\rm NS}^{r}(X)$ be the group of codimension $r$-cycles modulo algebraic equivalence.
\begin{lemma}\label{lem;iso-alb-H1}
Let $F\in \RSC_{\et,\leq 1}$ and let $X\in \SmlSm(k)$ connected such that $X^\circ$ is affine and ${\rm NS}^1(X^\circ_{\ol{k}}) =0$. Then for $\kX\in \Comp(X)$ the map $\Q_{\tr}(X)\to \Log(\Alb_{\kX})$ induces an isomorphism:
\begin{multline*}
\Ext^1_{\RSC_{\et}} (\bAlb_{\kX},  \omega_{\sharp}^{\log}(\omega^*_{\log}\G_m \otimes_{\mathbf{logCI}} \underline{\Hom}(\omega^*_{\log}\G_m, \Log_{\det}(F))))\simeq\\ 
H^1_{\dNis} (X, \omega^*_{\log}\G_m \otimes_{\mathbf{logCI}} \underline{\Hom}( \omega^*_{\log} \G_m, \Log_{\det}(F))).
\end{multline*}
\begin{proof}
	Let $F=\colim F_i$ with $F_i\in \RSC_{\et, \leq 1}^\star$ and $T=\omega^*_{\log}\G_m \otimes_{\mathbf{logCI}} \underline{\Hom}(\omega^*_{\log}\G_m, \Log_{\det}F)$ and $T_i=\omega^*_{\log}\G_m \otimes_{\mathbf{logCI}} \underline{\Hom}(\omega^*_{\log}\G_m, \Log_{\det}F_i)$.
	By Corollary \ref{cor:AHPL}, $\bAlb_{\kX}$ is a compact object in $\RSC_{\et}$. Since filtered colimits are exact and $\RSC_{\et,\leq 1}\subseteq \RSC_{\et}$ is closed under extensions, by \eqref{eq;tensor-in-HI} we have that\[
		\Ext^1_{\RSC_{\et}} (\bAlb_{\kX},  \omega_{\sharp}^{\log}T)\cong \\
		\colim_i \Ext^1_{\RSC_{\et, \leq 1}} (\bAlb_{\kX}, \omega_{\sharp}^{\log}T_i).
	\]
	On the other hand, by Lemma \ref{lem;applying-uneful-tensor-hi-closed}
	we have
	\[H^1_{\dNis} (X, T) \cong \\
	\colim_i H^1_{\dNis} (X,\omega^*_{\log}(\G_m \otimes_{\HI} \underline{\Hom}_{\Shv^{\tr}}(\G_m,F_i^{\A^1}))).\]
	Moreover, since $\omega^*_{\log}$ is exact and preserves injective sheaves we have that
	\[
	H^1_{\dNis} (X,\omega^*_{\log}(\G_m \otimes_{\HI} \underline{\Hom}_{\Shv^{\tr}}(\G_m,F_i^{\A^1})))\cong H^1_{\rm Nis} (X^{\circ}, \G_m \otimes_{\HI} \underline{\Hom}_{\Shv^{\tr}}(\G_m, F_i^{\A^1} )).
	\]
	Finally, $\underline{\Hom}_{\Shv^{\tr}}(\G_m, F_i^{\A^1} )$ is a lattice by \cite[Corollary 1.3.9]{Ayoub-BV}, hence it is enough to show that for every lattice $L$ we have an isomorphism\[
	\Ext^1_{\RSC_{\et,\leq 1}}(\bAlb_{\kX},\G_m\otimes_{\HI}L)\cong H^1_{\Nis}(X^{\circ},\G_m\otimes_{\HI} L).
	\]
	By \cite[Lemma 2.4.5]{Ayoub-BV}, since $X^{\circ}$ is affine and ${\rm NS}^1(X^\circ_{\ol{k}}) =0$, we have that
	\[H^1_{\rm Nis} (X^{\circ}, \G_m \otimes_{\HI} L)\cong 
	\Ext^1_{\HI_{\et,\leq 1}}(\Alb_{X^\circ},\G_m \otimes_{\HI} L),
	\]
	so it is enough to show that the canonical map $\bAlb_{\kX}\to \bAlb_{X^\circ}$ induces an isomorphism\[
	\Ext^1_{\HI_{\et,\leq 1}}(\bAlb_{X^\circ},\G_m \otimes_{\HI}L )\cong\Ext^1_{\RSC_{\et,\leq 1}}(\bAlb_{\kX},\G_m \otimes_{\HI}L).
	\]
	If $k$ is algebraically closed, we have $L\cong \Q^r$, hence the above isomorphism  comes from Lemma \ref{lem:ext-hi-ext-ci}.
	A Galois descent argument (see \cite[Lemma 2.4.5, Step 1]{Ayoub-BV}) allows us to deduce the isomorphism above for any $k$. 
\end{proof}
\end{lemma}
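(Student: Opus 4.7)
The plan is to reduce the statement to a known case handled in \cite[Lemma 2.4.5]{Ayoub-BV} by a series of formal manipulations; the key inputs are Lemma \ref{lem;applying-uneful-tensor-hi-closed} (which identifies the logarithmic internal tensor/Hom with $\omega^*_{\log}$ applied to an object of $\HI_{\et}$), Lemma \ref{lem:ext-hi-ext-ci} (which says that for a semi-abelian target, $\Ext^1$ does not see the unipotent discrepancy between $\bAlb_{\kX}$ and $\bAlb_{\kX}^{\sab}=\Alb_{X^\circ}$), and the compactness statements of Remark \ref{rmk;RSC-star-compact}.

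First I would write $F=\colim_i F_i$ as a filtered colimit of finitely presented $1$-reciprocity sheaves via Proposition \ref{prop:can-pres-1-mot-sheaf}, abbreviate
\[T:=\omega^*_{\log}\G_m\otimes_{\mathbf{logCI}}\uHom(\omega^*_{\log}\G_m,\Log_{\det}F),\qquad T_i:=\omega^*_{\log}\G_m\otimes_{\mathbf{logCI}}\uHom(\omega^*_{\log}\G_m,\Log_{\det}F_i),\]
and invoke \eqref{eq;tensor-colim-fg} to get $T=\colim_iT_i$. Since $\bAlb_{\kX}$ is compact in $\RSC_{\et}$, $\RSC_{\et,\leq1}\subseteq\RSC_{\et}$ is closed under extensions (Proposition \ref{prop;1-rec-closed-under-ext}), filtered colimits are exact, and $d\Nis$-cohomology commutes with filtered colimits, both sides of the desired isomorphism commute with the colimit over $i$. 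Thus it suffices to establish the statement for each $F_i$ separately, and I may replace $F$ by a finitely presented $F$.

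Next I would apply Lemma \ref{lem;applying-uneful-tensor-hi-closed} to identify $T\simeq \omega^*_{\log}\bigl(\G_m\otimes_{\HI}\uHom_{\Shv^{\tr}}(\G_m,F^{\A^1})\bigr)$. By \cite[Cor.~1.3.9]{Ayoub-BV}, $\uHom_{\Shv^{\tr}}(\G_m,F^{\A^1})$ is a lattice $L$, so $S:=\G_m\otimes_{\HI}L\in\HI_{\et,\leq1}$ is a torus. Since $\omega^*_{\log}$ is exact and preserves injectives, I obtain
\[H^1_{\dNis}(X,T)=H^1_{\dNis}(X,\omega^*_{\log}S)\cong H^1_{\Nis}(X^\circ,S).\]
Thus the right hand side of the sought isomorphism rewrites as $H^1_{\Nis}(X^\circ,S)$, and the left hand side, using $\omega_\sharp^{\log}\omega^*_{\log}=\mathrm{id}$, rewrites as $\Ext^1_{\RSC_{\et,\leq1}}(\bAlb_{\kX},S)$. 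So I need
\[\Ext^1_{\RSC_{\et,\leq1}}(\bAlb_{\kX},S)\;\cong\;H^1_{\Nis}(X^\circ,S).\]

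For the final step, I would invoke \cite[Lemma 2.4.5]{Ayoub-BV} (this is where the hypotheses that $X^\circ$ is affine and $\NS^1(X^\circ_{\ol{k}})=0$ enter) to get $H^1_{\Nis}(X^\circ,S)\cong\Ext^1_{\HI_{\et,\leq1}}(\Alb_{X^\circ},S)$. It then remains to check that the canonical map $\bAlb_{\kX}\twoheadrightarrow\bAlb_{\kX}^{\sab}=\Alb_{X^\circ}$ (Proposition \ref{prop:max-semiab-quotient}, together with the short exact sequence \eqref{eq:can-map-Alb-Albsemiab}) induces an isomorphism on $\Ext^1(-,S)$. When $k=\overline{k}$ this is exactly Lemma \ref{lem:ext-hi-ext-ci} (applied with $L\cong\Q^r$ and noting that $\Ext^*$ against a torus kills the unipotent kernel $U(\kX)\otimes_k\G_a$). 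For arbitrary perfect $k$, I would conclude by a Hochschild--Serre/Galois descent argument as in \cite[Lemma 2.4.5, Step~1]{Ayoub-BV}, comparing the two Hochschild--Serre spectral sequences associated to $\Gal(\overline{k}/k)$: the $E_2^{0,1}$-terms match by the case $k=\overline{k}$ just treated, and the $E_2^{1,0}$-terms match because $\mathrm{Hom}(\bAlb_{\kX_{\overline{k}}},S_{\overline{k}})=\mathrm{Hom}(\Alb_{X^\circ_{\overline{k}}},S_{\overline{k}})$ ($S$ being homotopy invariant, any map from $\bAlb_{\kX_{\overline{k}}}$ factors through its maximal semi-abelian quotient).

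The main obstacle is keeping track of the various categories in which $\Ext$ is computed: one must repeatedly pass between $\Shv^{\ltr}_{d\Nis}$, $\RSC_{\et}$, $\RSC_{\et,\leq1}$, and $\HI_{\et,\leq1}$, and every such transfer is justified only by the preparatory lemmas (Proposition \ref{prop;1-rec-closed-under-ext}, Proposition \ref{prop:hom-dim}, the full faithfulness of $\Log_{\det}$, and the exactness/colimit-preservation of $\omega^*_{\log}$ and $\omega_\sharp^{\log}$). The appeal to \cite[Lemma 2.4.5]{Ayoub-BV} is essentially a black box consuming the geometric hypotheses on $X^\circ$, so once the reduction to that lemma is complete the proof is finished.
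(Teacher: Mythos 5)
Your proposal is correct and follows essentially the same route as the paper's proof: the same reduction to finitely presented $F_i$ via compactness of $\bAlb_{\kX}$ and exactness of filtered colimits, the same identification of $T$ with $\omega^*_{\log}(\G_m\otimes_{\HI}\uHom(\G_m,F_i^{\A^1}))$ via Lemma \ref{lem;applying-uneful-tensor-hi-closed}, the same appeal to \cite[Lemma 2.4.5]{Ayoub-BV} for the lattice case, and the same final comparison of $\bAlb_{\kX}$ with $\Alb_{X^\circ}$ via Lemma \ref{lem:ext-hi-ext-ci} plus Galois descent. No gaps.
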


\subsection{Deriving the Albanese functor}We are ready to prove Theorem \ref{thm:derivedAlbanese}. The categories $\RSC_{\et,\leq 1}$ and $\Shv^{\ltr}$ are Grothendieck abelian categories and the functor $\omegaCI_{\log}$ from \eqref{omegaCI-1mot} is exact and commutes with filtered colimits. 

The derived $\infty$-category $\cD(\Shv^{\ltr})$ is equivalent by classical reason to the $\infty$-category underlying the model category $\Cpx(\PSh^{\ltr}(k,\Lambda))$ with the $\dNis$-local model structure considered in \cite{BindaMerici}. In particular, by \cite[Lemma 2.15]{BindaMerici}, the functor $i_{\ltr}\colon \cD(\Shv^{\ltr})\to \Ch_{\dg}(\Shv^{\ltr})$ preserves filtered colimtis. In particular, the commutative square of $\infty$-categories:\[
\begin{tikzcd}
	\Ch_{\dg}(\RSC_{\et,\leq 1})\ar[r,"{\Ch_{\dg}(\Log_{\det})}"]\ar[d,"{L_{\leq 1}}"]&\Ch_{\dg}(\Shv^{\ltr})\ar[d,swap,"{L_{\ltr}}"]\\
	\cD(\RSC_{\et,\leq 1})\ar[r,"{\cD(\Log_{\det})}"]&\cD(\Shv^{\ltr})
\end{tikzcd}
\]
satisfies the hypotheses of \ref{ex-derived-cats}, so that $\cD(\Log_{\det})$ has a pro-left adjoint $L\Alb^{\log}$.  We consider the $BC$-admissibility with respect to this diagram.
Recall from Lemma \ref{lem:equivalent-condition-admissible} that a compact object $P\in \Sh^{\ltr}$ is $BC$-admissible as an object of $\Ch_{\dg}(\Sh^{\ltr})$
if and only if for every injective object $I\in \RSC_{\et,\leq 1}$ we have\[
\Ext^n_{\Sh^{\ltr}}(P,\Log_{\det}(I))=0\textrm{ for }i\neq 0.
\]
We make the following definition (see \cite[Def.~2.4.2]{Ayoub-BV}):
\begin{defn} $X\in \SmlSm(k)$ is \textit{$\Alb^{\log}$-trivial} if $X^\circ$ is affine, ${\rm NS}^1(X^\circ_{\overline{k}})=0$ and\[
	H^j_{\rm Zar}(\underline{X}, \mathcal{O}_{\underline{X}})  =0 \quad \text{ for } j>0.
	\]
\end{defn}

\begin{remark}
	If $\underline{X}=\Spec(R)$ is affine and $\partial X$ is supported on a principal divisor with global equation $f$, then $X^\circ = \Spec(R[\frac{1}{f}])$ is affine, in particular if ${\rm NS}^1(X^{\circ}_{\overline{k}}) =0$ we have that $X$ is $\Alb^{\log}$-trivial and $X^\circ$ satisfies the hypotheses of \cite[Proposition 2.4.4]{Ayoub-BV}.
\end{remark}

The main technical input of the proof of Theorem \ref{thm:derivedAlbanese} is the following result:

\begin{prop}\label{prop:key-prop-Alb-local-admissible} Let $X\in \mathbf{SmlSm}(k)$ be $\Alb^{\log}$-trivial, then the complex $\Q_{\ltr}(X)[0]$ is $BC$-admissible.
\begin{proof}
	We follow (with some modifications) the path of the proof of \cite[Proposition 2.4.4]{Ayoub-BV}. 
Since $\mathbb{Q}_{\ltr}(X)[0]$ is a compact object, by Lemma \ref{lem:equivalent-condition-admissible} it is enough to prove that 
	\[ \Ext^i_{\Shv^{ltr}}(\Q_{\ltr}(X), \Log_{\det}(I) ) =0, \quad \text{for $i>0$ and for every }I\in \RSC_{\et, \leq 1}\text{ injective.} \]
	By \cite[Proposition 4.3.2]{BPO} the $\Ext$ groups in $\Shv^{\ltr}$ can be computed as cohomology groups:
	\[ \Ext^i_{\Shv^{\ltr}}(\Q_{\ltr}(X), \Log_{\det}(I) ) =  H^i_{\dNis}(X, \Log_{\det}(I)),\]
	so we need to check that $H^i_{\rm dNis}(X, \Log_{\det}(I)) =0 $ for $i>0$. In order to control this cohomology, we look then at the adjunction map
	\begin{equation}\label{eq:key-prop.adjunction} 
		\omega^*_{\log} \G_m \tensor_{\mathbf{logCI}} \underline{\Hom}_{\Shv^{\ltr}}(\omega^*_{\log} \G_m, \Log_{\det}(I)) \to \Log_{\det} (I)
	\end{equation}
By Lemma \ref{lem;vanish-tensor} and Lemma \ref{lemma;vanish-ker-coker} below, we get that $H^j_{\dNis}(X, \Log_{\det}(I)) =0$ for $j>1$ and that we have a surjection
\begin{equation}\label{eq:final-surjection} 
		H^1_{\dNis} (X, \omega^*_{\log}\G_m \otimes_{\mathbf{logCI}} \underline{\Hom}_{\Shv^{\ltr}}( \omega^*_{\log}\G_m, \Log_{\det}(I)))  \to H^1_{\dNis}(X, \Log_{\det}(I)) \to 0. 
	\end{equation}
We are then left to show that the displayed morphism in \eqref{eq:final-surjection} is the zero map.

For every modulus pair $\kX\in \textbf{Comp}(X)$, the canonical map $\Q_{tr}(X) \to \Log_{\det}\bAlb_{\kX}$ gives for any $F\in \RSC_{\et, \leq1}$ a natural map (again we are using the fact that $\Log_{\det}$ is exact):
\[\Ext^1_{\RSC_{\et,\leq 1}} (\bAlb_{\kX}, F)\to \Ext^1_{\Shv^{ltr}} (\Log_{\det}\bAlb_{\kX}, \Log_{\det}F) \to H^1_{\dNis}(X, \Log_{\det}F), \]
hence, from \eqref{eq:final-surjection} we get a commutative diagram (cfr.~with the proof of \cite[2.4.4]{Ayoub-BV})
\begin{equation}\label{eq:key-diagram-ext-h1}\begin{small} 
\begin{tikzcd}
	\Ext^1_{\RSC_{\et, \leq 1}} (\bAlb_{\kX},  \omega_{\sharp}^{\log}(\omega^*_{\log}\G_m \otimes_{\mathbf{logCI}} \underline{\Hom}(\omega^*_{\log}\G_m, \Log_{\det}(I)))) \ar[r] \ar[d] & \Ext^1_{\RSC_{\et, \leq 1}} (\bAlb_{\kX}, I) \ar[d]\\
			H^1_{\dNis} (X, \omega^*_{\log}\G_m \otimes_{\mathbf{logCI}} \underline{\Hom}( \omega^*_{\log} \G_m, \Log_{\det}(I))) \ar[r] & H^1_{\dNis}(X,\Log_{\det}(I)).
\end{tikzcd}
\end{small}
\end{equation}
Since $I$ is injective in $\RSC_{\et, \leq 1}$, the term $\Ext^1_{\RSC_{\et, \leq 1}}(\bAlb_{\kX}, I)$ is zero. On the other hand, the left vertical map is an isomorphism by Lemma \ref{lem;iso-alb-H1}, which implies that \eqref{eq:final-surjection} is indeed the zero map. This finishes the reduction of the proof of \ref{prop:key-prop-Alb-local-admissible} to Lemma \ref{lemma;vanish-ker-coker}.
\end{proof}
\end{prop}
\begin{lemma}\label{lemma;vanish-ker-coker}
	Let $F\in \RSC_{\et,\leq 1}$ and let $N$ and $Q$ be respectively the kernel and the cokernel (computed in $\Shv^{\ltr}$) of the morphism 
	\begin{equation}\label{eq:key-prop.adjunction-general}
		\omega^*_{\log}\G_m \tensor_{\mathbf{logCI}}  \underline{\Hom}_{\Shv^{\ltr}}(\omega^*_{\log}\G_m, \Log_{\det} F) \to \Log_{\det} F.
	\end{equation} 
	Then for $j>0$ we have the following vanishing:
	\begin{align}
		\label{eq:vanishing-coh-N-Q} H^j_{\dNis}(X, N)=H^j_{\dNis}(X, Q)=0.
	\end{align}
\end{lemma}
\begin{proof}
	Since cohomology commutes with filtered colimits, let $F=\colim F_i$ with $F_i$ finitely generated $1$-reciprocity sheaves. For all $i$, let $K_i$ and $N_i$ be the kernel and the cokernel of the adjunction map
	\begin{equation}\label{eq;reduction-fin-gen}
	\omega^*_{\log}\G_m \tensor_{\mathbf{logCI}}  \underline{\Hom}_{\Shv^{\ltr}}(\omega^*_{\log}\G_m, \Log_{\det} F_i) \to \Log_{\det} F_i .
	\end{equation}
	As observed in Remark \ref{rmk;colimits-hom-tensor}, since filtered colimits are exact we have that $K=\colim K_i$ and $Q=\colim Q_i$, hence it is enough to show that for all $i$:
	\begin{align} &\label{eq;vanishN_i}H^j_{\dNis}(X, N_i) = 0, \quad j>0 \\
		&\label{eq;vanishQ_i}H^j_{\dNis}(X, Q_i) = 0, \quad j>0.
	\end{align}
	By Lemma \ref{lem;applying-uneful-tensor-hi-closed} we have that the left hand side of \eqref{eq;reduction-fin-gen} is isomorphic to $\omega^*_{\log}\bigl(\G_m \tensor_{\HI}  \underline{\Hom}_{\Shv^{\tr}}(\G_m, F_i^{\A^1})\bigr)$. 
Let $K_i$ and $R_i$ be the kernel and the cokernel of the adjunction map
	\[\psi_i: \G_m \tensor_{\HI}  \underline{\Hom}_{\Shv^{\tr}}(\G_m, F_i^{\A^1})
	\to F_i^{\A^1}.\] We have the following diagram:
	\begin{equation}\label{eq:comparison-A1}
		\small \begin{tikzcd}
			0\ar[r]& N_i \ar[r]\ar[d,hook] & \omega^*_{\log}\G_m \tensor_{\mathbf{logCI}} \underline{\Hom}_{\Shv^{\ltr}}(\omega^*_{\log}\G_m, \Log_{\det} F_i)  \ar[r,"\varphi_i"]  \ar[d,"\simeq"] & \Log_{\det} F_i\ar[r,two heads]\ar[d] & Q_i \ar[r] \ar[d]&0\\
			0\ar[r] & \omega^*_{\log} K_i \ar[r] & \omega^*_{\log}\big(\G_m \tensor_{\HI} \underline{\Hom}_{\Shv^{\tr}}(\G_m, F_i^{\A^1})\big) \ar[r,"\omega^*\psi_i"] &  \omega^*_{\log}F_i^{\A^1} \ar[r] & \omega^*_{\log} R_i \ar[r] &0.
		\end{tikzcd}
	\end{equation}  
	By the cancellation theorem \cite{Voecancel},  
	$\underline{\Hom}_{\Shv^{\tr}}(\G_m,\G_m\otimes_{\HI} M)\cong M$ for $M\in \HI$,
	and $\underline{\Hom}_{\Shv^{\tr}}(\G_m,\_)$ is exact as an endo-functor on $\HI$.
	Hence we get 
	$\underline{\Hom}_{\Shv^{\tr}}(\G_m,K_i)=\underline{\Hom}_{\Shv^{\tr}}(\G_m,R_i)=0$, 
in particular the sheaves $K_i$ and $R_i$ are birational sheaves in the sense of \cite{KS} (see \cite[Proposition 2.5.2]{KS}).
	In particular, since $\omega^*_{\log}$ is exact and preserves injectives, by \cite[Proposition 14.23]{MVW} and \cite[Proposition 2.3.3]{KS} that
	\begin{align} &\label{eq;vanishK_i}H^j_{\dNis}(X, \omega^*_{\log} K_i) = H^j_{\rm Nis}(X^{\circ}, K_i) = 0, \quad j>0 \\
		&\label{eq;vanishR_i}H^j_{\dNis}(X, \omega^*_{\log}R_i) = H^j_{\Nis}(X^{\circ}, R_i)= 0, \quad j>0.
	\end{align}
	Since $N_i$ is a subsheaf of $\omega^*_{\log}K_i$, 
	$\omega_{\sharp}^{\log}N_i$ is a subsheaf of $K_i$, so it is a birational sheaf, in particular it is an object of $\HI$ by \cite[Proposition 2.3.3 (a)]{KS} so $N_i\cong \omega^*_{\log}\omega_{\sharp}^{\log}N_i$ by \eqref{eq;only-one-homotopy-invariant}.
	Therefore the same argument gives the vanishing \eqref{eq;vanishN_i}.
	
	Let $H_i$ be the kernel of the map $F_i\to F_i^{\A^1}$. By a snake lemma argument on \eqref{eq:finpresA1}, there exists a lattice $L_i'$ and $r_i\geq 0$ such that $\G_a^{r_i}/L_i'\cong H_i$. By the exactness of $\Log_{\det}$, we have that $\Log_{\det}\G_a^{r_i}/\omega^*L'\cong \Log_{\det}H_i$. Since $L'$ is a lattice, $H^j_{\dNis}(X,\omega^*L') = H^j_{\Nis}(X^{\circ},L')=0$ for $j>0$ and by \cite[Corollary 6.8]{RulSaito} with $q=0$ 
	we have that (see \eqref{eq;colimdiv}):
	\begin{equation}\label{eq;vanishGa}
		H^j_{\dNis}(X,\Log_{\det}\G_a)=\colim_{Y\in X_{\div}^{\Sm}} H^j_{\Nis}(\underline{Y},\cO_{\underline{Y}}).
	\end{equation}
	By the comparison of Zariski cohomology with Nisnevich cohomology for coherent sheaves we have that for all $Y\in X_{\div}^{\Sm}$:\[
	H^j_{\Nis}(\underline{Y},\cO_{\underline{Y}})\cong H^j_{\Zar}(\underline{Y},\cO_{\underline{Y}}).
	\]
	By definition the map $\underline{Y}\to \underline{X}$ is the composition of blowups in smooth centers, hence the well know blow-up formula (see e.g. \cite[Corollary IV.1.1.11]{Gros1985}) implies:
	\[H^j_{\Zar}(\underline{Y},\cO_{\underline{Y}})\cong H^j_{\Zar}(\underline{X},\cO_{\underline{X}})=0,\;\; j>0,
	\]
	where the last vanishing comes from the fact that $X$ was taken $\Alb^{\log}$-trivial. In particular, we conclude that
	\begin{equation}\label{eq;vanishH}
		H^i_{\dNis}(X,\Log_{\det}H_i)=0\;\text{ for }i\not=0.
	\end{equation}
	
	From the diagram \eqref{eq:comparison-A1} and a snake lemma argument, we get the following short exact sequence:
	\begin{equation}\label{eq:ker-Q1} 
		0\to \omega^*_{\log}K_i/N_i \to \Log_{\det}H_i \to \ker(Q_i\to \omega^*_{\log}R_i) \to 0.
	\end{equation}
	Now by \eqref{eq;vanishK_i} and \eqref{eq;vanishN_i} we have that $H^j_{\dNis}(X,\omega^*_{\log}K_i/N_i)=0$ for $j>0$, so by \eqref{eq;vanishH}, \eqref{eq;vanishR_i} and \eqref{eq:ker-Q1} we deduce \eqref{eq;vanishQ_i}.
\end{proof}

Given Proposition \ref{prop:key-prop-Alb-local-admissible}, we can show the following 
\begin{lemma}\label{lem:enough-admissible} The category $\Shv_{\dNis}^{\ltr}(k, \Q)$ is generated by the set of $\Alb^{\log}$-trivial objects of $\Shv_{\dNis}^{\ltr}(k, \Q)$.
	\begin{proof} The category $\Shv_{\dNis}^{\ltr}(k, \Q)$ is compactly generated, and a set of compact generators is given by $\Q_{\rm ltr}(X)[i]$, for $X\in \mathbf{SmlSm}$ and $i\in \Z$. By Proposition \ref{prop:key-prop-Alb-local-admissible}, it is then enough to show that any $X\in \mathbf{SmlSm}$ can be covered (even Zariski-locally) by $X_i\in \mathbf{SmlSm}$ which are $\Alb^{\log}$-trivial. Let $\underline{U_i}$ be a Zariski cover of $\underline{X}$ such that $|\partial X|_{|\underline{U_i}}$ is principal. By \cite[Corollary 2.4.6]{Ayoub-BV}), we can cover each $\underline{U_i}$ by affine $\underline{U_{ij}}$ such that $\NS^1((\underline{U_{ij}})_{\ol{k}})=0$, and since $|\partial X|_{\underline{U_i}}$ is principal, $|\partial X|_{\underline{U_{ij}}}$ is again principal. Considering the log schemes $U_{ij}:=(\underline{U_{ij}},\partial X_{|U_{ij}})$, we have that $\NS^1(\underline{(U_{ij}})_{\ol{k}})\to \NS^1((U_{ij}^{\circ})_{\ol{k}})$ is surjective by \cite[Example 10.3.4]{Fulton}, hence $\NS^1((U_{ij}^{\circ})_{\ol{k}})=0$. We conclude that $\{U_{ij}\}$ is a Zariski cover of $X$ by $\Alb^{\log}$-trivial log schemes.
\end{proof}
\end{lemma}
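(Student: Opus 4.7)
The plan is to reduce the generation statement to a purely local problem about covering objects of $\SmlSm(k)$ by $\Alb^{\log}$-trivial ones. Since $\Shv_{\dNis}^{\ltr}(k,\Q)$ is compactly generated, a natural set of generators is $\{\Q_{\ltr}(X)[i] : X\in \SmlSm(k),\; i\in\Z\}$. A standard descent argument (using that $\dNis$-covers refine to $s$Nis-covers and hence to Zariski covers on the underlying scheme) shows that if every $X\in \SmlSm(k)$ admits a Zariski cover by $\Alb^{\log}$-trivial objects of $\SmlSm(k)$, then the $\Q_{\ltr}(\cdot)$ of these covers together generate. Thus the task reduces to a local construction.

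Given $X=(\ul{X},\partial X)\in \SmlSm(k)$, I would first pass to a Zariski cover $\{\ul{U_i}\}$ of $\ul{X}$ small enough that $|\partial X|_{|\ul{U_i}}$ is cut out by a single global equation $f_i\in \Gamma(\ul{U_i},\sO_{\ul{U_i}})$; this is possible because $|\partial X|$ is a Cartier divisor. Endowed with the restricted log structure, each $U_i$ is still smooth log smooth, and its open part is $U_i^\circ = \Spec(\Gamma(\ul{U_i},\sO)[1/f_i])$, which is already affine (assuming we first shrink so that $\ul{U_i}$ is affine).

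Next I would refine further by invoking \cite[Corollary 2.4.6]{Ayoub-BV} (via the very same argument used in the original proof), which produces an affine Zariski refinement $\{\ul{U_{ij}}\}$ of each $\ul{U_i}$ with $\mathrm{NS}^1((\ul{U_{ij}})_{\ol k}) = 0$. The crucial check is that, after removing the principal divisor $|\partial X|_{|\ul{U_{ij}}}$, the complement $U_{ij}^\circ$ still has trivial $\mathrm{NS}^1$. This is the step I expect to be the main obstacle: it relies on the excision/localization sequence for the Néron--Severi group, namely \cite[Example 10.3.4]{Fulton}, which gives the surjection $\mathrm{NS}^1((\ul{U_{ij}})_{\ol k}) \to \mathrm{NS}^1((U_{ij}^\circ)_{\ol k}) \to 0$; here the principality of $|\partial X|_{|\ul{U_{ij}}}$ is essential so that the cycle class of the divisor being removed is already zero in $\mathrm{NS}^1(\ul{U_{ij}})$. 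With the surjection in hand, the vanishing $\mathrm{NS}^1((\ul{U_{ij}})_{\ol k}) = 0$ descends to $U_{ij}^\circ$.

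Finally, the coherent cohomology vanishing $H^j_{\Zar}(\ul{U_{ij}},\sO_{\ul{U_{ij}}}) = 0$ for $j>0$ holds by Serre's vanishing theorem for affine schemes, since $\ul{U_{ij}}$ is affine by construction. Putting the three conditions together shows that $\{U_{ij}\}$ is a Zariski cover of $X$ by $\Alb^{\log}$-trivial objects in $\SmlSm(k)$. Combined with Proposition \ref{prop:key-prop-Alb-local-admissible}, which guarantees that each $\Q_{\ltr}(U_{ij})[0]$ is $BC$-admissible, this yields the desired generation statement. The only really delicate point in the argument is the $\mathrm{NS}^1$ reduction after removing the boundary, and that is handled cleanly by the principality of the boundary divisor on each small enough open.
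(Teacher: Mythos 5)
Your proof is correct and follows essentially the same route as the paper's: reduce to the compact generators $\Q_{\ltr}(X)[i]$, cover $\ul{X}$ by affine opens on which the boundary divisor is principal, refine using \cite[Corollary 2.4.6]{Ayoub-BV}, and transfer the $\NS^1$-vanishing to the open part via the excision surjection of \cite[Example 10.3.4]{Fulton}. One small quibble: that surjection $\NS^1((\ul{U_{ij}})_{\ol k})\to \NS^1((U_{ij}^{\circ})_{\ol k})$ holds for the complement of any closed subscheme, so principality is not what makes it work; its actual role is to guarantee that $U_{ij}^{\circ}$ is affine (which you do note correctly earlier in your argument).
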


\begin{proof}[Proof of Theorem \ref{thm:derivedAlbanese}] From Lemma \ref{lem:enough-admissible} and Proposition \ref{prop:key-prop-Alb-local-admissible}, we have that the $\infty$-category $\cD(\Shv_{\dNis}^{\ltr}(k, \Q)))$ is generated by a set of compact $BC$-admissible objects concentrated in degree zero. The existence of the derived log Albanese functor $L\Alb^{\log}$ as pro-left derived functor of $\Alb^{\log}$ follows then from Theorem \ref{thm:gen-thm-existenceLF}, and by constrution it is equivalent to the pro-left adjoint of the functor $\cD(\Log_{\det})$. 

We are left to show that the functor $L\Alb^{\log}$ factors through the localization \[ L_{\bcube} \colon \cD(\Shv_{\dNis}^{\ltr}(k, \Q)) \to \logDM(k,\Q).\]
Recall that $\logDM(k,\Q)$ is obtained as localization of $\cD(\Shv_{\dNis}^{\ltr}(k, \Q))$ with respect to the class of maps:
	\begin{equation}\tag{CI} \Q_{\rm ltr}(X\times \bcube)[n]\to \Q_{\rm ltr}(X)[n]
	\end{equation}
for $X$ in $\mathbf{SmlSm}(k)$. From the proof of Lemma \ref{lem:enough-admissible}, we can suppose that $X$ is $\Alb^{\log}$-trivial (this is the exact analogue of \cite[2.4.1]{Ayoub-BV}). We are then left to show that $L\Alb^{\log} (\Q_{\rm ltr}(X\times \bcube)\to \Q_{\rm ltr}(X))$ is contractible for $X$ an $\Alb^{\log}$-trivial object. Since $\Q_{\ltr}(X)$ is $BC$-admissible by Proposition \ref{prop:key-prop-Alb-local-admissible}, we have by Remark \ref{rmk;BC-adm-is-LF-trivial} that
	\[ L\Alb^{\log}( \Q_{\rm tr}(X)[n]) = ``\lim_{i}"\bAlb_{\kX^{(i)}}[n],\quad \textrm{for any choice of }\kX\in \mathbf{Comp}(X).\] 
	Note also that if $X$ is $\Alb^{\log}$-trivial, so is $X\times \bcube$. Indeed, $(X\times \bcube)^\circ = X^\circ \times \A^1$ is affine if $X^\circ$ is affine, $\NS^1((X^\circ \times \A^1)_{\overline{k}})\cong \NS^1(X^{\circ}_{\overline{k}})$ and $H^i_{\Zar}(\underline{X}\times \P^1,\mathcal{O}_{\underline{X}\times \P^1})\cong H^i_{\Zar}(\underline{X},\mathcal{O}_{\underline{X}})$ for all $i$. Therefore\[ L\Alb^{\log}( \Q_{\rm ltr}(X\times \bcube)[n]) = ``\lim_{i}"\bAlb_{\kX^{(i)}\times\bcube},\quad \textrm{for any choice of }\kX\in \mathbf{Comp}(X).\]  On the other hand, by construction we have
	\[\bAlb_{\kX^{(i)}\times \bcube} \cong \bAlb_{\kX^{(i)}},\]
proving the factorization. The pro-adjunction now is formal since for $X\in \cD(\RSC_{\et,\leq 1})$, we have that $\cD(\Log_{\det})(X)\in \cD(\logCI)$, hence it is $(\dNis,\bcube)$-local by \cite[Corollary 5.5]{BindaMerici}, so:\[\cD(\Log_{\det})(X)\simeq i_{\bcube}L_{\bcube}\cD(\Log_{\det})(X)\simeq i_{\bcube}\omega_{\leq 1}^{\lDM}(X),\] hence for any $Y\in \logDM(k,\Q)$
\begin{align*}
\Map_{\logDM(k,\Q)}(Y,\omega_{\leq 1}^{\lDM}(X)) &\simeq
\Map_{\cD(\Shv_{\det}^{\ltr}(k,\Q))}(i_{\bcube}Y,\cD(\Log_{\det})(X)) \\
&\simeq \Map_{\Pro\cD(\RSC_{\et,\leq 1}(k,\Q))}(L\Alb^{\log}(Y),X)
\end{align*}
as required.
\end{proof}

\subsection{Some computations}\label{sec:computations}

Recall from \cite[Theorem 9.2.3]{BVKahn} (see also \cite[Theorem 1.1]{Doosung} for a statement in a language more similar to ours) that 
\begin{equation}\label{eq;LAlb-usual}
	L\Alb(\omega(X))\simeq \Alb(\omega(X))\oplus \NS^*(\omega(X))[1]\footnote{The splitting is not stated but it can easily be deduced by \cite[Prop (1.2) and Rem (1.4)]{DeligneLefschetz}, similarly to Theorem \ref{lem:split}},
\end{equation} 
where $\NS^*(\omega(X))$ is the torus dual to the N\'eron-Severi. The goal of this section is to give an explicit description of $L\Alb^{\log}(X)$: for $X=(\underline{X},\partial X)$, recall that $\omega(X)$ is defined as $\underline{X}-|\partial X|$. We will prove the following result:

\begin{thm}\label{thm;compute-LAlbX}
	Let $X\in \SmlSm$ geometrically connected and $(\overline{X},D)$ a Cartier compactification of $X$. Then we have that\[
	L_i\Alb^{\log}(X)\cong \begin{cases}
		``\lim"(H^i(\overline{X},\cO_{\overline{X}}(nD))^\vee\otimes_k\G_a) &\textrm{for }2\leq i\leq \dim(X)\\
		``\lim"\Bigl((H^1(\overline{X},\cO_{\overline{X}}(nD))/H^1(\overline{X},\cO_{\overline{X}}))^\vee\otimes_k\G_a\Bigr) \oplus \NS^*(\omega(X))&\textrm{for }i=1\\
		\Alb^{\log}(X)&\textrm{for }i=0\\
		0&\textrm{otherwise}.
	\end{cases}
	\]
	Moreover, the canonical map $L\Alb^{\log}(X)\to \oplus_{i=0}^{\dim(X)}L_i\Alb^{\log}(X)[i]$ is an equivalence.
\end{thm}

We start with the following observation:

\begin{prop}\label{prop;suslin-complex-1-motivic}
	The inclusion $\cD(i_{\A^1})\colon \cD(\HI_{\et,\leq 1})\to \cD(\RSC_{\et,\leq 1})$ has a left adjoint $L_{\A^1}^{\leq 1}$ such that $\pi_i(L_{\A^1}^{\leq 1})(F)=h_i^{\A^1}(F)$ (the Suslin hyperhomology).
	\begin{proof}
		By Proposition \ref{prop:hom-dim}, the inclusion $\cD(\RSC_{\et,\leq 1})\to \cD(\Shv_{\et}^{\tr}(k,\Q))$ is fully faithful, then for $F\in \cD(\RSC_{\et,\leq 1})$ and $H\in \cD(\HI_{\et,\leq 1})$:
		\begin{align*}
			\Map_{\cD(\RSC_{\et,\leq 1})}(F,\cD(i_{\A^1})(H)) \simeq \Map_{\cD(\Shv_{\et}^{\tr}(k,\Q))}(F,\cD(i_{\A^1})(H))\\
			\simeq \Map_{\mathcal{DM}^{\eff}(k,\Q))}(C_{*}^{\A^1}(F),H) \simeq \Map_{\cD(\HI_{\et,\leq 1})}(L\Alb C_{*}^{\A^1}(F),H)
		\end{align*}
		which implies that $L_{\A^1}^{\leq 1}$ exists and it coincides with $L\Alb C_{*}^{\A^1}$. 
We can write $F=\hocolim_{i,n} F_i[n]$ with $F_i\in \RSC_{\leq 1}^\star$.
Since $C_{*}^{\A^1}\colon \cD(\Shv_{\et}^{\tr}(k,\Q))\to \mathcal{DM}^{\eff}(k,\Q))$
commutes with all (homotopy) colimits as a left adjoint,
we have\[
		C_{*}^{\A^1}(F) \simeq \hocolim C_{*}^{\A^1}(F_i[n]) \overset{(*)}{\simeq} \hocolim (h_0^{\A^1}(F_i)[n]),	\]
where $(*)$ follows from Remark \ref{rmk:Fsab}. 
Hence the homotopy groups of $C_{*}^{\A^1}(F)$ are $1$-motivic so that $\pi_iL_{\A^1}^{\leq 1}(F) = \pi_i L\Alb C_{*}^{\A^1}(F)=\pi_i C_{*}^{\A^1}(F) = h_i^{\A^1}(F)$.	
	\end{proof}	
\end{prop}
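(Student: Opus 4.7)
The plan is to exhibit the left adjoint explicitly as a composite of two functors that are already understood, and then identify its homotopy groups via the canonical presentation of $1$-reciprocity sheaves. The key observation is that although $\cD(\RSC_{\et,\leq 1})$ has more objects than $\mathcal{DM}^{\rm eff}_{\leq 1}(k,\Q)$, any mapping into an object coming from $\cD(\HI_{\et,\leq 1})\hookrightarrow \cD(\RSC_{\et,\leq 1})$ must factor through the Suslin complex because $\cD(\HI_{\et,\leq 1})$ is a localizing subcategory inside $\mathcal{DM}^{\rm eff}(k,\Q)$ (using the equivalence $L\Alb_{\leq 1}\colon \mathcal{DM}^{\rm eff}_{\leq 1}(k,\Q)\xrightarrow{\sim}\cD(\HI_{\leq 1}(k,\Q))$ of Ayoub--Barbieri-Viale).

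First I would invoke Proposition \ref{prop:hom-dim} to upgrade the exact embedding $\RSC_{\et,\leq 1}\hookrightarrow \Shv_{\et}^{\tr}(k,\Q)$ to a fully faithful inclusion of derived $\infty$-categories $\cD(\RSC_{\et,\leq 1})\hookrightarrow \cD(\Shv_{\et}^{\tr}(k,\Q))$. Since the target has a left Bousfield localization $L_{\A^1}\colon \cD(\Shv_{\et}^{\tr}(k,\Q))\to \mathcal{DM}^{\rm eff}(k,\Q)$ computing the Suslin complex $C_*^{\A^1}$, and since $L\Alb$ provides a left adjoint to the inclusion $\cD(\HI_{\leq 1}(k,\Q))\hookrightarrow \mathcal{DM}^{\rm eff}(k,\Q)$, for any $F\in \cD(\RSC_{\et,\leq 1})$ and $H\in \cD(\HI_{\et,\leq 1})$ the chain of equivalences
\[
\Map_{\cD(\RSC_{\et,\leq 1})}(F,H)\simeq \Map_{\cD(\Shv_{\et}^{\tr})}(F,H)\simeq \Map_{\mathcal{DM}^{\rm eff}}(C_*^{\A^1}(F),H)\simeq \Map_{\cD(\HI_{\leq 1})}(L\Alb\,C_*^{\A^1}(F),H)
\]
exhibits $L_{\A^1}^{\leq 1}:=L\Alb\circ C_*^{\A^1}$ as the sought left adjoint. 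The second equivalence uses $H\in \HI$, and the third uses the adjunction defining $L\Alb$.

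Next, I would compute the homotopy groups. Since $L_{\A^1}^{\leq 1}$ is a left adjoint it commutes with all (homotopy) colimits, and by Proposition \ref{prop:can-pres-1-mot-sheaf} any $F\in \cD(\RSC_{\et,\leq 1})$ can be written as a homotopy colimit of shifts $F_i[n]$ with $F_i\in \RSC_{\et,\leq 1}^\star$ finitely presented. It therefore suffices to identify $C_*^{\A^1}(F_i)$ for such finitely presented objects. Here Remark \ref{rmk:Fsab} is the crucial ingredient: for a finitely presented $1$-reciprocity sheaf, the canonical presentation $0\to L\to G\to F\to 0$ combined with the Chevalley decomposition \eqref{eq:Chev} (valid in characteristic zero, so the unipotent part is $\A^1$-contractible, cf.\ \cite[Example 2.23]{MVW}) gives $C_*^{\A^1}(F_i)\simeq h_0^{\A^1}(F_i)[0]$. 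Since $h_0^{\A^1}(F_i)\in \HI_{\et,\leq 1}$, applying $L\Alb$ is the identity, so $L_{\A^1}^{\leq 1}(F_i)\simeq h_0^{\A^1}(F_i)[0]$.

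Finally, passing to the homotopy colimit yields $\pi_i L_{\A^1}^{\leq 1}(F)\cong h_i^{\A^1}(F)$, as required. The only subtle point in the argument is justifying that $L\Alb$ may be dropped after applying $C_*^{\A^1}$ to a homotopy colimit of finitely presented $1$-reciprocity sheaves; this works because $\cD(\HI_{\et,\leq 1})$ is closed under colimits inside $\mathcal{DM}^{\rm eff}(k,\Q)$ and the canonical map $L\Alb\,C_*^{\A^1}(F)\to C_*^{\A^1}(F)$ is an equivalence once the target is known to already lie in $\cD(\HI_{\et,\leq 1})$. I expect this coherent bookkeeping for the homotopy colimit—rather than any deep input—to be the main place where care is needed.
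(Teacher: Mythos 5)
Your proposal is correct and follows essentially the same route as the paper's own proof: the same fully faithfulness input from Proposition \ref{prop:hom-dim}, the same chain of mapping-space equivalences identifying the adjoint as $L\Alb\circ C_*^{\A^1}$, and the same reduction via Proposition \ref{prop:can-pres-1-mot-sheaf} and Remark \ref{rmk:Fsab} to finitely presented $1$-reciprocity sheaves, for which $C_*^{\A^1}(F_i)\simeq h_0^{\A^1}(F_i)[0]$. Your closing remark about why $L\Alb$ can be dropped is exactly the (implicit) final step of the paper's argument, so there is nothing to add.
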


\begin{lemma}
	Let $\Pro L_{\A^1}^{\leq 1}\colon \Pro\cD(\RSC_{\et,\leq 1})\to \Pro \cD(\HI_{\et,\leq 1})$. For $X\in \SmlSm(k)$ geometrically connected, $\Pro L_{\A^1}^{\leq 1}L\Alb^{\log}(X)$ is a constant pro-object and \[
	\Pro L_{\A^1}^{\leq 1}L\Alb^{\log}(X)\simeq c(L\Alb(\omega(X))),
	\]
	where $c$ denotes the constant pro-object.
	\begin{proof}
		Let $\Q_{\ltr}(Y_{\bullet})\to \Q_{\ltr}(X)$ be a resolution in $\Shv_{\det}^{\ltr}(k,\Q)$ by $\Alb$-trivial objects. Then $L_i\Alb^{\log}(X)=\pi_i(\Alb^{\log}(Y_{\bullet}))$. On the other hand, by construction, $\Q_{\tr}(\omega(Y_{\bullet}))\to \Q_{\tr}(\omega(X))$
is a resolution in $\Shv_{\et}^{\tr}(k,\Q)$ by affine $\NS^1$-local objects in the sense of \cite{Ayoub-BV}, so $L_i\Alb(\omega(X))=\pi_i\Alb(\omega(Y_\bullet))$. By Proposition \ref{prop:max-semiab-quotient}  and \eqref{eq:can-map-Alb-Albsemiab},
for $\kY_\bullet$ Cartier compactifications of $Y_\bullet$, we have a fiber-cofiber sequence:
\begin{equation}\label{eq;Fibbullet}
		``\lim_n" U(\kY^{(n)}_{\bullet})\otimes_k\G_a\to \Alb^{\log}(Y_\bullet)\to c(\Alb(\omega(Y_\bullet))),
		\end{equation}
where $U(\kY^{(n)}_{\bullet})$ are finite dimensional $k$-vector spaces. 
		The complex $\Alb(\omega(Y_\bullet))$ is $\A^1$-local, so we have that \[\Pro L_{\A^1}^{\leq 1}c(\Alb(\omega(Y_{\bullet})))\simeq c(\Alb(\omega(Y_{\bullet}))) = c(L\Alb(\omega(X)))\] and for all $i,n$, there exist $r_{i,n}$ such that $U(\kY^{(n)}_{i})\otimes_k\G_a=\G_a^{r_{i,n}}$,
 so we conclude since:\[\Pro L_{\A^1}^{\leq 1}(``\lim_n"(\kY^{(n)}_{\bullet}))= ``\lim_n" L_{\A^1}^{\leq 1}(U(\kY^{(n)}_{\bullet}))=0.\]
	\end{proof}
\end{lemma} 
The lemma above gives a natural map $L\Alb^{\log}(X)\to cL\Alb(\omega(X))$. Let
\begin{equation}\label{eq;Fib}
\textrm{Fib}(X):=\hofib(L\Alb^{\log}(X)\to cL\Alb(\omega(X))).
\end{equation}
In view of \eqref{eq;Fibbullet}, we have that
\begin{equation}\label{eq;fib-is-vg}
\textrm{Fib}(X)\simeq ``\lim_n"U(\kY^{(n)}_\bullet)\otimes_k\G_a.
\end{equation}
since for $\Q_{\ltr}(Y_{\bullet})\to \Q_{\ltr}(X)$ an $\Alb$-trivial resolution and $\kY_\bullet$ a Cartier compactification of $Y_\bullet$, the map $\bAlb(\kY^{(n)}_i)\to \Alb(\omega(Y_i))$ is surjective with kernel $U(\kY^{(n)}_i)$ for all $i$.

\begin{defn}
	Let $\langle\G_a\rangle\subseteq \cD(\RSC_{\et,\leq 1})$ be the stable $\infty$-subcategory generated by direct sums of $\G_a$ and let $\langle\G_a\rangle^\omega$ be the full subcategory of compact objects. In particular, $F\in \langle\G_a\rangle^\omega$ if and only if there exist $s\leq t\in \Z$ and $r_s,\ldots, r_t\geq 0$ such that \[F\simeq \bigoplus_{i=s}^t \G_a^{r_i}[i].\]
\end{defn}
\begin{remark}
	Let $\Pro(\langle\G_a\rangle^\omega)\subseteq\Pro\cD(\RSC_{\et,\leq 1})$, then for all $X$ as above $\textrm{Fib}(X)\in \Pro(\langle\G_a\rangle^\omega)$
\end{remark}
\begin{remark}
	Let $\mathbf{Vect}_{\rm fd}$ be the category of finite dimensional $k$-vector spaces. By Proposition \ref{prop:hom-dim} and \cite[VII, n.7, Proposition 8]{SerreGACC}:
\[\Map_{\cD(\RSC_{\et,\leq 1})}(\G_a^r,\G_a^s[i])\simeq \Hom_{\mathbf{Vect}_{\rm fd}}(k^r,k^s)[i]\simeq \Map_{\cD(\mathbf{Vect}_{\rm fd})}(k^r,k^s[i]).\]
	In particular, the functor $V\mapsto \Spec(k[V^\vee])$ induces an equivalence of $\infty$-categories \[\_\otimes_k\G_a\colon \cD(\mathbf{Vect}_{\rm fd})\to \langle\G_a\rangle^\omega\]
	with quasi inverse given by $R\Gamma(\Spec(k),\_)$.
\end{remark}
Let $(\_)^\vee\colon \mathbf{Vect}_{\rm fd}\to \mathbf{Vect}_{\rm fd}^{\rm op}$ denote the equivalence given by the dual vector space. It induces an equivalence: \[
(\_)^\vee\colon \Pro\cD(\mathbf{Vect}_{\rm fd})\to \Ind\textrm{-}\cD(\mathbf{Vect}_{\rm fd}^{\rm op}).
\]
\begin{remark}\label{rmk;vector-groups}
	There is a commutative diagram of stable $\infty$-category:\[
	\begin{tikzcd}
		\Pro\langle\G_a\rangle^\omega\ar[rrr,swap,"{\Pro R\Gamma(\Spec(k),\_)}"]\ar[rrrr,bend left = 10,"{\Map(\_,\G_a)}"] &&&\Pro\cD(\mathbf{Vect_{\rm fd}})\ar[r,swap,"(\_)^\vee"]&\Ind\textrm{-}\cD(\mathbf{Vect}_{\rm fd}^{\rm op})
	\end{tikzcd}
	\]
where $\Map$ denotes the mapping space in $\Pro\cD(\RSC_{\et,\leq 1})$.
	This easily follows from the fact that any map of sheaves $f\colon \G_a^r\to \G_a^s$ is indeed a map of vector groups, hence since $\G_a=k\otimes_k\G_a$, we have a commutative diagram in $\cD(\mathbf{Vect}_{\rm fd})$:
\[	\begin{tikzcd}
		\Map(\G_a^s,\G_a)\ar[r,"\simeq"]\ar[d,"f^*"] &R\Gamma(\Spec(k),\G_a^s)^\vee\ar[d,"f(k)^t"]\\
		\Map(\G_a^r,\G_a)\ar[r,"\simeq"] &R\Gamma(\Spec(k),\G_a^r)^\vee.
	\end{tikzcd}	\] 
\end{remark}
\begin{prop}\label{prop;LAlb-i-geq-2}
Let $X\in \SmlSm(k)$ geometrically connected. Then for any $(\overline{X},D)$ Cartier compactification of $\underline{X}$ and $i\geq 2$, we have that\[
L_i\Alb^{\log}(X) \simeq ``\lim"(H^i(\overline{X},\cO_{\overline{X}}(nD))^\vee)\otimes_k\G_a.
\]
In particular, $L_i\Alb^{\log}(X)=0$ for $i\geq \max(\dim(X),2)$.
\begin{proof}
	By \eqref{eq;LAlb-usual}, for $X$ as above we have that for $i\geq 2$\[
	L_i\Alb^{\log}(X)\simeq \pi_i\textrm{Fib}(X).
	\] 
	By Remark \ref{rmk;vector-groups} we have that \begin{equation}\label{eq;fib-duality}
	\textrm{Fib}(X)\simeq \Map (\textrm{Fib}(X),\G_a)^{\vee}\otimes_k\G_a.
\end{equation}
where for $V=\colim V_i\in \Ind\textrm{-}\cD(\mathbf{Vect}_{\rm fd}^{\rm op})$, $V^{\vee}= \plim V_i^\vee$. In particular, it is enough to compute $\pi_{-i}\Map_{\Pro\cD(\RSC_{\et, \leq 1})}(\textrm{Fib}(X),\G_a)$ for $i\geq 2$, which again by \eqref{eq;LAlb-usual} agree with $\pi_{-i}\Map_{\Pro\cD(\RSC_{\et, \leq 1})}(L\Alb^{\log}(X),\G_a)$.
By Theorem \ref{thm:derivedAlbanese} and \cite[Theorem 9.7.1]{BPO} and 
\eqref{omegaCIGa}.  
	\begin{equation}\label{eq;map-LAlb-coherent-coh}
		\Map_{\Pro\cD(\RSC_{\et, \leq 1})}(L\Alb^{\log}(X),\G_a)\simeq \Map_{\logDM}(M(X),\Log_{\det}\G_a)\simeq R\Gamma(\underline{X},\cO_{\underline{X}}).
	\end{equation} 
Let now $(\overline{X},D)$ be a Cartier compactification of $\underline{X}$. This  gives  an isomorphism:\[
	R\Gamma(\underline{X},\cO_{\underline{X}})\simeq \colim_n R\Gamma(\overline{X},\cO_{\overline{X}}(nD)).\]
and the right-hand side is in $\Ind\textrm{-}\cD(\mathbf{Vect}_{\rm fd}^{\rm op})$, which completes the proof.
\end{proof}
\end{prop}

\begin{proof}[Proof of Theorem \ref{thm;compute-LAlbX}]
The only case left is $i=1$: we consider the long exact sequence of homotopy groups arising from \eqref{eq;Fib}. The map $L_1\Alb(\omega(X))\to \pi_0(\textrm{Fib}(X))$ is zero
since by \eqref{eq;LAlb-usual}, $L_1\Alb(\omega(X))$ is a torus, so 
we get a short exact sequence in $\pro\RSC_{\et,\leq 1}$:
\begin{equation}\label{eq:ses-L1}
		0\to \pi_1\textrm{Fib}(X)\to L_1\Alb^{\log}(X)\to L_1\Alb(\omega(X))\overset{\eqref{eq;LAlb-usual}}{\cong}c(\NS^*(\omega(X)))\to 0.
		\end{equation}
		Since {$\pi_1\textrm{Fib}(X) = \plim V_i$ with $V_i$ vector groups, we have 
		\begin{align*}
		\Ext^1_{\pro\RSC_{\et, \leq 1}}(\NS^*(\omega(X)),\pi_1\textrm{Fib}(X)) = \pi_1\bigl(\lim\Map_{\cD(\RSC_{\et, \leq 1})}(\NS^*(\omega(X),V))\bigr)
		\end{align*}}
		Since $\NS^*(\omega(X))$ is a torus, by \cite[VII, n. 6, Proposition 7]{SerreGACC} {and Proposition \ref{prop:ext}}, we have that for all vector groups $V$\[
		\Map_{\cD(\RSC_{\et, \leq 1})}(\NS^*(\omega(X),V))\simeq 0,
		\]
		so \eqref{eq:ses-L1} above splits and
		\begin{equation}\label{eq;prelim-L1Alb}
			L_1\Alb^{\log}(X)\cong \pi_1\textrm{Fib}(X)\oplus \NS^*(\omega(X)).
		\end{equation}
		Moreover, we have 
\[\Map(\pi_1\textrm{Fib}(X),\G_a)\simeq \Map(L_1\Alb^{\log}(X),\G_a)\simeq\Hom_{\pro\RSC_{\et,\leq 1}}(L_1\Alb^{\log}(X),\G_a)[0],\] 
where $\Map$ denotes the mapping space in $\Pro\cD(\RSC_{\et,\leq 1})$.
Hence \eqref{eq;prelim-L1Alb} gives:
		\begin{equation}\label{eq;prelim-L1Alb-II}
			\pi_1L\Alb^{\log}(X)\cong (\Hom(L_1\Alb^{\log}(X),\G_a)^\vee)\otimes_k\G_a\oplus \NS^*(\omega(X)).
		\end{equation}
		Let us compute $\Hom(L_1\Alb^{\log}(X),\G_a)$: by \eqref{eq;map-LAlb-coherent-coh} and the degeneration of \eqref{eq;spectral-sequence-ext-coherent} we have an exact sequence:
		\begin{equation}\label{eq;exact-sequence-LiAlb}
			0\to \Ext^1(\Alb^{\log}(X),\G_a)\to H^1(X,\cO_X)\to \Hom(L_1\Alb^{\log}(X),\G_a)\to 0.
		\end{equation}
In particular, we have a similar exact sequence for the log scheme $(\overline{X},\triv)$, which we now investigate. For $\overline{X}$ proper we have by construction (see Theorem \ref{thm:logAlb}) that $\Alb^{\log}(\overline{X})$ is the constant pro-object $\Alb(\overline{X})$, so there is a surjective map\[
\Alb^{\log}(X)\twoheadrightarrow \Alb^{\log}(\overline{X})
\]
whose kernel is an extension of the torus $T:=\ker(\Alb(\omega(X))\to \Alb(\overline{X}))$ by the pro-vector group $``\lim"U(\Xb,nD)\otimes_k\G_a$, where $U(\Xb,nD)$ comes from Definition \ref{defn:COmega}. 
For $i\geq 1$, we have that $\colim\Ext^i(U(\Xb,nD),\G_a)=0$ and $\Ext^i(T,\G_a)=0$, so we have a surjective map:
\begin{equation}\label{eq;surj-ext1-L0Alb}
	\Ext^1(\Alb^{\log}(\overline{X}),\G_a)\twoheadrightarrow \Ext^1(\Alb^{\log}(X),\G_a).
\end{equation}  

		Combining \eqref{eq;exact-sequence-LiAlb}, and \eqref{eq;surj-ext1-L0Alb} we have a commutative diagram with exact rows:\[
\begin{tikzcd}
	0\ar[r] &\Ext^1(\Alb^{\log}(\overline{X}),\G_a)\ar[r]\ar[d,two heads] &H^1(\overline{X},\cO_X)\ar[r]\ar[d] &\Hom(L_1\Alb^{\log}(\overline{X}),\G_a)\ar[r]\ar[d] &0\\
	0\ar[r] &\Ext^1(\Alb^{\log}(X),\G_a)\ar[r] &H^1(X,\cO_X)\ar[r] &\Hom(L_1\Alb^{\log}(X),\G_a)\ar[r] &0
\end{tikzcd}
\]
so to conclude it is enough to show that $L_1\Alb^{\log}(\overline{X})\cong \NS^*(\overline{X})$, which implies that $\Hom(L_1\Alb^{\log}(\overline{X}),\G_a) = 0$, so the diagram above implies \[
\Hom(L_1\Alb^{\log}(X),\G_a)\simeq \coker(H^1(\overline{X},\cO_X)\to H^1(X,\cO_X))
\]
and we will conclude by duality and \eqref{eq;prelim-L1Alb-II}.

By \eqref{eq;prelim-L1Alb}, it is enough to show that $\pi_1\textrm{Fib}(\overline{X})=0$. By \eqref{eq;fib-duality}, we have \[\pi_1\textrm{Fib}(\overline{X}) \cong \pi_1(\Map(\textrm{Fib}(\overline{X}),\G_a)^\vee\otimes_k\G_a)\cong (\pi_{-1}\Map(\textrm{Fib}(\overline{X}),\G_a)^\vee\otimes_k\G_a),\]
	so it is enough to show that $\pi_{-1}\Map(\textrm{Fib}(\overline{X}),\G_a)=0$. By \eqref{eq;Fib} for $\overline{X}$ we have a fiber-cofiber sequence:
	\begin{equation}\label{eq;compute-pi-1-map-fib}
	\Map(L\Alb(\overline{X}),\G_a)\to \Map(L\Alb^{\log}(\overline{X}),\G_a)\to \Map(\textrm{Fib}(\overline{X}),\G_a).
	\end{equation}
By \eqref{eq;LAlb-usual}, we have \[
	\pi_{-1}\Map(L\Alb(\overline{X}),\G_a)\cong \pi_0\Map(\NS^*(\overline{X}),\G_a)\oplus \pi_{-1}\Map(\Alb(\overline{X}),\G_a).
	\]
	Since $\NS^*(\overline{X})$ is a torus and $\Alb(\overline{X})$ is an abelian variety, we have:\[
	\Map(\NS^*(\overline{X}),\G_a)\simeq 0\quad\textrm{and}\quad \Ext^1(\Alb(\overline{X}),\G_a)\cong H^1(\overline{X},\cO_{\overline{X}}),
	\]
where the last isomorphism is classical (see \cite[VII, n.17, Th\'eor\`eme 7]{SerreGACC}), and\[
\pi_{-2}\Map(L\Alb(\overline{X}),\G_a) = 0.
\]
Finally, $\pi_{-1}\Map(L\Alb^{\log}(\overline{X}),\G_a)\cong H^1(\overline{X},\G_a)$ 
by \eqref{eq;map-LAlb-coherent-coh}, so the map
\[\pi_{-1}\Map(L\Alb(\overline{X}),\G_a)\to \pi_{-1}\Map(L\Alb^{\log}(\overline{X}),\G_a)\]
 in the long exact sequence of homotopy groups of \eqref{eq;compute-pi-1-map-fib} is an isomorphism, which implies the desired vanishing.
	\end{proof}
\begin{remark}\label{rmk:main_theorem_optimal}
	We observe from Theorem \ref{thm;compute-LAlbX} two extreme cases: if $X$ is affine, we have that 
$H^i(X,\cO_{X})\simeq \underset{n}{\colim} H^i(\overline{X},\cO_{X}(nD))=0$ for $i\geq 1$, so \[
	L_i\Alb^{\log}(X)=\begin{cases}
		\Alb^{\log}(X)&\textrm{if }i=0\\
		\NS^*(X)&\textrm{if }i=1\\
		0&\textrm{otherwise}.
	\end{cases}
	\]
	In particular,  $L_i\Alb^{\log}(X)$ is constant for $i\geq 1$.
	For $X$ proper, $H^1(X,\cO_X)\cong H^1(\overline{X},\cO_{\overline{X}})$, so \[
	L_i\Alb^{\log}(X)=\begin{cases}
		\Alb(X)&\textrm{if }i=0\\
		\NS^*(X)&\textrm{if }i=1\\
		(H^i(\overline{X},\cO_{X})^\vee)\otimes_k\G_a&\textrm{if }i\geq 2.
	\end{cases}
	\]
	In this case, $L\Alb^{\log}(X)$ is a constant pro-object. This shows that Proposition \ref{prop;comparison-usual-LAlb} cannot be extended to the whole $\mathcal{DM}^{\eff}(k,\Q)$: in general, if $M\in \mathcal{DM}^{\eff}(k,\Q)$, $L\Alb^{\log}(\omega^*M)$ is not equal to $L\Alb(M)$: the difference is controlled by coherent cohomology of degree $\geq 2$.
\end{remark}
\subsection{Open questions}\label{BlochSrinivas} We end this Section with the following observation. It seems to be an interesting question to determine under which conditions $L_i\Alb^{\log}$ is a constant pro-object.

This is related to the following problem: let $X'\to X$ be a desingularisation of a $d$-dimensional, integral variety over a field $k$, and let $D$ be an effective Cartier divisor on $X'$ covering the exceptional fibre, and assume that $\codim_X(\pi(D))\geq 2$. Let $rD$ denote the $r$-th infinitesimal thickening of $D$ and $F^dK_0(X',rD)$ the subgroup of the relative $K$-group $K_0(X', rD)$ generated by the cycle classes of closed points of $X' - |D|$, for each $r \geq 1$.

Bloch and Srinivas conjectured (see \cite[p. 6]{SrinivasZeroCyclesII}) that the pro-object $``\lim_n"F^dK_0(X',nD)$ is essentially constant and equal to $F^dK_0(X)$. The Bloch--Srinivas conjecture was proved for normal surfaces by Krishna--Srinivas \cite[Theorem 1.1]{KrishnaSrinivas}, and for $\ch(k)=0$ it was later extended to higher dimensional projective and affine varieties over an algebraically closed field by Krishna \cite[Theorem 1.1]{KrishnaThreefold} \cite[Thorem 1.2]{KrishnaArtinReese} and Morrow \cite[Theorem 0.1, \textit{(iii), (iv)}]{MorrowBS}\footnote{The conjecture is indeed true in a more general class of examples: the interested reader can check \cite[Theorem 0.1 \textit{(i)-(vii)}]{MorrowBS}}.

The proof of \cite{MorrowBS} indeed relies on a natural reformulation of the Bloch--Srinivas conjecture for the Chow groups with modulus:

\begin{thm}[cfr. {\cite[Theorem 0.3]{MorrowBS}}]\label{thm;BS-Chow} Let $k$ be an algebraically closed field of characteristic zero and $\pi \colon X' \to X$ and $D$ be as above and assume that $X$ is projective. Then the pro-object $``\lim"_n\CH_0(X,nD)$ is constant with stable value equal to the Levine--Weibel Chow group of zero cycles $\CH^{LW}_0(Y)$ of \cite{LW} (see also \cite{BK}).
\end{thm}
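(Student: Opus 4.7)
The plan is to follow the strategy of Morrow, which proves the analogous statement at the level of algebraic $K$-theory and then descends to the Chow group with modulus via a cycle class map. The first step is to construct, for each $n \geq 1$, a cycle class map
\[
\mathrm{cyc}_n \colon \CH_0(X',nD) \to F^d K_0(X',nD),
\]
where $F^d K_0(X',nD)$ denotes the subgroup of the relative $K_0$ generated by cycle classes of closed points of $X' - |D|$, and to verify that the family $\{\mathrm{cyc}_n\}_n$ is compatible with transition maps, defining a morphism in $\pro\text{-}\Ab$
\[
``\lim"_n \CH_0(X',nD) \to ``\lim"_n F^d K_0(X',nD).
\]
The compatibility with the generalised Jacobian/Albanese discussed in Section~\ref{sec:Albanese} (via the identification ${\bf Alb}_{(\overline{C},C_\infty)} = \uPic(\overline{C},C_\infty)$, cf.~Remark~\ref{rmk:h0-curve-is-1-motivic}) and the Bloch--Quillen formula applied curve-wise identify $\CH_0(X',nD)$ with the group generated by closed points modulo divisors of functions in $G(\overline{C}, \nu^*nD)$ for admissible curves; this translates verbatim to the corresponding statement for $F^d K_0$, so that $\mathrm{cyc}_n$ is surjective for every $n$.

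The heart of the argument is to establish the Bloch--Srinivas conjecture at the level of $K$-theory, i.e. that the natural map $``\lim"_n F^d K_0(X',nD) \to F^d K_0(X)$ is an isomorphism of pro-abelian groups. Here I would invoke Morrow's pro Hochschild--Kostant--Rosenberg theorem, which, under the characteristic zero hypothesis, computes $``\lim"_n \mathrm{HH}(X',nD)$ in terms of the coherent cohomology of the formal completion of $X'$ along $|D|$; combining this with Goodwillie's theorem identifying rational relative $K$-theory with relative cyclic homology, one obtains an isomorphism in the pro-category
\[
``\lim"_n K_*(X',nD)_\Q \xrightarrow{\sim} K_*(X)_\Q^{\mathrm{cont}},
\]
and the integral refinement on the relevant subgroup $F^d K_0$ follows from the codimension condition $\codim_X(\pi(D))\geq 2$ together with a weight argument (i.e.~controlling the $\gamma$-filtration under the isomorphism above). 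This is the step that requires the projectivity of $X$: it ensures that the formal completion interacts well with global sections and yields finite-dimensional pieces, so that the resulting pro-system is essentially constant rather than merely Mittag--Leffler.

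The final step is to identify $F^d K_0(X)$ with the Levine--Weibel Chow group $\CH_0^{LW}(X)$. For a reduced projective variety over an algebraically closed field of characteristic zero, Levine--Weibel constructed a cycle class map $\CH_0^{LW}(X) \to F^d K_0(X)$ and proved it is an isomorphism after inverting $(d-1)!$; the integral statement in our setting follows from the work of Binda--Krishna comparing $\CH_0^{LW}(X)$ with $\CH_0(X',D)$ via the pushforward along $\pi$, using again the codimension assumption on $\pi(D)$. Combining the three isomorphisms one concludes that $``\lim"_n\CH_0(X',nD)$ is essentially constant with stable value $\CH_0^{LW}(X)$.

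The main obstacle is the second step: the pro HKR comparison is delicate because one must control not only rational but integral cycle classes, and the passage from $K$-theory to the subgroup $F^d K_0$ generated by zero cycles requires tracking the $\gamma$-filtration through the HKR spectral sequence. This is precisely where the hypotheses $\ch(k)=0$, $k=\overline{k}$, and $\codim_X(\pi(D))\geq 2$ are all needed; weakening any of them would require either integral $p$-adic Hodge theory inputs or a finer analysis of the exceptional fibre.
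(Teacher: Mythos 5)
First, a point of order: the paper does not prove this statement. Theorem \ref{thm;BS-Chow} is quoted (``cfr.~\cite[Theorem 0.3]{MorrowBS}'') as an external input from Morrow's work, and Section \ref{BlochSrinivas} uses it only as motivation for Question \ref{qn:BS-LAlb}; there is therefore no internal proof to compare your argument against. What you have written is a reconstruction of Morrow's proof, and it does track his published strategy at the level of headlines: cycle class maps to relative $K_0$, a pro-isomorphism in relative $K$-theory obtained from Goodwillie's theorem and a pro Hochschild--Kostant--Rosenberg comparison, and a final identification of $F^dK_0(X)$ with the Levine--Weibel Chow group.

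As a proof, however, your text has genuine gaps concentrated exactly where the real work is. The step you label the heart of the argument --- descending from the rational pro-isomorphism $``\lim"_n K_*(X',nD)_\Q \simeq K_*(X)_\Q^{\mathrm{cont}}$ to the integral statement on the subgroup $F^dK_0$ generated by classes of closed points --- is not a ``weight argument controlling the $\gamma$-filtration'' that can be asserted in a sentence; it is the main technical content of \cite{MorrowBS}, resting on pro cdh descent and pro excision for $K$-theory and a careful analysis of relative cyclic homology, with the hypothesis $\codim_X(\pi(D))\geq 2$ entering to kill the coherent cohomology contributions in the relevant degrees. Invoking ``Morrow's pro HKR theorem'' at this point is essentially invoking the theorem you are trying to prove. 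Two further inaccuracies: the integral isomorphism $\CH^{LW}_0(X)\cong F^dK_0(X)$ for $X$ projective over an algebraically closed field of characteristic zero is Levine's theorem (the statement after inverting $(d-1)!$ is the general Levine--Weibel result), not a consequence of Binda--Krishna; and the projectivity of $X$ is needed precisely for this last identification, not, as you claim, to make the formal completion ``interact well with global sections.'' As it stands, your second and third steps either presuppose the cited result or misattribute its inputs, so the proposal should be read as an annotated citation rather than a proof.
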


By the universal property of the Albanese map, we deduce that if in the situation of Theorem \ref{thm;BS-Chow} we assume that $D$ is a simple normal crossing divisor, the pro-object $``\lim"_n\bAlb_{(X',nD)}$ is indeed essentially constant, so the pro-object
$L_0\Alb^{\log}(X- \pi(|D|),\triv)$ is essentially constant.
Then the following question arises naturally:
\begin{qn}\label{qn:BS-LAlb}
	Let $X$ be a proper variety and $U\subset X$ be a smooth open subvariety. When is the pro-reciprocity sheaf $L_i\Alb^{log}(M(U,\triv))$ essentially constant?
\end{qn}
Notice that in general $L_i\Alb^{log}(M(U,\triv))$ is not essentially constant: let $X$ be a proper non-singular surface, and $U = X - Y$ for some closed subscheme $Y$ of $X$. Let $(X',D)$ be the blow-up of $X$ in $Y$. As observed in \cite[p. 407]{HartshorneCohDim}, if some irreducible component of $Y$ is a point, then $\dim(H^1(U, \cO_U))=\infty$. On the other hand, $\dim(H^1(X', \cO_{X'}(nD)))$ is finite for every $n$, so $L_1\Alb^{log}(M(U,\triv))$ is not constant by Theorem \ref{thm;compute-LAlbX-intro}. At the moment, we do not know if there is a nice family of pairs $U\subseteq X$ that answers Question \ref{qn:BS-LAlb} positively.

\section{Logarithmic 1-motivic complexes}

For any perfect field $k$ and any commutative ring $\Lambda$, recall the stable $\infty$-category of $1$-motivic complexes $\mathcal{DM}^{\rm eff}_{\leq 1}(k,\Lambda)$, i.e. the full stable $\infty$-subcategory of $\mathcal{DM}^{\rm eff}(k,\Lambda)$ generated by $M(X)$, with $\dim(X)\leq 1$. If $\Lambda=\Q$, by \cite[Theorem 2.4.1]{Ayoub-BV}, the composition\[
L\Alb_{\leq 1}\colon \mathcal{DM}^{\rm eff}_{\leq 1}(k,\Q)\hookrightarrow\mathcal{DM}^{\rm eff}(k,\Q)\xrightarrow{L\Alb} \cD(\HI_{\leq 1}(k,\Q))
\]
is an equivalence. To ease the notation, we will denote the functor $R\omega^*_{\log}$ from \eqref{eq;adjunctionomega-motivic} simply by:
\[ \omega^*: \mathcal{DM}^{\rm eff}(k,\Q) \to \logDM(k,\Q).\] 
We give the following definition:
\begin{defn}
	For any perfect field $k$ and any commutative ring $\Lambda$, we let $\logDMone(k,\Lambda)$ be the full stable $\infty$-subcategory of $\logDM(k,\Lambda)$ generated by $\omega^*\mathcal{DM}^{\rm eff}_{\leq 1}$ and $\Log_{\det}(U)[n]$ for a unipotent group scheme $U$. We will call it the stable $\infty$-category of \emph{log $1$-motives}.
\end{defn}

\begin{remark} By \cite[Theorem 7.6.7]{BPO}, if $k$ satisfies $(RS)$ the $\infty$-category $\omega^*\mathcal{DM}^{\rm eff}_{\leq 1}$ is equivalent to the $\infty$-subcategory of $\logDM(k,\Lambda)$ generated by $M(X)$ with $\ul{X}$ a proper smooth curve. Moreover, if $\ch(k)=0$, every unipotent group scheme splits as a direct sum of $\G_a$, hence the categroy $\logDMone(k,\Lambda)$ is generated by $\omega^*\mathcal{DM}^{\rm eff}_{\leq 1}$ and $\Log_{\det}\G_a[n]$. Moreover, if $\Lambda$ is a $\Q$-algebra, the functor $\omega^{\lDM}_{\leq 1}$ from Theorem \ref{thm:derivedAlbanese} factors through $\logDMone(k,\Q)$: indeed the category $\cD(\RSC_{\leq 1}(k,\Q))$ is generated by $h_0(\kC)[n]$ with $\kC=(\ol{C},C_{\infty})$ a proper modulus pair of dimension $1$ such that $\ol{C}-|C_{\infty}|$ is affine. By \cite{RulYama} and the proof of Proposition \ref{prop:AlbaneseChow}, we have that $h_0(\kC)\simeq \bAlb_{\kC}$, so the exact sequence \eqref{eq:can-map-Alb-Albsemiab} gives a fiber sequence in $\cD(\RSC_{\leq 1}(k,\Q))$:\[
	\G_a^{\oplus r}[n]\to h_0(\kC)[n]\to h_0(\kC_{red})[n] .
	\]
	Let $C\in \SmlSm(k)$ be the log scheme $(\ol{C},\partial C)$ with $|\partial C|=|C_{\infty}|$, then\[
	\omega^{\lDM}_{\leq 1}h_0(\kC_{red})[m] = \omega^*h_0^{\A^1}(\ol{C}-C_{\infty})[m] = \omega^*M(\ol{C}-C_{\infty})[m]
	\] 
	where the last equality is \cite[Thm. 3.4.2]{VoevTriangCat} since $\ol{C}-C_{\infty}$ is affine. By construction $\Log_{\det}\G_a[n]= \omega^{\lDM}_{\leq 1}\G_a[n]$. By repeating this argument backwards we conclude that the functor $\omega^{\lDM}_{\leq 1}$ is also essentially surjecrive on $\logDMone(k,\Lambda)$.
\end{remark}

From now on, we consider again $\ch(k)=0$ and $\Lambda=\Q$. We have the following generalization of \cite[Theorem 2.4.1]{Ayoub-BV}:

\begin{thm}\label{thm;fully-faithfyl} The composition $L\Alb^{\log} \circ \omega^{\lDM}_{\leq 1}$ is equivalent to the constant pro-object functor. In particular, the functor $\omega^{\lDM}_{\leq 1}$ is fully faithful and induces an equivalence\[
	\omega^{\lDM}_{\leq 1}\colon \cD(\RSC_{\leq 1}(k,\Q)) \xrightarrow{\sim} \logDMone(k,\Q) \colon \lim L\Alb^{\log}.
	\]
\end{thm}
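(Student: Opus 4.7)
The plan is to establish the equivalence $L\Alb^{\log}\circ \omega^{\lDM}_{\leq 1}\simeq c$, where $c$ denotes the constant pro-object functor $\cD(\RSC_{\leq 1}(k,\Q))\to \Pro\cD(\RSC_{\leq 1}(k,\Q))$. Full faithfulness of $\omega^{\lDM}_{\leq 1}$ follows at once from the pro-adjunction of Theorem \ref{thm:derivedAlbanese}: for $N,N'\in \cD(\RSC_{\leq 1}(k,\Q))$,
\[ \Map_{\logDM}(\omega^{\lDM}_{\leq 1}N,\, \omega^{\lDM}_{\leq 1}N') \simeq \Map_{\Pro\cD(\RSC_{\leq 1})}(L\Alb^{\log}\omega^{\lDM}_{\leq 1}N,\, N') \simeq \Map_{\cD(\RSC_{\leq 1})}(N, N'). \]
The equivalence $\cD(\RSC_{\leq 1}(k,\Q))\simeq \logDMone(k,\Q)$ then follows because the essential image is a stable subcategory of $\logDM(k,\Q)$ containing $\Log_{\det}\G_a[n]$ by construction as well as $\omega^*M(C)$ for $C$ a smooth projective curve, using the commutativity of the diagram appearing in the main theorem of the introduction (proved via Proposition \ref{prop;comparison-usual-LAlb}).

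Both $\omega^{\lDM}_{\leq 1}$ and $L\Alb^{\log}$ commute with arbitrary colimits ($\omega^{\lDM}_{\leq 1}$ because $\Log_{\det}$ does by Remark \ref{rmk;omegaCI-commutes-with-colimits} and $L_{\bcube}$ is a Bousfield localization; $L\Alb^{\log}$ as a pro-left adjoint). Using the homotopy $t$-structure on $\cD(\RSC_{\leq 1}(k,\Q))$, it therefore suffices to verify the equivalence on generators of the heart $\RSC_{\leq 1}(k,\Q)$. By Proposition \ref{prop:can-pres-1-mot-sheaf}, this further reduces to finitely presented $1$-reciprocity sheaves, which admit canonical presentations $0\to L\to G\to F\to 0$ with $L$ a lattice and $G\in \sG^*$. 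Chevalley's theorem in characteristic zero writes $G$ as an extension of a semi-abelian group $G^{sab}$ by $\G_a^r$. Thus the verification reduces, by further devissage, to four cases: lattices, tori, abelian varieties, and $\G_a$.

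The semi-abelian cases (lattices, tori, abelian varieties) all lie in $\HI_{\leq 1}(k,\Q)$, and are handled by Proposition \ref{prop;comparison-usual-LAlb}, which identifies $L\Alb^{\log}$ on $\omega^*\mathcal{DM}^{\rm eff}_{\leq 1}(k,\Q)$ with the classical motivic Albanese functor $L\Alb_{\leq 1}$. Since $L\Alb_{\leq 1}$ is an equivalence by \cite[Thm. 2.4.1]{Ayoub-BV} and its values lie in $\cD(\HI_{\leq 1})\subset \cD(\RSC_{\leq 1})$, embedded as constant pro-objects under $j$, the desired equivalence follows in these cases.

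The main obstacle is the case $F=\G_a$: one must show $L\Alb^{\log}(\omega^{\lDM}_{\leq 1}(\G_a))\simeq \G_a$ as a constant pro-object. The plan is to use the Breen--Deligne resolution of $\G_a$ as an \'etale sheaf, which provides a functorial resolution in $\Shv^{\tr}_{\et}(k,\Q)$ by direct sums of representable sheaves $\bigoplus_i \Q_{\tr}(\G_a^{n_i})$; applying $\omega^*_{\log}$ yields a resolution of $\omega^*_{\log}\G_a$ in $\Shv^{\ltr}$ whose terms are sums of $\Q_{\ltr}(\G_a^{n_i})$. After locally covering each $\G_a^{n}$ by $\Alb^{\log}$-trivial opens in $\SmlSm(k)$ (see Lemma \ref{lem:enough-admissible} and Proposition \ref{prop:key-prop-Alb-local-admissible}), every term becomes $BC$-admissible, so $L\Alb^{\log}$ is computed term-by-term as a pro-system of Albanese schemes associated to suitable smooth compactifications of $\G_a^n$ inside $\P^n$. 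The explicit formula of Theorem \ref{thm;compute-LAlbX-intro} for the homotopy groups of these pro-systems expresses them as inverse systems of coherent cohomology groups $H^i(\P^n,\mathcal{O}_{\P^n}(\ast))$, and using the standard vanishing of higher coherent cohomology on projective space together with an identification of the induced differentials with the Breen--Deligne coboundaries, the spectral sequence of the resulting double complex collapses to yield $\G_a$ as total cohomology. The delicate point is to control the pro-object structure through the spectral sequence and to verify that the Breen--Deligne differentials lift coherently to the level of pro-Albanese schemes.
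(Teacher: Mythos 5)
Your overall architecture coincides with the paper's: full faithfulness is formal once $L\Alb^{\log}\circ\omega^{\lDM}_{\leq 1}\simeq c$ is known, the reduction to generators of the heart and then (via the canonical presentation and Chevalley) to lattices, tori, abelian varieties and $\G_a$ is essentially the paper's reduction, and the semi-abelian cases are correctly delegated to Proposition \ref{prop;comparison-usual-LAlb}. The problem is the case $F=\G_a$, where your argument has a genuine gap. After applying $L\Alb^{\log}$ termwise to the Breen--Deligne (normalized bar) resolution, the vanishing of higher coherent cohomology of $\cO_{\P^n}(mH)$ only tells you that each term $L\Alb^{\log}(\A^{n},\triv)$ is concentrated in degree $0$; it says nothing about the value in degree $0$, which is the highly nontrivial pro-unipotent group $\Alb^{\log}(\A^n)=``\lim_m"\,\bAlb_{\kX^{(m)}}$ (for $n=1$, $``\lim_m"\,\uPic^0(\P^1,m\infty)\otimes\Q\cong ``\lim_m"(1+zK[z]/(z^m))$). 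The statement you need is that the resulting complex of pro-groups, with the Breen--Deligne differentials, is acyclic in positive degrees; this does not follow from any cohomology vanishing on $\P^n$ and is precisely the content of the paper's Lemma \ref{lem;compute-Li-Alb-Ga}. There the authors compute the induced differentials explicitly on $1+z_iK[z_i]/(z_i^m)$, introduce the filtration $\mathrm{Fil}_r=1+z^rK[z]/(z^m)$, identify the graded differentials (Claim \ref{claim;annoying-computations}, using roots of unity), and prove exactness of the graded pieces by solving polynomial equations over an algebraically closed field. Your sentence acknowledging a ``delicate point'' is exactly where the proof actually lives; as written, the claim that ``the spectral sequence collapses to yield $\G_a$'' is unsupported.

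Two further issues you should repair. First, you cannot obtain the resolution by applying $\omega^*_{\log}$ to the Breen--Deligne resolution in $\Shv^{\tr}_{\et}$: on the one hand $\omega^*_{\log}\Q_{\tr}(\A^n)$ is not the representable sheaf $\Q_{\ltr}(\A^n,\triv)$ (finiteness of closures fails), and on the other hand $\omega^*_{\log}\G_a\neq\Log_{\det}\G_a=\omega^{\lDM}_{\leq 1}\G_a$, since $\G_a$ is not homotopy invariant ($\omega^*_{\log}\G_a(X)=\Gamma(\ul{X}-|\partial X|,\cO)$ while $\Log_{\det}\G_a(X)=\Gamma(\ul{X},\cO_{\ul{X}})$ by \eqref{omegaCIGa}). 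The paper avoids this by building the complex directly out of $\widetilde{\Q_{\ltr}(\A^1,\triv)}$ with an augmentation to $\Log_{\det}\G_a$ defined by hand. Second, you invoke Theorem \ref{thm;compute-LAlbX-intro}, which in the paper is proved only in Section \ref{sec:computations}, after the present theorem; if you want to use it here you must check that its proof does not depend on the full faithfulness being established (in fact you do not need it: $\A^n$ is $\Alb^{\log}$-trivial, so BC-admissibility and Remark \ref{rmk;BC-adm-is-LF-trivial} already give $L\Alb^{\log}(\A^n,\triv)=\Alb^{\log}(\A^n)[0]$ directly).
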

\begin{proof}
	By construction, the proof follows from Proposition \ref{prop;comparison-usual-LAlb} and Lemma \ref{lem;compute-Li-Alb-Ga} below.
\end{proof}

\begin{prop}\label{prop;comparison-usual-LAlb}
	There is a commutative diagram of stable $\infty$-cateogries:\[
	\begin{tikzcd}
		&\cD(\HI_{\leq 1}(k,\Q))\ar[r,"j"]&\Pro\cD(\RSC_{\leq 1}(k,\Q))\\
		\mathcal{DM}^{\rm eff}_{\leq 1}(k,\Q)\ar[r,hook]\ar[ur,bend left=10,"L\Alb_{\leq 1}"]&\mathcal{DM}^{\rm eff}(k,\Q) \ar[r,"\omega^*"]
		&\logDM(k,\Q).\ar[u,swap,"L\Alb^{\log}"]
	\end{tikzcd}
	\]
	\begin{proof}
		The category $\mathcal{DM}^{\rm eff}_{\leq 1}(k,\Q)$ is generated by $M^{\A^1}(C)$ with $C$ affine curve, and for such $C$ we have ${\rm NS}^1(C_{\overline{k}})=0$. It is then enough to show that\[
		L\Alb^{\log}(\omega^*M^{\A^1}(C))\simeq jL\Alb_{\leq 1}(M^{\A^1}(C)).
		\]
		Recall that $\omega^*M^{\A^1}(C) = M(\ol{C},\partial C)$ with $\ol{C}$ any smooth compactification and $\partial C$ is the log structure associated to
the closed subscheme $C_\infty:=\ol{C}\setminus C$ with reduced structure. Since $C$ is affine, the right-hand side is equivalent to the constant pro-object $\Alb(C)[0]$, so we conclude by Theorem \ref{thm;compute-LAlbX} since there exists $n\gg 0$ such that $H^1(\ol{C},\cO_{\ol{C}}(nC_{\infty})) = 0$ and $\Alb^{\log}(\ol{C},\partial C)=\Alb(C)$.
\end{proof}
\end{prop}

\begin{lemma}\label{lem;compute-Li-Alb-Ga}
	For all $i>0$, $L_i\Alb^{\log}(\omega^{\lDM}_{\leq 1}\G_a)=0$. 
	
	\begin{proof}
		For all $F\in \RSC_{\et, \leq 1}(k,\Q)$ and all $i$, we have by adjunction that
		\begin{equation}\label{eq:adjunction-LAlb-Ga}
			\Map_{\Pro\cD(\RSC_{\et,\leq 1})}(L\Alb^{\log}(\omega^{\lDM}_{\leq 1}\G_a),F[i]) \simeq \Map_{\logDM(k,\Q)}(\omega^{\lDM}_{\leq 1}\G_a,\Log_{\det}(F)[i]).
		\end{equation}
Noting $\omega^{\lDM}_{\leq 1}\G_a\simeq L_{\bcube}\Log_{\det}(\G_a)[0]$
and $\Log_{\det}(\G_a) \cong \omega^\sharp_{\log}\G_a$ by \eqref{omegaCIGa}, 
we get
	\begin{equation}\label{eq:adjunction-LAlb-Ga-2}
\begin{aligned}
		\Map_{\logDM(k,\Q)}(\omega^{\lDM}_{\leq 1}\G_a,\Log_{\det}(F)[i])&\simeq\Map_{\cD(\Sh_{\et}^{\ltr}(k,\Q))}(\omega^\sharp_{\log}\G_a,\Log_{\det}(F)[i])\\
&\simeq\Map_{\cD(\Sh_{\et}^{\tr}(k,\Q))}(\G_a,\omega_\sharp^{\log}\Log_{\det}(F)[i])\\
		&\simeq  \Map_{\cD(\Sh_{\et}^{\tr}(k,\Q))}(\G_a,F[i]).
		\end{aligned}\end{equation}
where the first equivalence holds since {$\Log_{\det}(F)[i]$ is $\bcube$-local, the second follows from \eqref{eq:adjunctionomega-derived1-2}} and the last holds since
$\omega_\sharp^{\log}\Log_{\det}\simeq id$.
 		By Theorem \ref{lem:split}, we have that \[
		L\Alb^{\log}(\omega^{\lDM}_{\leq 1}\G_a)\simeq \bigoplus_j L_j\Alb^{\log}(\omega^{\lDM}_{\leq 1}\G_a)[j],
		\]
		hence noting $\pi_0$ commutes with products, we have\[
		\begin{tikzcd}
		\pi_0\Map(L\Alb^{\log}(\omega^{\lDM}_{\leq 1}\G_a),F[i]) \ar[r,"{(*)}","\cong"']\ar[d,"\cong"] &\pi_0\Map(\G_a,F[i])\ar[dd,"\cong"]\\
		\prod_{j\geq 0}\pi_0\Map(L_j\Alb^{\log}(\omega^{\lDM}_{\leq 1}\G_a),F[i-j])\ar[d,"\cong"]\\
		\bigoplus_{j=0}^i \Ext^{i-j}_{\pro\RSC_{\et, \leq 1}(k,\Q)}(L_j\Alb^{\log}(\omega^{\lDM}_{\leq 1}\G_a),F) &\Ext^i_{\Sh_{\et}^{\tr}(k,\Q)}(\G_a,F),
		\end{tikzcd}
	\]
where $(*)$ follows from \eqref{eq:adjunction-LAlb-Ga} and \eqref{eq:adjunction-LAlb-Ga-2}. So, $\Hom_{\pro\RSC_{\et,\leq 1}}(L_i\Alb(\omega^{\lDM}_{\leq 1}\G_a),F)$ is a direct summand of $\Ext^i_{\Sh_{\et}(k,\Q)}(\G_a,F)$, which for $i\geq 2$ is zero by Proposition \ref{prop:hom-dim}. For $i=1$, the above diagram gives 
		\begin{align*}
		{\Ext^1_{\Sh^{\tr}_{\et}(k,\Q)}(\G_a,F) } \cong &\Ext^1_{\pro\RSC_{\et,\leq 1}}(L_0\Alb^{\log}(\omega^{\lDM}_{\leq 1}\G_a),F)\oplus \\
		&\Hom_{\pro\RSC_{\et,\leq 1}}(L_1\Alb^{\log}(\omega^{\lDM}_{\leq 1}\G_a),F).
		\end{align*}
		On the other hand, $L_0\Alb^{\log}(\omega^{\lDM}_{\leq 1}\G_a)\cong \Alb^{\log}(\Log_{\det}(\G_a))\cong \G_a$ since $L\Alb^{\log}$ is the derived functor of $\Alb^{\log}$ and $\Log_{\det}$ is fully faithful, so Proposition \ref{prop;1-rec-closed-under-ext} and Corollary \ref {cor:AHPL} implies that \[\Hom_{\pro\RSC_{\et,\leq 1}}(L_1\Alb^{\log}(\omega^{\lDM}_{\leq 1}\G_a),F)=0,\] 
which concludes the proof.
	\end{proof}
\end{lemma}

\section{Laumon \texorpdfstring{$1$}{one}-motives and compact objects}\label{sec:Laumon}
In this Section, we combine the results of \cite{BertapelleJAlgebra} with some arguments of \cite{BVKahn}. As before, let $k$ be a field of characteristic zero.
\subsection{Review of Laumon $1$-motives} 
The following definition is adapted from \cite[1]{BertapelleJAlgebra}.

\begin{defn}\label{def:eff-1-mot} An \textit{effective Laumon $1$-motive} is a two-terms complex $M =[\Gamma \xrightarrow{u} G]$, where $\Gamma$ is a formal $k$-group and $G$ is a connected algebraic $k$-group, both seen as objects of $\Sh_{\et}(k)$. We say that $M$ is \emph{\'etale} if $\Gamma$ is a lattice\footnote{ Note that this definition is different from the one given in \cite[1.4]{BVBert}, where the authors require in addition that $U(G) =0$}.  An \textit{effective morphism} 
	\[\xymatrix{ M = [\Gamma \xrightarrow{u} G]\ar[r]^{(f,g)} &M'= [\Gamma'\xrightarrow{u'} G']}\] 
	is a map of complexes. 
	We denote the category of effective (resp. \'etale) Laumon $1$-motives by $\cM_{1}^{a, {\rm eff}}$ (resp. $\cM_{1,\et}^{a, {\rm eff}}$). An effective Laumon $1$-motive is an \emph{effective Deligne $1$-motive} if $G$ is semi-abelian. We write $\cM_1^{D,{\rm eff}}$ for the full subcategory of effective Deligne $1$-motives.
\end{defn}

\begin{defn}An effective morphism $(f,g)\colon M\to M'$ is \textit{strict} if $g$ has (smooth) connected kernel, and a \textit{quasi-isomorphism} if $g$ is an isogeny, $f$ is surjective and $\ker(f) =
\ker(g)$ is a finite $k$-group scheme.
\end{defn}
Note that if $(f,g)$ is strict and $g$ is an isogeny, then $g$ is an isomorphism of commutative algebraic groups. 
Write $\Sigma$ for the class of quasi-isomorphisms: it admits a calculus of right fractions (see \cite[C.2.4]{BVKahn} or \cite[Lemma 1.6]{BertapelleJKT}). 
We can now give the following Definition (see \cite[Definition 2]{BertapelleJAlgebra} and \cite[Definition 1.4.4]{BVBert}).
\begin{defn}\label{def:1-mot} The category of \textit{\'etale Laumon} (resp. \textit{Laumon}, resp \textit{Deligne}) \textit{$1$-motives} $\cM_{1, \et}^a$ (resp. $\cM_{1}^a$, resp. $\cM_{1}^{D}$) is the localization by $\Sigma$ of 
	the effective category.
\end{defn}
Recall \cite[Appendix B]{BVKahn} for the notion of $\cC\otimes \Q$ for an additive category $\cC$. The proof of the following proposition is identical to \cite[Corollary C.7.3]{BVKahn}:
\begin{prop}\label{prop;1-mot-abelian}
The categories $\cM_{1}^D\otimes \Q$, $\cM_{1}^{a}\otimes \Q$ and $\cM_{1,\et}^{a}\otimes \Q$ are abelian.
\end{prop}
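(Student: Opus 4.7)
The strategy is to transplant the argument of \cite[Appendix~C, especially Corollary~C.7.3]{BVKahn} from the Deligne to the Laumon setting. The proof divides naturally into three stages.

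First, using the calculus of right fractions for $\Sigma$ recorded just before the statement, every morphism $\phi$ in $\cM_1^a \otimes \Q$ (and similarly in $\cM_1^D \otimes \Q$ and $\cM_{1,\et}^a \otimes \Q$) is represented by a roof
$$M \xleftarrow{\;s\;} M'' \xrightarrow{(f,g)} M', \qquad s\in \Sigma\otimes\Q,$$
with $(f,g)$ an effective morphism of $1$-motives. By composing $s$ with a further element of $\Sigma$ if needed, I would first reduce to the case in which $(f,g)$ is \emph{strict}, i.e.\ $\ker(g)$ is smooth and connected; in characteristic zero, the disconnected part of $\ker(g)$ is a finite \'etale group, which can be absorbed into the formal part of $M''$ via a quasi-isomorphism, and then killed by the $\otimes \Q$ operation.

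Second, for a strict representative $(f,g)\colon [\Gamma''\xrightarrow{u''}G''] \to [\Gamma'\xrightarrow{u'}G']$, I would propose
$$\ker(\phi)\;=\;[\,\ker(f)\longrightarrow \ker(g)\,], \qquad \coker(\phi)\;=\;[\,\coker(f)\longrightarrow \coker(g)\,],$$
where $\ker(g)$ is a connected algebraic $k$-group and $\coker(g)$ an algebraic $k$-group, while $\ker(f)$ and $\coker(f)$ are naturally formal $k$-groups. A routine roof computation, parallel to \cite[C.5--C.7]{BVKahn}, then establishes the correct universal property of these objects in the localized rational category and the independence from the chosen strict representative.

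Third, to promote the resulting additive structure to an abelian one, I would verify that for a strict effective morphism $(f,g)$ the canonical factorization
$$0\;\to\;[\ker(f)\to\ker(g)]\;\to\; M''\;\to\;[\IM(f)\to \IM(g)]\;\to\;0$$
is a short exact sequence in $\cM_1^a \otimes \Q$, from which the equality $\mathrm{coim}(\phi)=\IM(\phi)$ follows. As in \cite{BVKahn}, this reduces to exactness of $\_\otimes \Q$ on the underlying sheaves of abelian groups together with the structural fact that in characteristic zero every commutative formal $k$-group decomposes functorially as a direct sum of its \'etale part and its connected (vector) part. The three cases can be handled in parallel: the hypotheses defining $\cM_1^D \otimes \Q$ (the $G$'s semi-abelian) and $\cM_{1,\et}^a \otimes \Q$ ($\Gamma$ a lattice) are both preserved by the strict constructions above, as they are stable by kernels, cokernels and images of group schemes and lattices modulo torsion. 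The main obstacle I anticipate, and the reason tensoring with $\Q$ is indispensable, is the delicate bookkeeping between quasi-isomorphisms in $\Sigma$ (which may move finite discrete groups between $\Gamma$ and $G$) and the universal property of the proposed kernel and cokernel; this is exactly the point where the BVKahn arguments leverage rationalization, and the same input suffices here.
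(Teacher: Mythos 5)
Your proposal is correct and follows essentially the same route as the paper, which simply invokes the argument of \cite[Corollary C.7.3]{BVKahn}: represent morphisms by roofs via the calculus of right fractions, reduce to strict effective representatives, form kernels and cokernels termwise, and use rationalization to identify coimage with image and to dispose of the finite group schemes moved around by quasi-isomorphisms. The only point worth making explicit is that the three cases require checking that strictness-based constructions preserve the defining conditions (semi-abelian $G$, resp.\ lattice $\Gamma$), which you do note at the end.
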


\begin{defn}[{see \cite{BertapelleJAlgebra}}]\label{def:M1star} Let $\cM_{1, \et}^{a, \star}$ 
	be the full subcategory of $\cM_{1, \et}^{a}$ 
	whose objects are $1$-motives $M=[\Gamma \xrightarrow{u} G]$ with $\ker(u)=0$.
\end{defn}
\begin{lemma}\label{lem:gen-sub} The category $\cM_{1, \et}^{a, \star}\otimes \Q$ is a generating subcategory of $\cM_{1, \et}^{a}\otimes \Q$, and it is closed under kernels and extensions. Moreover, for every object $M\in \cM_{1, \et}^{a}\otimes \Q$ there exists a monomorphism $f:M'\to M''$ in $\cM_{1, \et}^{a, \star}\otimes \Q$ such that $M=\coker(f)$.
	\begin{proof} This is essentially \cite[Lemma 4]{BertapelleJAlgebra}. 
\end{proof}
\end{lemma}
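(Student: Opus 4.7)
The plan is to adapt \cite[Lemma 4]{BertapelleJAlgebra} to our \'etale $\otimes\Q$ setting. The main construction will produce, for every $M = [\Gamma \xrightarrow{u} G]$, a short exact sequence $0 \to M' \to M'' \to M \to 0$ in $\cM^a_{1,\et}\otimes\Q$ with $M', M'' \in \cM^{a,\star}_{1,\et}\otimes\Q$; this immediately yields the required cokernel presentation and, as a consequence, the generating property.

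First I would construct an injection $\iota\colon K \hookrightarrow T$, where $K := \ker u$ (a sublattice of $\Gamma$) and $T$ is the torus with cocharacter group $K(\bar{k})$. The map will be $\iota(\gamma) = \gamma\otimes g$ for a non-torsion element $g\in k^\times$, whose existence follows from $\mathrm{char}(k)=0$; Galois equivariance is automatic because $g\in k^\times$, and injectivity uses that $g$ is non-torsion. Because the Galois action on any lattice factors through a finite quotient, the inclusion $K \hookrightarrow \Gamma$ splits after $\otimes\Q$ by Maschke's theorem; composing a splitting with $\iota$ and clearing denominators yields $\phi\colon \Gamma \to T$ with $\phi|_K = n\iota$ for some $n\geq 1$. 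Setting $M'' := [\Gamma \xrightarrow{(u,\phi)} G\times T]$, one has $\ker(u,\phi) \subseteq K\cap \ker(\phi|_K) = 0$, so $M''\in \cM^{a,\star}_{1,\et}$, while $M' := [0 \to T]$ is trivially in $\cM^{a,\star}_{1,\et}$. The projection $G\times T\to G$ induces a termwise exact sequence, which is a short exact sequence in the abelian category $\cM^a_{1,\et}\otimes\Q$ thanks to Proposition \ref{prop;1-mot-abelian}.

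Next, for closure under kernels, I would represent a morphism $f\colon M_1\to M_2$ in $\cM^{a,\star}_{1,\et}\otimes\Q$ by a strict effective morphism $(f_0,f_1)$ using the calculus of right fractions for $\Sigma$ (see e.g.\ \cite[Lemma 1.6]{BertapelleJKT}). The kernel in the abelian category is then $[\ker f_0 \xrightarrow{u_1|_{\ker f_0}} \ker f_1]$, and the injectivity of the formal part follows from $\ker u_1 = 0$, because $u_1|_{\ker f_0}$ is a restriction of $u_1$. For closure under extensions, given $0\to M_1 \to M_3 \to M_2 \to 0$ with $M_1, M_2\in \cM^{a,\star}_{1,\et}\otimes\Q$, good representatives obtained via the right calculus of fractions yield termwise short exact sequences on both the $\Gamma$'s and $G$'s; a diagram chase then shows that any $\gamma\in \ker u_3$ projects to $\ker u_2=0$, hence lies in $\Gamma_1$, and then $u_1(\gamma)=0$ forces $\gamma=0$, so $M_3\in \cM^{a,\star}_{1,\et}\otimes\Q$.

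The hard part will be the construction in the second paragraph: ensuring simultaneously the Galois equivariance and injectivity of $\iota\colon K\hookrightarrow T$ (which genuinely uses $\mathrm{char}(k)=0$ via the non-torsion element $g$), and producing the lift $\phi$ of a rational splitting. The remaining assertions are formal once one commits to strict representatives through the right calculus of fractions and uses the abelianness of $\cM^a_{1,\et}\otimes\Q$ supplied by Proposition \ref{prop;1-mot-abelian}.
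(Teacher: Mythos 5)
Your proposal is correct and reproduces, with full details, exactly the argument of \cite[Lemma 4]{BertapelleJAlgebra} that the paper's proof defers to: embed $\ker(u)$ into a torus $T$ via $\gamma\mapsto\gamma\otimes g$ for a non-torsion $g\in k^\times$, extend rationally to $\Gamma$, and read off the sequence $0\to[0\to T]\to[\Gamma\to G\times T]\to M\to 0$. The only imprecision is in the kernel step, where the categorical kernel is not literally $[\ker f_0\to\ker f_1]$ (the group $\ker f_1$ need not be connected) but rather, up to isogeny, $[K_0'\to(\ker f_1)^0]$ with $K_0'\subseteq\ker f_0$ of finite index; since the formal part is still a subgroup of $\Gamma_1$ mapping via a restriction of the injective $u_1$, your conclusion stands.
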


\begin{remark} The reader might wonder if there are interesting examples of \'etale Laumon $1$-motives which are not Deligne $1$-motives. The prototype of such example is given by the $1$-motive $M^\natural= [\Gamma\to \mathbf{G}^\natural]$ which is the universal $\G_a$-extension of the Deligne $1$-motive $[\Gamma\to G]$. Starting from the motive $M^\natural$ it is possible to construct the universal \emph{sharp} extension $M^\sharp$ of $M$, as discussed in \cite{BVBert}. Note however that the cateogry $\cM_{1, \et}^a$ is not closed under $\sharp$-extensions: as remarked in \cite[3.1.5]{BVBert}, $[0\to \G_a]^{\sharp}=[\widehat{\G}_a\to \G_a^2]$, which is clearly not \'etale.
\end{remark}
\begin{remark}
	The category of Deligne $1$-motives has an interesting self-duality, induced by the classical Cartier duality for algebraic groups. This extends to Laumon $1$-motives, see \cite{BVBert}. Note that while the Cartier dual of a Deligne $1$-motive is again a Deligne $1$-motive, the dual of an \'etale Laumon $1$-motive is in general not \'etale.  For example, if $A$ is an Abelian variety (see as $1$-motive $[0\to A]$), its universal $\G_a$ extension is the \'etale Laumon $1$-motive $A^\natural = [0\to A^\natural]$, which is not a Deligne $1$-motive. Its Cartier dual $(A^\natural)^*$ is the $1$-motive $[\widehat{A}'\to A']$, where $A'$ is the dual Abelian variety of $A$ and $\widehat{A}'$ is the formal completion of $A'$ along the identity. Clearly, $(A^\natural)^*$ is a Laumon $1$-motive that is not \'etale.  
\end{remark}

\begin{remark}\label{rmk:ES-and-CI-finpres}
Consider the functor\[
\rho^{\rm eff}\colon \cM_{1, \et}^{a,{\rm eff}} \to \Sh_{\et}(k,\Q)\quad [L\xrightarrow{u} G] \mapsto \coker(u)\otimes_\Z \Q.
\]
If $[L_1\xrightarrow{u_1} G_1]\to [L_2\xrightarrow{u_2} G_2] \in \Sigma$, then by definition $\coker(u_1)\otimes \Q\cong \coker(u_2)\otimes \Q$. This together with \cite[Lemma B.1.2]{BVKahn} implies that $\rho^{\rm eff}$ induces:\[
\rho\colon \cM_{1, \et}^{a}\otimes \Q \to \Sh_{\et}(k,\Q).
\]
\end{remark}
\begin{lemma}\label{lem;rho-star}
The restriction of $\rho$ to $\cM_{1, \et}^{a, \star}\otimes \Q$ induces an equivalence (cf. Def. \ref{defn:fin-pres1mot}):\[
\rho^\star \colon  \cM_{1, \et}^{a, \star}\otimes \Q \xrightarrow{\cong} \RSC_{\et, \leq 1}^\star.
\]
\begin{proof}  By definition, for $[L\to G]\in \cM_{1, \et}^{a, \star}\otimes \Q$, $\rho([L\to G])\in \RSC_{\et,\leq 1}^\star$, and by Proposition \ref{prop;forget-transfer-fully-faithful} for every morphism $f$ we have that $\rho(f)$ is a map in $\RSC_{\et, \leq 1}^\star$, hence $\rho^\star$ is well defined. The presentation of Proposition \ref{prop:can-pres-1-mot-sheaf} gives then a quasi-inverse of $\rho^\star$.
\end{proof}
\end{lemma}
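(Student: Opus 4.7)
The plan is to establish the equivalence $\rho^\star$ in two steps: first verifying that $\rho^\star$ is well-defined, and second constructing its quasi-inverse from the canonical presentation of Proposition \ref{prop:can-pres-1-mot-sheaf}.

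For well-definedness, let $M = [L \xrightarrow{u} G] \in \cM_{1,\et}^{a,\star}$. The defining condition $\ker(u) = 0$ yields a short exact sequence of \'etale sheaves
\[
0 \to L \to G \to \coker(u) \to 0,
\]
and tensoring with $\Q$ gives $0 \to L\otimes \Q \to G\otimes \Q \to \rho(M) \to 0$, with $L\otimes\Q$ a torsion-free finitely generated $0$-motivic sheaf and $G\otimes \Q \in \sG^*$. By Proposition \ref{prop:can-pres-1-mot-sheaf}, this exhibits $\rho(M)$ as a finitely presented $1$-reciprocity sheaf, so $\rho(M) \in \RSC_{\et,\leq 1}^\star$. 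That $\rho^\star$ descends to the localization $\cM_{1,\et}^a \otimes \Q$ follows from Remark \ref{rmk:ES-and-CI-finpres}. Moreover, by Proposition \ref{prop;forget-transfer-fully-faithful} the forgetful functor $\RSC_{\et} \to \Sh_{\et}$ is fully faithful, so every morphism $\rho(M_1) \to \rho(M_2)$ produced by $\rho$ automatically lives in $\RSC_{\et,\leq 1}^\star$.

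For the quasi-inverse, given $F \in \RSC_{\et,\leq 1}^\star$, Proposition \ref{prop:can-pres-1-mot-sheaf} supplies a unique and functorial presentation
\[
0 \to L \to G \to F \to 0
\]
with $L$ a lattice and $G \in \sG^*$. Setting $u\colon L \hookrightarrow G$, the pair $[L \xrightarrow{u} G]$ is an object of $\cM_{1,\et}^{a,\star}$, and by construction $\rho^\star([L\to G]) \cong F$. This defines an inverse assignment $F \mapsto [L\to G]$ on objects. To promote it to a functor and to verify full faithfulness simultaneously, one uses the functoriality clause of Proposition \ref{prop:can-pres-1-mot-sheaf}: any morphism $\varphi\colon F_1 \to F_2$ in $\RSC_{\et,\leq 1}^\star$ (which by Proposition \ref{prop;forget-transfer-fully-faithful} is the same as a morphism of the underlying \'etale sheaves) lifts uniquely to a morphism of the canonical resolutions, i.e.\ to a pair of maps $L_1 \to L_2$ and $G_1 \to G_2$ compatible with the inclusions. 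This gives an effective morphism of Laumon $1$-motives lifting $\varphi$ and shows that the composition with $\rho^\star$ recovers $\varphi$; uniqueness of the lift together with the uniqueness of the canonical presentation then yields full faithfulness.

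The main obstacle lies in the essential surjectivity step: one must check carefully that the group $G$ produced by the canonical presentation fits into the definition of a Laumon $1$-motive (in particular the connectedness assumption on the algebraic group) and, where necessary, exploit the passage to $\cM_{1,\et}^a \otimes \Q$ and the calculus of quasi-isomorphisms to replace an arbitrary representative by one in $\cM_{1,\et}^{a,\star}$. This is exactly where the functoriality and uniqueness statements of Proposition \ref{prop:can-pres-1-mot-sheaf}, combined with the $\otimes \Q$ and the localization by $\Sigma$ (Proposition \ref{prop;1-mot-abelian}), become essential, since they allow the lattice contribution from $\pi_0(G)$ to be absorbed in $L$ without changing the isomorphism class of the image under $\rho^\star$.
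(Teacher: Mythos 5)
Your proof follows the same two-step strategy as the paper's: well-definedness of $\rho^\star$ on objects by inspecting the exact sequence $0\to L\otimes\Q\to G\otimes\Q\to\rho(M)\to 0$ and on morphisms via the full faithfulness of the forgetful functor (Proposition \ref{prop;forget-transfer-fully-faithful}), and then the quasi-inverse supplied by the uniqueness and functoriality of the canonical presentation of Proposition \ref{prop:can-pres-1-mot-sheaf}. The descent through the localization by $\Sigma$, which you route through Remark \ref{rmk:ES-and-CI-finpres}, is a point the paper's proof leaves implicit, and it is good that you made it explicit. One pedantic remark on your first step: to conclude $\rho(M)\in\RSC_{\et,\leq 1}^\star$ you also need $\rho(M)\in\RSC_{\et,\leq 1}$ in the first place; this follows from Remark \ref{rmk:Alb-1-generated} and the closure of $\RSC_{\et,\leq 1}$ under quotients (Proposition \ref{prop:CI1-thick}), not from Proposition \ref{prop:can-pres-1-mot-sheaf} itself.

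The one place where you go beyond the paper is the closing discussion of connectedness, and there your proposed fix does not work as stated. You suggest that ``the lattice contribution from $\pi_0(G)$ can be absorbed in $L$ without changing the isomorphism class of the image under $\rho^\star$''. This cannot be done: enlarging $L$ inside $G$ changes $\coker(L\to G)$, and in particular a finitely presented $1$-reciprocity sheaf with a nonzero discrete quotient (the canonical presentation produces $G\in\sG^*$, which need not be connected) can never be rewritten as the cokernel of a lattice embedded in a \emph{connected} group, since a connected group has no nonzero discrete quotients. The honest resolution is not an absorption trick but a matching of conventions: the group scheme appearing in Proposition \ref{prop:can-pres-1-mot-sheaf} lies in $\sG^*$, i.e.\ it is locally of finite type with finitely generated $\pi_0$, and this is the class of groups the $1$-motive formalism must accommodate for the lemma to hold on the nose. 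The paper's own proof is silent on this point, so you are not worse off than the source, but the specific mechanism you propose would fail and should be removed or replaced.
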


\begin{remark}\label{rmk;generators-laumon-1-mot}
For a category $\mathcal{C}$, we write $\Ind(\cC)$ for the Ind-category of $\cC$ as in e.g.~\cite{Kashiwara-Schapira}. By \cite[Prop. 6.3.4]{Kashiwara-Schapira} and Remark \ref{rmk;RSC-star-compact} the functor $\Ind(\RSC_{\et, \leq 1}^{\star})\to \RSC_{\et, \leq 1}$ induced by filtered colimits is fully faithful. It is also essentially surjective by Proposition \ref{prop:can-pres-1-mot-sheaf}, hence it is an equivalence. Combining this with Lemma \ref{lem:gen-sub} and \ref{lem;rho-star}, we have a functor
\begin{equation}\label{eq;Ma-generators-RSC-1}
T\colon \cM_{1,\et}^{a}\otimes \Q\to \Ind(\cM_{1,\et}^{a}\otimes\Q)\overset{(*)}{\simeq}\Ind(\cM_{1,\et}^{a,\star}\otimes\Q)\simeq \RSC_{\et,\leq 1}(k,\Q)
\end{equation}
where $(*)$ follows from the fact that $\cM_{1,\et}^{a,\star}\otimes\Q$ is a generating subcategory of $\cM_{1,\et}^{a}\otimes\Q$. Since $\cM_{1,\et}^{a}\otimes \Q$ is abelian by Lemma \ref{prop;1-mot-abelian}, it is idempotent-complete, hence following the steps of \cite[Exercise 6.1]{Kashiwara-Schapira} the functor \eqref{eq;Ma-generators-RSC-1} is fully faithful and it identifies $\cM_{1,\et}^{a}\otimes \Q$ with a set of compact generators of $\RSC_{\et,\leq 1}$. Moreover, by \cite[Proposition 8.6.11]{Kashiwara-Schapira}, the category $T(\cM_{1,\et}^{a}\otimes \Q)$ is closed under extensions in $\RSC_{\et,\leq 1}$.
\end{remark}
\subsection{The derived category of \'etale Laumon \texorpdfstring{$1$}{1}-motives}

By \cite[Proposition 8.6.11]{Kashiwara-Schapira} and Remark \ref{rmk;generators-laumon-1-mot}, the image of the functor $\rm T$ of \eqref{eq;Ma-generators-RSC-1} is a Serre subcategory of $\RSC_{\leq 1,\et}(k,\Q)$, hence we can consider the triangulated category $D^b_{\cM_{1,\et}^{a}}(\RSC_{\leq 1,\et}(k,\Q))$ of bounded complexes of $\RSC_{\leq 1}(k,\Q)$ such that $H_n(C)=T([L_n\to G_n])$ for $[L_n\to G_n]\in \cM_{1,\et}^{a}\otimes \Q$.
\begin{remark}\label{lem;bounded-der-cat}
The functor $T$ of \eqref{eq;Ma-generators-RSC-1} induces an equivalence of triangulated categories:\[
D^b(\cM_{1,\et}^{a}\otimes \Q)\simeq D^b_{\cM_{1,\et}^{a}}(\RSC_{\et,\leq 1}(k,\Q))
\]
where the latter is the triangulated derived category , since by Proposition \ref{prop:hom-dim}, every object of $\RSC_{\et,\leq 1}^\star$ is of projective dimension at most 1 in the sense of \cite{Kellercychom}, in particular the image of $T$ satisfies \cite[1.21 Lemma (c2)]{Kellercychom}, hence the equivalence comes from \cite[1.21 Lemma (c)]{Kellercychom}.
\end{remark}
\begin{defn}
Let $\cD^b(\cM_{1,\et}^{a}\otimes\Q)$ be the full $\infty$-subcategory of $\cD(\RSC_{\et,\leq 1})$ spanned by bounded complexes $C\in \cD^b(\RSC_{\et,\leq 1})$ such that $\pi_n C=T([L_n\to G_n])$ for $[L_n\to G_n]\in \cM_{1,\et}^{a}\otimes \Q$. 
\end{defn}
\begin{remark}
Notice that since the category $\cM_{1,\et}^{a}\otimes \Q$ does not have enough injective nor projective objects, we cannot use \cite[1.3]{HA} to construct $\cD^b(\cM_{1,\et}^{a}\otimes \Q)$ directly.
\end{remark}

\begin{lemma}\label{lem;compact-laumon}
There is an equivalence of $\infty$-categories:\[
\cD(\RSC_{\et,\leq 1}(k,\Q))^\omega \simeq \cD^b(\cM_{1,\et}^{a}\otimes \Q)
\]
where the left hand side denotes the subcategory of compact objects as in \cite[Notation 5.3.4.6]{HTT}.
\begin{proof}
Since the set of objects of $\RSC_{\et,\leq 1}(k,\Q)$ lying in the image of \eqref{eq;Ma-generators-RSC-1} is a set of compact generators, by \cite[Lemma 094B]{stacks-project} we have an equivalence\[
\cD(\RSC_{\et,\leq 1}(k,\Q))^\omega \simeq {\rm Idem}(\cD^b(\cM_{1,\et}^{a}\otimes \Q))
\]
where the right-hand side is the idempotent completion of $\cD^b(\cM_{1,\et}^{a}\otimes \Q)$, see \cite[Definition 5.1.4.1]{HTT}. On the other hand, the category $\cD^b(\cM_{1,\et}^{a}\otimes \Q)$ is idempotent-complete since the image of \eqref{eq;Ma-generators-RSC-1} is idempotent complete (it is an abelian subcategory), hence every object of $\cD(\RSC_{\et,\leq 1}(k,\Q))^\omega$ lies in $\cD^b(\cM_{1,\et}^{a}\otimes \Q)$. The other inclusion is clear.
\end{proof}
\end{lemma}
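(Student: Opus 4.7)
The approach is to combine the compact generation statement from Remark \ref{rmk;generators-laumon-1-mot} with a standard $\infty$-categorical description of the subcategory of compact objects in a presentable stable $\infty$-category as the idempotent completion of the thick subcategory generated by a chosen set of compact generators (e.g.~\cite[Lemma 094B]{stacks-project}), and then to upgrade the equivalence so produced to a strict equivalence by observing that $\cD^b(\cM_{1,\et}^{a}\otimes\Q)$ is already idempotent complete.

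The first step is to note that Remark \ref{rmk;generators-laumon-1-mot} exhibits the image $T(\cM_{1,\et}^{a}\otimes\Q)\subseteq \RSC_{\et,\leq 1}(k,\Q)$ as a set of compact generators of the Grothendieck abelian category (compactness coming from the fact that each element is a finite colimit of group schemes, which are themselves compact by Remark \ref{rmk;RSC-star-compact}). Viewing these objects in degree zero in $\cD(\RSC_{\et,\leq 1}(k,\Q))$ and shifting, one obtains a set of compact generators of the full derived $\infty$-category. The cited stacks project result then identifies $\cD(\RSC_{\et,\leq 1}(k,\Q))^{\omega}$ with the idempotent completion of the thick $\infty$-subcategory $\cT$ they generate.

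The second step is to identify $\cT$ with $\cD^b(\cM_{1,\et}^{a}\otimes\Q)$. The inclusion $\cT\subseteq \cD^b(\cM_{1,\et}^{a}\otimes\Q)$ follows from the fact that $T(\cM_{1,\et}^{a}\otimes\Q)$ is a Serre subcategory of $\RSC_{\et,\leq 1}(k,\Q)$ (as noted in Remark \ref{rmk;generators-laumon-1-mot}), so taking fibers and cofibers preserves the condition on homology sheaves. For the converse inclusion, any $C\in \cD^b(\cM_{1,\et}^{a}\otimes\Q)$ can be built, via its Postnikov tower, from its homology objects $H_n(C)\in T(\cM_{1,\et}^{a}\otimes\Q)$; the only extensions that arise are controlled by $\Ext^1_{\RSC_{\et,\leq 1}(k,\Q)}$-groups, since Proposition \ref{prop:hom-dim} guarantees that $\Ext^i_{\RSC_{\et,\leq 1}(k,\Q)}=0$ for $i\geq 2$, which ensures the induction on the length of $C$ terminates in finitely many steps and stays inside $\cT$.

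The final step is to show that $\cD^b(\cM_{1,\et}^{a}\otimes\Q)$ is itself idempotent complete, so that the idempotent completion in the stacks project identification is vacuous. This again uses Proposition \ref{prop;1-mot-abelian} (the abelian category $\cM_{1,\et}^{a}\otimes\Q$ is idempotent complete, as any abelian category is) together with the cohomological dimension bound from Proposition \ref{prop:hom-dim}: an idempotent on a bounded complex splits levelwise on each homology object, and the vanishing of higher Ext groups allows these splittings to be assembled into a splitting of the complex via the Postnikov tower. The hardest part of the argument is really this idempotent completeness check, since in general bounded derived categories of idempotent complete abelian categories need not be idempotent complete without a finiteness constraint; the cohomological dimension $1$ statement is exactly the input that makes the argument go through.
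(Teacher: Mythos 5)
Your proposal follows the same route as the paper: use the compact generators of Remark \ref{rmk;generators-laumon-1-mot}, apply \cite[Lemma 094B]{stacks-project} to identify $\cD(\RSC_{\et,\leq 1}(k,\Q))^{\omega}$ with the idempotent completion of the thick subcategory they generate, identify that subcategory with $\cD^b(\cM_{1,\et}^{a}\otimes\Q)$, and check idempotent completeness. Your explicit Postnikov-tower identification of the thick subcategory with $\cD^b(\cM_{1,\et}^{a}\otimes\Q)$ fills in a step the paper leaves implicit, which is welcome; note however that the induction terminates simply because the complex is bounded, not because of the vanishing of $\Ext^{\geq 2}$, so Proposition \ref{prop:hom-dim} is not needed there. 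The one genuine divergence is the idempotent-completeness check: the paper's argument is just that the image of $T$ is an abelian (hence retract-closed) subcategory of $\RSC_{\et,\leq 1}(k,\Q)$, so $\cD^b(\cM_{1,\et}^{a}\otimes\Q)$ is closed under retracts inside the presentable stable $\infty$-category $\cD(\RSC_{\et,\leq 1}(k,\Q))$, where idempotents automatically split. Your levelwise-splitting-and-reassembly argument via cohomological dimension $1$ would also work, but it is heavier than necessary, and the concern motivating it is moot: by Balmer--Schlichting the bounded derived category of an idempotent complete exact category is always idempotent complete, so no finiteness constraint on $\Ext$ is required for this step.
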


\begin{thm}\label{thm;Laumon-are-motivic}
Let $\mathbf{log}{\mathcal{DM}}^{\rm eff}_{\leq 1,{\rm gm}}(k,\Q):= \mathbf{log}{\mathcal{DM}}^{\rm eff}_{\leq 1}(k,\Q)^\omega$. The functor $\omega^{\lDM}_{\leq 1}$ preserves compact objects and it induces an equivalence\[
\cD^b(\cM_{1,\et}^{a}\otimes \Q)\xrightarrow{\sim} \mathbf{log}{\mathcal{DM}}^{\rm eff}_{\leq 1,{\rm gm}}(k,\Q).
\] 
\begin{proof} 
By Lemma \ref{lem;compact-laumon}, if $C\in \cD(\RSC_{\leq 1})$ is compact, then it is a bounded complex such that $\pi_i(C)=T(M_i)$ for $M_i\in \cM_{1,\et}^{a}\otimes \Q$. In particular, there exists $n$ such that $C=\tau_{\geq n}C$, so  we get a fiber-cofiber sequence in $\cD(\RSC_{\et,\leq 1})$:\[
T(M_n)[n]\to C \to \tau_{\geq n-1}C.
\]
If $C$ is compact, then $\tau_{\geq n-1}C$ is compact, 
so by induction on the length of the bounded complex it is enough to show that for $M\in \cM_{1,\et}^{a}\otimes \Q$, the object $\omega^{\lDM}_{\leq 1}(T(M)[n])$ is compact. As observed in Lemma \ref{lem:gen-sub}, there is an exact sequence in $\cM_{1,\et}^{a}\otimes \Q$:\[
0\to M'\to M''\to M\to 0
\]
with $M', M''\in \cM_{1,\et}^{a,\star}$, and since $T$ is exact we have
\[
\omega^{\lDM}_{\leq 1}(T(M)[n]) = {\rm cofib}(\omega^{\lDM}_{\leq 1}(T(M')[n])\to \omega^{\lDM}_{\leq 1}(T(M'')[n]).
\]
Thus, it is enough to show that $\omega^{\lDM}_{\leq 1}(T(M)[n])$ is compact for $M=[L\overset{u}{\hookrightarrow} G]$. In this case, we have that $T(M)=\coker(u)$, so we have a cofiber sequence\[
\omega^{\lDM}_{\leq 1}(L[n])\to \omega^{\lDM}_{\leq 1}(G[n])\to \omega^{\lDM}_{\leq 1}(T(M)[n]).
\] 
We conclude since $\omega^{\lDM}_{\leq 1}(L)[n]=\omega^*L[n]$ and $\omega^{\lDM}_{\leq 1}(G[n])=\omega^{\CI}_{\log}(G)[n]$ are compact. 
The equivalence then follows from Theorem \ref{thm;fully-faithfyl} and Lemma \ref{lem;compact-laumon}.
\end{proof}

\end{thm}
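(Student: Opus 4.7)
The plan is to prove the theorem in two moves: first establish that $\omega^{\lDM}_{\leq 1}$ sends compact objects to compact objects, and then combine the equivalence from Theorem \ref{thm;fully-faithfyl} with Lemma \ref{lem;compact-laumon} to deduce the equivalence of compact subcategories.

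For the preservation of compact objects, I would use Lemma \ref{lem;compact-laumon} to identify $\cD(\RSC_{\et,\leq 1}(k,\Q))^\omega$ with $\cD^b(\cM_{1,\et}^{a}\otimes \Q)$, and then proceed by induction on the length of a bounded complex, using the fiber--cofiber sequence
\[
T(M_n)[n]\to C \to \tau_{\geq n-1}C
\]
for a complex $C$ with $\pi_n(C)=T(M_n)$. This reduces the problem to proving that $\omega^{\lDM}_{\leq 1}(T(M)[n])$ is compact for each $M\in \cM_{1,\et}^{a}\otimes \Q$ and each integer $n$. Using Lemma \ref{lem:gen-sub}, we further reduce to the case $M=[L\xhookrightarrow{u} G]\in \cM_{1,\et}^{a,\star}\otimes\Q$, for which $T(M)=\coker(u)$ fits in a cofiber sequence
\[
\omega^{\lDM}_{\leq 1}(L[n])\to \omega^{\lDM}_{\leq 1}(G[n])\to \omega^{\lDM}_{\leq 1}(T(M)[n]).
\]
Since $L$ is a lattice, we have $\omega^{\lDM}_{\leq 1}(L[n])=\omega^*L[n]$, which is compact in $\logDM(k,\Q)$; and $\omega^{\lDM}_{\leq 1}(G[n])=\omega^{\CI}_{\log}(G)[n]$ is compact since $G\in \sG^*$ can be written as an extension of a semi-abelian variety (whose image agrees, via Proposition \ref{prop;comparison-usual-LAlb}, with a compact object of $\omega^*\mathcal{DM}^{\rm eff}_{\leq 1}$) by a unipotent group (a direct sum of $\G_a$, whose image $\Log_{\det}\G_a$ is compact by construction in $\logDMone(k,\Q)$).

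Once compactness preservation is established, Theorem \ref{thm;fully-faithfyl} provides the equivalence
\[
\omega^{\lDM}_{\leq 1}\colon \cD(\RSC_{\et, \leq 1}(k,\Q)) \xrightarrow{\sim} \logDMone(k,\Q),
\]
which automatically restricts to an equivalence on subcategories of compact objects. Combining this with the identification $\cD(\RSC_{\et,\leq 1}(k,\Q))^\omega \simeq \cD^b(\cM_{1,\et}^{a}\otimes \Q)$ of Lemma \ref{lem;compact-laumon}, we obtain the claimed equivalence $\cD^b(\cM_{1,\et}^{a}\otimes \Q)\simeq \mathbf{log}\mathcal{DM}^{\rm eff}_{\leq 1,\mathrm{gm}}(k,\Q)$.

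The main obstacle is the verification of compactness for $\omega^{\CI}_{\log}(G)[n]$ when $G$ has a nontrivial unipotent part: the argument relies crucially on having set up $\logDMone(k,\Q)$ to contain $\Log_{\det}\G_a$ as a generator, and on the explicit control over $G$ via the Chevalley decomposition (valid since $\mathrm{char}(k)=0$). Everything else is a formal consequence of the previously established equivalences and the structural results on $\cM_{1,\et}^{a}\otimes \Q$ from Lemma \ref{lem:gen-sub} and Remark \ref{rmk;generators-laumon-1-mot}.
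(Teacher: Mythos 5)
Your proposal is correct and follows essentially the same route as the paper's proof: reduction to $\cD^b(\cM_{1,\et}^{a}\otimes\Q)$ via Lemma \ref{lem;compact-laumon}, induction on the length of the complex through the truncation fiber--cofiber sequence, reduction to $\cM_{1,\et}^{a,\star}$ via Lemma \ref{lem:gen-sub}, the cofiber sequence $\omega^{\lDM}_{\leq 1}(L[n])\to\omega^{\lDM}_{\leq 1}(G[n])\to\omega^{\lDM}_{\leq 1}(T(M)[n])$, and finally Theorem \ref{thm;fully-faithfyl}. The only difference is that you spell out, via the Chevalley decomposition, why $\omega^{\CI}_{\log}(G)[n]$ is compact, a point the paper leaves as an assertion.
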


\appendix

\section{Pro-left derived functors}\label{sec:appendix}

In this appendix we generalize to pro-left adjoints the results discussed in \cite[2.1]{Ayoub-BV} and \cite[14.3]{Kashiwara-Schapira} for left adjoints. We use in an essential way the formalism of (stable) $\infty$-categories of \cite{HA} and \cite{HTT}.

\subsection{}\label{setting-BC} We consider the following commutative square:\begin{equation}
\label{eq:cd-BC}
\begin{tikzcd}
\cC\ar[r,"G"]\ar[d,"L_{\cC}"]&\cD\ar[d,swap,"L_{\cD}"]\\
\cC'\ar[r,"G'"]&\cD'
\end{tikzcd}
\end{equation}
where $\cC,\cC'$, $\cD$ and $\cD'$ are 
 $\infty$-categories
 , $L_{\cC}$ (resp. $L_{\cD}$) has a fully faithful right adjoint $i_{\cC}$ (resp. $i_{\cD}$). Let $\alpha:L_{\cD}G\to G'L_{\cC}$ be the equivalence that makes the diagram commute. As observed in \cite[Definition 4.7.4.13]{HA}, $\alpha$ induces a natural transformation (the \emph{Beck--Chevalley} map)
\[BC(\alpha):Gi_{\cC}\to i_{\cD}G'.\]

\begin{defn}
In the situation of \ref{setting-BC}, an object $X$ of $\cD$ is $BC(\alpha)$-admissible if and only if for any $Y\in \cC'$ the Beck-Chevalley map induces an equivalence:
\begin{equation}\label{eq;admiss}
\Map_{\cD}(X,i_{\cD}G'(Y))\simeq \Map_{\cD}(X,Gi_{\cC}(Y)).
\end{equation}

\end{defn}

Assume that $G$ and $G'$ have left adjoints $F$ and $F'$, then by adjunction $\alpha$ induces a map\[
\beta\colon F'L_{\cD}\to L_{\cC}F.
\]

\subsection{} Let $\cC$ be an (arbitrary) $\infty$-category. We consider as in \cite[Section 2]{Hoyois} the $\infty$-category of pro-objects $\Pro\cC$ 
together with the ``constant pro-object" embedding $c\colon \cC\to \Pro(\cC)$ such that $\Map_{\Pro\cC}(\_,c(Y))$ commmutes with cofiltered limits. 

Every element in $\Pro\cC$ is corepresented by a diagram $I\to \cC$ for $I$ (the nerve of) a small cofiltered poset. 
We will often denote an object of $\Pro\cC$ as $``\lim_{i\in I}"X_i$ for a diagram $I\to \cC$. By construction we have that\[
\Map_{\Pro\cC}(``\lim_{i\in I_1}"X_i,``\lim_{j\in I_2}"Y_j) \simeq \lim_{j}\colim_{i}\Map_{\cC}(X_i,Y_j)
\]
where the limits and colimits are computed in the $\infty$-category of spaces $\cS$. 

\begin{remark}\label{rmk;pro-BC}
The functors $L_{\cC}$ and $i_{\cC}$ extend levelwise to an adjunction $(\Pro L_{\cC},\Pro i_{\cC})$ on $\Pro\cC$ and $\Pro\cC'$ with the same properties. The verification is immediate. 
In particular, if $\cD$ and $\cD'$ have all limits, $G$ and $G'$ give the following commutative diagram:\[
\begin{tikzcd}
\Pro\cC\ar[r,"\Pro G"]\ar[d,"\Pro L_{\cC}"]&\cD\ar[d,swap,"L_{\cD}"]\\
\Pro\cC'\ar[r,"\Pro G'"]&\cD'
\end{tikzcd}
\]
which satisfies the hypotheses of Situation \ref{setting-BC} with equivalence \[\alpha^{pro}:L_{\cD}\Pro G\to \Pro G' \Pro L_{\cC}.\] In particular, since $i_{\cD}$ commutes with all limits being a right adjoint, it is immediate that $X\in \cD$ is $BC(\alpha)$-admissible if and only if it is $BC(\alpha^{pro})$-admissible. 

\end{remark}
\begin{remark} If $\cC$ is an accessible stable $\infty$-category equipped with a $t$-structure $(\cC_{\leq 0},\cC_{\geq 0})$ with heart $\cC^{\heartsuit}$, the $\infty$-category $\Pro \cC$ is also stable (see e.g. \cite[Lemma 2.5]{KST}) and it comes equipped with a $t$-structure such that $(\Pro \cC)_{\leq 0}$ (resp. $(\Pro \cC)_{\geq 0}$) is the full subcategory of objects which are formal limits of objects in $\cC_{\leq 0}$ (resp $\cC_{\geq 0}$). 
\end{remark}

\subsection{}
Let $\A$ be a Grothendieck abelian category. Write $\Ch(\A)$ for the model category of chain complexes with the injective model structure. Let $W$ be the class of quasi isomorphisms. We consider the $\infty$-categories (see \cite[1.3.5]{HA}) $\Ch_{\dg}(\A)=N_{\dg}(\Ch(\A))$ and $\cD(\A)=N_{\dg}(\Ch(\A))[W^{-1}]$. An exact functor $G\colon\A\to \B$ between Grothendieck abelian categories induces a $\dg$-functor $\Ch(G):\Ch(\A)\to \Ch(\B)$ which preserves $W$, so by taking the $\dg$-nerve it induces a functor $\Ch_{\dg}(G)\colon \Ch_{\dg}(\A)\to \Ch_{\dg}(\B)$ such that $\Ch_{\dg}(G)(C)=\Ch(G)(C)$, and by e.g. \cite[Proposition 4.3]{Hinich}, it induces $\cD(G)\colon \cD(\A)\to \cD(\B)$ on the localizations. Note that both functors are clearly stable (i.e. they commute with shifts). By construction, we have a commutative square of $\infty$-categories:
\begin{equation}\label{eq;diag-derived}
\begin{tikzcd}
	\Ch_{\dg}(\A)\ar[r,"\Ch_{\dg}(G)"]\ar[d,"L_{\A}"]&\Ch_{\dg}(\B)\ar[d,swap,"L_{\B}"]\\
	\cD(\A)\ar[r,"\cD(G)"]&\cD(\B)
\end{tikzcd}
\end{equation}
where $L_{\A}$ and $L_{\B}$ have fully faithful right adjoints $i_{\A}$ and $i_{\B}$. We will fix $\alpha$ that makes \eqref{eq;diag-derived} commute and just say that an object is $BC$-admissible.
\begin{remark}\label{rem:BC-abeliancase}
    Note  that, since $G$ is exact, we can identify the Beck-Chevalley transformation $\Ch_{\dg}(G) i_{\A} \to i_{\B} \cD(G)$ as follows. For any object $I\in \cD(\A)$ (i.e. a fibrant complex in $\Ch(\A)$ for the injective model structure), the object $\cD(G)(I)$ is 
    a fibrant replacement of $\Ch_{\dg}(G)(i_{\cA}(I))$. The map $\Ch_{\dg}(G)(i_{\A}I) \to i_{\B}\cD(G)(I)$ in $\Ch(\B)$ is thus given by the functorial fibrant replacement. In particular, if $\Ch(G)$ is a right Quillen functor, 
    the map $\Ch_{\dg}(G)(i_{\A}I) \to i_{\B}\cD(G)(I)$ is 
    an equivalence in $\Ch_{\dg}(\B)$. On the other hand, the functors considered here are not necessarily right Quillen.
\end{remark}
\begin{remark}
	If the functor $G$ has a left adjoint $F$, then $X$ is $BC$-admissible if and only if it is $F$-admissible in the sense of \cite[Definition 2.1.5]{Ayoub-BV}.
\end{remark}
\subsection{Pro-left derived functors}\label{ex-derived-cats}
Fix $\A$, $\B$ and $G$ as above and we assume that $G$ and $i_{\B}$ commute with filtered colimits. Since $G$ is exact, it preserves finite limits, so it has a pro-left adjoint $F\colon \B\to \pro\A$. The functor $\cD(G)$ is then an accessible functor between presentable $\infty$-categories that preserves finite limits, hence it has a pro-left adjoint $LF\colon \cD(\B)\to \Pro\cD(\A)$ (see e.g. \cite[Remark 2.2]{Hoyois}). 
	
For any chain complex $C$, let $\sigma_{\leq n}C$ and $\sigma_{\geq n}C$ denote the stupid truncations (see \cite[Tag 0118]{stacks-project}) \footnote{Notice that we chose the convention for \emph{chain} complexes, which is different from \cite[Lemma 2.1.10]{Ayoub-BV}: there the convention is for \emph{cochain} complexes}. We have an equivalence in $\Ch_{\dg}$:
\begin{equation}\label{eq:colim-lim-trunc}
	C\cong \colim_n(\lim_m\sigma_{\geq -m}\sigma_{\leq n} C).
\end{equation}

\begin{defn}\label{def;strictly-bounded}
	We say that a chain complex $C$ is \emph{strictly bounded} if there exists $m,n$ such that $C=\sigma_{\leq n}C=\sigma_{\geq -m}C$.
\end{defn} 

\begin{remark}\label{rmk:strictly-bounded-F-heart}
	Notice that if $C\in \Ch(\B)$, the object $\Ch(F)(C)$ a priori lives in $\Ch_{\dg}(\pro\A)$, which contains strictly $\Pro\Ch_{\dg}(\A)$. If $C$ is a strictly bounded complex, let $m,n$ such that $C=\sigma_{\leq n}C = \sigma_{\geq -m}C$, then for $r\in [-m,n]$ let $F^{\heartsuit}(C_r)=``\lim"_{i\in I_r} (X_r)_{i}$, for small cofiltered posets $I_r$ . Then one can find a cofinal set $I\subseteq I_r$ for all $r$ such that\[
	\Ch(F)(C) = ``\lim_{i\in I}"(\ldots\to (X_r)_{i}\to (X_{r-1})_i\to \ldots). 
	\]
	In particular, $\Ch(F)(C)\in \Pro(\Ch_{\dg}(\A))$.
\end{remark}	
	
\begin{prop}\label{prop;chain-homotopy-pro-left-adj}
For all $C \in \Ch_{\dg}(\B)$ strictly bounded, there is an equivalence in $\cS$: \[
\Map_{\Pro \Ch_{\dg}(\A)}(\Ch(F)(C)[0],Y)\simeq \Map_{\Ch_{\dg}(\B)}(C[0],\Ch_{\dg}(G)(Y)).
\]
\begin{proof}
Let $C_{n}=0$ for $n\notin[-r,s]$. The cofiber of the map $\sigma_{\leq s-1}C \to C$ is equivalent to $C_{s}[s]$, hence by induction on $r+s$ we are reduced to the case where $C=C_s[s]$ with $C_s\in \mathcal{B}$, and clearly $\Ch(F)(C_s[s])=F(C_s)[s]$.
Since $\Pro \Ch_{\dg}(\A)$ is pointed, by \cite[Remark 1.1.2.8]{HA} it is enough to show that for all $m$, \[\pi_0\Map_{\Pro \Ch_{\dg}(\A)}(F(X)[m],Y) \simeq \pi_0\Map_{\Ch_{\dg}(\B)}(X[m],\Ch_{\dg}(G)(Y)).\]
Let $F(X)=``\lim"T_i$, then for all $m$ we have an isomorphism of abelian groups:\[
\Hom_{\Ch(\B)}(X[m],\Ch(G)(Y)) = \colim \Hom_{\Ch(\A)}(T_i[m],Y),
\]
and since $\pi_0$ commutes with filtered colimits in $\cS$ we have by \cite[Remark 1.3.1.5, Remark 1.3.1.11 and Definition 1.3.2.1]{HA}:
\begin{align*}
\pi_0\Map_{\Pro\Ch_{\dg}(\A)}&(F(X)[n],Y)  \cong \colim \pi_0\Map_{\Ch_{\dg}(\A)}(T_i[n],Y)\\
&\cong \coker(\colim \Hom_{\Ch(\A)}(T_i[n+1],Y)\to
\colim\Hom_{\Ch(\A)}(T_i[n],Y)).\\
&\cong \coker(\Hom_{\Ch(\B)}(X[n+1],\Ch(G)(Y))\to\Hom_{\Ch(\A)}(X[n],\Ch(G)(Y)))\\
&\cong \pi_0\Map_{\Ch_{\dg}(\B)}(X[n],\Ch_{\dg}(G)(Y)).
\end{align*}
\end{proof}
\end{prop}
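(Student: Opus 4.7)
The plan is to prove the statement by d\'evissage on the amplitude of $C$, reducing to a complex concentrated in a single degree where the pro-adjunction $(F, G)$ of the underlying abelian categories applies directly. Throughout, I assume $C$ is strictly bounded, as required by Remark \ref{rmk:strictly-bounded-F-heart} for $\Ch(F)(C)$ to genuinely live in $\Pro\Ch_{\dg}(\A)$. For $C$ with $C_n = 0$ outside $[-r,s]$, the stupid-truncation cofiber sequence $\sigma_{\leq s-1} C \to C \to C_s[s]$ in $\Ch_{\dg}(\B)$ is preserved by $\Ch(F)$ (applied levelwise to produce a pro-object of $\Ch_{\dg}(\A)$, using the common-indexing trick of Remark \ref{rmk:strictly-bounded-F-heart}) and by $\Ch_{\dg}(G)$. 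Since $\Map(-,Y)$ converts cofiber sequences in the source into fiber sequences in $\cS$ on both sides, induction on $r+s$ reduces the claim to $C = C_s[s]$ and then, by shift-invariance of mapping spaces, to $C \in \B$ placed in degree $0$.

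For this base case, write $F(C) = ``\lim_{i \in I}" T_i$ with $T_i \in \A$. Because $\Pro\Ch_{\dg}(\A)$ is stable (hence pointed), \cite[Remark 1.1.2.8]{HA} allows us to check the equivalence on $\pi_0\Map(-[n], Y)$ for every $n \in \Z$. I would then invoke the formula
\[
\pi_0 \Map_{\Ch_{\dg}(\A)}(T_i[n], Y) \cong \coker\!\big(\Hom_{\Ch(\A)}(T_i[n+1], Y) \to \Hom_{\Ch(\A)}(T_i[n], Y)\big)
\]
coming from \cite[Remark 1.3.1.5, Definition 1.3.2.1]{HA} (the map being induced by the differential of $Y$), pass filtered colimits in $\Ab$ through $\coker$ using exactness of filtered colimits, and finally apply the pointwise pro-adjunction $\colim_i \Hom_{\A}(T_i, D) \cong \Hom_{\B}(C, G(D))$ for $D \in \A$. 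The resulting chain of isomorphisms is precisely the one displayed in the author's computation, and the same argument handles every $\pi_n$ via shifts.

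The delicate technical point is ensuring that the pro-adjunction is compatible with the differentials of $Y$, so that the identifications above commute with the boundary map computing $\coker$. This forces the colimit over $I$ to be arranged along a single cofinal indexing poset --- exactly as in Remark \ref{rmk:strictly-bounded-F-heart} --- that simultaneously corepresents $F(C)[n]$ and $F(C)[n+1]$, rather than along independent systems. Once this common index is in place, the naturality of the adjunction isomorphism delivers the required commutation with $d^*_Y$, and the argument closes. A final subtlety is that the passage from strictly bounded $C$ to arbitrary $C \in \Ch_{\dg}(\B)$ (as formally asserted in the statement) has to be interpreted via the colimit--limit presentation \eqref{eq:colim-lim-trunc}, using that both sides of the desired equivalence send the corresponding colimit and limit into a limit and colimit of spaces, respectively.
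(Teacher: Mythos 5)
Your proposal is correct and follows essentially the same route as the paper's proof: dévissage via the stupid-truncation cofiber sequence $\sigma_{\leq s-1}C \to C \to C_s[s]$ reducing to a single object, then reduction to $\pi_0$ of shifted mapping spaces via \cite[Remark 1.1.2.8]{HA}, the cokernel formula for $\pi_0\Map$ in a dg-nerve, and commutation of filtered colimits with $\pi_0$ and cokernels to invoke the underlying pro-adjunction. Your added attention to the common cofinal indexing of the pro-system (so the adjunction isomorphisms commute with the differentials of $Y$) and to the implicit restriction to strictly bounded $C$ are both points the paper's proof passes over silently, but they do not change the argument.
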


\begin{remark}\label{rmk;BC-adm-is-LF-trivial}
A priori, there is no relation between $F$ and $LF$, but if $X$ is strictly bounded and $BC$-admissible, then Proposition \ref{prop;chain-homotopy-pro-left-adj} implies that\[
LF(L_{\B}(X))\simeq L_{\Pro\A}\Ch(F)(X)[0].
\]
In particular for $X\in \B$ such that $X[0]$ is $BC$-admissible, \[
\pi_n(LF(L_{\B}(X[0])))=\begin{cases}
F(X)&\textrm{if }n=0\\
0&\textrm{otherwise}.
\end{cases}\]
\end{remark}
Remark \ref{rmk;BC-adm-is-LF-trivial} motivates the following definition:
\begin{defn}\label{defn;left-derived}
	In the situation of  \ref{ex-derived-cats}, $LF$ is said to be a \emph{pro-left derived functor} of $F$ if for every $X\in \B$\[
	\pi_0LF(X[0])\cong F(X).
	\] 
\end{defn}

\subsection{\texorpdfstring{$BC$}{BC}-admissible resolution} We will fix the setting of \ref{ex-derived-cats}. By abuse of notation, we will say that $P\in \B$ is $BC$-admissible if $P[0]\in \Ch_{\dg}(\B)$ is.

\begin{prop}\label{prop;bounded-admissible}
Let $P_\bullet\in \Ch_{\dg}(\B)$ be a strictly bounded complex (see Definition \ref{def;strictly-bounded}) such that $P_n$ is $BC$-admissible for all $n$. Then $P_{\bullet}$ is $BC$-admissible.
\begin{proof}
Up to shift, we can suppose that $P_{\bullet} = \sigma_{\geq 0}P_{\bullet}= \sigma_{\leq n}P_{\bullet}$ for some $n\geq 0$: we proceed by induction on $n$. 
If $n=0$, then $P_{\bullet}=P_{0}[0]$, and it is $BC$-admissible by assumption. 
Let $n>0$ and consider the fiber-cofiber sequence in $\Ch_{\dg}(\B)$:\[
\sigma_{\leq n-1}P_{\bullet}\to P_{\bullet} \to P_{n}[n].
\]
For all $I\in \cD(\A)$, we conclude by the following diagram in $\mathcal{S}$ where the left and right vertical maps are equivalences by induction:\[
\begin{small}
\begin{tikzcd}
\Map(P_{n}[0],\Ch_{\dg}(G)(i_{\A}I[-n]))\ar[r]\ar[d,"\simeq"]&\Map(P_{\bullet},\Ch_{\dg}(G)(i_{\A}I))\ar[r]\ar[d]&\Map(\sigma_{\leq n-1}P_{\bullet},\Ch_{\dg}(G)(i_{\A}I))\ar[d,"\simeq"]\\
\Map(P_{n}[0],i_{\B}\cD(G)(I[-n]))\ar[r]&\Map(P_{\bullet},i_{\B}\cD(G)(I))\ar[r]&\Map(\sigma_{\leq n-1}P_{\bullet},i_{\B}\cD(G)(I)).
\end{tikzcd}
\end{small}
\]
\end{proof}
\end{prop}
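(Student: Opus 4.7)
The plan is a straightforward induction on the length of the strictly bounded complex, exploiting the fact that $BC$-admissibility is a condition on mapping spaces, which turn fiber-cofiber sequences in the source into fiber-cofiber sequences in $\mathcal{S}$.

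First I would normalize the complex: up to shifting (both sides of the Beck--Chevalley comparison are compatible with the shift, since $\Ch_{\dg}(G)$ and $\cD(G)$ are stable functors commuting with $[\pm 1]$, and so the $BC$-admissibility of $P_\bullet$ is equivalent to that of $P_\bullet[m]$), we may assume $P_\bullet = \sigma_{\geq 0}P_\bullet = \sigma_{\leq n}P_\bullet$ for some $n\geq 0$. I then proceed by induction on $n$. The base case $n=0$ is just $P_\bullet = P_0[0]$, which is $BC$-admissible by hypothesis.

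For the inductive step, I would use the standard stupid-truncation fiber-cofiber sequence
\[
\sigma_{\leq n-1}P_\bullet \longrightarrow P_\bullet \longrightarrow P_n[n]
\]
in $\Ch_{\dg}(\B)$. Given any $I \in \cD(\A)$, applying $\Map_{\Ch_{\dg}(\B)}(-, \Ch_{\dg}(G)(i_{\A}I))$ and $\Map_{\Ch_{\dg}(\B)}(-, i_{\B}\cD(G)(I))$ produces two fiber sequences in $\mathcal{S}$, connected by the natural Beck--Chevalley map of functors $\Ch_{\dg}(G)\, i_{\A} \to i_{\B}\, \cD(G)$. In the resulting commutative ladder, the map on the fiber comes from $\sigma_{\leq n-1}P_\bullet$ (an equivalence by the induction hypothesis, since the truncation is again a strictly bounded complex with $BC$-admissible terms of strictly smaller length) and the map on the cofiber comes from $P_n[n]$, which is an equivalence because $P_n$ is $BC$-admissible and the property only depends on the underlying object up to shift. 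The two-out-of-three property for equivalences in $\mathcal{S}$ then forces the middle map to be an equivalence, proving the required admissibility for $P_\bullet$.

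The main obstacle, and really the only subtle point, is the stability of $BC$-admissibility under shift: one must be careful that testing against arbitrary $I \in \cD(\A)$ already absorbs the $[n]$ shift on the right-hand side, so that admissibility of $P_n$ yields admissibility of $P_n[n]$ automatically. Once that is in place, the induction is a formal diagram chase using that $\Map(-, Z)$ converts cofiber sequences in the source to fiber sequences in $\mathcal{S}$. No computation beyond the naturality of the Beck--Chevalley transformation is needed.
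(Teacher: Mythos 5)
Your proof is correct and follows essentially the same route as the paper: reduce by shift-invariance, induct on the length via the stupid-truncation fiber-cofiber sequence $\sigma_{\leq n-1}P_\bullet \to P_\bullet \to P_n[n]$, and conclude by comparing the two resulting fiber sequences of mapping spaces through the Beck--Chevalley map. Your explicit remark on why admissibility is stable under shift (absorbing $[n]$ into the test object $I$) is a point the paper leaves implicit, but the argument is the same.
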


Recall that $\B$ is said to be generated by a set of objects $E$ if and only if $E$ is closed under direct sums and for every $X\in \B$ there exists a surjective map
\begin{equation}\label{eq;generating}
P_0\to X\to 0
\end{equation}
with $P_0\in E$.
Suppose that $\B$ is generated by a set of objects $E$ which are $BC$-admissible. Then let $K$ be the kernel of \eqref{eq;generating}, so there exists $P_1\in E$ together with a surjective map $P_1\twoheadrightarrow K$, hence we have an exact sequence:
\begin{equation}\label{eq;resolution}
P_1\to P_0\to X\to 0.
\end{equation}
By iterating \eqref{eq;resolution} one can construct a resolution $P_{\bullet}\to X[0]$ where $P_n\in E$ and $P_n=0$ for $n<0$. We will call this a \emph{connective $BC$-admissible resolution}.

\begin{lemma}\label{lem;compute-derived-with-resolution}
Suppose that $\B$ is generated by a set of objects which are $BC$-admissible. For any $X\in \B$ and any connective $BC$-admissible resolution $P_{\bullet}\to X[0]$, we have that\[
LF(L_{\B}(X[0])\simeq \colim_{n}L_{\Pro\A}(\Ch(F)(\sigma_{\leq n}P_{\bullet})).
\]
\begin{proof}
Since $P_{\bullet}\to X[0]$ is a resolution, we have $L_{\B}(P_\bullet)\simeq L_{\B}(X[0])$. Moreover, since $P_\bullet$ is connective, we have that $P_{\bullet}=\colim \sigma_{\leq n}P_{\bullet}$, and $\sigma_{\leq n}P_{\bullet}$ are $BC$-admissible by Proposition \ref{prop;bounded-admissible}. Since $LF$ and $L_{\B}$ commute with all colimits, 
by Remark \ref{rmk;BC-adm-is-LF-trivial} we conclude that\[
LF(L_{\B}X[0]) \simeq \colim LF(L_{\B}\sigma_{\leq n}P_{\bullet}) \simeq \colim_{n}L_{\Pro\A}(\Ch(F)(\sigma_{\leq n}P_{\bullet})),
\]
\end{proof}
\end{lemma}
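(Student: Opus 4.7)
The plan is to reduce to the bounded case, where Remark \ref{rmk;BC-adm-is-LF-trivial} applies directly, and then assemble by colimits. First, since $P_{\bullet}\to X[0]$ is a quasi-isomorphism of chain complexes, we have $L_{\B}(P_{\bullet})\simeq L_{\B}(X[0])$ in $\cD(\B)$, so it suffices to compute $LF(L_{\B}(P_{\bullet}))$. Next, because $P_{\bullet}$ is connective (concentrated in non-negative degrees by construction of a connective $BC$-admissible resolution), the stupid truncations fit into a filtered system $\{\sigma_{\leq n}P_{\bullet}\}_{n\geq 0}$ whose colimit in $\Ch_{\dg}(\B)$ is $P_{\bullet}$, since in any fixed degree the sequence stabilizes for $n$ large.

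Applying the localization $L_{\B}\colon \Ch_{\dg}(\B)\to \cD(\B)$, which preserves all colimits (it is a reflective localization, hence a left adjoint), and then the pro-left adjoint $LF$, which also preserves all colimits, yields
\[ LF(L_{\B}(X[0]))\simeq LF(L_{\B}(P_{\bullet}))\simeq \colim_{n}LF(L_{\B}(\sigma_{\leq n}P_{\bullet})). \]
It remains to identify each bounded term. By Proposition \ref{prop;bounded-admissible}, each $\sigma_{\leq n}P_{\bullet}$ is a strictly bounded complex of $BC$-admissible objects and is therefore itself $BC$-admissible. Remark \ref{rmk:strictly-bounded-F-heart} ensures that $\Ch(F)(\sigma_{\leq n}P_{\bullet})$ genuinely lives in $\Pro\Ch_{\dg}(\A)$ (one chooses a common cofinal cofiltered indexing set, which is possible precisely because only finitely many degrees are nonzero). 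Then Proposition \ref{prop;chain-homotopy-pro-left-adj}, combined with the $BC$-admissibility, gives the identification
\[ LF(L_{\B}(\sigma_{\leq n}P_{\bullet}))\simeq L_{\Pro\A}(\Ch(F)(\sigma_{\leq n}P_{\bullet})), \]
as already recorded in Remark \ref{rmk;BC-adm-is-LF-trivial}. Substituting into the colimit above delivers the desired formula.

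The main obstacle is conceptual rather than computational: one must justify that $\Ch(F)(\sigma_{\leq n}P_{\bullet})$ can be regarded as an object of $\Pro\Ch_{\dg}(\A)$ (and not merely a complex in $\pro\A$, which is in general larger), and that under this identification the map $L_{\Pro\A}\Ch(F)(\sigma_{\leq n}P_{\bullet})\to LF(L_{\B}\sigma_{\leq n}P_{\bullet})$ induced by the canonical natural transformation is an equivalence. Both points are settled by the finiteness of the range of nonzero degrees in $\sigma_{\leq n}P_{\bullet}$, which is exactly where strict boundedness is essential; once this is in place the rest is a formal manipulation using that $LF$ and $L_{\B}$ commute with arbitrary colimits.
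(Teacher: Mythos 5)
Your proof is correct and follows essentially the same route as the paper's: reduce to the bounded stupid truncations, apply Proposition \ref{prop;bounded-admissible} to get $BC$-admissibility of each $\sigma_{\leq n}P_{\bullet}$, identify the bounded terms via Remark \ref{rmk;BC-adm-is-LF-trivial} (i.e.\ Proposition \ref{prop;chain-homotopy-pro-left-adj}), and pass to the colimit using that $L_{\B}$ and $LF$ preserve colimits. Your additional remarks on why $\Ch(F)(\sigma_{\leq n}P_{\bullet})$ lands in $\Pro\Ch_{\dg}(\A)$ only make explicit what the paper leaves implicit via Remark \ref{rmk:strictly-bounded-F-heart}.
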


We can now prove the main theorem of this appendix:

\begin{thm}\label{thm:gen-thm-existenceLF}
In the situation of \ref{ex-derived-cats}, suppose that $\B$ is generated by a set objects which are $BC$-admissible. Then the functor $LF$ is a pro-left derived functor of $F$.
\end{thm}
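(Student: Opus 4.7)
The plan is to reduce the identification of $\pi_0 LF(L_\B(X[0]))$ with $F(X)$ to a direct calculation using a $BC$-admissible resolution of $X$. Fix $X \in \B$. Since $\B$ is generated by a set of $BC$-admissible objects, by the iterative construction described just before Lemma \ref{lem;compute-derived-with-resolution} one can choose a connective resolution $P_\bullet \to X[0]$ with $P_n = 0$ for $n < 0$, each $P_n$ $BC$-admissible, and homology concentrated in degree $0$ with $H_0(P_\bullet) = X$. In particular, $P_1 \xrightarrow{d} P_0 \to X \to 0$ is an exact sequence in $\B$.

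By Lemma \ref{lem;compute-derived-with-resolution} I have
\[ LF(L_{\B}(X[0])) \simeq \colim_{n} L_{\Pro\A}(\Ch(F)(\sigma_{\leq n} P_{\bullet})).\]
Each $\sigma_{\leq n} P_\bullet$ is strictly bounded with $BC$-admissible components, so by Remark \ref{rmk:strictly-bounded-F-heart} I can represent $\Ch(F)(\sigma_{\leq n} P_\bullet)$, over a common cofinal indexing, as an honest chain complex $F(P_n) \to \cdots \to F(P_0)$ in $\Pro\Ch_{\dg}(\A)$. Applying $\pi_0$ for the $t$-structure on $\Pro\cD(\A)$ described at the start of the appendix, which is compatible with filtered colimits and with the localization $L_{\Pro\A}$, I get
\[ \pi_0 LF(L_\B(X[0])) \simeq \colim_n \pi_0 L_{\Pro\A}(\Ch(F)(\sigma_{\leq n} P_\bullet)).\]
For every $n \geq 1$, the right-hand side equals $\coker(F(P_1) \to F(P_0))$ in $\Pro\A$, computed from the bottom two terms of the complex; the transition maps between stages $n, n+1 \geq 1$ are identities and the transition from stage $0$ is the natural surjection $F(P_0) \twoheadrightarrow \coker(F(P_1) \to F(P_0))$. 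Hence the colimit stabilises at $\coker(F(P_1) \to F(P_0))$.

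To conclude I need to identify this cokernel with $F(X)$. This is formal: $F$, as pro-left adjoint of $G$, preserves cokernels, since for any $Y\in\A$ the adjunction together with the left exactness of $\Hom_{\B}(-,G(Y))$ applied to the exact sequence $P_1\to P_0\to X\to 0$ gives
\[ \Hom_{\Pro\A}(F(X),Y)\cong \ker\bigl(\Hom_{\Pro\A}(F(P_0),Y)\to \Hom_{\Pro\A}(F(P_1),Y)\bigr),\]
so by the Yoneda lemma $F(X)\simeq \coker(F(P_1)\to F(P_0))$ in $\Pro\A$. Combining this with the previous step yields $\pi_0 LF(L_\B(X[0]))\cong F(X)$, which is exactly Definition \ref{defn;left-derived}.

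The main technical obstacle I expect is the careful verification that $\pi_0$ genuinely commutes with $L_{\Pro\A}$ on the strictly bounded complexes $\Ch(F)(\sigma_{\leq n} P_\bullet)$, i.e. that the heart of the induced $t$-structure on $\Pro\cD(\A)$ computes the expected cokernel in $\Pro\A$. This should follow from the explicit description of the induced $t$-structure recalled in the appendix, together with the $t$-exactness of filtered colimits; once granted, the rest of the argument is purely formal.
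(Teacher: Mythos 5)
Your proposal is correct and follows essentially the same route as the paper: reduce via Lemma \ref{lem;compute-derived-with-resolution} to the truncations $\sigma_{\leq n}P_\bullet$ of a connective $BC$-admissible resolution, identify the degree-zero homotopy of each stage with $\coker(F(P_1)\to F(P_0))$, and use that $F$ preserves cokernels to conclude. The only (cosmetic) difference is that the paper first notes $LF(L_\B X[0])$ is connective by right $t$-exactness, so that $\pi_0(-)[0]\simeq\tau_{\leq 0}$ and the commutation with colimits is automatic because $\tau_{\leq 0}$ is a left adjoint, which sidesteps the compatibility-of-$\pi_0$-with-filtered-colimits point you flag as the main technical obstacle.
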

\begin{proof}
Let $X\in \B$ and $P_\bullet \to X[0]$ a $BC$-admissible resolution, in particular $P_\bullet\in \cD(\B)_{\geq 0}$. Since $L_{\B}X[0]\in \cD(\B)_{\geq 0}$ and $LF$ is right $t$-exact, $LF(L_{\B}X[0])\in \Pro\cD(\A)_{\geq 0}$, hence\[
\pi_0LF(L_{\B}X[0])[0]\simeq \tau_{\leq 0}LF(L_{\B}X[0]).
\]
Since $\tau_{\leq 0}$ is a left adjoint, it commutes with colimits, so by Lemma \ref{lem;compute-derived-with-resolution} we have\[
\tau_{\leq 0}LF(X[0]) \simeq \tau_{\leq 0} \colim_{n} L_{\Pro\A}\Ch(F)(\sigma_{\leq n}P_{\bullet})\simeq \colim_{n}\tau_{\leq 0} L_{\Pro\A}\Ch(F)(\sigma_{\leq n}P_{\bullet})
\]
where the last colimit is computed in $\Pro\cD(\A)_{\leq 0}$. On the other hand, by definition of the $t$-structure on $\Pro\cD(\A)$ we have\[
\tau_{\leq 0} L_{\Pro\A}\Ch(F)(\sigma_{\leq n}P_{\bullet}) \simeq L_{\Pro\A}\tau_{\leq 0}\Ch(F)(\sigma_{\leq n}P_{\bullet}).
\]
For $n\geq 1$, we have that $\tau_{\leq 0}\Ch(F)(\sigma_{\leq n}P_{\bullet}) = \coker(F(P_1)\to F(P_0))[0]$, and since $F$ is a left adjoint, it preserves cokernels, so\[
\coker(F(P_1)\to F(P_0)) = F(\coker(P_1\to P_0)) = F(X).
\]
We conclude that in $\Pro\cD(\A)_{\leq 0}$ we have\[
\colim_{n}\tau_{\leq 0} L_{\Pro\A}\Ch(F)(\sigma_{\leq n}P_{\bullet})\simeq \colim_{n}L_{\Pro\A}F(X)[0]\simeq L_{\Pro\A}F(X)[0]
\]
since $L_{\Pro\A}F(X)[0]\in \Pro\cD(\A)^{\heartsuit}$ we conclude that\[
\pi_0LF(X[0])\simeq\pi_0(\colim_{n}\tau_{\leq 0} L_{\Pro\A}F(\sigma_{\leq n}P_{\bullet}))\simeq F(X).
\]
\end{proof}

\subsection{} We end this appendix with a criterion of $BC$-admissibility. 

\begin{lemma}[{See \cite[Lemma 2.1.10]{Ayoub-BV}}]\label{lem:equivalent-condition-admissible} In the situation of \ref{ex-derived-cats}, let $P\in \B$ such that $P[0]$ is compact in $\Ch_{\dg}(\B)$. Then $P$ is $BC$-admissible if and only if for any injective object $I_0\in \A$, $\Ext^i_{\B}(P, G(I_0))=0$ for $i\neq 0$.
\begin{proof} Suppose first that $P$ is $BC$-admissible. If $I_0\in \A$ is injective, then $I_0[0]$ is fibrant, so $I_0[0]=i_{\A}L_{\A}(I_0[0])$. Let $I=L_{\A}(I_0[0])$, then: 
\begin{align*}
\Ext^n_{\B}(P, G(I_0)) 
&=\pi_0\Map_{\Ch_{\dg}(\B)}(P[0],i_{\cB}\cD(G)(I[-n])).
	\end{align*}
Since $P[0]$ is $BC$-admissible, we have: 
\begin{align*}
\pi_0\Map_{\Ch_{\dg}(\B)}(P[0],i_{\B}\cD(G)(I[-n])) & = \pi_0\Map_{\Ch_{\dg}(\B)}(P[0],\Ch_{\dg}(G)(i_{\A}I[-n])))  \\
& =\pi_0\Map_{\Ch_{\dg}(\B)}(P[0],G(I_0)[-n])).
\end{align*}
The last term is zero for $n\neq 0$, hence $\Ext^n_{\cB}(P, G(I_0))=0$ if $n\neq 0$.

Let us now show the converse implication. 
We need to show that for every $Y\in \cD(\A)$ the Beck-Chevalley map induces an equivalence:\[
\Map_{\Ch_{\dg}(\mathcal{B})}(P[0],i_{\B}\cD(G)(Y)) \simeq \Map_{\Ch_{\dg}(\mathcal{B})}(P[0],\Ch_{\dg}(G)(i_{\A}Y)).
\]
Let $I:=i_{\A}(Y)$, so $\sigma_{\geq -n}\sigma_{\leq m}I$ is a strictly bounded complex of injectives; since $P[0]$ is compact and the colimit in \eqref{eq:colim-lim-trunc} is filtered, Lemma \ref{lem;ext-admiss-reduct-bounded} below implies that:
\begin{equation}\label{eq;claim-almost}
	\begin{aligned}
		\Map_{\Ch_{\dg}(\B)}(P[0],\Ch_{\dg}(G)(I)) &\simeq \colim_n(\lim_m\Map_{\Ch_{\dg}(\B)}(P[0],\sigma_{\geq -n}\sigma_{\leq m}\Ch_{\dg}(G)(I)))\\ &\simeq
		\colim_n(\lim_m\Map_{\Ch_{\dg}(\B)}(P[0],\Ch_{\dg}(G)(\sigma_{\geq -n}\sigma_{\leq m}I)))\\
		&\simeq \colim_n(\lim_m \Map_{\Ch_{\dg}(\B)}(P[0],i_{\B}\cD(G)(L_{\A}\sigma_{\geq -n}\sigma_{\leq m}I))\\
		&\simeq \Map_{\Ch_{\dg}(\B)}(P[0],\colim_n\lim_m i_{\B}\cD(G)(L_{\A}\sigma_{\geq -n}\sigma_{\leq m}I)).
	\end{aligned}
\end{equation}

Next, observe that, for any $n$ and $m$, the map $\Ch_{\dg}(G)(\sigma_{\geq -n}\sigma_{\leq m}I)\to i_{\B}\cD(G)(L_{\A}\sigma_{\geq -n}\sigma_{\leq m}I)$ is a fibrant replacement (see Remark \ref{rem:BC-abeliancase}) of bounded complexes, which is given by the total complex of injective resolutions of each $G(I_r)\to J_{r}^{\bullet}$. So, we have that
\begin{equation}\label{eq;get-rid-of-limits}
\lim_mi_{\B}\cD(G)(L_{\A}\sigma_{\geq -n}\sigma_{\leq m}I) \simeq \lim_{m} \Tot_{r\in [-n,m]}(J_{r}^{\bullet}) = \Tot_{r\geq -n} (J_{r}^{\bullet}) \simeq i_{\B}\cD(G)(L_{\A}\sigma_{\leq -n}I).
\end{equation}
Since $i_{\B}$ commutes with filtered colimits by assumption, \eqref{eq;claim-almost} and \eqref{eq;get-rid-of-limits} imply:\begin{equation}\label{eq;claim-only-colimit}
	\Map_{\Ch_{\dg}(\B)}(P[0],\Ch_{\dg}(G)(i_{\A}Y))\simeq \Map_{\Ch_{\dg}(\B)}(P[0],i_{\B}\colim_n \cD(G)(L_{\A}\sigma_{\leq -n}I)).
\end{equation}
For every $q\in \Z$, we have that for $n\gg q$: \[
\pi_q \cD(G)(Y)\cong G(\pi_qY)\cong G(\pi_q\sigma_{\leq -n} I) \cong\pi_q \cD(G)(L_{\A}\sigma_{\leq -n} I),\]
so since homotopy groups commute with filtered colimits:
\begin{equation}\label{eq;get-rid-of-colimit}
\pi_q \colim_n \cD(G)(L_{\A}\sigma_{\leq -n} I) \cong\colim_n \pi_q \cD(G)(L_{\A}\sigma_{\leq -n} I) \cong \pi_q \cD(G)(Y)
\end{equation}
The proof follows then from \eqref{eq;claim-only-colimit} and \eqref{eq;get-rid-of-colimit}.
\end{proof}
\end{lemma}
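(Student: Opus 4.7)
The plan is to prove the two implications separately, with the reverse implication being substantially more involved.

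For the forward implication, the strategy is purely formal. Given $I_0 \in \A$ injective, the complex $I_0[0]$ is already fibrant in $\Ch(\A)$ with the injective model structure, so the unit $I_0[0] \to i_{\A}L_{\A}(I_0[0])$ is an equivalence. Set $I = L_{\A}(I_0[0])$. Then $\Ext^n_{\B}(P,G(I_0))$ can be computed via the adjunction $(L_{\B},i_{\B})$ as $\pi_0\Map_{\Ch_{\dg}(\B)}(P[0], i_{\B}\cD(G)(I[-n]))$. Applying $BC$-admissibility to $Y = I[-n]$, this is equivalent to $\pi_0\Map_{\Ch_{\dg}(\B)}(P[0], \Ch_{\dg}(G)(i_{\A}I[-n])) = \pi_0\Map_{\Ch_{\dg}(\B)}(P[0], G(I_0)[-n])$, which is zero for $n \neq 0$ since the two complexes are concentrated in disjoint degrees.

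For the reverse implication, I need to show that for every $Y \in \cD(\A)$ the Beck–Chevalley map induces an equivalence $\Map(P[0], \Ch_{\dg}(G)(i_{\A}Y)) \simeq \Map(P[0], i_{\B}\cD(G)(Y))$. Setting $I := i_{\A}(Y)$ (a complex of injective-like fibrant objects in $\A$), I would use the canonical presentation $I \simeq \colim_n \lim_m \sigma_{\leq -n}\sigma_{\geq m} I$ from \eqref{eq:colim-lim-trunc}. The workhorse will be an auxiliary lemma (presumably \ref{lem;ext-admiss-reduct-bounded}) stating that the Ext-vanishing hypothesis implies $BC$-admissibility against strictly bounded complexes of injectives. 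Such an auxiliary statement would be proved by induction on the length of the bounded complex using the cofiber sequences $\sigma_{\geq m+1}C \to C \to C_m[m]$ in $\cD(\A)$, reducing everything to the base case $C = I_0[0]$, which is precisely the Ext-vanishing hypothesis.

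Granting this auxiliary lemma, the remaining work is to pass from strictly bounded complexes to general $Y$ by commuting mapping spaces past limits and filtered colimits. Compactness of $P[0]$ lets the mapping space enter the filtered colimit over $n$. For the inner limit over $m$, the key observation is that fibrant replacement of a bounded complex of injectives in $\Ch(\B)$ is computed concretely as the total complex of degree-wise injective resolutions, so $\lim_m i_{\B}\cD(G)(L_{\A}\sigma_{\leq -n}\sigma_{\geq m}I)$ stabilizes and identifies with $i_{\B}\cD(G)(L_{\A}\sigma_{\leq -n}I)$. Since $i_{\B}$ commutes with filtered colimits by the standing assumption, and the homotopy groups $\pi_q \cD(G)(L_{\A}\sigma_{\leq -n}I)$ stabilize to $\pi_q \cD(G)(Y)$ once $n \gg |q|$, the colimit over $n$ computes $i_{\B}\cD(G)(Y)$, closing the argument.

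The main obstacle is the careful bookkeeping of the limits and colimits in the third paragraph: one needs to ensure that taking $\Map(P[0], -)$ really commutes with the $\colim_n \lim_m$ appearing in the truncation presentation, and this is where compactness of $P[0]$ (for the colimit) and the concrete nature of fibrant replacement for strictly bounded complexes (for the limit) intervene in an essential way. A secondary subtle point is verifying that the Beck–Chevalley map at the level of bounded complexes of injectives really becomes an equivalence once one knows the base-case Ext-vanishing; this is the content of the auxiliary lemma and requires that $G$ be exact (so that $\Ch_{\dg}(G)$ preserves bounded complexes and the fibrant replacement identifies with the total complex mentioned above).
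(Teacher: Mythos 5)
Your proposal is correct and follows essentially the same route as the paper's proof: the same formal adjunction argument for the forward direction, and for the converse the same reduction to an auxiliary lemma for strictly bounded complexes of injectives (proved by induction on length via stupid-truncation cofiber sequences), followed by the same commutation of $\Map(P[0],-)$ past the $\colim_n\lim_m$ presentation using compactness of $P[0]$, the total-complex description of fibrant replacement, the assumption that $i_{\B}$ preserves filtered colimits, and the stabilization of homotopy groups. The only cosmetic difference is that you truncate the bounded complex from the opposite end in the induction, which changes nothing.
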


\begin{lemma}\label{lem;ext-admiss-reduct-bounded}
	Let $P\in \B$ such that for any injective object $I_0\in \mathcal{A}$, $\Ext^i_{\mathcal{B}}(P, G(I_0))=0$ for $i\neq 0$. Then for any strictly bounded complex $I^b\in \Ch(\A)$ 
	of injective objects of $\A$:\[
	\Map_{\Ch_{\dg}(\B)}(P[0],\Ch_{\dg}(G)(I^b))\simeq \Map_{\Ch_{\dg}(\B)}(P[0],i_{\B}\cD(G)(L_{\A}I^b)).
	\] 
	\begin{proof} Let $I^b_{n}=0$ for $n\notin[-r,s]$. The cofiber of $\sigma_{\leq s-1}I^b \to I^b$ is equivalent to $I^b_{s}[s]$: by induction on $r+s$ we reduce to $I^b=I_s[s]$ with $I_s$ an injective object of $\mathcal{B}$. We conclude:
		\[
		\pi_n\Map_{\Ch_{\dg}(B)}(P[0],i_{\B}\cD(G)(L_{\A}I_s[s])) 
		\cong\Ext^{s-n}_{\mathcal{B}}(P, G(I_s))=\begin{cases}0&\textrm{if }n\neq s\\
			\Hom_{\mathcal{B}}(P, G(I_s))&\textrm{if }n= s.\end{cases}
		\]\end{proof}
\end{lemma}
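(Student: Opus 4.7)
The plan is to prove Lemma \ref{lem;ext-admiss-reduct-bounded} by induction on the length $\ell := r+s+1$ of the strictly bounded complex $I^b$, where $I^b_n = 0$ for $n \notin [-r,s]$. Both sides of the claimed equivalence are functorial in $I^b$ and send fiber-cofiber sequences of strictly bounded complexes of injectives to fiber sequences, so it suffices to reduce to complexes concentrated in a single degree.

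For the base case $\ell = 1$, I take $I^b = I_s[s]$ with $I_s \in \A$ injective. The left-hand side $\Map_{\Ch_{\dg}(\B)}(P[0], G(I_s)[s])$ has $\pi_n$-groups equal to $\Hom_{\B}(P, G(I_s))$ for $n = s$ and $0$ otherwise, directly from the $\dg$-nerve. For the right-hand side, I use Remark \ref{rem:BC-abeliancase}: the object $i_{\B}\cD(G)(L_{\A} I_s[s])$ is obtained as a fibrant replacement of $\Ch_{\dg}(G)(I_s[s]) = G(I_s)[s]$ in the injective model structure on $\Ch(\B)$, concretely realized by the total complex $\Tot(J^{\bullet})[s]$ of an injective resolution $G(I_s) \to J^{\bullet}$ in $\B$. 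Then one computes
\[
\pi_n \Map_{\Ch_{\dg}(\B)}(P[0], i_{\B}\cD(G)(L_{\A}I_s[s])) \cong \Ext^{s-n}_{\B}(P, G(I_s)),
\]
which by the hypothesis on $P$ vanishes for $n \neq s$ and equals $\Hom_{\B}(P, G(I_s))$ for $n = s$. The Beck–Chevalley map $\Ch_{\dg}(G)(I_s[s]) \to i_{\B}\cD(G)(L_{\A}I_s[s])$ agrees with the natural map $G(I_s)[s] \to \Tot(J^{\bullet})[s]$, so it induces the identification of $\pi_s$'s on both sides, yielding the desired equivalence.

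For the inductive step, given $I^b$ of length $\ell > 1$, I consider the termwise-split short exact sequence (equivalently, a cofiber sequence in $\Ch_{\dg}(\A)$) $\sigma_{\leq s-1} I^b \to I^b \to I^b_s[s]$. Applying the exact functor $\Ch_{\dg}(G)$ yields a cofiber sequence in $\Ch_{\dg}(\B)$, and applying $i_{\B}\cD(G) L_{\A}$ also yields a cofiber sequence (composition of functors of stable $\infty$-categories, noting that $L_{\A}$, $\cD(G)$ are exact and $i_{\B}$ preserves fiber sequences). Applying $\Map_{\Ch_{\dg}(\B)}(P[0], -)$ turns these into a morphism of fiber sequences in $\cS$. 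By the inductive hypothesis applied to the length-$(\ell-1)$ complex $\sigma_{\leq s-1}I^b$ and the base case applied to $I^b_s[s]$, the two outer maps are equivalences; hence so is the middle, proving the lemma for $I^b$.

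The main technical point I expect to require care is the identification in the base case of $i_{\B}\cD(G)(L_{\A}I_s[s])$ with $\Tot(J^{\bullet})[s]$, and the compatibility of the Beck–Chevalley transformation with this concrete model; this relies on the injective model structure on $\Ch(\B)$ and on Remark \ref{rem:BC-abeliancase}. Once this is pinned down, the induction itself is a formal diagram chase and the computation of $\pi_* \Map$-spaces is routine from the definitions.
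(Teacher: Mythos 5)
Your proof is correct and follows essentially the same route as the paper: the same stupid-truncation cofiber sequence $\sigma_{\leq s-1}I^b \to I^b \to I^b_s[s]$ drives the induction, and the base case is the identical computation $\pi_n\Map(P[0], i_{\B}\cD(G)(L_{\A}I_s[s]))\cong \Ext^{s-n}_{\B}(P,G(I_s))$ via an injective resolution of $G(I_s)$, using Remark \ref{rem:BC-abeliancase} to identify the Beck--Chevalley map with the fibrant replacement. The only difference is that you spell out the two-out-of-three step for the fiber sequences of mapping spaces, which the paper leaves implicit.
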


\bibliographystyle{amsalpha}
\bibliography{bib1mot}

\providecommand{\bysame}{\leavevmode\hbox to3em{\hrulefill}\thinspace}
\providecommand{\MR}{\relax\ifhmode\unskip\space\fi MR }
\providecommand{\MRhref}[2]{%
  \href{http://www.ams.org/mathscinet-getitem?mr=#1}{#2}
}
\providecommand{\href}[2]{#2}
\begin{thebibliography}{KMSY21b}

\bibitem[ABV09]{Ayoub-BV}
Joseph Ayoub and Luca Barbieri-Viale, \emph{1-motivic sheaves and the
  {A}lbanese functor}, J. Pure Appl. Algebra \textbf{213} (2009), no.~5,
  809--839.

\bibitem[AHPL16]{AHPL}
Giuseppe Ancona, Annette Huber, and Simon Pepin~Lehalleur, \emph{On the
  relative motive of a commutative group scheme}, Algebraic Geometry \textbf{3}
  (2016), 150--178.

\bibitem[BBD82]{BBD}
A.~A. Be\u{\i}linson, J.~Bernstein, and P.~Deligne, \emph{Faisceaux pervers},
  Analysis and topology on singular spaces, {I} ({L}uminy, 1981),
  Ast\'{e}risque, vol. 100, Soc. Math. France, Paris, 1982, pp.~5--171.
  \MR{751966}

\bibitem[BCKS17]{tor-div-rec}
F.~Binda, J.~Cao, W.~Kai, and R.~Sugiyama, \emph{Torsion and divisibility for
  reciprocity sheaves and 0-cycles with modulus}, J. Algebra \textbf{469}
  (2017), 437--463.

\bibitem[Ber13]{BertapelleJKT}
Alessandra Bertapelle, \emph{Remarks on 1-motivic sheaves}, J. K-Theory
  \textbf{12} (2013), no.~2, 363--380.

\bibitem[Ber14]{BertapelleJAlgebra}
A.~Bertapelle, \emph{Generalized 1-motivic sheaves}, J. Algebra \textbf{420}
  (2014), 261--268.

\bibitem[BK18]{BK}
Federico Binda and Amalendu Krishna, \emph{Zero cycles with modulus and zero
  cycles on singular varieties}, Compositio Math. \textbf{154} (2018), no.~1,
  120---187.

\bibitem[BM21]{BindaMerici}
Federico Binda and Alberto Merici, \emph{Connectivity and purity for
  logarithmic motives}, J. Inst. Math. Jussieu (2021), 1--47.

\bibitem[BM22]{BindaMericiErratum}
\bysame, \emph{Erratum to: Connectivity and purity for logarithmic motives}, J.
  Inst. Math. Jussieu (2022), 1--2.

\bibitem[BP{\O}22a]{BPOCras}
Federico Binda, Doosung Park, and Paul~Arne {\O}stv{\ae}r, \emph{Motives and
  homotopy theory in logarithmic geometry}, C. R., Math., Acad. Sci. Paris
  \textbf{360} (2022), 717--727.

\bibitem[BP{\O}22b]{BPO}
\bysame, \emph{Triangulated categories of logarithmic motives over a field},
  Ast{\'e}risque, vol. 433, Paris: Soci{\'e}t{\'e} Math{\'e}matique de France
  (SMF), 2022.

\bibitem[Bre69]{Breen}
Lawrence Breen, \emph{Extensions of abelian sheaves and {E}ilenberg-{M}aclane
  algebras}, Inventiones mathematicae \textbf{9} (1969), no.~1, 15--44.

\bibitem[BS19]{BS}
Federico Binda and Shuji Saito, \emph{Relative cycles with moduli and regulator
  maps}, J. Inst. Math. Jussieu \textbf{18} (2019), no.~6, 1233--1293.

\bibitem[BVB09]{BVBert}
Luca Barbieri-Viale and Alessandra Bertapelle, \emph{Sharp de {R}ham
  realization}, Adv. Math. \textbf{222} (2009), no.~4, 1308--1338.

\bibitem[BVK16]{BVKahn}
Luca Barbieri-Viale and Bruno Kahn, \emph{On the derived category of
  1-motives}, Ast\'erisque, vol. 381, Soc. Math. de France, 2016.

\bibitem[CS]{condensed}
Dustin Clausen and Peter Scholze, \emph{Lecture notes on condensed
  mathematics}, lectures notes for a course on condensed mathematics taught in
  the summer term 2019 at the University of Bonn.

\bibitem[Del68]{DeligneLefschetz}
Pierre Deligne, \emph{Th{\'e}or{\`e}me de {L}efschetz et crit{\`e}res de
  d{\'e}g{\'e}n{\'e}rescence de suites spectrales}, Inst. Hautes \'{E}tudes
  Sci. Publ. Math. (1968), no.~35, 107--126.

\bibitem[Del74]{DeligneHodgeIII}
\bysame, \emph{Th\'{e}orie de {H}odge. {III}}, Inst. Hautes \'{E}tudes Sci.
  Publ. Math. (1974), no.~44, 5--77. \MR{498552}

\bibitem[DG80]{DemazureGabriel}
Michel Demazure and Peter Gabriel, \emph{Introduction to algebraic geometry and
  algebraic groups}, North-Holland Mathematics Studies, vol.~39, North-Holland
  Publishing Co., Amsterdam-New York, 1980, Translated from the French by J.
  Bell.

\bibitem[ESV99]{ESV}
H{\'e}l{\`e}n Esnault, Vasudevan Srinivas, and Eckart Viehweg, \emph{The
  universal regular quotient of the chow group of points on projective
  varieties}, Invent. Math. \textbf{135} (1999), 595–664.

\bibitem[Ful98]{Fulton}
William Fulton, \emph{Intersection theory}, second ed., Ergeb. der Math.
  Grenzgeb. (3), vol.~2, Springer-Verlag, Berlin, 1998.

\bibitem[FW84]{FaltingsWustholz}
G.~Faltings and G.~W{\"u}stholz, \emph{Einbettungen kommutativer algebraischer
  {G}ruppen und einige ihrer {E}igenschaften}, J. Reine Angew. Math.
  \textbf{354} (1984), 175--205.

\bibitem[Gro85]{Gros1985}
Michel Gros, \emph{Classes de {C}hern et classes de cycles en cohomologie de
  {H}odge-{W}itt logarithmique}, M{\'e}m. Soc. Math. Fr. \textbf{21} (1985),
  1--87.

\bibitem[GSC01]{GScorr}
A~Grothendieck, J.-P. Serre, and P.~Colmez, \emph{Correspondance
  grothendieck--serre}, Documents mathématiques, vol.~2, Soci\'et\'e
  math\'ematique de France, 2001.

\bibitem[Har68]{HartshorneCohDim}
Robin Hartshorne, \emph{Cohomological dimension of algebraic varieties}, Ann.
  of Math. (2) \textbf{88} (1968), no.~3, 403--450.

\bibitem[Hin20]{Hinich}
Vladimir Hinich, \emph{So, what is a derived functor?}, Homol. Homotopy Appl.
  \textbf{22} (2020), no.~2, 279--293.

\bibitem[Hoy18]{Hoyois}
Marc Hoyois, \emph{Higher galois theory}, J. Pure Appl. Algebra \textbf{222}
  (2018), no.~7, 1859--1877.

\bibitem[Isa01]{Isaksenmodel}
Daniel~C. Isaksen, \emph{A model structure for the category of pro-simplicial
  sets}, Trans. Amer. Math. Soc. \textbf{353} (2001), 2805--2841.

\bibitem[Kat21]{FKato}
Fumiharo Kato, \emph{Integral morphisms and log blow-ups}, Isr. J. of Math.
  (2021), 1--9, to appear.

\bibitem[Kel99]{Kellercychom}
Bernhard Keller, \emph{On the cyclic homology of exact categories}, J. Pure
  Appl. Algebra \textbf{136} (1999), 1--56.

\bibitem[KMSY21a]{MotModulusI}
Bruno Kahn, Hiroyasu Miyazaki, Shuji Saito, and Takao Yamazaki, \emph{{Motives
  with modulus, I: Modulus sheaves with transfers for non-proper modulus
  pairs}}, {{\'E}pij. de G{\'e}om. Alg.} \textbf{5} (2021), 1--46.

\bibitem[KMSY21b]{MotModulusII}
\bysame, \emph{{Motives with modulus, II: Modulus sheaves with transfers for
  proper modulus pairs}}, {{\'E}pij. de G{\'e}om. Alg.} \textbf{5} (2021),
  1--31.

\bibitem[Kri06]{KrishnaThreefold}
Amalendu Krishna, \emph{Zero cycles on a threefold with isolated
  singularities}, J. Reine Angew. Math. \textbf{594} (2006), 93--115.

\bibitem[Kri10]{KrishnaArtinReese}
\bysame, \emph{An artin-rees theorem in k-theory and applications to zero
  cycles}, J. Algebraic Geom. \textbf{19} (2010), no.~3, 555--598.

\bibitem[KS83]{KatoSaitoUnCFT}
Kazuya Kato and Shuji Saito, \emph{Unramified class field theory of
  arithmetical surfaces}, Ann. of Math. (2) \textbf{118} (1983), no.~2,
  241--275. \MR{717824}

\bibitem[KS02]{KrishnaSrinivas}
Amalendu Krishna and Vasudevan Srinivas, \emph{Zero-cycles and k-theory on
  normal surfaces}, Ann. of Math. (2) \textbf{156} (2002), no.~1, 155--195.

\bibitem[KS06]{Kashiwara-Schapira}
Masaki Kashiwara and Pierre Schapira, \emph{Categories and sheaves},
  Grundlehren der Mathematischen Wissenschaften, vol. 332, Springer-Verlag,
  Berlin, 2006.

\bibitem[KS16]{KerzSaitoDuke}
Moritz Kerz and Shuji Saito, \emph{Chow group of 0-cycles with modulus and
  higher-dimensional class field theory}, Duke Math. J. \textbf{165} (2016),
  no.~15, 2811--2897.

\bibitem[KS17]{KS}
Bruno Kahn and Ramdorai Sujatha, \emph{Birational motives, {II}: Triangulated
  birational motives}, Int. Math. Res. Notices (2017), no.~22, 6778--6831.

\bibitem[KST19]{KST}
Moritz Kerz, Shuji Saito, and Georg Tamme, \emph{{$K$}-theory of
  non-{A}rchimedean rings. {I}}, Doc. Math. \textbf{24} (2019), 1365--1411.
  \MR{4012551}

\bibitem[KSY21]{KSY-RecII}
Bruno Kahn, Shuji Saito, and Takao Yamazaki, \emph{Reciprocity sheaves, {II}},
  Homology, Homotopy and Applications (2021), 25, to appear.

\bibitem[KSYR16]{KSY}
Bruno Kahn, Shuji Saito, Takao Yamazaki, and Kay R{\"u}lling, \emph{Reciprocity
  sheaves}, Compositio Mathematica \textbf{152} (2016), no.~9, 1851---1898.

\bibitem[Lur09]{HTT}
Jacob Lurie, \emph{Higher topos theory}, Ann. of Math. Studies, vol. 170,
  Princeton U. Press, 2009.

\bibitem[Lur17]{HA}
\bysame, \emph{Higher algebra},
  http://people.math.harvard.edu/~lurie/papers/HA.pdf, 2017.

\bibitem[LW85]{LW}
Marc Levine and Chuck Weibel, \emph{Zero cycles and complete intersections on
  singular varieties}, J. Reine Angew. Math. \textbf{359} (1985), 106--120.

\bibitem[Mat52]{Matsusaka}
Teruhisa Matsusaka, \emph{On a generating curve of an abelian variety}, Nat.
  Sci. Rep. Ochanomizu Univ. \textbf{3} (1952), 1--4.

\bibitem[Mil80]{MilneEtCoh}
James~S. Milne, \emph{{\'E}tale cohomology}, Princeton Math. Ser., vol.~33,
  Princeton Univ. Press, 1980.

\bibitem[Mil82]{MilneTorsion}
J.~S. Milne, \emph{Zero cycles on algebraic varieties in nonzero
  characteristic: {R}ojtman's theorem}, Compositio Math. \textbf{47} (1982),
  no.~3, 271--287.

\bibitem[Mil17]{milnegroups}
James~S. Milne, \emph{Algebraic groups: The theory of group schemes of finite
  type over a field}, Cambridge Stud. Adv. Math., vol. 170, Cambridge Univ.
  Press, 2017.

\bibitem[Mor15]{MorrowBS}
Matthew Morrow, \emph{Zero cycles on singular varieties and their
  desingularisations}, Doc. Math. \textbf{Extra Volume Merkurjev} (2015),
  465--486.

\bibitem[MVW06]{MVW}
Carlo Mazza, Vladimir Voevodsky, and Charles Weibel, \emph{Lecture notes on
  motivic cohomology}, Clay Math. Monographs, vol.~2, American Mathematical
  Society, 2006.

\bibitem[Ogu18]{ogu}
Arthur Ogus, \emph{Lectures on logarithmic algebraic geometry}, Cambridge Stud.
  Adv. Math., vol. 178, Cambridge Univ. Press, 2018.

\bibitem[Par21]{Doosung}
Doosung Park, \emph{Motivic interpretation of albanese varieties of smooth
  varieties}, arXiv preprint arxiv:1908.01582, 2021.

\bibitem[Ram01]{Ramachandran}
Niranjan Ramachandran, \emph{Duality of {A}lbanese and {P}icard 1-motives},
  $K$-Theory \textbf{22} (2001), no.~3, 271--301.

\bibitem[Roj80]{Rojtman}
A.~A. Rojtman, \emph{The torsion of the group of {$0$}-cycles modulo rational
  equivalence}, Ann. of Math. (2) \textbf{111} (1980), no.~3, 553--569.

\bibitem[RS21]{RulSaito}
Kay R{\"u}lling and Shuji Saito, \emph{Reciprocity sheaves and their
  ramification filtration}, J. Inst. Math. Jussieu (2021), 1--74.

\bibitem[Rus08]{RussellKyoto}
Henrik Russell, \emph{Generalized {A}lbanese and its dual}, J. Math. Kyoto
  Univ. \textbf{48} (2008), no.~4, 907--949.

\bibitem[Rus13]{RussellANT}
\bysame, \emph{Albanese varieties with modulus over a perfect field}, Algebra
  Number Theory \textbf{7} (2013), no.~4, 853--892.

\bibitem[RY16]{RulYama}
Kay R{\"u}lling and Takao Yamazaki, \emph{Suslin homology of relative curves
  with modulus}, J. Lond. Math. Soc. (2) \textbf{93} (2016), no.~3, 567--589.

\bibitem[Sai20]{SaitoPurity}
Shuji Saito, \emph{Purity of reciprocity sheaves}, Adv. Math. \textbf{366}
  (2020), 107067, 70.

\bibitem[Sai21]{shujilog}
\bysame, \emph{Reciprocity sheaves and logarithmic motives}, Accepted for
  publication by Compos. Math. \url{arxiv.org/abs/2107.00381}, 2021.

\bibitem[Ser60]{SerreChevalley1}
Jean-Pierre Serre, \emph{Morphismes universels et vari{\'e}t{\'e}
  d'{A}lbanese}, S{\'e}minaire {C}. {C}hevalley, 3i{\`e}me ann{\'e}e: 1958/59.
  (1960), ii+182.

\bibitem[Ser75]{SerreGACC}
\bysame, \emph{Groupes alg{\'e}briques et corps de classes}, Publ. Math. Univ.
  Nancago, vol. VII, Hermann, 1975.

\bibitem[Sri85]{SrinivasZeroCyclesII}
Vasudevan Srinivas, \emph{Zero cycles on a singular surface. ii}, J. Reine
  Angew. Math. \textbf{362} ((1985)), 4--27.

\bibitem[SS03]{SS}
Michael Spie{\ss} and Tam{\'a}s Szamuely, \emph{On the {A}lbanese map for
  smooth quasi-projective varieties}, Math. Ann. \textbf{325} (2003), no.~1,
  1--17.

\bibitem[{Sta}16]{stacks-project}
{Stacks Project Authors}, \emph{{\itshape {S}tacks {P}roject}},
  \url{http://stacks.math.columbia.edu}, 2016.

\bibitem[Voe00]{VoevTriangCat}
Vladimir Voevodsky, \emph{Triangulated categories of motives over a field},
  Cycles, transfers, and motivic homology theories, Ann. of Math. Stud., vol.
  143, Princeton Univ. Press, 2000, pp.~188--238.

\bibitem[Voe10]{Voecancel}
\bysame, \emph{Cancellation theorem}, Doc. Math. (2010), 671{\textendash}685.

\bibitem[Vol13]{Vologodsky}
Vadim Vologodsky, \emph{Some applications of weight structures of {B}ondarko},
  Int. Math. Res. Not. IMRN (2013), no.~2, 291--327. \MR{3010690}

\bibitem[Wit08]{WittenbergAlbaneseTorsors}
Olivier Wittenberg, \emph{On {A}lbanese torsors and the elementary
  obstruction}, Math. Ann. \textbf{340} (2008), no.~4, 805--838.

\bibitem[Yam17]{YamazakiLetter}
Takao Yamazaki, \emph{Personal communication}, 2017.

\end{thebibliography}

\end{document}